\def\blfootnote{\xdef\@thefnmark{}\@footnotetext}
\DeclareRobustCommand{\format@sec@number}[2]{{\normalfont\upshape#1}#2}
\def\e{\varepsilon}
\def\a{\alpha}
\def\b{\beta}
\def\R{\mathbb R}
\def\N{{\mathbb N}}
\def\Z{\mathbb Z}
\def\T{\mathbb T}
\def\({\biggl(}
\def\){\biggr)}
\def\<{\mathbf{\langle}}
\def\>{\mathbf{\rangle}}
\def\ent{\operatorname{ent}}
\def\uent{\overline{\operatorname{ent}}}
\def\lent{\underline{\operatorname{ent}}}
\def\top{\operatorname{top}}
\def\pol{\operatorname{pol}}
\newcommand{\Meng}[2]{\left\{#1\mathrel{}\middle|\mathrel{}#2\right\}}
\newcommand{\abs}[1]{\left\lvert#1\right\rvert}
\newcommand{\norm}[1]{\left\lVert#1\right\rVert}
\DeclareMathOperator*{\argmin}{arg\,min}
\def\Diff{\text{Diff}^{\infty}}
\newcommand{\freq}{\operatorname{freq}}
\newcommand{\sh}{\operatorname{sh}}
\newcommand{\vertiii}[1]{{\left\vert\kern-0.25ex\left\vert\kern-0.25ex\left\vert #1
    \right\vert\kern-0.25ex\right\vert\kern-0.25ex\right\vert}}
\numberwithin{equation}{section}
\theoremstyle{definition}
\newtheorem{definition}{Definition}[section]
\newtheorem{theorem}[definition]{Theorem}
\newtheorem{proposition}[definition]{Proposition}
\newtheorem{lemma}[definition]{Lemma}
\newtheorem{remark}[definition]{Remark}
\newtheorem{claim}[definition]{Claim}
\newtheorem{maintheorem}{Theorem}
\title{Slow entropy for some Anosov-Katok diffeomorphisms\blfootnote{Keywords: }
\blfootnote{2020 Mathematics Subject Classification: Primary; Secondary}}
\author{Shilpak Banerjee\footnote{Department of Mathematics, Indraprastha Institute of Information Technology Delhi (IIIT-Delhi), Okhla Industrial Estate, Phase-III, New Delhi, 110020, India, E-mail: banerjee.shilpak@gmail.com} \and Philipp Kunde\footnote{University of Hamburg, Department of Mathematics, Bundesstrasse 55, 20146 Hamburg, Germany, E-mail: pkunde.math@gmail.com P.K. acknowledges financial support from a DFG Forschungsstipendium under Grant No. 405305501.} \and Daren Wei\footnote{Einstein Institute of Mathematics, The Hebrew University of Jerusalem, Givat Ram. Jerusalem, 9190401, Israel, E-mail: Daren.Wei@mail.huji.ac.il  D.W. was supported ERC-2018-ADG project HomDyn.}}
\date{\today}
\begin{document}

\maketitle

\begin{abstract}
	The Anosov-Katok method is one of the most powerful tools of constructing smooth volume-preserving diffeomorphisms of entropy zero with prescribed ergodic or topological properties. To measure the complexity of systems with entropy zero, invariants like slow entropy have been introduced. In this article we develop several mechanisms facilitating computation of topological and measure-theoretic slow entropy of Anosov-Katok diffeomorphisms.
\end{abstract}

%\tableofcontents
%{\color{red}I am wondering do we need content or not since our previous paper does not have a content.} {\bb I don't have a strong opinion but I guess we can skip the content.}

\section{Introduction}

Measure-theoretical and topological entropy serve as crucial tools in the study of complexity in dynamical systems in both measurable and topological category.  A dynamical system has positive entropy if and only if its orbit structure has exponential growth rates. However the celebrated results of Rokhlin \cite{Rokhlin} showed that zero-entropy systems are generic among automorphisms of a Lebesgue probability space equipped with a natural Polish topology. As a result, we know that low complexity systems are not rare and can exhibit rich dynamical phenomena. In order to study the dynamical systems with entropy zero, several different invariants have been introduced and studied: sequence entropy (\cite{Kushnirenko}, \cite{Goodman}), slow entropy (\cite{KT}), measure-theoretic and topological complexity (\cite{FerencziComplexity}, \cite{BHM}), entropy dimension (\cite{Carvalho}, \cite{DouHuangPark}), entropy convergence rate (\cite{Blume1}), scaled entropy(\cite{Vershik}), amorphic complexity (\cite{FGJ}). In particular, slow entropy enables us to measure precise complexities for both homogeneous systems (\cite{KatokKatokRodriguezHertz}, \cite{KanigowskiVinhageWei}, \cite{KanigowskiKundeVinhageWei}) and non-homogeneous systems (\cite{Kanigowski}, \cite{Adams}, \cite{BKW1}), and hence it has become a popular tool to estimate the complexity for zero-entropy systems. For a more detailed summary of the history, background and further references, we refer to the survey article \cite{KanigowskiKatokWei}.

In \cite{BKW1}, we studied the measure-theoretic slow entropy for some combinatorial constructions and addressed three problems stated in \cite{KanigowskiKatokWei}. More precisely, several results in \cite{BKW1} rely on an abstract \emph{Approximation by Conjugation method} to construct rigid automorphisms as well as transformations with good cyclic approximation for any prescribed value of lower or upper measure-theoretic slow entropy with respect to the polynomial scaling function $a^{\pol}_n(t)=n^t$. The Approximation by Conjugation method (also known as AbC or Anosov-Katok method) was introduced by D. Anosov and A. Katok in the highly influential paper \cite{AK} and has become a very powerful tool to construct smooth volume-preserving systems of entropy zero with prescribed ergodic or topological properties. We present the scheme of the AbC method in Section \ref{subsec:Anosov-Katok} and we refer to survey articles \cite{FK} as well as \cite{Ksurvey} for more detailed expositions of the AbC method and its wide range of applications.

However, the AbC transformations constructed in \cite{BKW1} are \emph{not} necessarily smooth. To guarantee smoothness further growth conditions have to be posed on the parameter sequences within the AbC constructions. In this paper we present several methods describing how to compute the measure-theoretic and topological slow entropy of \emph{smooth} AbC diffeomorphisms. Hereby, we provide another answer to Problem 6.3.1 in \cite{KanigowskiKatokWei}.

When working with slow entropy an important first step is to determine the scale which describes the growth rates and also distinguishes different systems. For instance, for smooth flows on surfaces the growth rates are $n^t$ and $n(\log n)^t$, depending on whether the singularities of the flow are degenerated or not, respectively \cite{Kanigowski}. For abelian unipotent $\R^k$ actions the correct family of scales to choose is the polynomial scale $n^t$ (\cite{KanigowskiVinhageWei}, \cite{KanigowskiKundeVinhageWei}). In our investigation it turns out that the choice of an appropriate scale depends on the regularity of the AbC diffeomorphisms. While for any fixed $k \in \N$ we construct $C^k$ diffeomorphisms with positive upper slow entropy with respect to the polynomial scale $a^{\pol}_n(t)=n^t$, we have to use intermediate scales between logarithmic $a^{\ln}_n(t)=(\ln(n))^t$ and polynomial to see non-trivial upper slow entropy for $C^{\infty}$ AbC diffeomorphisms.  We introduce these intermediate scaling functions in Section \ref{subsec:scaling}.  For a given dynamical system, we say its topological (measure-theoretic) slow entropy is in polynomial (or logarithmic, intermediate) scale if its polynomial (or logarithmic, intermediate) topological (measure-theoretic) slow entropy is a finite positive real number. %{\color{red}$a_n^{\operatorname{int1,r}}(t)=n^\frac{t}{\Gamma_r^{-1}({\frac{\ln n}{\ln q_1})}}$, $a_n^{\operatorname{int2,r}}(t)=n^\frac{t}{\big[{\Gamma_r^{-1}({\frac{\ln n}{\ln q_1})}}\big]^\frac{r-2}{r}}$, where $\Gamma_r(x)=\big[\Gamma(x^\frac{1}{r}+1)\big]^r$ and $\Gamma$ is the usual Gamma function, see Section \ref{subsec:scaling} for more details.}

One of the key ingredients in our method are precise norm estimates on the conjugation maps in our AbC constructions. In the literature, such constructions are called \emph{quantitative version} of the AbC method (see e.g. \cite[section 2]{Ksurvey}) which was initiated by B. Fayad and M. Saprykina in \cite{FS}. In case of the disc $\mathbb{D}^2$ or the annulus $\mathbb{S}^1 \times [0,1]$ the constructions in that paper provide for each Liouville number\footnote{We recall that an irrational number $\alpha$ is called \emph{Liouville} if for every $C>0$ and every positive integer $n$ there are infinitely many pairs of integers $(p,q)$ with $q>1$ such that $0<\abs{\alpha - \frac{p}{q}}<\frac{C}{q^n}$. If an irrational number is not Liouville, it is called \emph{Diophantine}} $\alpha$ a weakly mixing area-preserving diffeomorphism whose restriction to the boundary is the rotation by $\alpha$. Combining this result with ``Herman's last geometric theorem'' \cite{HermansLast} one obtains the striking dichotomy that an irrational number $\beta$ is Diophantine if and only if there is no ergodic $C^{\infty}$-diffeomorphism of the disc whose restriction to the boundary has rotation number $\beta$. Inspired by the constructions in \cite{FS}, we compute measure-theoretic as well as topological slow entropy for some AbC constructions of weakly mixing diffeomorphisms:
\begin{maintheorem}[Theorem \ref{thm:weakMixingTopInter}, \ref{thm:weakMixingTopIn}, \ref{thm:weakMixingTopPol}, \ref{thm:weakMixingMeaInter}] Let $M$ be the disc $\mathbb{D}^2$, or the annulus $\mathbb{A}=\mathbb{S}^1 \times [0,1]$ or the torus $\mathbb{T}^2$. Then,
\begin{itemize}
    \item there exist weakly mixing $C^{\infty}$ AbC diffeomorphisms on $M$ such that their upper topological slow entropy are in intermediate scale\footnote{ Intermediate scale refers to a scale we introduce in section \ref{subsec:scaling} based on the inverse gamma functions. It has speed between the logarithmic and polynomial scales}.
    \item there exist weakly mixing $C^{\infty}$ AbC diffeomorphisms on $M$ such that their upper topological slow entropy are in logarithmic scale.
    \item for every $k\in \N$ there also exists a weakly mixing $C^k$ AbC diffeomorphism such that its upper topological slow entropy is in polynomial scale.
    \item there exists a weakly mixing $C^{\infty}$ AbC diffeomorphism such that its upper measure-theoretic slow entropy is in intermediate scale.
    \item there exists a weakly mixing $C^{\infty}$ AbC diffeomorphism such that its upper measure-theoretic slow entropy is in logarithmic scale.
\end{itemize}
\end{maintheorem}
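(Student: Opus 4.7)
The plan is to carry out a quantitative Anosov--Katok construction on each of $\mathbb{D}^2$, $\mathbb{A}$, $\mathbb{T}^2$ in which the limit diffeomorphism $T = \lim_n T_n$, with $T_n = H_n \circ R_{\alpha_{n+1}} \circ H_n^{-1}$ and $H_n = H_{n-1} \circ h_n$, is weakly mixing, and then to read off its slow entropy from the combinatorial structure of the $h_n$ together with sharp $C^r$-norm estimates. First I would build $h_n$ as a shear composed with a permutation of the atoms of a Rokhlin-tower partition adapted to $R_{p_n/q_n}$; this template for weak mixing is by now standard in the AbC literature, and choosing $\alpha_{n+1} = p_n/q_n + 1/(k_n q_n^2)$ for sufficiently large $k_n$ guarantees that the limit $T$ inherits weak mixing, the correct boundary rotation number, and $C^r$-convergence of the $T_n$.

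The second step is the quantitative side, following \cite{FS}: I would estimate $\norm{h_n}_{C^r}$ and hence the distortion $M_n := \norm{H_n}_{C^r} \cdot \norm{H_n^{-1}}_{C^r}$, and impose $q_{n+1} \gg M_n^{C(r)}$ so that $T_{n+1}$ stays $C^r$-close to $T_n$. For a fixed $k \in \N$ one may keep $q_{n+1}$ polynomial in $q_n$ and still achieve $C^k$-convergence, whereas for $C^\infty$ convergence all $r$ must be satisfied simultaneously, forcing $q_{n+1}$ to grow at least tower-exponentially in $n$ (or close to it, depending on how the regularity of $h_n$ grows).

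The central step is to convert the growth rate of $(q_n)$ into a slow entropy scale. On the time window $N \in [q_n, q_{n+1})$ the orbit of a typical point under $T$ resolves, up to an error controlled by $M_n$ and $\e$, the $q_n$ images of a partition element under $R_{\alpha_{n+1}}$; a careful counting of $(\e, N)$-Bowen-separated sets then shows that the complexity at time $N \sim q_{n+1}/(\e \cdot M_n^C)$ is comparable to $q_n$. Inverting this correspondence yields the three topological regimes: polynomial complexity $N^t$ when $q_{n+1}$ is polynomial in $q_n$ (the $C^k$ case), logarithmic complexity $(\ln N)^t$ when $q_{n+1}$ is at least tower-exponential in $q_n$ (a typical $C^\infty$ case), and the inverse-gamma intermediate rate of Section \ref{subsec:scaling} when the growth is tuned between these extremes.

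The main obstacle is achieving matching lower and upper bounds, especially in the measure-theoretic setting. The upper bound is the easier direction, coming from covering arguments that exploit $M_n$. For the lower bound one has to exhibit, on a set of measure $\ge 1 - \e$, many orbit segments pairwise far in the Hamming metric; here the shear component of $h_n$ is used to force nearby points to separate along the second coordinate, producing $\gtrsim q_n$ genuinely distinguishable orbit types up to time $N \sim q_{n+1}$. Throughout, the parameters $k_n$ (equivalently the ratio $q_{n+1}/q_n$) must be tuned delicately: too slow a growth destroys smoothness, too fast a growth collapses the lower bound to a trivial scale. Finally, the statements on the disc and torus follow from the construction on $\mathbb{A}$ by standard extension and identification procedures, which preserve both $C^r$-regularity and the slow-entropy bounds.
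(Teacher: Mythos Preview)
Your overall skeleton is right, but two key pieces are missing, and both affect whether the scales you claim actually match the complexity of the construction.

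First, the complexity count ``$\sim q_n$ separated orbits at time $N\sim q_{n+1}$'' is correct for the untwisted and uniquely ergodic AbC constructions, but \emph{not} for the weakly mixing one. In the weakly mixing template of \cite{FS} the conjugation map is $h_n=g_n\circ\phi_n$, where $g_n$ is (a smoothed version of) the shear $g_{[nq_n^{\sigma_n}]}$ and $\phi_n$ involves quasi-rotations with stretch factors up to order $q_n^{n^2+1}$. This multi-scale structure in $\phi_n$ is what produces on the order of $q_n^{n^2}$ Bowen-separated points by time $q_{n+1}$ (see Lemma~\ref{lem:LowerSmooth3}), not $q_n$. Correspondingly, the norm estimates give $q_{n+1}\ge q_n^{n^2(m_n+1)^5}$, and the right intermediate scale for the topological result is $a_m^{\operatorname{int2,r}}$ with $r=8$, satisfying $a_{q_{n+1}}^{\operatorname{int2,8}}(t)=q_n^{n^2t}$. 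If you only produced $q_n$ separated points, the topological slow entropy at this scale would be $0$, not $1$. So the separation mechanism you describe (``the shear forces nearby points apart along the second coordinate'') is too coarse; the paper's argument tracks how the \emph{varying} stretch factors of $\phi_n$ on different domains of width $1/(2q_n^3)$ send nearby points to distant $\eta_m$-atoms.

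Second, for the measure-theoretic lower bound (bullets four and five) you are missing the essential combinatorial input. It is not enough that nearby points eventually separate; one needs that for a set of measure $>1-\e$, orbit segments of length $q_{n+1}$ are pairwise at Hamming distance $>1-O(\e)$, even under arbitrary shifts. The paper achieves this by a probabilistic ``Substitution Lemma'' (Lemma~\ref{lem:prob3}): one selects $q_n$ words of length $q_n$ over the alphabet $\{0,\dots,n^2-1\}$ with exact uniformity and uniform Hamming separation after all shifts, and then uses these words to prescribe which quasi-rotation $\phi_{2q_n^{j+2},\e_n}$ acts on each of the $2q_n^2$ subdomains of the fundamental domain. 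Without this randomized combinatorics, two points in different domains could still have Hamming-close codings, and the lower bound $S^H_{\eta_m}(T,q_{n+1},\e)\ge q_n^2$ of Lemma~\ref{lem:lowerSmooth4} would fail. Your proposal does not mention any device of this kind, and the standard Fayad--Saprykina $h_n$ alone does not provide it.
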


The quantitative version of the AbC method is also used to find non-standard smooth realizations of Liouville circle rotations. While the non-standard realizations in \cite[section 6]{AK} did not allow control over the rotation number, it is shown in \cite{FSW} that for every Liouville number $\alpha$ there exists an ergodic $T\in \Diff(M,\mu)$ measure-theoretically isomorphic to the circle rotation by $\alpha$. In case of the torus $\T^d$, $d\geq 2$, the result can be strengthened to obtain a uniquely ergodic $C^{\infty}$-diffeomorphism $T$ \cite[Theorem 2]{FSW}. In both general and uniquely ergodic setting, we compute the topological slow entropy for such  AbC diffeomorphisms measure-theoretically isomorphic to a circle rotation:

\begin{maintheorem}[Theorem \ref{thm:untwistedTopInter}, \ref{thm:untwistedTopIn}, \ref{thm:untwistedTopPol}]\label{thm:mainUniquelyErgodic}
Let $M$ be $\mathbb{D}^2$, $\mathbb{A}$ or $\mathbb{T}^2$. Then,
\begin{itemize}
    \item there exist $C^{\infty}$ AbC diffeomorphisms  on $M$ isomorphic to an irrational translation of the circle such that their upper topological slow entropy are in intermediate scale.
    \item there exist $C^{\infty}$ AbC diffeomorphisms  on $M$ isomorphic to an irrational translation of the circle such that their upper topological slow entropy are in logarithmic scale.
    \item for every $k\in \N$ there also exists a $C^k$ AbC diffeomorphism  on $M$ isomorphic to an irrational translation of the circle such that its upper topological slow entropy is in polynomial scale.
\end{itemize}
\end{maintheorem}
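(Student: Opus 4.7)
My plan is to adapt the untwisted quantitative AbC construction of \cite{FSW}: for any Liouville number $\alpha$, produce $T=\lim_{n\to\infty}T_n$ with $T_n=H_n\circ R_{\alpha_{n+1}}\circ H_n^{-1}$ measure-theoretically isomorphic to the rotation $R_\alpha$, while tuning the parameter sequence so that the $C^1$ norms of $H_n$ and the denominators $q_{n+1}$ grow at a prescribed rate. The free parameters---the sequence $(q_n)$, the number of ``steps'' in each building block $h_n$ (with $H_n=H_{n-1}\circ h_n$), and how much faster $q_{n+1}$ is taken compared to $\|H_n\|_{C^{k}}$---should allow me to interpolate between (i) the very restrictive growth needed for $C^\infty$ convergence together with Liouville-type approximation of $\alpha$, which forces logarithmic or intermediate scales, and (ii) the milder polynomial growth allowed in the $C^k$ category, which leaves room for polynomial slow entropy.

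For the upper bound on topological slow entropy, I would combine two ingredients. First, the AbC convergence estimates give $d_{C^0}(T^m,T_n^m)\leq m\cdot d_{C^0}(T,T_n)$, where $d_{C^0}(T,T_n)$ decays super-fast by the standard telescoping bound. Second, since $T_n$ is $q_{n+1}$-periodic with orbits conjugate via $H_n$ to orbits of the rigid rotation $R_{\alpha_{n+1}}$, an $(m,\varepsilon)$-spanning set for $T_n$ can be produced by pushing forward the natural $1/q_{n+1}$-partition of the circle through $H_n$, using on the order of $q_{n+1}\cdot\|H_n\|_{C^1}^{\dim M}\cdot\varepsilon^{-\dim M}$ balls. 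Comparing $T$ to $T_n$ up to the largest $m$ at which orbits still stay $\varepsilon$-close then gives an upper bound that, once expressed as a function of $m$, has exactly the scaling $a_m^{\pol}$, $a_m^{\ln}$, or an intermediate $a_m$, depending on the regularity-driven parameter choice.

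For the lower bound I would exhibit an $(m,\varepsilon_n)$-separated set by pushing forward through $H_n$ a set of representatives of the equal $q_{n+1}$-partition of the circle; the stretching of $H_n^{-1}$ forces these points to be at least $\varepsilon_n\sim(q_{n+1}\|H_n^{-1}\|_{C^1})^{-1}$ apart in $M$, and they remain $\varepsilon_n$-separated under $T_n^j$ for $j\leq q_{n+1}$ since the rotation permutes them cyclically. Transferring this to $T$ via the closeness $d_{C^0}(T^j,T_n^j)$ at these time scales produces a matching lower bound. In the uniquely-ergodic case on $\T^2$ I would use the refined construction of \cite[Theorem 2]{FSW}, verifying that the extra structure imposed to guarantee unique ergodicity perturbs the relevant norm estimates only up to a multiplicative constant.

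The main obstacle, I expect, is threading the parameter choices so that the upper and lower bounds match at the \emph{same} scale, yielding a finite positive slow entropy value rather than $0$ or $\infty$. This is most delicate in the $C^\infty$ case: the requirement $\|H_n\|_{C^k}^{2}\cdot\abs{\alpha_{n+1}-\alpha_n}\to 0$ for \emph{every} $k$ forces $q_{n+1}$ to grow faster than any polynomial in $\|H_n\|_{C^1}$, which already collapses the polynomial slow entropy to zero. Capturing the residual complexity therefore requires the inverse gamma scales introduced in Section \ref{subsec:scaling}. A compatibility lemma that translates the growth type of the sequence $(\|H_n\|_{C^1})$ into the correct scaling function $a_n$ will consequently be the central technical step, and I expect the three parts of the theorem to correspond to three explicit growth regimes of this lemma.
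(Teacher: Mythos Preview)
Your overall architecture is the same as the paper's---build the FSW-style untwisted AbC diffeomorphism, bound separated and spanning numbers for $T_n$, and transfer to $T$---but the specific mechanisms you propose for the upper and lower bounds are both too coarse to yield a \emph{finite nonzero} slow entropy, and in fact your lower bound does not work as stated.

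For the lower bound: pushing forward the $q_{n+1}$ points $(i/q_{n+1},y_0)$ through $H_n$ and using that $T_n$ permutes them cyclically does not give $q_{n+1}$ points that are $(\varepsilon,q_{n+1})$-separated at a \emph{fixed} $\varepsilon$. The Bowen distance between two such points with index gap $\Delta$ is $\max_{\ell}d\big(H_n(\ell/q_{n+1},y_0),H_n((\ell+\Delta)/q_{n+1},y_0)\big)$, which is at most $\|DH_n\|_0\cdot|\Delta|/q_{n+1}$; so whenever $|\Delta|<\varepsilon q_{n+1}/\|DH_n\|_0$ the pair is \emph{not} $\varepsilon$-separated. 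At best this yields $O(\|DH_n\|_0/\varepsilon)$ separated points, not $q_{n+1}$. Your formulation with $\varepsilon_n\to0$ is incompatible with the order of limits in the slow-entropy definition (one must fix $\varepsilon$, take $\limsup_m$, and only then let $\varepsilon\to0$).

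For the upper bound: since $R_{\alpha_{n+1}}$ is an isometry, the $q_{n+1}$ factor in your spanning estimate is spurious; the correct crude bound (Lemma~\ref{lem:estimationOfBowenBalls}) is $N_{d^{T_n}_m}(\varepsilon)\lesssim\|H_n\|_{C^1}^{2\dim M}/\varepsilon^{\dim M}$, which in the present construction is of order $q_n^{O(1)}$, not $q_{n+1}$. But even this does not match your (corrected) lower bound closely enough to pin down the exponent.

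The paper closes this gap by abandoning the norm-only viewpoint and exploiting the internal geometry of the conjugation $h_n=\phi_{q_n,\varepsilon_n}$: for the lower bound (Lemma~\ref{lem:FSWSeperationAtN-thStage}) one places $\sim q_n/\varepsilon$ points so that for each pair there is an iterate $t<q_{n+1}$ at which one point lands in the rotation kernel of $h_n$ and is flipped to the top while the other lands in the identity zone, producing a \emph{uniform} separation of order $1/4$; for the upper bound (Lemma~\ref{lem:FSWSpanningAtn-thStage}) one uses the $1/q_n$-equivariance of $h_n$ to build $\sim q_n/\varepsilon^2$ sets whose $H_n$-images stay $\varepsilon$-small under all iterates of $T_n$. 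Both bounds are then of the \emph{same} order $q_n$, and the scaling function is designed so that $a_{q_{n+1}}(t)\asymp q_n^t$, yielding slow entropy exactly $1$. The crux you are missing is this separation mechanism tied to the quasi-rotation structure; without it the upper and lower bounds cannot be made to meet.
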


\begin{maintheorem}[Theorem \ref{thm:uniquelyErgodicTopInter}, \ref{thm:uniquelyErgodicTopIn}, \ref{thm:uniquelyErgodicTopPol}] \label{theo:uniquelyErgodicShort}
Let $M$ be $\mathbb{T}^2$. Then,
\begin{itemize}
    \item there exist uniquely ergodic $C^{\infty}$ AbC diffeomorphisms  on $M$ isomorphic to an irrational translation of the circle such that their upper topological slow entropy are in intermediate scale.
    \item there exist uniquely ergodic $C^{\infty}$ AbC diffeomorphisms on $M$ isomorphic to an irrational translation of the circle such that their upper topological slow entropy are in logarithmic scale.
    \item for every $k\in \N$ there also exists an uniquely ergodic  $C^k$ AbC diffeomorphism on $M$ isomorphic to an irrational translation of the circle such that its upper topological slow entropy is in polynomial scale.
\end{itemize}
\end{maintheorem}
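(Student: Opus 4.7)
The plan is to build on the uniquely ergodic $C^{\infty}$ AbC construction of Fayad--Saprykina--Windsor on $\T^{2}$ (\cite[Theorem~2]{FSW}) and graft onto it the quantitative parameter estimates developed in Main Theorem \ref{thm:mainUniquelyErgodic}. Restriction to $M=\T^{2}$ is essential, since the boundary fixed points on $\mathbb{D}^{2}$ and $\mathbb{A}$ obstruct unique ergodicity. Starting from the standard inductive scheme $T_{n}=H_{n}\circ R_{\alpha_{n+1}}\circ H_{n}^{-1}$ with $\alpha_{n}=p_{n}/q_{n}\to\alpha$ Liouville and $H_{n}=h_{1}\circ\cdots\circ h_{n}$, the conjugations must satisfy three simultaneous requirements: (a) $C^{\infty}$ (respectively $C^{k}$) convergence of $T_{n}\to T$, which forces a Liouville speed $|\alpha-\alpha_{n+1}|\le q_{n+1}^{-A_{n}}$ absorbing the $C^{k}$-norms produced by $h_{n+1}$; (b) unique ergodicity on $\T^{2}$, achieved via the FSW equidistribution conditions on the push-forward of the $q_{n+1}$-tower partition; (c) the prescribed slow entropy scale, which is controlled by the growth rate of $\|DH_{n}\|$ in $q_{n}$. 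The three bullet points then correspond to choosing the parameters $(q_{n},\|DH_{n}\|)$ exactly as in the proofs of Theorems \ref{thm:untwistedTopInter}, \ref{thm:untwistedTopIn} and \ref{thm:untwistedTopPol}, respectively, with the FSW equidistribution conditions inserted at each step.

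For the upper bound on topological slow entropy I would cover $\T^{2}$ by $H_{n}$-images of a Cartesian grid of mesh $\varepsilon/\|DH_{n}\|$, hence of cardinality $\lesssim \|DH_{n}\|^{2}$: since $T$ is $C^{0}$-close to $T_{n}$ on time windows of length $\lesssim q_{n+1}$ and $T_{n}$ is conjugate to a rotation by a map of derivative norm $\|DH_{n}\|$, each such cell has Bowen $(N,\varepsilon)$-diameter at most $\varepsilon$ for $N\lesssim q_{n+1}$. Inverting the relation between $\|DH_{n}\|$ and the chosen scaling function yields the required upper bound in intermediate, logarithmic or polynomial scale. For the matching lower bound I would use that $H_{n}$ stretches a transverse direction by a factor comparable to $\|DH_{n}\|$: points lying in distinct cells of the $h_{n}$-induced partition are $(q_{n},\varepsilon)$-Bowen separated, and by unique ergodicity almost every orbit visits each cell with the correct frequency, producing $\gtrsim \|DH_{n}\|^{2}$ separated points at the desired time.

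The main obstacle will be compatibility of the three requirements (a)--(c): the uniquely ergodic condition on $\T^{2}$ typically forces $h_{n+1}$ to equidistribute on a finer spatial scale than in the (merely) isomorphism result of Main Theorem \ref{thm:mainUniquelyErgodic}, which inflates $\|Dh_{n+1}\|$ and tends to shift the effective slow entropy scale upward, while the $C^{\infty}$ (respectively $C^{k}$) convergence caps that norm through the Liouville speed of $\alpha_{n}$. Verifying that the admissible windows for these constraints overlap at each of the three prescribed scales is the key technical point, and would be carried out by a recursive parameter selection mirroring the proofs of Theorems \ref{thm:untwistedTopInter}--\ref{thm:untwistedTopPol}, with the FSW unique ergodicity conditions absorbed into the choice of $q_{n+1}$ at every stage.
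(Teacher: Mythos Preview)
Your overall scaffold (FSW construction on $\T^2$, estimate covering/separated sets for $T_n$, transfer to $T$ via closeness, choose parameters for each scale) matches the paper's, but your quantitative estimates are off in a way that breaks the argument.

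The central mismatch is between your claimed upper and lower bounds, both asserted to be $\sim\|DH_n\|^2$. The upper bound by a grid of mesh $\varepsilon/\|DH_n\|$ is valid but very crude; the paper sharpens it to order $q_n$ (not $\|DH_n\|^2$) by exploiting the $\tfrac{1}{q_n}$-equivariance of $h_n$: one groups $d_n\sim \varepsilon q_n/\|DH_{n-1}\|$ consecutive $\frac{1}{q_n}$-columns together before refining inside each column, yielding $N_{d_m^{T_n}}(\varepsilon)\leq C_n q_n/\varepsilon^3$ (Lemma~\ref{lem:upperSmooth2}). Since $\|Dh_n\|\sim q_n^8$ here, your bound $\|DH_n\|^2\gtrsim q_n^{16}$ is far too large to match any lower bound actually achievable, so the slow entropy could not be pinned down.

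Conversely, your lower bound of $\|DH_n\|^2$ separated points is not justified and in fact cannot hold, given the $\sim q_n$ upper bound just mentioned. The paper's separation mechanism (Lemma~\ref{lem:uniquelyErgodicApproLower}) produces only $\sim q_n$ separated sets, and even that requires a genuine modification of FSW: the linear shear $\psi_n(x)=q_nx$ is replaced by a smoothed step function with \emph{varying} step lengths, so that $D_{\psi_{N+1}}\circ R^t_{\alpha_{N+2}}\circ D_{\psi_{N+1}}^{-1}$ generates, as $t$ varies, all the distinct vertical translations needed to push some sets into the identity zone of $\phi_{q_{N+1}}$ while others stay in the rotation zone. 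Your sketch (``$H_n$ stretches a transverse direction by $\|DH_n\|$'') does not supply this mechanism, and the time scale you cite---$(q_n,\varepsilon)$-separation---should be $(q_{n+1},\varepsilon)$, since the slow entropy estimate compares the count at time $q_{n+1}$ to the scaling function at $q_{n+1}$. Without the step-function modification and the matching $\sim q_n$ upper and lower bounds, the three scale computations (intermediate with $r=5$, logarithmic, polynomial) cannot be carried out.
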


Since transformations isomorphic to a translation on a compact group have measure-theoretic slow entropy $0$ with respect to every scaling function \cite[Proposition 3]{FerencziComplexity}, these diffeomorphisms provide examples for the failure of a variational principle for slow entropy. More precisely, since the uniquely ergodic diffeomorphism $T$ from Theorem \ref{theo:uniquelyErgodicShort} is measure-theoretically isomorphic to a circle rotation, it satisfies $\ent^{\mu}_{a_n(t)}(T)=0$ with respect to its unique invariant probability measure $\mu$ and every scaling function $\{a_n(t)\}_{n\in \N,t>0}$. On the other hand, we found explicit scaling functions with $\uent^{\top}_{a_n(t)}(T)>0$. Hence, the diffeomorphism $T$ is an example that the variational principle does not hold for slow entropy. This has already been observed in \cite[Appendix A.2.]{KanigowskiVinhageWei}. While in \cite{KanigowskiVinhageWei} just the existence of a scaling function with failure of variational principle could be shown, we provide counterexamples for specific scaling functions. In fact, we obtain the following corollary from Theorem \ref{theo:uniquelyErgodicShort}:
\begin{maintheorem}
For every $k\in\mathbb{N}\cup\{+\infty\}$, there exist $C^k$ diffeomorphisms such that slow entropy variational principle fails at scale $a_m^{\operatorname{int1,5}}(t)$ and logarithmic scale $a^{\ln}_m(t)$, respectively. Moreover, for every $k\in \N$, there exist $C^k$ diffeomorphisms such that slow entropy variational principle fails at polynomial scale $a_m^{\pol}(t)$.
\end{maintheorem}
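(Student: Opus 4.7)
The plan is to deduce this corollary essentially directly from Theorem \ref{theo:uniquelyErgodicShort}, combined with the classical observation that any transformation measure-theoretically isomorphic to a group translation has vanishing measure-theoretic slow entropy at every scale.

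First I would invoke Theorem \ref{theo:uniquelyErgodicShort} to produce, for each choice of regularity and scale, a uniquely ergodic $C^k$ (respectively $C^\infty$) diffeomorphism $T$ of $\mathbb{T}^2$ that is measure-theoretically isomorphic to an irrational translation of the circle and whose upper topological slow entropy at the prescribed scale $a_n(t)$ is a finite positive real number. Specifically, for the intermediate scale $a_m^{\operatorname{int1,5}}(t)$ and the logarithmic scale $a^{\ln}_m(t)$ I would take $T$ to be $C^\infty$ from the first two bullets of Theorem \ref{theo:uniquelyErgodicShort}, and for the polynomial scale $a_m^{\pol}(t)$ I would take $T$ to be $C^k$ from the third bullet.

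Next I would invoke \cite[Proposition 3]{FerencziComplexity}: any system isomorphic to a translation on a compact group has measure-theoretic slow entropy zero with respect to every scaling function. Because $T$ is uniquely ergodic, there is a single invariant Borel probability measure $\mu$, and $(T,\mu)$ is by construction isomorphic to a circle rotation, so $\ent^{\mu}_{a_n(t)}(T)=0$ for every scale. In particular this holds for the three specific scales under consideration.

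Finally I would assemble the contradiction with any putative variational principle. On one side, the unique invariant measure gives $\ent^{\mu}_{a_n(t)}(T)=0$; on the other side, Theorem \ref{theo:uniquelyErgodicShort} supplies $\uent^{\top}_{a_n(t)}(T)>0$ at the chosen scale. Since $T$ is uniquely ergodic, the supremum of measure-theoretic slow entropies over invariant measures collapses to zero, whereas the topological counterpart is strictly positive, so the variational equality fails at that scale. There is no real obstacle here: the technical work is entirely in Theorem \ref{theo:uniquelyErgodicShort}; the corollary is a packaging statement that just combines the theorem with Ferenczi's proposition and the uniqueness of the invariant measure.
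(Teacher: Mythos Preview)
Your proposal is correct and matches the paper's own argument essentially verbatim: the paper explicitly derives this corollary from Theorem~\ref{theo:uniquelyErgodicShort} together with \cite[Proposition 3]{FerencziComplexity}, using unique ergodicity so that the supremum over invariant measures reduces to the single zero value while the topological side is strictly positive. The only cosmetic point is that for finite $k$ at the intermediate and logarithmic scales you are implicitly using that the $C^\infty$ example is in particular $C^k$, which is of course fine.
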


Our last result is a general upper bound for the complexity of $C^{\infty}$ AbC diffeomorphisms:
\begin{maintheorem}\label{thm:vanishSuperSmooth}
For any $C^{\infty}$ AbC diffeomorphism $T$, its upper measure-theoretic slow entropy is always zero at polynomial scale.
\end{maintheorem}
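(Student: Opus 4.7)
} The argument rests on two standard features of a $C^{\infty}$ AbC construction with period sequence $\{q_m\}$ and conjugates $\{H_m\}$: (i) every $C^k$-norm of $H_m$ is polynomially bounded in $q_m$, with exponent depending only on $k$; and, crucially, (ii) in order to secure $C^{\infty}$-convergence $T_m\to T$, one must choose $q_m$ growing super-polynomially in $q_{m-1}$, i.e.\ $\log q_{m-1}=o(\log q_m)$. My plan is to show that for every $\varepsilon,\delta>0$ the minimum number $S(T,n,\varepsilon,\delta)$ of Hamming $(n,\varepsilon)$-balls needed to $(1-\delta)$-cover $M$ satisfies $\log S(T,n,\varepsilon,\delta)=o(\log n)$; this is equivalent to vanishing of the upper measure-theoretic slow entropy at polynomial scale.

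Given $n$, let $m$ be defined by $q_m\le n<q_{m+1}$. Because $T_{m-1}=H_{m-1}R_{\alpha_m}H_{m-1}^{-1}$ is conjugate to an isometry of period $q_m$, one has the uniform Lipschitz control $\|T_{m-1}^j\|_{\mathrm{Lip}}\le L_{m-1}:=\|H_{m-1}\|_{C^1}\|H_{m-1}^{-1}\|_{C^1}$ for every $j$. The first step is a quantitative rigidity estimate: arranging the AbC convergence so that $\|T-T_{m-1}\|_{C^1}\ll q_{m+1}^{-K}$ for a prescribed large $K$ (this is built into the quantitative definition of $C^{\infty}$ AbC), telescoping $T^{q_m}-T_{m-1}^{q_m}=0$ produces
\begin{equation*}
\|T^{q_m}-\mathrm{id}\|_{C^0}\le q_m L_{m-1}\|T-T_{m-1}\|_{C^0},
\end{equation*}
together with an analogous small bound on $\|T^{q_m}\|_{\mathrm{Lip}}-1$, sufficient to guarantee that the iterates $(T^{q_m})^i$ with $i\le q_{m+1}/q_m$ remain $C^0$-close to $\mathrm{id}$ with error $\ll\varepsilon/L_{m-1}$.

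The second step converts this into a reduction of the Hamming count at time $n$ to the count at time $q_m$: writing $j=iq_m+s$ with $0\le s<q_m$ and $i\le q_{m+1}/q_m$, the identity $T^jx=T^s((T^{q_m})^ix)$ combined with the Lipschitz bound $\|T^s\|_{\mathrm{Lip}}\le 2L_{m-1}$ inherited from $T_{m-1}$ on $[0,q_m)$ forces $|d(T^jx,T^jy)-d(T^sx,T^sy)|<\varepsilon/2$ uniformly in $j$, so the Hamming distances $d_n^H(x,y)$ and $d_{q_m}^H(x,y)$ differ by $o(1)$. The third step bounds $S(T,q_m,\varepsilon/2,\delta/2)$ by a simple ball-count in $M$ at scale $\varepsilon/(4L_{m-1})$ (any two points at that distance have $T$-orbits within $\varepsilon/2$ for every $j<q_m$), yielding
\begin{equation*}
S(T,n,\varepsilon,\delta)\le S(T,q_m,\varepsilon/2,\delta/2)\le C(\varepsilon) L_{m-1}^{\dim M}\le C'(\varepsilon) q_{m-1}^{\,c},
\end{equation*}
where the last inequality uses (i). Invoking (ii), $\log S(T,n,\varepsilon,\delta)\le c\log q_{m-1}+O_{\varepsilon}(1)=o(\log q_m)=o(\log n)$, so $\uent^{\mu}_{a^{\pol}_n(t)}(T)=0$ for every $t>0$, as claimed. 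The principal obstacle is the quantitative rigidity of the first step: the AbC convergence rate must be sharp enough that the iterates $(T^{q_m})^i$ with $i\le q_{m+1}/q_m$ do not amplify the $C^0$-error, which requires invoking the explicit growth conditions on $\|T-T_m\|_{C^k}$ built into the paper's notion of $C^{\infty}$ AbC diffeomorphism rather than mere qualitative $C^{\infty}$-convergence.
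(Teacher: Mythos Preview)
Your approach is fundamentally different from the paper's, and it has a genuine gap at exactly the place you identify as the ``principal obstacle.''

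The paper's argument (Section~\ref{sec:vanishSuperSmooth}) is purely combinatorial and measure-theoretic: it quotes \cite[Lemma~4.5]{BKW1} to bound the Hamming count with respect to the natural AbC partitions by $\frac{2}{\varepsilon^2}k_nq_ns_n$, where $s_n$ is the vertical partition parameter. Smoothness enters only through the geometric lower bound $\norm{h_n}_u\ge(\max(k_nq_n,s_n))^u$ (a diffeomorphism approximating a permutation of $\tfrac{1}{k_nq_n}\times\tfrac{1}{s_n}$ rectangles must have derivatives at least that large), which for $C^\infty$ convergence forces $l_n\ge(\max(k_nq_n,s_n))^{u_n}$ with $u_n\to\infty$. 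Then $\frac{k_nq_ns_n}{(l_nq_n)^t}\to 0$ for every $t>0$.

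Your Step~2 requires $(T^{q_m})^i$ to remain $C^0$-close to $\mathrm{id}$ for all $i\le q_{m+1}/q_m$. Telescoping gives $\|(T^{q_m})^i-\mathrm{id}\|_{C^0}\le i\cdot q_mL_{m-1}\|T-T_{m-1}\|_{C^0}$, so you need $q_{m+1}L_{m-1}^2\|T-T_{m-1}\|_{C^0}\ll\varepsilon$. But the AbC convergence condition (via Lemma~\ref{lem:proximity}) only guarantees $d_0(T_m,T_{m-1})\lesssim\vertiii{H_m}_1^2/q_{m+1}$; multiplying by $q_{m+1}$ yields something of order $\vertiii{H_m}_1^2$, which \emph{grows}. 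The general AbC framework in Section~\ref{subsec:Anosov-Katok} does not build in $\|T-T_{m-1}\|_{C^0}\ll q_{m+1}^{-K}$ for large $K$: the $\epsilon_m$ are merely summable, and the scheme is calibrated so that $q_{m+1}\epsilon_m$ is at least a high norm of $H_m$. Hence your rigidity estimate fails for a general $C^\infty$ AbC diffeomorphism, and you are not free to ``arrange'' it.

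There is also a conceptual red flag. Your argument is metric rather than partition-based: if it worked, it would bound $N_{d_n^T}(\varepsilon)$ and thus prove $\uent^{\top}_{n^t}(T)=0$ for every $C^\infty$ AbC diffeomorphism. The paper explicitly leaves this open (see the sentence following Theorem~\ref{thm:vanishSuperSmooth}). The measure-theoretic statement is genuinely weaker, and the paper's proof exploits precisely this: the Hamming count against the AbC partitions is controlled by the combinatorial data $k_nq_ns_n$ alone, not by the Lipschitz constants $L_m$ your bound invokes. That is where the two routes diverge, and why the combinatorial one succeeds while the metric one does not.
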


While we have already shown that there cannot be a $C^{\infty}$ AbC diffeomorphism with positive measure-theoretic polynomial entropy, it is an interesting question if there are $C^{\infty}$ AbC diffeomorphisms with positive topological polynomial entropy.

\paragraph{Plan of the paper:} In Section \ref{sec:prelim}, we provide basic definitions and properties of measure-theoretic slow entropy, topological slow entropy, AbC constructions, properties of scaling functions and some simple estimates of complexity of AbC diffeomorphisms. In particular, we show in Subsection \ref{subsec:ArbSlow} that for any given scaling function $\{a_n(t)\}_{n\in \N,t>0}$ one can construct a $C^{\infty}$ AbC diffeomorphism $T$ with $\ent^{\lambda}_{a_n(t)}(T)=0$, where $\lambda$ denotes Lebesgue measure. In Section \ref{sec:untwistedAbC}, we first provide a specific AbC construction of a diffeomorphism measure-theoretically isomorphic to an irrational circle rotation. This construction is a slight modification of the one from \cite{FSW} in order to alleviate our calculations. Then we prove several estimates for cardinality of maximal separated sets and cardinality of minimal covering sets, which give estimates of upper topological slow entropy of the constructed AbC diffeomorphisms at several different scales. In Section \ref{sec:uniquelyErgodicAbC}, we further modify the AbC construction to obtain a uniquely ergodic diffeomorphism on $\T^2$. Similar to before we deduce estimates for cardinality of maximal separated sets and cardinality of minimal covering sets which yield the value of upper topological slow entropy of these AbC diffeomorphisms. We start Section \ref{sec:weakMixingAbC} by presenting a construction of weakly mixing AbC diffeomorphisms inspired by the examples in \cite{FS}. As before, this is followed by estimates of cardinality of maximal separated sets and cardinality of minimal covering sets in order to compute the topological slow entropy. By careful combinatorial estimates we also obtain the upper measure-theoretic slow entropy of the given weakly mixing AbC diffeomorphisms in Subsection \ref{subsec:UpperMeasure}.

\section{Preliminaries}\label{sec:prelim}

\subsection{Topological and measure-theoretic slow entropy}
In this section, we introduce the topological and measure-theoretic slow entropy for homeomorphisms and invertible measure-preserving transformations, respectively.
\subsubsection{Topological slow entropy}
Suppose $T$ is a homeomorphism from a locally compact metric space $(X,d)$ to itself and $K\subset X$ is a compact subset.
Then the cardinality of maximal separated sets and minimal covering set can be defined as follows:
\begin{definition}[Cardinality of maximal separate sets and minimal covering sets]
Let $n\in\mathbb{Z}^+$ and $x,y\in X$, we define the Bowen metric as
$$d_n^T(x,y)=\max_{0\leq i\leq n-1}\{d(T^ix,T^iy)\}.$$
Then we define $N_{d_n^T,K}(\epsilon)$ as the minimal number of $(\e,n)$-Bowen balls required to cover $K$ and $S_{d_n^T,K}(\epsilon)$ as maximal number of possible disjoint $(\e,n)$-Bowen balls with centers in $K$.
\end{definition}

With the help of cardinality of maximal separated sets and minimal covering sets, the topological slow entropy can be introduced as follows:
\begin{definition}[Upper topological slow entropy]
Suppose $a_n(t)$ is a family of positive sequences increasing in $n$ and monotonically increasing in $t$, then the upper topological slow entropy of $T$ with respect to $a_n^t$ is defined as
\begin{equation}
\uent^{\top}_{a_n(t)}(T)=\sup_K\lim_{\epsilon\to0}N(\epsilon,K)=\sup_K\lim_{\epsilon\to0}S(\epsilon,K),
\end{equation}
where $N(\epsilon,K)=\left\{
                       \begin{array}{ll}
                         \sup N_1(\epsilon,K), & \hbox{if $N_1(\epsilon,K)\neq\emptyset$;} \\
                         0, & \hbox{if $N_1(\epsilon,K)=\emptyset$.}
                       \end{array}
                     \right.
$ with
\begin{equation}\label{eq:minimalCovering}
N_1(\epsilon,K)=\{t>0:\limsup_{n\to\infty}\frac{N_{d_n^T,K}(\epsilon)}{a_n(t)}>0\},
\end{equation}
and $S(\epsilon,K)=\left\{
                     \begin{array}{ll}
                       \sup S_1(\epsilon,K), & \hbox{if $S_1(\epsilon,K)\neq\emptyset$;} \\
                       0, & \hbox{if $S_1(\epsilon,K)=\emptyset$.}
                     \end{array}
                   \right.
$ with
\begin{equation}\label{eq:maximalSeparated}
S_1(\epsilon,K)=\{t>0:\limsup_{n\to\infty}\frac{S_{d_n^T,K}(\epsilon)}{a_n(t)}>0\}.
\end{equation}
By replacing $\limsup$ in \eqref{eq:minimalCovering} and \eqref{eq:maximalSeparated} by $\liminf$, we can define the lower topological slow entropy $\lent_{a_n(t)}^{\top}(T)$ for $T$. If $\uent^{\top}_{a_n(t)}(T)=\lent^{\top}_{a_n(t)}(T)$, we define this value as the topological slow entropy of $T$ with respect to $a_n(t)$ and denote it as $\ent^{\top}_{a_n(t)}(T)$.
\end{definition}

We provide following characterization of the vanishing of topological slow entropy at all scales  in the setting of minimality:
\begin{proposition}[Proposition A.2, \cite{KanigowskiVinhageWei}]
Suppose $T:(X,d)\to(X,d)$ is a minimal homeomorphism and $(X,d)$ is a compact metric space. Then $T$ is topologically conjugate to a translation on a compact abelian group if and only if $\uent^{\top}_{a_n(t)}(T)=0$ for every family of scales $a_n(t)$.
\end{proposition}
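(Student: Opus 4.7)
The plan is to reduce the statement to the classical topological Halmos--von Neumann theorem (Ellis, Auslander): a minimal homeomorphism $T$ of a compact metric space is topologically conjugate to a translation on a compact abelian group if and only if it is equicontinuous. Indeed, for a minimal equicontinuous system the enveloping semigroup $E(X,T)=\overline{\{T^n\}}\subset X^X$ is a compact monothetic (hence abelian) group of isometries acting transitively on $X$, and the orbit map furnishes the conjugacy. Hence it suffices to prove that, for a minimal $T$, the vanishing of $\uent^{\top}_{a_n(t)}(T)$ at every scale is equivalent to equicontinuity of $T$.

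The direction ``$T$ equicontinuous $\Rightarrow$ $\uent^{\top}_{a_n(t)}(T)=0$ for every scale'' is immediate: replacing $d$ by the bi-invariant metric pulled back through the conjugacy (which is uniformly equivalent to the original $d$ and thus produces the same slow entropy), $T$ becomes an isometry, so $d_n^T=d$ and $N_{d_n^T,X}(\e)\le N_d(\e,X)<\infty$ uniformly in $n$. For any scale $a_n(t)\to\infty$ this forces $\limsup_n N_{d_n^T,X}(\e)/a_n(t)=0$ for every $t>0$, hence $\uent^{\top}_{a_n(t)}(T)=0$.

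For the converse I would argue the contrapositive. The key technical claim is: for a minimal homeomorphism $T$, forward equicontinuity of $\{T^n\}_{n\ge 0}$ is equivalent to $\sup_n N_{d_n^T,X}(\e)<\infty$ for every $\e>0$. Setting $d_\infty^T(x,y):=\sup_{n\ge 0}d(T^nx,T^ny)$, I would prove the nontrivial direction by an Arzel\`a--Ascoli-type diagonal argument. If $N_{d_n^T,X}(\e/2)\le M$ for all $n$, pick $\e/2$-nets $\{p_{n,j}\}_{j=1}^{M_n}$ in $(X,d_n^T)$; pigeonholing the cardinality $M_n\le M$ and using compactness of $(X,d)$ produces a subsequence along which the cardinality stabilises to some $M^*$ and $p_{n,j}\to p_j^*$ in $d$. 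A second pigeonhole on the covering index $j_n(x)$ for a given $x$, combined with continuity of each fixed $T^i$, shows that $\{p_j^*\}_{j=1}^{M^*}$ is an $\e$-net for $d_\infty^T$, so $d_\infty^T$ is totally bounded. Since $d\le d_\infty^T$, a uniform-Cauchy argument yields completeness, hence compactness, of $(X,d_\infty^T)$; the continuous bijection $(X,d_\infty^T)\to(X,d)$ between compact Hausdorff spaces is therefore a homeomorphism, which is exactly forward equicontinuity. On a minimal invertible system this coincides with two-sided equicontinuity because the enveloping semigroup of forward iterates is already a group of isometries, and hence contains $T^{-1}$.

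Given this claim, if $T$ is not equicontinuous there exists $\e^*>0$ with $N_{d_n^T,X}(\e^*)$ unbounded in $n$. Defining $b_n:=\max_{m\le n}N_{d_m^T,X}(\e^*)$ and $a_n(t):=t\cdot b_n$ gives an admissible scale (positive, increasing in both $n$ and $t$, with $a_n(t)\to\infty$) satisfying $\limsup_n N_{d_n^T,X}(\e^*)/a_n(t)\ge 1/t>0$. By monotonicity of $N_{d_n^T,X}(\cdot)$ in the radius, the same bound holds for every $\e'\le\e^*$, so $N_1(\e',X)=(0,\infty)$ and $N(\e',X)=\infty$ for all sufficiently small $\e'$, forcing $\uent^{\top}_{a_n(t)}(T)=\infty$ and contradicting the hypothesis. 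The main obstacle is the Arzel\`a--Ascoli-type claim connecting uniform boundedness of the Bowen counts to total boundedness of $d_\infty^T$; the double pigeonhole construction of a $d_\infty^T$-net from the given $d_n^T$-nets is the only non-routine step.
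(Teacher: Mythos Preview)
The paper does not prove this proposition; it is quoted from \cite{KanigowskiVinhageWei} without argument, so there is no proof in the paper to compare against. Your proposal is correct and follows the natural route via the topological Halmos--von Neumann characterisation of minimal equicontinuous systems.

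Two minor remarks. First, your double-pigeonhole extraction of a $d_\infty^T$-net can be shortened: any finite $d_\infty^T$-$\e$-separated set $\{x_1,\dots,x_k\}$ is, for a single $n$ large enough to witness all $\binom{k}{2}$ pairwise separations, already $d_n^T$-$(\e/2)$-separated, whence $k\le S_{d_n^T,X}(\e/2)\le N_{d_n^T,X}(\e/4)$; this gives total boundedness of $(X,d_\infty^T)$ directly. Second, your passage from forward to two-sided equicontinuity is right in spirit but is cleanest via the standard fact that a surjective nonexpansive self-map of a compact metric space is an isometry, so $T$ (and hence $T^{-1}$) is a $d_\infty^T$-isometry. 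Finally, note that your forward implication tacitly uses $a_n(t)\to\infty$; this is the intended convention (explicit in the paper's measure-theoretic definition and in \cite{KT}) even though the topological definition as written only says ``increasing in $n$''.
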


\subsubsection{Measure-theoretic slow entropy}
In this section, we assume that $T$ is an invertible measure-preserving transformation on a standard Borel probability space $(X,\mathcal{B},\mu)$ and $\mathcal{P}=\{P_1,\ldots,P_m\}$ is a finite measurable partition of $X$. Denote $\Omega_{m,n}=\{w=(w_k)_{k=0}^{n-1}:w_k\in\{1,\ldots,m\}\}$ with $n\in\mathbb{Z}^+$ and the coding map $\phi_{\mathcal{P},n}(x):X\to\Omega_{m,n}$ of $T$ and $\mathcal{P}$ as
$$
\phi_{\mathcal{P},n}(x)=w(x), \text{ where }T^k(x)\in P_{w_k(x)}.
$$
Moreover, for any $w,w'\in\Omega_{m,n}$, we define Hamming metric between $w$ and $w'$ as:
$$
d_n^H(w,w')=\frac{1}{n}\sum_{i=0}^{n-1}(1-\delta_{w_i,w'_i}),
$$
where $\delta_{p,q}=\left\{
                      \begin{array}{ll}
                        1, & \hbox{if $p=q$;} \\
                        0, & \hbox{if $p\neq q$.}
                      \end{array}
                    \right.
$. With the help of coding map $\phi_{\mathcal{P},x}(x)$ and Hamming metric, we can define $(\epsilon,n)$-Hamming balls for any $x\in X$:
$$B_{\mathcal{P},n}(x,\epsilon)=\{y\in X:d_n^H(w(x),w(y))<\epsilon\}.$$
Then let $\alpha_n(\epsilon,\mathcal{P})$ be a family of $(\epsilon,n)$-Hamming balls with $\mu(\cup\alpha_n(\epsilon,\mathcal{P}))>1-\epsilon$, and call it an $(\epsilon,\mathcal{P},n)$-covering of $X$. In this setting, we denote the minimal cardinality of an $(\epsilon,\mathcal{P},n)$-covering by
\begin{equation}
S_{\mathcal{P}}^H(T,n,\epsilon)=\min\{\operatorname{Card}(\alpha_n(\epsilon,\mathcal{P}))\}.
\end{equation}

%Notice that in some situation, the estimate of upper bound of $S(\mathcal{P},n,\epsilon)$ is not easy. Thus we introduce $N(\mathcal{P},n,\epsilon)$ as the maximal cardinality of the $\epsilon$-separated sets $X$ with respect to Hamming metric and coding map based on $T$, $\mathcal{P}$ and $n$. Then it is clear that
%\begin{equation}\label{eq:separatedCovering}
%S(\mathcal{P},n,\epsilon)\leq N(\mathcal{P},n,\epsilon).
%\end{equation}

\begin{definition}[Upper measure-theoretic slow entropy]
Let $a_n(t)$ be a family of positive sequences increasing to infinity and monotonically increasing in $t$, we define the upper measure-theoretic slow entropy of $T$ with respect to a finite measurable partition $\mathcal{P}$ by
$$\uent_a^{\mu}(T,\mathcal{P})=\lim_{\epsilon\to0}A(T,\epsilon,\mathcal{P}),$$
where $A(T,\epsilon,\mathcal{P})=\left\{
                                 \begin{array}{ll}
                                   \sup B(T, \epsilon,\mathcal{P}), & \hbox{if $B(T,\epsilon,\mathcal{P})\neq\emptyset$;} \\
                                   0, & \hbox{if $B(T,\epsilon,\mathcal{P})=\emptyset$.}
                                 \end{array}
                               \right.
$ with
\begin{equation}\label{eq:upperSlow}
B(T,\epsilon,\mathcal{P})=\{t>0:\limsup_{n\to\infty}\frac{S_{\mathcal{P}}^H(T,n,\epsilon)}{a_n(t)}>0\}.
\end{equation}
The upper measure-theoretic slow entropy of $T$ is defined as $\uent^{\mu}_a(T)$ by taking the supremum over all finite measurable partitions:
$$\uent_a^{\mu}(T)=\sup_{\mathcal{P}}\uent_a^{\mu}(T,\mathcal{P}).$$
By replacing $\limsup$ in \eqref{eq:upperSlow} by $\liminf$, we can define the lower measure-theoretic slow entropy $\lent_a^{\mu}(T)$ for $T$. If $\uent_a^{\mu}(T)=\lent_a^{\mu}(T)$, we define this value as the measure-theoretic slow entropy of $T$ and denote it as $\ent^{\mu}_a(T)$.
\end{definition}

Here we document the generating sequence property for measure-theoretic slow entropy, which is a very important feature of measure-theoretic slow entropy:
\begin{proposition}[Proposition 1 in \cite{KT}]\label{prop:generator}
Let $T$ be a measure-preserving transformation on standard Borel probability space $(X,\mathcal{B},\mu)$ and $\mathcal{P}_1\leq\mathcal{P}_2\leq\ldots$ be a family of increasing finite measurable partitions of $X$ with $\vee_{n=1}^{+\infty}\mathcal{P}_n$ generates the $\sigma$-algebra $\mathcal{B}$. Then for any scale family $a_n(t)$, we have
$$\uent^{\mu}_a(T)=\lim_{n\to+\infty}\uent_a^{\mu}(T,\mathcal{P}_n),$$
$$\lent^{\mu}_a(T)=\lim_{n\to+\infty}\lent_a^{\mu}(T,\mathcal{P}_n).$$
\end{proposition}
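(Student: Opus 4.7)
The plan is to combine a straightforward monotonicity argument with a ``partition approximation'' argument. First I observe that because $\mathcal{P}_n \leq \mathcal{P}_{n+1}$, any $(\epsilon,\mathcal{P}_n,m)$-covering $\alpha_m(\epsilon,\mathcal{P}_n)$ can be refined to an $(\epsilon,\mathcal{P}_{n+1},m)$-covering with the same cardinality only in one direction: in general, a finer partition produces a longer code and hence potentially larger $S^H_{\mathcal{P}}$. Tracing definitions shows $S^H_{\mathcal{P}_n}(T,m,\epsilon) \leq S^H_{\mathcal{P}_{n+1}}(T,m,\epsilon)$, so the sequence $\uent_a^{\mu}(T,\mathcal{P}_n)$ is non-decreasing in $n$ and bounded above by $\uent_a^{\mu}(T)$. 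Hence the limit exists and satisfies $\lim_n \uent_a^{\mu}(T,\mathcal{P}_n) \leq \uent_a^{\mu}(T)$.

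For the reverse inequality, the key step is a Rokhlin-type approximation: given any finite measurable partition $\mathcal{Q}=\{Q_1,\dots,Q_r\}$ and any $\delta>0$, because $\bigvee_n \mathcal{P}_n$ generates $\mathcal{B}$, there exists $n_0=n_0(\mathcal{Q},\delta)$ and a partition $\mathcal{Q}' = \{Q'_1,\dots,Q'_r\}$ which is measurable with respect to $\mathcal{P}_{n_0}$ and satisfies $\sum_{i=1}^r \mu(Q_i \triangle Q'_i) < \delta$. I would then compare the coding maps $\phi_{\mathcal{Q},m}$ and $\phi_{\mathcal{Q}',m}$: by the Borel--Cantelli/Markov inequality argument applied to the $T$-invariant measure $\mu$, for each $\eta>0$ the set $E_{m,\eta}$ of points $x$ whose orbit code under $\mathcal{Q}$ and $\mathcal{Q}'$ differ in more than $\eta m$ positions has measure $\mu(E_{m,\eta}) \leq \delta/\eta$. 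Outside this bad set the Hamming distance between the two codes is at most $\eta$, and $\mathcal{Q}'$ is finer than $\mathcal{P}_{n_0}$ so its Hamming ball is contained in a Hamming ball for $\mathcal{P}_{n_0}$ of the same radius (this is the standard fact that a finer partition has finer Hamming metric).

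Putting these together, I obtain a comparison of the form
\begin{equation*}
S^H_{\mathcal{Q}}(T,m,\epsilon+2\eta+\delta/\eta) \;\leq\; S^H_{\mathcal{P}_{n_0}}(T,m,\epsilon),
\end{equation*}
valid for all $m$ sufficiently large. Fixing $\delta,\eta$ so that $2\eta + \delta/\eta$ is as small as we wish, taking $\limsup_{m\to\infty}$ and dividing by $a_m(t)$, this inequality propagates to the level of the defining sets $B(T,\cdot,\mathcal{P})$: if $t \in B(T,\epsilon+2\eta+\delta/\eta,\mathcal{Q})$ then $t \in B(T,\epsilon,\mathcal{P}_{n_0})$. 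Hence $\uent_a^{\mu}(T,\mathcal{Q}) \leq \uent_a^{\mu}(T,\mathcal{P}_{n_0}) \leq \lim_n \uent_a^{\mu}(T,\mathcal{P}_n)$ after letting $\epsilon,\delta,\eta \to 0$ in the correct order. Taking supremum over $\mathcal{Q}$ yields the claimed equality for $\uent_a^{\mu}$. The proof for $\lent_a^{\mu}$ is identical since $\liminf$ and $\limsup$ enter the argument only through monotonicity in the threshold $\epsilon$.

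The main obstacle is the bookkeeping in the last step: one must choose the parameters $\delta, \eta, \epsilon$ with care so that the enlargement $\epsilon + 2\eta + \delta/\eta$ still goes to $0$, while simultaneously $n_0 = n_0(\mathcal{Q},\delta)$ tends to infinity; this requires a diagonal choice rather than three independent limits. Apart from this, every ingredient is standard and the argument is essentially the same as Katok--Thouvenot's original proof for entropy adapted to the slow-entropy scale $a_m(t)$.
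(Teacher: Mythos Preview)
The paper does not supply its own proof of this proposition; it merely quotes the statement from Katok--Thouvenot \cite{KT} and uses it as a black box. So there is no in-paper argument to compare against.

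Your sketch is the standard Katok--Thouvenot argument and is essentially correct. One wording slip: since $\mathcal{Q}'$ is $\mathcal{P}_{n_0}$-measurable, $\mathcal{Q}'$ is \emph{coarser} than $\mathcal{P}_{n_0}$, not finer, and the Hamming-ball containment goes the other way (a $\mathcal{P}_{n_0}$-ball of radius $\epsilon$ sits inside the $\mathcal{Q}'$-ball of the same radius and center). Fortunately this reversal is exactly what you need: it yields $S^H_{\mathcal{Q}'}(T,m,\epsilon)\leq S^H_{\mathcal{P}_{n_0}}(T,m,\epsilon)$, and combined with your $\mathcal{Q}$-versus-$\mathcal{Q}'$ comparison it gives the displayed inequality, which is stated correctly. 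The rest of the bookkeeping (Markov bound on the bad set, the order of limits in $\epsilon,\eta,\delta$) is fine.
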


In fact, with the help of measure-theoretic slow entropy, we can detect all complexity among zero entropy systems. On the other hand, when measure-theoretic entropy vanishes at all scales, our system will look like a translation:
\begin{proposition}[Proposition 3, \cite{FerencziComplexity}]
Suppose $T$ is a measure-preserving transformation on standard Borel probability space $(X,\mathcal{B},\mu)$. Then $T$ is measure-theoretic isomorphic to a translation on a compact group if and only if
$$\uent_a^{\mu}(T)=0$$
with respect to every family of scales $a_n(t)$, or if and only if
$$\lent^{\mu}_a(T)=0$$
with respect to every family of scales $a_n(t)$.
\end{proposition}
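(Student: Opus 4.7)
The plan is to translate the slow entropy vanishing condition into a boundedness statement for the measure-theoretic Hamming complexity $S_{\mathcal{P}}^H(T,n,\epsilon)$ and then invoke the classical Halmos--von Neumann characterization of Kronecker systems. Throughout I use the fact that the slow entropy functionals of Section~\ref{sec:prelim} depend only on the asymptotic behaviour of $S_{\mathcal{P}}^H(T,n,\epsilon)$ against a scale.

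For the forward direction, I would assume $T$ is measure-theoretically isomorphic to a translation $T_g(x)=gx$ on a compact metric group $G$ with Haar measure, equipped with a bi-invariant metric $d$ so that $T_g$ acts isometrically: $d(T_g^k x, T_g^k y)=d(x,y)$ for every $k$. Fix a finite partition $\mathcal{P}=\{P_1,\dots,P_m\}$ and $\epsilon>0$. Choose $\delta>0$ so small that the boundary strip $\partial^{\delta}\mathcal{P}:=\bigcup_i\{x:d(x,\partial P_i)<\delta\}$ has measure $\mu(\partial^{\delta}\mathcal{P})<\epsilon^{2}$, and cover $G$ by finitely many $\delta$-balls with centers $x_1,\dots,x_{N(\delta)}$. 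Applying Birkhoff's theorem to $\mathbf{1}_{\partial^{\delta}\mathcal{P}}$, outside a set of $\mu$-measure at most $\epsilon$ every orbit of length $n$ spends less than an $\epsilon$-fraction of its time in $\partial^{\delta}\mathcal{P}$ once $n$ is large. Isometry then forces any $y$ with $d(y,x_j)<\delta$ to receive the same $\mathcal{P}$-symbol as $x_j$ at every step outside those bad times, so the Hamming balls $\{B_{\mathcal{P},n}(x_j,\epsilon)\}_{j=1}^{N(\delta)}$ form an $(\epsilon,\mathcal{P},n)$-covering. Hence $S_{\mathcal{P}}^H(T,n,\epsilon)\le N(\delta)$ uniformly in $n$, so for every scale family $a_n(t)\uparrow\infty$ one has $\limsup_n S_{\mathcal{P}}^H(T,n,\epsilon)/a_n(t)=0$. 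Taking the supremum over $\mathcal{P}$ yields $\uent_a^{\mu}(T)=\lent_a^{\mu}(T)=0$ on every scale.

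For the converse I would focus on the $\lent$ version; the corresponding implication for $\uent$ then follows since $\lent_a^{\mu}\le\uent_a^{\mu}$ means the $\uent$ hypothesis is the stronger one. Assume $T$ is \emph{not} isomorphic to a translation on a compact group. By Halmos--von Neumann, $T$ lacks pure point spectrum, and the core ingredient of Ferenczi's argument in \cite{FerencziComplexity} then supplies a finite partition $\mathcal{P}$ and an $\epsilon>0$ for which the complexity $S_n:=S_{\mathcal{P}}^H(T,n,\epsilon)$ tends to infinity. Setting $a_n(t):=t\,S_n+t(1-1/(n+1))$ gives a scale that is strictly positive, strictly increasing in $n$, monotone in $t$, and divergent, with $\liminf_n S_n/a_n(t)=1/t>0$ for every $t>0$. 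Thus $\lent_a^{\mu}(T,\mathcal{P})=\infty$ and in particular $\lent_a^{\mu}(T)>0$ on this scale, contradicting the hypothesis.

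The main obstacle is the Ferenczi step just invoked, namely the implication that uniform boundedness of $S_{\mathcal{P}}^H(T,n,\epsilon)$ across all partitions and all tolerances forces $T$ to have pure point spectrum. The classical route is to show that such boundedness makes the Koopman orbit of every $L^{2}$-function precompact, placing $T$ inside its Kronecker factor and identifying it, via Halmos--von Neumann, with a translation on a compact abelian group. This is a genuine spectral theorem and is the only nonelementary ingredient; the rest of the argument reduces to the rescaling calculus between complexity growth and the slow entropy functionals of Section~\ref{sec:prelim}.
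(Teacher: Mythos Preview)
The paper does not prove this proposition; it is quoted verbatim as Proposition~3 of \cite{FerencziComplexity} and used as a black box. So there is no in-paper proof to compare your attempt against, and I will simply assess your sketch on its own terms.

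Your forward direction is essentially correct in spirit, but the sentence ``choose $\delta>0$ so small that the boundary strip $\partial^{\delta}\mathcal{P}$ has measure $<\epsilon^{2}$'' is not justified for an arbitrary finite measurable partition: the topological boundary of a measurable atom can have positive Haar measure. The standard repair is to first approximate each $P_i$ from inside by a compact set $K_i$ with $\sum_i\mu(P_i\setminus K_i)<\epsilon^{2}$; the disjoint compacts $K_i$ are at positive distance $2\delta$ from one another, and the rest of your isometry plus Markov-inequality argument goes through with $\bigcup_i K_i$ playing the role of the complement of the ``boundary strip''.

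Your converse direction, however, is circular. The implication ``$T$ not Kronecker $\Rightarrow$ some $S_{\mathcal{P}}^H(T,n,\epsilon)$ is unbounded'' is precisely the nontrivial half of Ferenczi's Proposition~3, and you invoke it by name rather than proving it. You are candid about this in your final paragraph, but that means your submission is an outline that correctly identifies the architecture of the proof and defers its substantive content back to the reference. If the intent was to reprove the proposition, the spectral step (bounded Hamming complexity for all $(\mathcal{P},\epsilon)$ forces discrete spectrum, via precompactness of Koopman orbits and Halmos--von Neumann) must actually be carried out.

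One further technical slip: your scale $a_n(t)=t\,S_n+t(1-1/(n+1))$ need not be increasing in $n$, because $S_n=S_{\mathcal{P}}^H(T,n,\epsilon)$ is not known to be monotone. Replace $S_n$ by $\tilde S_n:=\inf_{k\ge n}S_k$, which is nondecreasing, tends to infinity whenever $S_n$ does, and satisfies $S_n\ge\tilde S_n$; then $\liminf_n S_n/a_n(t)\ge 1/t>0$ as you want.
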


Similar to entropy's situation, slow entropy also obeys Goodwyn's inequality. However we need the following definition at first:
\begin{definition}
A metric space $X$ is well-partitionable if it is $\sigma$-compact and for any Borel probability measure $\mu$, compact set $K\subset X$ and $\epsilon,\delta>0$, there exist $\kappa>0$ and a finite measurable partition $\mathcal{P}$ of $K$ with atoms' diameters belonging to $(\frac{\epsilon}{2},\epsilon)$ such that $\mu\left(\cup_{\xi\in\mathcal{P}}\partial_{\kappa}\xi\right)<\delta$, where
$$\partial_{\kappa}\xi=\{y\in X:B(y,\kappa)\cap\xi\neq\emptyset\text{ but }B(y,\kappa)\nsubseteq\xi\}.$$
\end{definition}
It is worth to point out that any smooth manifold is well-partitionable. Now slow Goodwyn's inequality can be formulated as:
\begin{theorem}[Proposition 2, \cite{KT}]
Suppose $X$ is a well-partitionable metric space and $T:X\to X$ is a homeomorphism preserving a non-atomic Borel probability measure $\mu$. Then for any family of scale functions $a_n(t)$, we have
$$\uent^{\mu}_a(T)\leq\uent^{\top}_a(T),$$
$$\lent^{\mu}_a(T)\leq\lent^{\top}_a(T).$$
\end{theorem}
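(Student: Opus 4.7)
The plan is to adapt the classical Goodwyn inequality argument to the slow-entropy scale. The central quantitative comparison is that for a partition $\mathcal{P}$ whose boundary is thin in the well-partitionable sense, a Bowen $(\kappa,n)$-ball centered at a point with a \emph{typical} orbit is contained in a single Hamming $(\epsilon,\mathcal{P},n)$-ball; hence a minimal Bowen cover of a large compact set induces a Hamming cover of almost the same cardinality.

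By Proposition \ref{prop:generator} it suffices to bound $\uent^{\mu}_a(T,\mathcal{P})$ for partitions $\mathcal{P}$ coming from an increasing generating sequence. For any $\epsilon>0$ I would use well-partitionability to produce a compact set $K\subset X$ with $\mu(K)>1-\epsilon^2$ and a finite partition $\mathcal{P}_\epsilon$ of $K$, extended to $X$ by the extra atom $X\setminus K$, whose atoms have diameter less than $\epsilon$ and satisfy $\mu(\partial_\kappa\mathcal{P}_\epsilon)<\epsilon^3$ for a suitable $\kappa=\kappa(\mathcal{P}_\epsilon)>0$. Joining finitely many such partitions along $\epsilon_n\to 0$ yields an increasing generating sequence each of whose terms retains the small-boundary property, since a direct check shows $\partial_\kappa(\mathcal{P}\vee\mathcal{Q})\subset\partial_\kappa\mathcal{P}\cup\partial_\kappa\mathcal{Q}$; so it is enough to prove the Goodwyn estimate for a single such $\mathcal{P}=\mathcal{P}_\epsilon$.

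For a fixed such $\mathcal{P}$, the pointwise comparison is that if $T^i x\notin\partial_\kappa\mathcal{P}$ then $B(T^i x,\kappa)\subset P_{w_i(x)}$, so for every $y$ with $d_n^T(x,y)<\kappa$ we have $w_i(y)=w_i(x)$ at that index. Defining the exceptional set
$$E_n:=\left\{x\in K\;:\;\tfrac{1}{n}\#\{0\leq i<n:T^i x\in\partial_\kappa\mathcal{P}\}>\epsilon\right\},$$
Markov's inequality applied to the $T$-invariant average $\tfrac{1}{n}\sum_{i<n}\mathbf{1}_{\partial_\kappa\mathcal{P}}\circ T^i$ yields $\mu(E_n)\leq\mu(\partial_\kappa\mathcal{P})/\epsilon<\epsilon^2$. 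For $x\in K\setminus E_n$ the Bowen $(\kappa,n)$-ball at $x$ is therefore contained in the Hamming ball $B_{\mathcal{P},n}(x,\epsilon)$, so restricting a minimal $(\kappa,n)$-Bowen cover of $K$ to its centers in $K\setminus E_n$ gives a Hamming cover of a set of $\mu$-measure at least $1-2\epsilon$. This proves
$$S^H_{\mathcal{P}}(T,n,2\epsilon)\leq N_{d_n^T,K}(\kappa)\qquad\text{for all }n,$$
whence $B(T,2\epsilon,\mathcal{P})\subset N_1(\kappa,K)$. Taking suprema over $\mathcal{P}$ and $K$ and letting $\epsilon\to 0$ (so $\kappa\to 0$) yields $\uent^{\mu}_a(T)\leq\uent^{\top}_a(T)$; replacing $\limsup$ with $\liminf$ throughout delivers the analogous inequality for $\lent^{\mu}_a(T)$.

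The main obstacle is the coupling of constants: one must simultaneously shrink $\kappa$ below the atoms' diameter and arrange $\mu(\partial_\kappa\mathcal{P})$ to be superlinearly smaller than the Hamming tolerance $\epsilon$, so that Markov's inequality produces a genuinely negligible exceptional set. A secondary subtlety is that Proposition \ref{prop:generator} requires a \emph{nested} generating sequence, forcing the passage from the individual well-partitionable partitions to their joint refinement; the boundary inclusion sketched above shows that the $\kappa$-boundary of this refinement stays $\mu$-small, so the estimate survives.
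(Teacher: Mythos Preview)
The paper does not supply its own proof of this statement: it is quoted verbatim as Proposition~2 of \cite{KT} and used as a black box. So there is nothing to compare your argument against in the present paper; you are effectively reconstructing the Katok--Thouvenot proof.

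Your approach is the standard Goodwyn-type argument and is essentially correct. One small technical slip: when you write ``restricting a minimal $(\kappa,n)$-Bowen cover of $K$ to its centers in $K\setminus E_n$'', the centers of a minimal Bowen cover need not lie in $K\setminus E_n$. The clean fix is to take instead a maximal $(\kappa,n)$-separated set $F\subset K\setminus E_n$; then $|F|\leq S_{d_n^T,K}(\kappa)$, the $\kappa$-Bowen balls centered at $F$ cover $K\setminus E_n$ by maximality, and since each center lies in $K\setminus E_n$ your inclusion $B_{d_n^T}(x,\kappa)\subset B_{\mathcal{P},n}(x,\epsilon)$ applies directly. This yields $S^H_{\mathcal{P}}(T,n,2\epsilon)\leq S_{d_n^T,K}(\kappa)$, which is what you need. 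The rest of your outline---the Markov bound on $E_n$, the boundary inclusion $\partial_\kappa(\mathcal{P}\vee\mathcal{Q})\subset\partial_\kappa\mathcal{P}\cup\partial_\kappa\mathcal{Q}$, and the passage to a generating sequence via Proposition~\ref{prop:generator}---is sound.
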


\subsection{Space of smooth diffeomorphisms}

Let $M$ be the disc $\mathbb{D}^2$, or the annulus $\mathbb{A}=\mathbb{S}^1\times [0,1]$ or the torus $\mathbb{T}^2$. For any $k\in\N$, the set of all measure-preserving $C^k$ diffeomorphisms, denoted by $\text{Diff }^k(M,\mu)$, has the structure of a polish group. For $k=\infty$, the coarsest topology refining all the $C^k$-topologies makes $\text{Diff}^\infty(M,\mu)$ into a polish group.

Since we use the `quantitative' version of the AbC method, it is necessary to have an explicit formulation of the topology.  We borrow the description from Section 2.3 of \cite{FS} and skip details to give a terse presentation of the definitions and results relevant to our paper. We discuss topologies on the space of smooth diffeomorphisms on $\mathbb{A} = \mathbb{S}^1 \times \left[0,1\right]$. It is straightforward to adapt these definitions to the other manifolds.

For a diffeomorphism $f = \left(f_1,f_2\right): \mathbb{S}^1 \times \left[0,1\right] \rightarrow \mathbb{S}^1 \times \left[0,1\right]$, where $f_1,f_2$ are the coordinate functions, let $\tilde{f}=(\tilde{f}_1,\tilde{f}_2):\mathbb{R}\times \left[0,1\right] \rightarrow \mathbb{R} \times \left[0,1\right]$ be a lift of $f$ to the universal cover. Then for $m\in \mathbb{Z}$, $\tilde{f}_1\left(\theta +m,  r\right) - \tilde{f}_i \left(\theta, r\right) \in \mathbb{Z}$, and $\tilde{f}_2\left(\theta +m, r\right) = \tilde{f}_2 \left(\theta, r\right)$.

To define explicit metrics on $\text{Diff}^k\left(\mathbb{S}^1 \times \left[0,1\right]\right)$ the subsequent notations will be useful:
\begin{definition}
	\begin{enumerate}
		\item For a sufficiently differentiable function $f: \mathbb{R}^2 \rightarrow \mathbb{R}$ and a multi-index $\vec{a} = \left(a_1,a_2\right) \in \mathbb{N}^2_0$
		\begin{equation*}
			D_{\vec{a}}f := \frac{\partial^{\left|\vec{a}\right|}}{\partial x_1^{a_1}\partial x_2^{a_2}} f,
		\end{equation*}
		where $\left|\vec{a}\right| = a_1+a_2$ is the order of $\vec{a}$.
		\item For a continuous function $F: [0,1]^2 \rightarrow \mathbb{R}$
		\begin{equation*}
			\left\|F\right\|_0 := \sup_{z \in [0,1]^2} \left|F\left(z\right)\right|.
		\end{equation*}
	\end{enumerate}
\end{definition}
A diffeomorphism $f\in\text{Diff}^k\left(\mathbb{S}^1 \times \left[0,1\right]\right)$ can be regarded as a map from $\left[0,1\right]^2$ to $\mathbb{R}^2$ by taking a lift of $f$ to the universal cover and then restricting the domain to $\left[0,1\right]^2$. In this way the expressions $\left\|f_i\right\|_0,$ as well as  $\left\|D_{\vec{a}}f_i\right\|_0$ for any multiindex $\vec{a}$ with $\left| \vec{a}\right|\leq k,$ can be understood for $f=\left(f_1,f_2\right) \in\text{Diff}^k\left(\mathbb{S}^1 \times \left[0,1\right]\right)$. (Here $\left\|f_i\right\|_0$ is taken to be the minimum value of $\left\|F_i\right\|_0$, over all choices of lifts of $f$, where $F_i$ is the $i$th coordinate function of the lift.) Thus such a diffeomorphism can be regarded as a continuous map on the compact set $\left[0,1\right]^2$,  and every partial derivative of order at most $k$ can be extended continuously to the boundary.  Therefore the maxima that occur in the definition below are finite.
\begin{definition}
	\begin{enumerate}
		\item For $f,g \in\text{Diff}^k\left(\mathbb{S}^1 \times \left[0,1\right]\right)$ with coordinate functions $f_i$ and $g_i$, respectively, we define
		\begin{equation*}
			\tilde{d}_0\left(f,g\right) = \max_{i=1,2} \left\{ \inf_{p \in \mathbb{Z}} \left\| \left(f - g\right)_i + p\right\|_0\right\}
		\end{equation*}
		as well as
		\begin{equation*}
			\tilde{d}_k\left(f,g\right) = \max \left\{ \tilde{d}_0\left(f,g\right), \left\|D_{\vec{a}}\left(f-g\right)_i\right\|_0 \ : \ i=1,2 \ , \ 1\leq \left|\vec{a}\right| \leq k \right\}.
		\end{equation*}
		\item Using the definitions from part 1. we define for $f,g \in\text{Diff}^k\left(\mathbb{S}^1 \times \left[0,1\right]\right)$:
		\begin{equation*}
			d_k\left(f,g\right) = \max \left\{ \tilde{d}_k\left(f,g\right) \ , \ \tilde{d}_k\left(f^{-1},g^{-1}\right)\right\}.
		\end{equation*}
	\end{enumerate}
\end{definition}

Obviously $d_k$ describes a metric on $\text{Diff}^k\left(\mathbb{S}^1 \times \left[0,1\right]\right)$ measuring the distance between the diffeomorphisms as well as their inverses.

\begin{definition}
	\begin{enumerate}
		\item A sequence of diffeomorphisms in $\text{Diff}^{\infty}\left(\mathbb{S}^1 \times \left[0,1\right]\right)$ is called convergent in $\text{Diff}^{\infty}\left(\mathbb{S}^1 \times \left[0,1\right]\right)$ if  for every $k \in \mathbb{N}$ it converges in $\text{Diff}^k\left(\mathbb{S}^1 \times \left[0,1\right]\right)$.
		\item On $\text{Diff}^{\infty}\left(\mathbb{S}^1 \times \left[0,1\right]\right)$ we declare the following metric
		\begin{equation*}
			d_{\infty}\left(f,g\right) = \sum^{\infty}_{k=1} \frac{d_k\left(f,g\right)}{2^k \cdot \left(1 + d_k\left(f,g\right)\right)}.
		\end{equation*}
	\end{enumerate}
\end{definition}

It is a general fact that $\text{Diff}^{\infty}\left(\mathbb{S}^1 \times \left[0,1\right]\right)$ is a complete metric space with respect to this metric $d_{\infty}$.

Again considering diffeomorphisms on $\mathbb{S}^1 \times \left[0,1\right]$ as maps from $\left[0,1\right]^2$ to $\mathbb{R}^2$ we add the next notation:
\begin{definition}\label{def:norm}
	Let $f \in\text{Diff}^k\left(\mathbb{S}^1 \times \left[0,1\right]\right)$ with coordinate functions $f_i$ be given. Then
	\begin{equation*}
		\left\| Df \right\|_0 := \max_{i,j \in \left\{1,2\right\}} \left\| D_j f_i \right\|_0
	\end{equation*}
	and
	\begin{equation*}
		\vertiii{f}_k := \max \left\{ \left\|D_{\vec{a}} f_i \right\|_0 , \left\|D_{\vec{a}} \left(f^{-1}_{i}\right)\right\|_0 \ : \ i = 1,2, \ \vec{a} \in \mathbb{N}^2_0,\ 0\leq \left| \vec{a}\right| \leq k \right\}.
	\end{equation*}
\end{definition}

The following lemma, which is lemma 5.6 in \cite{FS}, is going to be used in later estimates.

\begin{lemma} \label{lem:proximity}
Let $k\in\N$. For all $h\in\text{Diff}^k(M,\mu)$ and all $\a,\b\in\R$,
$$d_k(h\circ R_\a\circ h^{-1},h\circ R_\b\circ h^{-1})\leq C \cdot \max\{\vertiii{h}^{k+1}_{k+1}, \vertiii{h^{-1}}^{k+1}_{k+1}\} \cdot |\a-\b|,$$
where $C$ depends only on $k$.
\end{lemma}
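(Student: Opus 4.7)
The identity $(h\circ R_\a\circ h^{-1})^{-1}=h\circ R_{-\a}\circ h^{-1}$ reduces the statement for $d_k$ to bounding $\tilde d_k(h\circ R_\a\circ h^{-1},h\circ R_\b\circ h^{-1})$, since applying the same argument to $\pm\a,\pm\b$ controls the inverses. Fix $\a,\b\in\R$ and write $G_\a:=R_\a\circ h^{-1}$, so that $F_\a:=h\circ R_\a\circ h^{-1}=h\circ G_\a$. I will estimate $\|D_{\vec a}(F_\a-F_\b)_i\|_0$ for every multi-index $\vec a$ with $0\le|\vec a|\le k$ and coordinate $i\in\{1,2\}$.

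\textbf{The $C^0$ bound.} For $|\vec a|=0$, the mean value theorem applied to $h$ along the segment from $R_\a(h^{-1}(x))$ to $R_\b(h^{-1}(x))$ yields
\[
|F_\a(x)-F_\b(x)|\le \|Dh\|_0\cdot|R_\a(y)-R_\b(y)|\le \vertiii{h}_1\cdot|\a-\b|,
\]
because $R_\a$ is an isometric translation in the relevant coordinate.

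\textbf{The $C^k$ bound for $|\vec a|\ge 1$.} Here the key observation is that $R_\a$ is an affine translation (by $\a$ in the first coordinate, identity in the second), so for every multi-index $\vec b$ with $|\vec b|\ge 1$ one has $D_{\vec b}G_\a=D_{\vec b}(R_\a\circ h^{-1})=D_{\vec b}h^{-1}$, completely independent of $\a$. By Faà di Bruno's formula applied to $F_\a=h\circ G_\a$,
\[
D_{\vec a}F_\a(x)=\sum_{\pi\in\mathcal P(\vec a)} c_\pi\,\bigl(D_{\vec\mu(\pi)}h\bigr)\bigl(G_\a(x)\bigr)\cdot\prod_{B\in\pi}D_{\vec b(B)}G_\a(x),
\]
where $\pi$ ranges over set-partitions of the multi-index $\vec a$, $\vec\mu(\pi)$ encodes the block structure (so $|\vec\mu(\pi)|\le|\vec a|\le k$), and each block derivative has order $\ge 1$, hence $\le k$. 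Subtracting the analogous expression for $\b$ and using that the $\prod_B D_{\vec b(B)}G_\a$ factors are $\a$-independent, we get
\[
D_{\vec a}(F_\a-F_\b)(x)=\sum_{\pi}c_\pi\,\Bigl[\bigl(D_{\vec\mu(\pi)}h\bigr)\bigl(G_\a(x)\bigr)-\bigl(D_{\vec\mu(\pi)}h\bigr)\bigl(G_\b(x)\bigr)\Bigr]\cdot\prod_{B\in\pi}D_{\vec b(B)}h^{-1}(x).
\]

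\textbf{Assembling the estimate.} For each partition $\pi$, the mean value theorem applied to $D_{\vec\mu(\pi)}h$ between $G_\a(x)$ and $G_\b(x)$ contributes a derivative of $h$ of order $|\vec\mu(\pi)|+1\le k+1$, multiplied by $|G_\a(x)-G_\b(x)|\le|\a-\b|$. The product over blocks contributes at most $|\pi|\le k$ factors each bounded by $\vertiii{h^{-1}}_k\le\vertiii{h^{-1}}_{k+1}$. Hence each term of the sum is controlled by
\[
\vertiii{h}_{k+1}\cdot\vertiii{h^{-1}}_{k+1}^{\,k}\cdot|\a-\b|\le\max\{\vertiii{h}_{k+1},\vertiii{h^{-1}}_{k+1}\}^{k+1}\cdot|\a-\b|.
\]
Summing over the finitely many partitions (with bound depending only on $k$) absorbs the combinatorial constants into $C=C(k)$. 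Combined with the $C^0$ bound and the analogous estimate applied to $\a\mapsto -\a,\b\mapsto-\b$ for the inverses, this gives the claim.

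\textbf{Main obstacle.} The only subtlety is the bookkeeping in Faà di Bruno: confirming that differentiating the composition $h\circ G_\a$ produces at most $k$ factors of $G_\a$-derivatives (each of which becomes an $h^{-1}$-derivative and contributes one power) and exactly one outer $h$-derivative that, after applying the mean value theorem to pick up $|\a-\b|$, is promoted from order $\le k$ to order $\le k+1$. Once this combinatorial structure is written down cleanly, the exponent $k+1$ and the joint maximum over $\vertiii{h}_{k+1}$ and $\vertiii{h^{-1}}_{k+1}$ in the statement follow without further work.
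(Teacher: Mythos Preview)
The paper does not prove this lemma at all; it simply quotes it as Lemma~5.6 of \cite{FS} and moves on. So there is no ``paper's own proof'' to compare against here.

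Your argument is correct and is essentially the standard one. The two observations that make it work are exactly the ones you isolate: first, that $R_\a$ is a pure translation, so $D_{\vec b}(R_\a\circ h^{-1})=D_{\vec b}h^{-1}$ for every $|\vec b|\ge 1$, which means the \emph{only} $\a$-dependence in the Fa\`a di Bruno expansion sits in the evaluation point of the outer derivative of $h$; second, that the mean value theorem then promotes that outer derivative by one order, producing the $k+1$ in the exponent. The counting of factors (at most $k$ inner blocks, one outer factor) gives the power $k+1$ on the max. One cosmetic remark: with the paper's Definition~\ref{def:norm}, $\vertiii{h}_{k+1}$ already incorporates the derivatives of $h^{-1}$, so $\vertiii{h}_{k+1}=\vertiii{h^{-1}}_{k+1}$ and the $\max$ in the statement is redundant; your bound $\vertiii{h}_{k+1}\cdot\vertiii{h^{-1}}_{k+1}^{\,k}$ already collapses to $\vertiii{h}_{k+1}^{k+1}$.
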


We also list another result regarding sub-multiplicity of the $\vertiii{\cdot}_k$ defined above, which is a direct consequence of Fa\`{a} di Bruno's formula.
\begin{lemma}\label{lem:submultiplicative}
For any two smooth functions $f:\R^2\to\R^2$ and $g:\R^2\to\R^2$, and any $k>0$, if the composition $f\circ g$ is defined on some open set $U$, then
$$\vertiii{f\circ g}_k\leq C\vertiii{f}_k\cdot \vertiii{g}_k^k.$$
\end{lemma}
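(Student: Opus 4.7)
The plan is to apply the multivariate Fa\`a di Bruno formula directly to the coordinate functions $(f\circ g)_i = f_i(g_1,g_2)$, estimate each term in the resulting finite sum, and then handle the inverse of the composition through the identity $(f\circ g)^{-1}=g^{-1}\circ f^{-1}$, taking advantage of the symmetry $\vertiii{h}_k = \vertiii{h^{-1}}_k$ built into Definition \ref{def:norm}.

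First, I would expand $D_{\vec{a}}(f_i\circ g)$ using Fa\`a di Bruno. For a multi-index $\vec{a}$ with $1 \le |\vec{a}| \le k$, this expresses the derivative as a finite sum, with combinatorial coefficients depending only on $\vec{a}$, of terms of the form
$$
(D_{\vec{b}} f_i)\circ g \; \cdot \; \prod_{j=1}^{r} D_{\vec{c}_j} g_{\ell_j},
$$
where $r\ge 1$, $\ell_j \in \{1,2\}$, the multi-index $\vec{b}\in\N_0^2$ has order $|\vec{b}|=r\le |\vec{a}|\le k$, each $\vec{c}_j$ has $|\vec{c}_j|\ge 1$, and $\sum_j \vec{c}_j = \vec{a}$. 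In particular the number of factors in the product satisfies $r\le |\vec{a}|\le k$, and the total number of such terms is bounded by a constant $N(k)$ depending only on $k$.

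Next I would bound each term: since $|\vec{b}|\le k$, we have $|(D_{\vec{b}} f_i)\circ g|\le \vertiii{f}_k$, and since each $|\vec{c}_j|\le k$, we have $|D_{\vec{c}_j}g_{\ell_j}|\le \vertiii{g}_k$. Each summand is therefore bounded by $\vertiii{f}_k \cdot \vertiii{g}_k^{r}$ (times a fixed combinatorial constant). Absorbing the difference between $\vertiii{g}_k^r$ and $\vertiii{g}_k^k$ into the constant --- for instance, by using the elementary inequality $\vertiii{g}_k^r \le (1+\vertiii{g}_k)^k \le 2^k(1+\vertiii{g}_k^k)$ --- and including the zeroth-order bound $\|(f\circ g)_i\|_0 \le \|f_i\|_0 \le \vertiii{f}_k$, one obtains
$$
\max_{0\le|\vec{a}|\le k} \|D_{\vec{a}}(f_i\circ g)\|_0 \le C_1 \, \vertiii{f}_k \, \vertiii{g}_k^k
$$
for a constant $C_1 = C_1(k)$.

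For the $(f\circ g)^{-1}$ half of $\vertiii{f\circ g}_k$, I would repeat the same Fa\`a di Bruno argument applied to $g^{-1}\circ f^{-1}$, obtaining a bound in terms of $\vertiii{g^{-1}}_k$ and $\vertiii{f^{-1}}_k$. Since Definition \ref{def:norm} makes the triple norm symmetric under inversion ($\vertiii{h}_k=\vertiii{h^{-1}}_k$), both halves may be combined by enlarging $C$ to dominate both $\vertiii{f}_k\vertiii{g}_k^k$ and $\vertiii{g}_k\vertiii{f}_k^k$ with the single expression on the right-hand side (the factors of $\vertiii{f}_k^k$ and $\vertiii{g}_k^k$ are interchangeable up to such a constant whenever the argument is applied, as it always will be in this paper, to maps with $\vertiii{\cdot}_k$ bounded by a common upper estimate). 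The main --- and essentially only --- obstacle is the bookkeeping in step two: making sure the sum over partition-type terms collapses into a clean $\vertiii{f}_k \cdot \vertiii{g}_k^k$ bound, rather than a messy sum of mixed powers $\vertiii{g}_k^r$ with $1\le r\le k$. This is where $k$ enters the exponent on $\vertiii{g}_k$ and where the constant $C$ is forced to depend on $k$.
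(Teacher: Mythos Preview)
Your Fa\`a di Bruno argument for the forward direction is exactly what the paper intends; the paper gives no proof and simply states that the lemma is ``a direct consequence of Fa\`a di Bruno's formula.'' So on the main content you are aligned with the paper.

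The genuine issue is your treatment of the inverse. You obtain from $g^{-1}\circ f^{-1}$ the bound $C\,\vertiii{g}_k\,\vertiii{f}_k^{\,k}$, and then assert this can be absorbed into $C\,\vertiii{f}_k\,\vertiii{g}_k^{\,k}$ by enlarging $C$. That step is false in general: with $\vertiii{f}_k$ large and $\vertiii{g}_k$ small (or vice versa) the two expressions differ by a factor that depends on $f$ and $g$, not just on $k$. Your parenthetical that ``in this paper'' the maps share a common upper estimate is an honest disclaimer, but it means you have not proved the lemma as stated; you have proved a weaker statement that happens to suffice for the applications.

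The resolution is simpler than what you attempted. The lemma is stated for \emph{smooth functions} $f,g:\R^2\to\R^2$, not for diffeomorphisms, so the inverse part of Definition~\ref{def:norm} is not in play here; one should read $\vertiii{\cdot}_k$ in this lemma as the one-sided quantity $\max_{i}\max_{0\le|\vec a|\le k}\|D_{\vec a}f_i\|_0$. This is consistent with how the paper actually uses the lemma: in the proofs of Lemmas~\ref{lem:estimateQ(n+1)vsQn}, \ref{lem:estimateQ(n+1)vsQnUniquelyErgodic} and \ref{lemma estimate q_(n+1) vs q_n weak mixing} the quantities $\vertiii{H_n}_k$ and $\vertiii{H_n^{-1}}_k$ are tracked separately, and Lemma~\ref{lem:proximity} writes $\max\{\vertiii{h}_{k+1}^{k+1},\vertiii{h^{-1}}_{k+1}^{k+1}\}$, which would be redundant under the symmetric reading. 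With the one-sided reading your Fa\`a di Bruno computation already proves the lemma outright, and the problematic ``combine the two halves'' step disappears.
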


\subsection{AbC diffeomorphisms} \label{subsec:Anosov-Katok}

Let $M$ be the disc $\mathbb{D}^2$, or the annulus $\mathbb{A}$ or the torus $\mathbb{T}^2$ equipped with the action of the circle inducing by the flow $$R_t(x,y)=(x+t,y).$$ The coordinates when $M=\T^2$ or $\mathbb{A}$ are regular Cartesian coordinates inherited from $\R^2$, while for the disc we use polar coordinates. Define
\begin{equation}
\begin{aligned}
    & \Delta_n := \{(x,y)\in M:0\leq x<\frac{1}{n}\},\\
    & \Delta_{n}^i := R_\frac{i}{n}(\Delta_n),\\
    & \Delta_{n,m} := \{(x,y)\in\T^2:0\leq x  < \frac{1}{n}, 0\leq y < \frac{1}{m}\},\\
    & \Delta_{n,m}^{i,j} := \{(x,y)\in\T^2:(x,y)=\big(x'+\frac{i}{n},y'+\frac{j}{m}\big)\text{ for some }(x',y')\in \Delta_{n,m}\}.
\end{aligned}
\end{equation}
We collect the above sets to form the following two  partitions
\begin{equation}\label{eq:eta}
\begin{aligned}
    & \eta_{n} =\{\Delta_{n}^i: 0\leq i< n\},\\
    & \mathcal{A}_{n, m} =\{\Delta_{n,m}^{i,j}: 0\leq i< n,\; 0\leq j< m\}.
\end{aligned}
\end{equation}
We now outline the AbC method. One can refer to \cite{Ka03} or \cite{AK} for further details. Our exposition here is general but we will put additional restrictions to suit our needs later. Given any summable sequence of positive real numbers $\{\e_n\}_{n \in \N}$ and a non-decreasing sequence of positive integers $\{m_n\}_{n \in \N}$, the construction proceeds inductively. Assume that we have chosen sequences of integers $\{k_i: i=1,2,3,\ldots, n-1\}$, $\{l_i: i=1,2,3,\ldots, n-1\}$, $\{p_i: i=1,2,3,\ldots, n\}, \{q_i: i=1,2,3,\ldots, n\}$, a sequence of rationals $\{\a_i: i=1,2,3,\ldots, n\}$, sequences of diffeomorphisms $\{h_i: i=1,2,3,\ldots, n-1\}, \{H_i: i=1,2,3,\ldots, n-1\},\{T_i: i=1,2,3,\ldots, n-1\}$ and two sequences of partitions  $\{\eta_{q_{i}}: i=1,2,3,\ldots, n-1\}$(see \eqref{eq:eta}), $\{\xi_{i}: i=1,2,3,\ldots, n-1\}$ such that the following properties are satisfied for any $i<n$:
\begin{equation}\label{eq:abcparameters}
\begin{aligned}
    & \a_i=\frac{p_i}{q_i},\qquad \a_{i+1}=\a_i+\b_i,\qquad \b_i=\frac{1}{k_il_iq_i^2},\\
    & p_{i+1}=k_il_ip_i+1,\qquad q_{i+1}=k_il_iq_i^2,\\
    & H_{i}=h_{1}\circ h_2\circ \ldots \circ h_{i},\\
    & T_{i}=H_i\circ R_{\a_{i+1}}\circ H_i^{-1},\\
    & \xi_{i}=H_{i}\eta_{q_{i}},\\
    & d_{m_i}(T_{i-1},T_i)<\epsilon_i.
\end{aligned}
\end{equation}
After completing $n-1$ steps, at the $n$-th step we choose parameter $k_n$ first and construct the smooth diffeomorphism $h_{n}$ such that $h_n\circ R_{\a_n}=R_{\a_n}\circ h_n$. We choose the parameter $l_n$ last in the $n$-th step to guarantee sufficient proximity of the transformations $T_n$  and $T_{n-1}$ in the $C^{m_n}$ topology.

\subsection{Arbitrarily slow AbC diffeomorphisms} \label{subsec:ArbSlow}
%\begin{theorem}[Kanigowski-Vinhage-Wei] \label{theorem Kanigowski-Katok-Wei}
%For every scaling function there exist Anosov-Katok diffeomorphisms whose slow entropy is zero with respect to that scale. More precisely, this happens if the additional requirement for speed is satisfied:
%\begin{align*}
%    \epsilon_{n_0}> \sum_{n=n_0+1}^\infty\epsilon_n
%\end{align*}
%for $n_0$ sufficiently large.
%\end{theorem}
One of the interesting dynamical features of AbC diffeomorphisms is that there is no uniform lower bound for their topological slow entropies. This has already been observed by Kanigowski, Vinhage and Wei (see Theorem 4.10.1 in \cite{KanigowskiKatokWei}). Since the complete proof has never been published, we provide the complete proof of this result here with their kind permission.
\begin{theorem}[Theorem 4.10.1 in \cite{KanigowskiKatokWei}]\label{thm:ABCentropy}
Assuming that $\epsilon_{n_0}> \sum_{n=n_0+1}^\infty\epsilon_n$ for all $n_0$ sufficiently large, then for any scale $a_n(t)$, there exists an AbC diffeomorphism $T$ such that its topological slow entropy at this scale is zero, i.e.
$$\uent_{a_n(t)}^{\top}(T)=0.$$
\end{theorem}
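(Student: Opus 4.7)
The plan is to exploit the fact that each periodic approximant $T_n=H_n\circ R_{\alpha_{n+1}}\circ H_n^{-1}$ is conjugate to an isometry by $H_n$, giving a uniform bound on Bowen separated cardinalities \emph{independent of time}. We then choose the AbC parameters so that $T$ tracks $T_n$ in Bowen distance for a window of iterates so long that the trivial bound from $T_n$ beats the scale $a_n(t)$.

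\textbf{Step 1 (periodic approximant has uniformly bounded Bowen cardinality).} Set $L_n:=\max\{\vertiii{H_n}_1,\vertiii{H_n^{-1}}_1\}$. Since $R_\beta$ is an isometry, $d(T_n^ix,T_n^iy)\le L_n\cdot d(R_{\alpha_{n+1}}^iH_n^{-1}x,R_{\alpha_{n+1}}^iH_n^{-1}y)=L_n\cdot d(H_n^{-1}x,H_n^{-1}y)\le L_n^2 d(x,y)$ for every $i$. Consequently, for every $N\ge 1$ and every $\epsilon>0$,
$$S_{d_N^{T_n},K}(\epsilon)\le N_{d_N^{T_n},K}(\epsilon/2)\le C_*\cdot L_n^2/\epsilon^2,$$
where $C_*$ depends only on $M$. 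The crucial point is that this bound is independent of $N$.

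\textbf{Step 2 (controlling $T$ by $T_n$).} The summability hypothesis $\epsilon_{n_0}>\sum_{n>n_0}\epsilon_n$ yields $\delta_n:=d_0(T,T_n)\le\sum_{k>n}\epsilon_k<\epsilon_n$. A standard telescoping argument, using that $T_n$ is $L_n^2$-Lipschitz, gives $d(T^ix,T_n^ix)\le\delta_n\cdot L_n^{2i}$. Hence for every $N\le N_n^*(\epsilon):=\bigl\lfloor\log(\epsilon/(4\delta_n))/(2\log L_n)\bigr\rfloor$ the two dynamics are $\epsilon/4$-close along the first $N$ iterates, and by the triangle inequality,
$$S_{d_N^T,K}(\epsilon)\le S_{d_N^{T_n},K}(\epsilon/2)\le 4C_*L_n^2/\epsilon^2.$$

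\textbf{Step 3 (choice of parameters against the scale).} We construct the AbC data inductively. At step $n$ we first choose $h_n$ (satisfying the usual commutation $h_n\circ R_{\alpha_n}=R_{\alpha_n}\circ h_n$) with a \emph{pre-committed} slow growth rate, say $L_n\le 2^n$; since $h_n=\mathrm{id}$ is admissible this is always possible. Then $H_n$ and $L_n$ are known, and we choose $l_n$ (via Lemma~\ref{lem:proximity}) small enough that the induced $\epsilon_n$ satisfies: (a) the summability constraint $\epsilon_n>\sum_{k>n}\epsilon_k$ carried forward, and (b) for every $k\in\{1,\dots,n\}$,
$$a_{N_n^*(1/k)}(1/n)\ \ge\ n\cdot 4C_*\cdot L_{n+1}^2/(1/k)^2,$$
where the right-hand side uses the pre-committed bound $L_{n+1}\le 2^{n+1}$, so it is computable at step $n$. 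Since $a_m(t)\to\infty$ as $m\to\infty$ for every fixed $t>0$, and we can make $N_n^*(1/k)$ as large as we wish by shrinking $\epsilon_n$, this is always achievable.

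\textbf{Step 4 (conclusion).} Fix a compact $K\subset M$, $\epsilon>0$ and $t>0$; choose $k\in\N$ with $1/k<\epsilon$. For any $N$ sufficiently large, there is a unique $n$ with $N_{n-1}^*(\epsilon)<N\le N_n^*(\epsilon)$, and by monotonicity $N_n^*(\epsilon)\ge N_n^*(1/k)$ once $n\ge k$. Combining Steps 1–2 and the choice in Step 3,
$$\frac{S_{d_N^T,K}(\epsilon)}{a_N(t)}\ \le\ \frac{4C_*L_n^2/\epsilon^2}{a_{N_{n-1}^*(1/k)+1}(t)}\ \le\ \frac{4C_*L_n^2/\epsilon^2}{a_{N_{n-1}^*(1/k)+1}(1/n)}\ \le\ \frac{1}{n-1},$$
valid once $n\ge\max\{k,1/t\}$. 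Thus $\limsup_{N\to\infty}S_{d_N^T,K}(\epsilon)/a_N(t)=0$ for every $t>0$, giving $N_1(\epsilon,K)=\emptyset$ and therefore $\uent^{\top}_{a_n(t)}(T)=0$.

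\textbf{Main obstacle.} The subtle point is the circular dependence between $L_{n+1}$ (not known at step $n$) and $\epsilon_n$ (chosen at step $n$, yet needing to dominate $L_{n+1}$ when applied at step $n+1$). The workaround is the pre-commitment $L_n\le 2^n$ in Step 3: it decouples the inductive choices by guaranteeing a known upper bound for future Lipschitz constants, at the (acceptable) cost of restricting $h_n$ to have only moderate distortion.
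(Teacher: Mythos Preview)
Your argument is correct and follows the paper's two-lemma skeleton: Step~1 is precisely Lemma~\ref{lem:estimationOfBowenBalls}, and Step~2 is a telescoping variant of Lemma~\ref{lem:BowenTnT}. The genuine point of departure is how you resolve the circular dependence flagged in your ``Main obstacle''.

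The paper does \emph{not} pre-commit the Lipschitz constants. Instead it relies on the order of choices inside each AbC step: at stage $n_0$ the map $h_{n_0}$---and hence the Bowen bound $C\vertiii{H_{n_0}}_1^4/\epsilon^2$---is fixed \emph{before} the parameter $l_{n_0}$ (equivalently $\epsilon_{n_0}$, which controls the length $m\sim\epsilon/\epsilon_{n_0}$ of the comparison window) is chosen. Thus when $\epsilon_{n_0}$ is selected, $\vertiii{H_{n_0}}_1$ is already a known number, and one simply takes $\epsilon_{n_0}$ small enough that the scale $a_m(t)$ over the resulting window dominates it. No a-priori growth restriction on $L_n$ is imposed.

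Your pre-commitment $L_n\le 2^n$ is a legitimate alternative, but it is costly. Since $h_n$ must commute with $R_{\alpha_n}$ (hence be $1/q_n$-periodic in the first coordinate) while $q_n\to\infty$ super-exponentially, keeping $\vertiii{h_n}_1$ bounded forces each $h_n$ to be an $O(1/q_n)$-perturbation of the identity; the limit $T$ is then essentially a rotation, for which vanishing slow entropy is nearly tautological. The paper's route, by contrast, accommodates the large-derivative conjugations (e.g.\ the quasi-rotations with $\vertiii{h_n}_1\sim q_n^k$ used throughout the rest of the paper) that make $T$ ergodic or weakly mixing---and that is where the theorem has real content.
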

The proof of this theorem relies on the following two lemmas, which establish some estimate for the minimal cardinality of covering balls for $T_n$ and the relation between cardinalities of minimal covering sets for $T$ and $T_n$ when both maps are sufficiently close to each other. Recall that $\vertiii{\cdot}_k$ is the norm defined as Definition \ref{def:norm} on $\mathbb{T}^2$.
\begin{lemma} \label{lem:estimationOfBowenBalls}
Given any $\epsilon>0$ and $n\in\mathbb{Z}^+$, the minimal number of Bowen balls required to cover $\mathbb{T}^2$ has an upper bound:
\begin{equation}
    N_{d_m^{T_n}}(\epsilon)\leq\frac{4C^4\vertiii{H_n}_1^4}{\epsilon^2},\qquad\forall m\in\mathbb{N}.
\end{equation}
\end{lemma}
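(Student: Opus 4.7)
The plan is to exploit the fact that $T_n = H_n \circ R_{\alpha_{n+1}} \circ H_n^{-1}$ is smoothly conjugate to an isometric rotation of $M$, so its Bowen balls are simply $H_n$-images of standard balls whose radius is controlled purely by the $C^1$-data $\vertiii{H_n}_1$ appearing in Definition \ref{def:norm}. Since the rotation in the middle is an isometry for the underlying metric $d$ on $M$, there will be no $m$-dependence in the final estimate, which matches the form of the claimed bound.

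First I would iterate the conjugacy to obtain $T_n^i = H_n \circ R_{\alpha_{n+1}}^i \circ H_n^{-1}$ for every $i \in \Z$, and then apply the mean value inequality twice together with Definition \ref{def:norm} to find a geometric constant $C>0$, depending only on $M$, such that both $H_n$ and $H_n^{-1}$ are globally Lipschitz with Lipschitz constant at most $C\,\vertiii{H_n}_1$. Concatenating these two Lipschitz bounds across the isometric rotation yields the uniform-in-$i$ distortion inequality
\[
d(T_n^i x, T_n^i y) \;\leq\; C^2\,\vertiii{H_n}_1^{2}\cdot d(x,y) \qquad \text{for every } i \in \N,
\]
and consequently $d_m^{T_n}(x,y) \leq C^2\,\vertiii{H_n}_1^{2}\,d(x,y)$ uniformly in $m$.

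Next I would set $\delta := \epsilon/(C^2\,\vertiii{H_n}_1^{2})$ and cover $M$ by a regular grid of standard $\delta$-balls. Since $M$ is a compact two-dimensional manifold of bounded diameter in all three cases (disc, annulus, torus), such a grid has cardinality at most $4/\delta^{2} = 4C^{4}\vertiii{H_n}_1^{4}/\epsilon^{2}$ after absorbing dimension constants into $C$. The distortion bound above then guarantees that each standard $\delta$-ball at a point $x$ is contained in the $(\epsilon,m)$-Bowen ball of $T_n$ centered at $x$, so the grid is automatically a Bowen $(\epsilon,m)$-cover, yielding the desired estimate $N_{d_m^{T_n}}(\epsilon)\leq 4C^{4}\vertiii{H_n}_1^{4}/\epsilon^{2}$ valid for all $m\in\N$.

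The only obstacle, and it is really just bookkeeping, is ensuring that the two separate constants, one from comparing the ordinary Lipschitz constant of $H_n^{\pm 1}$ to $\vertiii{H_n}_1$, and one from the grid count, can be absorbed into a single $C$ so that the final exponent matches the $C^{4}$ in the statement. No dynamical input beyond the conjugation identity is required, and the argument is entirely insensitive to the arithmetic of $\alpha_{n+1}$ precisely because the rotation in the middle is an isometry.
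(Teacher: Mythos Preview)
Your proposal is correct and follows essentially the same approach as the paper: both exploit the conjugacy $T_n^i = H_n \circ R_{\alpha_{n+1}}^i \circ H_n^{-1}$ to obtain the uniform Lipschitz bound $d(T_n^i x, T_n^i y) \leq C^2\vertiii{H_n}_1^2\, d(x,y)$, then cover $M$ by standard balls of radius $\epsilon/(C^2\vertiii{H_n}_1^2)$. The only cosmetic difference is that the paper invokes the submultiplicativity Lemma~\ref{lem:submultiplicative} for $\vertiii{\cdot}_1$ and a uniform bound $\vertiii{R_\alpha^m}_1\leq C_R$ where you appeal directly to the mean value inequality and the fact that $R_\alpha$ is an isometry; for $k=1$ these amount to the same computation.
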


\begin{proof}
Let $x,y\in\mathbb{T}^2$, then by Lemma \ref{lem:submultiplicative}, we have
\begin{equation}
\begin{aligned}
     d(T_n^mx,T_n^my) & \leq \vertiii{T_n^m}_1d(x,y)= \vertiii{H_n^{-1}\circ R_{\alpha_{n+1}}^m\circ H_n}_1d(x,y)\\
     & \leq C^2\vertiii{H_n^{-1}}_1\vertiii{R_{\alpha_{n+1}}^m}_1\vertiii{H_n}_1d(x,y)\leq C_RC^2\vertiii{H_n}_1^2d(x,y),\\
\end{aligned}
\end{equation}
where $C_R$ is the constant to bound the $\vertiii{\cdot}_1$ norms of any rotations.

So we obtain,
\begin{equation}
     d(x,y)  \leq \frac{\epsilon}{C^2\vertiii{H_n}_1^2}
    \implies  d(T_n^mx,T_n^my) \leq \epsilon.
\end{equation}
As a result, every $(\epsilon, m)$-Bowen ball contains a regular ball of radius $\frac{\epsilon}{C^2\vertiii{H_n}_1^2}$. Since we are dealing with regular balls on $\mathbb{T}^2$, we have the desired results.
\end{proof}

\begin{lemma}\label{lem:BowenTnT}
Let $m\in\mathbb{Z}^+$ and $\epsilon>0$. If $n$ is large enough to satisfy
$$\vertiii{T-T_n}_{0}\leq\frac{\epsilon}{2\sum_{k=1}^m\vertiii{T^{k-1}}_{1}},$$
then we have
\begin{equation} \label{eq:BowenTnT}
\begin{aligned}
N_{d_m^{T_n}}(4\epsilon)\leq N_{d_m^{T}}(2\epsilon)\leq N_{d_m^{T_n}}(\epsilon).
\end{aligned}
\end{equation}
\end{lemma}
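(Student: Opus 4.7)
The plan is to show that under the stated closeness assumption the Bowen metrics $d_m^T$ and $d_m^{T_n}$ differ pointwise by at most $\epsilon$, after which both inequalities in \eqref{eq:BowenTnT} follow by comparing the sizes of Bowen balls of different radii.

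First I would establish the key orbit-comparison estimate: for every $x\in M$ and every $1\leq k\leq m$,
\begin{equation*}
d(T^k x,T_n^k x)\leq \vertiii{T-T_n}_0\cdot \sum_{i=0}^{k-1}\vertiii{T^i}_1.
\end{equation*}
The argument is a standard telescoping decomposition: write
\begin{equation*}
T^k x-T_n^k x=\sum_{j=0}^{k-1}\bigl(T^{k-j}(T_n^j x)-T^{k-j-1}(T_n^{j+1}x)\bigr)=\sum_{j=0}^{k-1}T^{k-j-1}\bigl(T(T_n^j x)\bigr)-T^{k-j-1}\bigl(T_n(T_n^j x)\bigr),
\end{equation*}
and bound each summand by applying the mean value inequality with the $\vertiii{\cdot}_1$ norm of the iterate $T^{k-j-1}$ acting on the displacement $T(T_n^j x)-T_n(T_n^j x)$, whose size is controlled by $\vertiii{T-T_n}_0$. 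The hypothesis $\vertiii{T-T_n}_0\leq \epsilon/(2\sum_{k=1}^m\vertiii{T^{k-1}}_1)$ then gives $d(T^k x,T_n^k x)\leq \epsilon/2$ uniformly in $k\leq m$ and in $x$.

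From here the proof is essentially bookkeeping with the triangle inequality. For any $x,y\in M$ and $0\leq k\leq m-1$,
\begin{equation*}
d(T_n^k x,T_n^k y)\leq d(T_n^k x,T^k x)+d(T^k x,T^k y)+d(T^k y,T_n^k y)\leq \epsilon+d(T^k x,T^k y),
\end{equation*}
and symmetrically. Taking the maximum over $k$ yields $\lvert d_m^T(x,y)-d_m^{T_n}(x,y)\rvert\leq \epsilon$. Consequently, any $d_m^T$-ball of radius $2\epsilon$ sits inside a $d_m^{T_n}$-ball of radius $3\epsilon$ (hence $4\epsilon$) centered at the same point, and any $d_m^{T_n}$-ball of radius $\epsilon$ sits inside a $d_m^T$-ball of radius $2\epsilon$. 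A covering of $M$ by balls of the first type therefore yields a covering of the same cardinality by balls of the second type, which gives both inequalities in \eqref{eq:BowenTnT}.

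The only step that requires genuine care is the telescoping estimate, since one must use the $C^1$ norm of the iterates $T^i$ (rather than of $T$ itself) to pass from a pointwise $C^0$ estimate on $T-T_n$ to an orbit estimate of length $m$; the denominator $2\sum_{k=1}^m\vertiii{T^{k-1}}_1$ in the hypothesis is precisely tailored to absorb this sum. Everything else is a routine triangle-inequality comparison.
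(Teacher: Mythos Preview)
Your proposal is correct and follows essentially the same strategy as the paper: a telescoping estimate giving $d(T^i x,T_n^i x)\leq \tfrac{\epsilon}{2}$ for $i\leq m$, followed by the triangle inequality to compare the two Bowen metrics. The only cosmetic difference is that the paper organizes the telescoping recursively (bounding $\vertiii{T^i-T_n^i}_0$ by $\vertiii{T^{i-1}}_1\vertiii{T-T_n}_0+\vertiii{T^{i-1}-T_n^{i-1}}_0$ and unwinding) whereas you write out the full sum $\sum_{j=0}^{k-1}\bigl(T^{k-j}(T_n^j x)-T^{k-j-1}(T_n^{j+1}x)\bigr)$ directly; both yield the identical bound $\sum_{i=0}^{k-1}\vertiii{T^i}_1\cdot\vertiii{T-T_n}_0$.
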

\begin{proof}
Recall that $x,y$ belong to the same $(\epsilon, m)$-Bowen ball of $T$ if and only if
$$ d(T^ix,T^iy)\leq \epsilon \text{ for } 0\leq i <m.$$
By using triangle inequality, we obtain that
\begin{equation}\label{eq:triangle1}
d(T^ix,T^iy)\leq d(T^ix,T^i_nx)+d(T^i_nx,T^i_ny)+d(T^i_ny,T^iy).
\end{equation}
The bounds of the first and the third term in \eqref{eq:triangle1} follow from the following inequality:
\begin{equation}\label{eq:telescope1}
\begin{aligned}
    \vertiii{T^i-T_n^i}_0 & \leq \vertiii{T^i-T^{i-1}T_n}_0+\vertiii{T^{i-1}T_n-T^i_n}_0 \\
    &\leq \vertiii{T^{i-1}}_1\vertiii{T-T_n}_0 + \vertiii{T^{i-1}T_n-T^i_n}_0\\
    & \leq \vertiii{T^{i-1}}_1\vertiii{T-T_n}_0 + \vertiii{T^{i-1}-T^{i-1}_n}_0
     \leq \sum_{k=1}^i\vertiii{T^{k-1}}_1\vertiii{T-T_n}_0\\
    &\leq\vertiii{T-T_n}_0\sum_{k=1}^m\vertiii{T^{k-1}}_1\leq\frac{\epsilon}{2}.
\end{aligned}
\end{equation}

Combining \eqref{eq:triangle1} and \eqref{eq:telescope1}, we obtain that
\begin{equation}\label{eq:telescopeControl}
    d(T^ix,T^iy)\leq d(T_n^ix,T_n^iy) + \epsilon.
\end{equation}
Inequality \eqref{eq:telescopeControl} implies that if $x$ and $y$ both belong to some $(\epsilon, m)$-Bowen ball for $T_n$ centered around say $z$, then $x$ and $y$ both belong to the $(2\epsilon, m)$-Bowen ball for $T$ centered around $z$. In other words, the $(\epsilon, m)$-Bowen ball for $T_n$ centered around $z$ is contained inside the $(2\epsilon, m)$-Bowen ball for $T$ centered around $z$. As a result, we have
$$ N_{d_m^{T}}(2\epsilon)\leq N_{d_m^{T_n}}(\epsilon).$$
The other side of the inequality \eqref{eq:BowenTnT} can be obtained in similar way by replacing \eqref{eq:triangle1} by
$$ d(T^ix,T^iy)\geq -d(T^ix,T^i_nx)+d(T^i_nx,T^i_ny)-d(T^i_ny,T^iy),$$
which indeed implies that
\begin{equation}\label{eq:telescopecontrol1}
    d(T^i_nx,T^i_ny)\leq d(T^ix,T^iy) + \epsilon.
\end{equation}
\end{proof}

Now we proceed with the proof of theorem \ref{thm:ABCentropy}.

\begin{proof}[Proof of Theorem \ref{thm:ABCentropy}]
Fix $\epsilon>0$ and $m\in\mathbb{N}$. Pick $n_0$ large enough to satisfy the hypothesis of lemma \ref{lem:estimationOfBowenBalls}. So, we have the estimate
\begin{equation}\label{eq:TelescopeControl}
 \vertiii{T-T_{n_0}}_0\leq\frac{\epsilon}{2\sum_{n=1}^m\vertiii{T^{k-1}}_1}.
\end{equation}

By AbC construction, we have:
\begin{equation}\label{eq:estimateOfTT0}
    \vertiii{T - T_{n_0}}_1 \leq \sum_{n=n_0}^\infty\epsilon_n<2\epsilon_{n_0}.
\end{equation}

Recall that $T_n=H_nR_{\alpha_{n+1}}H_n^{-1}$ and thus
$$\vertiii{H^{-1}_n}_0\vertiii{T_n}_0\geq\vertiii{R_{\alpha_{n+1}}H^{-1}_n}_0=\vertiii{H^{-1}_n}_0,$$
which yields $\vertiii{T_n}_0\geq1$.  Since $\vertiii{T-T_n}_{1}\to0$ as $n\to\infty$, we have $\vertiii{T}_{1}\geq1$.
Combining above estimate with \eqref{eq:TelescopeControl}, we get
$$\vertiii{T-T_{n_0}}_{1}\leq\frac{\epsilon}{2m},$$
which is equivalent to
\begin{equation}\label{eq:mCondition}
m\leq\frac{\epsilon}{2\vertiii{T-T_{n_0}}_{1}}.
\end{equation}

By \eqref{eq:estimateOfTT0}, $m=\frac{\epsilon}{4\epsilon_{n_0}}$ satisfies \eqref{eq:mCondition}. Recall that the number of $(2\epsilon, m)$-Bowen balls is bounded by $\frac{4C^4\vertiii{H_n}_1^4}{\epsilon^2}$ due to Lemma \ref{lem:estimationOfBowenBalls} and Lemma \ref{lem:BowenTnT}. Notice that the choice of $\epsilon_{n_0}$ is independent of $\epsilon$ and thus by choosing $\epsilon_{n_0}$ small enough, $m$ can be large enough to get zero topological slow entropy at any prescribed given scale.
\end{proof}

\subsection{Additional estimates associated with AbC constructions}

In this section we present a few results that comes in handy during the computation of slow entropy and related estimates. In particular, we impose some restrictions on the growth of parameters, that are stronger than necessary for convergence, to ensure that bounds obtained for size of minimal covering and maximal separated  sets can be upgraded from $T_n$ to $T$. The general strategy we adopt to compute the topological slow entropy for some of the well known AbC diffeomorphism is to get upper bounds for the maximal size of separated sets and lower bounds for the minimal number of Bowen balls required to cover $\T^2$ for the periodic diffeomorphism $T_n$. In order to upgrade these results to estimates for the limit diffeomorphism $T$, we will need to introduce several requirements and modification to the AbC method itself and prove a sequence of lemmas to enable estimates for $T$. Without the modifications presented below, the upgrade process will be challenging.

As a recurring theme in all the constructions that appear later in this article, we choose parameter $l_n$ in the following way: First we choose a parameter $l_n'$ to be a positive integer satisfying
\begin{equation}\label{eq:equation ln' convergence}
l_n'\geq \max\{\vertiii{H_{n}}^{m_n+1}_{m_n+1}, \vertiii{H_{n}^{-1}}^{m_n+1}_{m_n+1}\}.
\end{equation}
We also require the sequence $\{l_n'\}_{n \in \N}$ to satisfy
\begin{equation}\label{eq:equation ln' are summable}
\sum_{n=1}^\infty \frac{1}{l_n'}<1
\end{equation}
Then we choose the parameter $l_n$ to be a positive integer such that the following condition is satisfied,
\begin{equation}\label{eq:equation ln condition}
l_n\geq\lceil\vertiii{H_{n}}_1\rceil\cdot l_n'
\end{equation}

\begin{lemma}\label{lem:convergenceOfAbC}
Suppose $T_n$ are $C^{\infty}$ AbC diffeomorphisms with parameters $l_n'$ and $l_n$ satisfying inequalities \eqref{eq:equation ln' convergence} and \eqref{eq:equation ln condition}, respectively. If the sequence $m_n$ increases to $K$, where $K$ is any positive integer or $\infty$, then $T_n$ converges to a $C^K$ diffeomorphism $T$.
%If the diffeomorphisms $T_n$ constructed in the AbC method are $C^\infty$ and if parameters $l_n'$ and $l_n$ are chosen to satisfy inequalities \eqref{eq:equation ln' convergence} and \eqref{eq:equation ln condition}, and the sequence $m_n$ increases to a positive integer $k$, then the sequence $T_n$ converge to a $C^k$ diffeomorphism $T$.
\end{lemma}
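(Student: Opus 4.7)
The plan is to show that $\{T_n\}_{n\in\N}$ is Cauchy in the metric $d_k$ on $\text{Diff}^k(M,\mu)$ for every integer $k\le K$, and then invoke completeness (noting that $d_k$ simultaneously controls the $C^k$ distance of the diffeomorphisms and of their inverses, so the limit is automatically a diffeomorphism). Passing $k\to\infty$ in case $K=\infty$ is then immediate from the definition of $d_\infty$.

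The first step is the standard AbC alignment trick: since $h_n$ is constructed to commute with $R_{\alpha_n}$, we have
\begin{equation*}
T_{n-1} \;=\; H_{n-1}R_{\alpha_n}H_{n-1}^{-1} \;=\; H_{n-1}h_nR_{\alpha_n}h_n^{-1}H_{n-1}^{-1} \;=\; H_nR_{\alpha_n}H_n^{-1},
\end{equation*}
so that $T_{n-1}$ and $T_n=H_nR_{\alpha_{n+1}}H_n^{-1}$ are conjugates of rotations by the \emph{same} diffeomorphism $H_n$. Lemma \ref{lem:proximity} then yields, for every $k\ge 1$,
\begin{equation*}
d_k(T_n,T_{n-1}) \;\le\; C_k\cdot \max\bigl\{\vertiii{H_n}_{k+1}^{k+1},\,\vertiii{H_n^{-1}}_{k+1}^{k+1}\bigr\}\cdot|\alpha_{n+1}-\alpha_n| \;=\; \frac{C_k\cdot\max\{\vertiii{H_n}_{k+1}^{k+1},\,\vertiii{H_n^{-1}}_{k+1}^{k+1}\}}{k_n\,l_n\,q_n^2}.
\end{equation*}

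Now fix $k\le K$. Since $m_n\nearrow K$, we have $m_n+1\ge k+1$ for all $n$ sufficiently large, and condition \eqref{eq:equation ln' convergence} gives
$\max\{\vertiii{H_n}_{k+1}^{k+1},\,\vertiii{H_n^{-1}}_{k+1}^{k+1}\}\le l_n'$, while condition \eqref{eq:equation ln condition} implies $l_n\ge l_n'$. Substituting,
\begin{equation*}
d_k(T_n,T_{n-1}) \;\le\; \frac{C_k\,l_n'}{k_n\,l_n\,q_n^2} \;\le\; \frac{C_k}{k_n\,q_n^2}.
\end{equation*}
Because $q_{n+1}=k_nl_nq_n^2$, the denominators $q_n$ grow at least double-exponentially, so $\sum_n \frac{1}{k_nq_n^2}<\infty$ and the tails $\sum_{n\ge N}d_k(T_n,T_{n-1})$ vanish as $N\to\infty$. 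Thus $\{T_n\}$ is Cauchy in $d_k$; by completeness of $(\text{Diff}^k(M,\mu),d_k)$ it converges to some $T\in\text{Diff}^k(M,\mu)$. Since the limit is the same for every $k\le K$, we obtain $T\in\text{Diff}^K(M,\mu)$, with the case $K=\infty$ following from the definition of $d_\infty$ as a convex combination of the $d_k$.

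The only subtle point is the alignment identity $T_{n-1}=H_nR_{\alpha_n}H_n^{-1}$, which is precisely what makes Lemma \ref{lem:proximity} applicable with a single conjugating map; once this is in place, the growth conditions \eqref{eq:equation ln' convergence} and \eqref{eq:equation ln condition} are designed to absorb the potentially huge norms $\vertiii{H_n}_{k+1}^{k+1}$ into the denominator and produce a summable bound. That $d_k$ controls both the diffeomorphism and its inverse spares us any extra work in verifying that the limit is a $C^k$ diffeomorphism rather than just a $C^k$ map.
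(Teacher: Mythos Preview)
Your proof is correct and follows essentially the same approach as the paper: you use the commutation $h_n\circ R_{\alpha_n}=R_{\alpha_n}\circ h_n$ to rewrite $T_{n-1}=H_nR_{\alpha_n}H_n^{-1}$, apply Lemma~\ref{lem:proximity}, and then let the growth conditions \eqref{eq:equation ln' convergence} and \eqref{eq:equation ln condition} absorb the norm factor, yielding the summable bound $C_k/(k_nq_n^2)$. Your version is slightly more explicit than the paper's in that you fix $k\le K$ and argue Cauchy in $d_k$ separately (which cleanly keeps the constant $C_k$ independent of $n$), whereas the paper works at level $m_n$ directly; but the substance is identical.
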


\begin{proof}
From Lemma \ref{lem:proximity} we get with inequalities \eqref{eq:equation ln' convergence} and \eqref{eq:equation ln condition} that
\begin{equation}
\begin{aligned}
&d_{m_n}(H_{n-1}\circ R_{\a_{n}}\circ H_{n-1}^{-1},H_n\circ R_{\a_{n+1}}\circ H_n^{-1})\\ = &d_{m_n}(H_n\circ R_{\a_{n}}\circ H_n^{-1},H_n\circ R_{\a_{n+1}}\circ H_n^{-1})\\
\leq &C\;\max\{\vertiii{H_{n}}^{m_n+1}_{m_n+1}, \vertiii{H_{n}^{-1}}^{m_n+1}_{m_n+1}\}\;\frac{1}{k_nl_nq_n^2}\\
\leq &C\; \frac{1}{k_nq_n^2}
\end{aligned}
\end{equation}
In conclusion, $T_n$ is Cauchy and hence converges to a diffeomorphism $T$ in $C^{m_n}$. Since $m_n\to K$ as $n\to \infty$, we complete the proof.
\end{proof}

We remark at this point that for the purpose of convergence as described in the lemma above, it is sufficient to assume $l_n=l_n'$. However for the purpose of making the computation of slow entropy easier, we instead chose $l_n$ as in \eqref{eq:equation ln condition}. It is also worth to point out that further restrictions, in particular to the decay rate of ${\e_n}$ may be imposed for individual cases of construction. But for now, we proceed without any further restrictions.

\begin{lemma}\label{lem:convergenceCriterion2}
The parameters in the AbC construction can be chosen such that for $n\in\N$ sufficiently large we have for $0\leq i\leq l_n'q_n$ and $j\geq 0$
$$d_0(T_{n-1}^i, \; T_{n+j}^i)\leq \frac{2}{k_nq_n}, \ \ \forall x\in\T^2.$$
\end{lemma}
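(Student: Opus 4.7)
The plan is to telescope $d_0(T_{n-1}^i,T_{n+j}^i)$ along the sequence $T_{n-1},T_n,T_{n+1},\dots,T_{n+j}$ and bound each consecutive difference using the intertwining relation together with the parameter choice \eqref{eq:equation ln condition}.

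First I would exploit the commutation $h_{m+1}\circ R_{\alpha_{m+1}}=R_{\alpha_{m+1}}\circ h_{m+1}$, which is built into the AbC construction, to rewrite
\begin{equation*}
T_m=H_m\circ R_{\alpha_{m+1}}\circ H_m^{-1}=H_{m+1}\circ R_{\alpha_{m+1}}\circ H_{m+1}^{-1}, \qquad T_{m+1}=H_{m+1}\circ R_{\alpha_{m+2}}\circ H_{m+1}^{-1}.
\end{equation*}
Consequently, for any $i\in\N$,
\begin{equation*}
T_m^i=H_{m+1}\circ R_{i\alpha_{m+1}}\circ H_{m+1}^{-1}, \qquad T_{m+1}^i=H_{m+1}\circ R_{i\alpha_{m+2}}\circ H_{m+1}^{-1},
\end{equation*}
so that a mean-value argument together with $\alpha_{m+2}-\alpha_{m+1}=\beta_{m+1}=\tfrac{1}{k_{m+1}l_{m+1}q_{m+1}^2}$ yields
\begin{equation*}
d_0(T_m^i,T_{m+1}^i)\leq \vertiii{H_{m+1}}_1\cdot\frac{i}{k_{m+1}l_{m+1}q_{m+1}^2}.
\end{equation*}

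Next I would apply \eqref{eq:equation ln condition} in the form $l_{m+1}\geq\vertiii{H_{m+1}}_1\cdot l_{m+1}'$ to absorb the norm factor, getting $d_0(T_m^i,T_{m+1}^i)\leq\frac{i}{l_{m+1}'k_{m+1}q_{m+1}^2}$. Applied first to the $(n-1,n)$-step with $i\leq l_n' q_n$ (and using $l_n\geq \vertiii{H_n}_1 l_n'$), this already gives the first ``half'' of the bound:
\begin{equation*}
d_0(T_{n-1}^i,T_n^i)\leq \frac{l_n'q_n}{l_n k_n q_n^2}\cdot\vertiii{H_n}_1\leq\frac{1}{k_nq_n}.
\end{equation*}

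The remaining piece is to show $\sum_{s=0}^{j-1}d_0(T_{n+s}^i,T_{n+s+1}^i)\leq\frac{1}{k_nq_n}$. Since $q_{n+s+1}\geq q_{n+1}=k_nl_nq_n^2$ and $i\leq l_n'q_n\leq l_n'q_{n+s+1}$, each term is at most $\frac{l_n'q_n}{l_{n+s+1}'k_{n+s+1}q_{n+1}^2}$; factoring out the common quantity $\frac{1}{k_nq_n}$ one checks that what remains is bounded by $\frac{l_n'}{k_n l_n^2 q_n^2}\cdot\frac{1}{l_{n+s+1}'}\leq\frac{1}{l_{n+s+1}'}$, using $l_n\geq l_n'\geq 1$. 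Summing over $s$ and invoking the summability hypothesis \eqref{eq:equation ln' are summable} $\sum_s 1/l_s'<1$ yields $\sum_{s=0}^{\infty}d_0(T_{n+s}^i,T_{n+s+1}^i)\leq\frac{1}{k_nq_n}$. Combining with the first estimate via the triangle inequality produces the desired bound $d_0(T_{n-1}^i,T_{n+j}^i)\leq\frac{2}{k_nq_n}$.

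The only mildly delicate point is the absorption step: one must keep careful track of where the factors $\vertiii{H_{m+1}}_1$ come from and verify that \eqref{eq:equation ln condition} really suffices (rather than the stronger assumption \eqref{eq:equation ln' convergence} used for $C^{m_n}$-convergence). Everything else is bookkeeping on geometric-growth estimates for $\{q_n\}$.
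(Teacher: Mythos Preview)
Your proof is correct and follows essentially the same route as the paper's: telescope along $T_{n-1},T_n,\dots,T_{n+j}$, use the mean-value inequality together with the commutation $h_{m+1}\circ R_{\alpha_{m+1}}=R_{\alpha_{m+1}}\circ h_{m+1}$ to get $d_0(T_m^i,T_{m+1}^i)\le\vertiii{H_{m+1}}_1\,\tfrac{i}{k_{m+1}l_{m+1}q_{m+1}^2}$, absorb the norm via \eqref{eq:equation ln condition}, then split off the first term (which gives $1/(k_nq_n)$) and bound the tail by $\tfrac{1}{k_nq_n}\sum_{m>n}\tfrac{1}{l_m'}<\tfrac{1}{k_nq_n}$ using \eqref{eq:equation ln' are summable}. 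The only cosmetic difference is that the paper bounds the tail terms by $\tfrac{1}{k_{n+m}l_{n+m}'q_{n+m}}$ directly (using $i\le l_n'q_n\le q_{n+m}$), whereas you route through $q_{n+1}^2$; both work.
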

\begin{proof}
The desired estimate follows from mean value inequality and inequality \eqref{eq:equation ln condition}:
%{\color{green} However, I am wondering do we need to take $\vertiii{H_{n+s}^{-1}}_1$ into consideration based on Lemma \ref{lem:proximity}?} {\bb Yes, I guess it should be $\max\{\vertiii{H_{n+m}}_1,\vertiii{H^{-1}_{n+m}}_1\}$} {\rr I think it may not be necessary to consider the inverse. We just consider the mean value inequality (the base version of 2.9), for example: $\sup d(T^i_{n-1}x, T_n^ix)\leq \|H_n\|\sup_{x\in M} d(R^i_{\a_n}H_n^{-1}x,R^i_{\a_{n+1}}H_n^{-1}x)=\|H_n\|\sup_{y\in M} d(R^i_{\a_n}y,R^i_{\a_{n+1}}y)$}
\begin{equation}
\begin{aligned}
d_0(T_{n-1}^i, T_{n+j}^i) \leq &\sum_{m=0}^{j}d_0(T_{n-1+m}^i, T_{n+m}^i)\leq\sum_{m=0}^{j}\vertiii{H_{n+m}}_1\frac{i}{k_{n+m}l_{n+m}q_{n+m}^2}\\
\leq &\frac{1}{k_{n}q_{n}} +\sum_{m=1}^j\frac{1}{k_{n+m}l_{n+m}'q_{n+m}}< \frac{1}{k_nq_n} +\frac{1}{k_nq_n} \sum_{m=n+1}^\infty\frac{1}{l_m'}\\
\overset{\eqref{eq:equation ln' are summable}}{<} &\frac{2}{k_nq_n}.
\end{aligned}
\end{equation}
\end{proof}

\begin{lemma}\label{lem:convergenceCriterion3}
The parameters in the AbC construction can be chosen such that for any $n\in\N$ and for $0\leq i\leq l_n'q_n$, we have:
$$d_0(T_{n-1}^i,\; T^i)\leq \frac{3}{k_nq_n}\qquad\forall x\in\T^2.$$
\end{lemma}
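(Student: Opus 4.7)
The strategy is to realize this bound as a limiting version of Lemma \ref{lem:convergenceCriterion2}. Passing $j \to \infty$ in the estimate $d_0(T_{n-1}^i, T_{n+j}^i) \leq \frac{2}{k_n q_n}$ should give the desired control of $d_0(T_{n-1}^i, T^i)$, with the extra $\frac{1}{k_n q_n}$ of slack absorbing (i) the $C^0$-discrepancy between $T$ and the approximating $T_{n+j}$ after $i$ iterations, and (ii) the finitely many small indices $n$ where the previous lemma's hypothesis ``$n$ sufficiently large'' might fail.

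Concretely, for any $j \geq 0$ the triangle inequality gives
\begin{equation*}
d_0(T_{n-1}^i, T^i) \leq d_0(T_{n-1}^i, T_{n+j}^i) + d_0(T_{n+j}^i, T^i).
\end{equation*}
Lemma \ref{lem:convergenceCriterion2} controls the first term by $\frac{2}{k_n q_n}$ (for $n$ sufficiently large), so it suffices to drive the second term below $\frac{1}{k_n q_n}$ by choosing $j$ large enough depending on $n$ and $i$. Since this choice does not feed back into the construction, I would let $j \to \infty$ and invoke uniform convergence $T_{n+j}^i \to T^i$ on $\T^2$.

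The uniform convergence step is the only point that needs justification: Lemma \ref{lem:convergenceOfAbC} furnishes $T_{n+j} \to T$ in $C^{m_{n+j}}$ (hence in $C^0$), and since $i \leq l_n' q_n$ is fixed while $n$ is fixed, a standard telescoping argument
\begin{equation*}
d_0(T_{n+j}^i, T^i) \leq \sum_{s=0}^{i-1} \vertiii{T^{i-1-s}}_1 \cdot d_0(T_{n+j}, T)
\end{equation*}
(essentially the same one used in \eqref{eq:telescope1}) shows that $T_{n+j}^i \to T^i$ uniformly as $j \to \infty$. Equivalently, one can simply pass to the limit $j \to \infty$ inside the summation appearing in the proof of Lemma \ref{lem:convergenceCriterion2}: the same summable majorant $\sum_m \frac{1}{k_{n+m} l_{n+m}' q_{n+m}}$ bounds the infinite tail and yields $d_0(T_{n-1}^i, T^i) \leq \frac{2}{k_n q_n}$.

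The remaining nuisance is that Lemma \ref{lem:convergenceCriterion2} was stated ``for $n$ sufficiently large'', whereas the current claim is quantified over all $n \in \N$. Since the statement merely says that the parameters \emph{can be chosen} so that the bound holds, I would add to the inductive construction finitely many additional restrictions at the initial stages (for instance, tightening $l_n$ beyond \eqref{eq:equation ln condition}, or requiring $k_n$ to be sufficiently large) so that the telescoped geometric series already satisfies $\sum_{m \geq n+1} \frac{1}{l_m'} \leq 1$ at every index $n$; this gives the stated constant $3$ (rather than $2$) uniformly in $n$. I do not expect any genuine obstacle here—the bookkeeping is the only substantive content.
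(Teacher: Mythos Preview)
Your approach is essentially identical to the paper's: the paper also splits via the triangle inequality $d_0(T_{n-1}^i,T^i)\leq d_0(T_{n-1}^i,T_{n+j}^i)+d_0(T_{n+j}^i,T^i)$, bounds the second term by $\frac{1}{k_nq_n}$ by choosing $j$ large, and bounds the first by $\frac{2}{k_nq_n}$ via Lemma~\ref{lem:convergenceCriterion2}. Your additional remarks (the telescoping justification for $T_{n+j}^i\to T^i$ in $C^0$, and the handling of the finitely many small $n$) are correct and simply fill in details the paper leaves implicit.
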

\begin{proof}
This lemma follows from
\begin{equation}
\begin{aligned}
d_0(T^i_{n-1},\; T^i) & \leq d_0(T^i_{n-1},\; T^i_{n+j}) + d_0(T^i_{n+j},\; T^i) \leq d_0(T^i_{n-1},\; T^i_{n+j}) + \frac{1}{k_nq_n} \\
& \leq \frac{3}{k_nq_n},
\end{aligned}
\end{equation}
where the second inequality follows by choosing $j$ large enough and the last inequality follows from Lemma \ref{lem:convergenceCriterion2}.
\end{proof}

\begin{proposition}\label{prop:upgradeTntoTForSeparatedSets}
Suppose $T$ is a diffeomorphism obtained as the limit of  AbC diffeomorphisms $T_n$ satisfying all the requirements described above. For any fixed $0\leq m \leq l_{n+1}'q_{n+1}$ and $\e>0$, we have
$$S_{d^T_m}(\e)\geq S_{d^{T_n}_m}(2\e).$$
\end{proposition}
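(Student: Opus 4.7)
The plan is to show that every $(2\e, m)$-separated set for the Bowen metric $d_m^{T_n}$ is automatically $(\e, m)$-separated for $d_m^T$, from which the stated inequality on maximal cardinalities is immediate.

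Concretely, I would fix a maximal $(2\e, m)$-separated set $E = \{x_1, \ldots, x_N\} \subset \T^2$ for $d_m^{T_n}$, so that $N = S_{d_m^{T_n}}(2\e)$. For any two distinct points $x, y \in E$ there exists an index $0 \leq i < m$ with $d(T_n^i x, T_n^i y) \geq 2\e$. To transfer this bound to the limit diffeomorphism $T$ I would invoke Lemma \ref{lem:convergenceCriterion3} with $n$ replaced by $n+1$: since $m \leq l_{n+1}' q_{n+1}$, every index $0 \leq i < m$ falls in the range to which the lemma applies, yielding the uniform estimate $d_0(T_n^i, T^i) \leq \frac{3}{k_{n+1} q_{n+1}}$. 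A single application of the triangle inequality then gives
\[
d(T^i x, T^i y) \geq d(T_n^i x, T_n^i y) - d(T^i x, T_n^i x) - d(T^i y, T_n^i y) \geq 2\e - \frac{6}{k_{n+1} q_{n+1}}.
\]

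To promote this to the separation bound $d(T^i x, T^i y) \geq \e$ one needs $\frac{6}{k_{n+1} q_{n+1}} \leq \e$. This is a harmless additional growth condition on the free parameter $k_{n+1}$ and does not interfere with conditions \eqref{eq:abcparameters}, \eqref{eq:equation ln' convergence}, or \eqref{eq:equation ln condition}; since $k_{n+1}q_{n+1}$ is in any case arranged to diverge as $n\to\infty$, this inequality can be absorbed into the standing hypotheses on the parameter sequences. Once it holds, $E$ remains $(\e,m)$-separated for $d_m^T$, and therefore $S_{d_m^T}(\e) \geq N = S_{d_m^{T_n}}(2\e)$.

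I do not anticipate any substantial obstacle here: the entire argument is a controlled $C^0$ comparison of the iterates $T_n^i$ and $T^i$ over the time window $0 \leq i < m \leq l_{n+1}'q_{n+1}$, which is exactly the content of Lemma \ref{lem:convergenceCriterion3}. The only point requiring care is to ensure that this $C^0$ comparison error is absorbed into the tolerance $\e$, and this is achieved by enlarging $k_{n+1}$ during the AbC construction.
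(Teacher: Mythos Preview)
Your proposal is correct and follows essentially the same route as the paper: both arguments apply Lemma~\ref{lem:convergenceCriterion3} (with $n$ shifted to $n+1$) together with the triangle inequality to show that a $(2\e,m)$-separated set for $T_n$ is $(\e,m)$-separated for $T$. The paper handles the residual condition $\frac{6}{k_{n+1}q_{n+1}}\leq \e$ simply by taking $n$ sufficiently large, which is the same mechanism you describe when noting that $k_{n+1}q_{n+1}\to\infty$.
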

\begin{proof}
Suppose $x,y$ are two points such that there exists $0\leq i\leq m\leq l_{n+1}'q_{n+1}$ with $d(T_n^ix,T_n^iy) \geq 2\e$. Combining this with triangle inequality and Lemma \ref{lem:convergenceCriterion3}, we have the following estimates for $n$ sufficiently large:
\begin{equation}
\begin{aligned}
d(T^ix,T^iy) & \geq -d(T^ix,T_n^ix)+d(T_n^ix,T_n^iy)-d(T_n^iy,T^iy)\\
& \geq -\frac{3}{k_{n+1}q_{n+1}} + 2\e - \frac{3}{k_{n+1}q_{n+1}}\\
& \geq \e,
\end{aligned}
\end{equation}
which finishes the proof by the definition of maximal separated sets.
\end{proof}

\begin{proposition}\label{prop:upgradeTntoTForSpanningSets}
Suppose $T$ is a diffeomorphism obtained as the limit of  AbC diffeomorphisms $T_n$ satisfying all the requirements described above. For any fixed $0\leq m \leq l_{n+1}'q_{n+1}$ and $\e>0$, we have
$$N_{d^T_m}(\e)\leq N_{d^{T_n}_m}(\frac{\e}{2}).$$
\end{proposition}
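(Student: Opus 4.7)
The plan is to mirror the proof of Proposition \ref{prop:upgradeTntoTForSeparatedSets}, but in the opposite direction: instead of starting from a separated set for $T_n$ and transferring it to $T$, I would start from a minimal covering for $T_n$ at scale $\varepsilon/2$ and show that the same set covers at scale $\varepsilon$ for $T$.

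Concretely, let $E \subset \mathbb{T}^2$ be a set of centers realizing $N_{d^{T_n}_m}(\varepsilon/2)$, i.e.\ the $(\varepsilon/2,m)$-Bowen balls of $T_n$ centered at points of $E$ cover $\mathbb{T}^2$. Given any $y \in \mathbb{T}^2$, pick $x \in E$ with $d(T_n^i x, T_n^i y) < \varepsilon/2$ for all $0 \leq i \leq m$. I then want to control $d(T^i x, T^i y)$ via the triangle inequality
$$d(T^i x, T^i y) \leq d(T^i x, T_n^i x) + d(T_n^i x, T_n^i y) + d(T_n^i y, T^i y).$$
Since $0 \leq i \leq m \leq l_{n+1}' q_{n+1}$, I can apply Lemma \ref{lem:convergenceCriterion3} (with the index shifted by one) to bound the first and third terms by $3/(k_{n+1} q_{n+1})$ each, so the whole expression is at most $\varepsilon/2 + 6/(k_{n+1} q_{n+1})$.

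For $n$ sufficiently large the quantity $6/(k_{n+1}q_{n+1})$ is less than $\varepsilon/2$ (this is automatic from the parameters being chosen in the AbC construction so that $k_{n+1}q_{n+1} \to \infty$, and indeed is already used implicitly in Proposition \ref{prop:upgradeTntoTForSeparatedSets}). Hence $d(T^i x, T^i y) < \varepsilon$ for all $0 \leq i \leq m$, meaning that the $(\varepsilon,m)$-Bowen balls of $T$ centered at points of $E$ cover $\mathbb{T}^2$. This immediately yields $N_{d^T_m}(\varepsilon) \leq |E| = N_{d^{T_n}_m}(\varepsilon/2)$.

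There is no real obstacle: the entire argument is the symmetric counterpart of Proposition \ref{prop:upgradeTntoTForSeparatedSets} and uses exactly the same ingredient (Lemma \ref{lem:convergenceCriterion3}). The only subtlety, if any, is the bookkeeping of the threshold on $n$ needed so that $6/(k_{n+1}q_{n+1}) \leq \varepsilon/2$; as in the separated-sets version, this is absorbed into the phrase ``for $n$ sufficiently large'', which is harmless for subsequent applications where the proposition is invoked along the AbC sequence with $n \to \infty$.
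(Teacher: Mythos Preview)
Your proposal is correct and follows essentially the same approach as the paper's proof: both use the triangle inequality together with Lemma~\ref{lem:convergenceCriterion3} (applied with index shifted to $n+1$) to show that any $(\varepsilon/2,m)$-Bowen ball for $T_n$ is contained in the corresponding $(\varepsilon,m)$-Bowen ball for $T$, whence the covering inequality follows immediately.
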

\begin{proof}
Suppose $x,y$ are two points such that there exists $0\leq i\leq m\leq l_{n+1}'q_{n+1}$ satisfied  $d(T_n^ix,T_n^iy) \leq \frac{\e}{2}$. Then by triangle inequality and Lemma \ref{lem:convergenceCriterion3}, we have the following estimates if $n$ is large enough:
\begin{equation}
\begin{aligned}
d(T^ix,T^iy) & \leq d(T^ix,T_n^ix)+d(T_n^ix,T_n^iy)+d(T_n^iy,T^iy)\\
& \leq \frac{3}{k_{n+1}q_{n+1}} + \frac{\e}{2} + \frac{3}{k_{n+1}q_{n+1}}\\
& \leq \e,
\end{aligned}
\end{equation}
which finishes the proof by the definition of minimal covering sets.
\end{proof}

\subsubsection{Quasi-rotations}

We end this section with a discussion of quasi-rotations which forms the foundation of the measure-preserving diffeomorphisms we construct later.
\begin{lemma}[From proof of lemma 2, \cite{FSW}]\label{lem:quasiRotation}
Given any $\e>0$, there exists an area-preserving $C^\infty$ diffeomorphism
$$\varphi_\e:[0,1]^2\to[0,1]^2$$
satisfying the following properties:
\begin{enumerate}
    \item When restricted to $[2\e,1-2\e]^2$, $\varphi_\e$ acts as a pure rotation by $\frac{\pi}{2}$.
    \item $\varphi_\e={id}$ when restricted to $[0,1]^2\setminus [\e,1-\e]^2$.
\end{enumerate}
\end{lemma}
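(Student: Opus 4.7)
The plan is to build $\varphi_\e$ in two stages: first construct a $C^\infty$ diffeomorphism $\psi_\e:[0,1]^2 \to [0,1]^2$, not necessarily area-preserving, that equals $R_{\pi/2}$ on $[2\e,1-2\e]^2$ and the identity on $[0,1]^2 \setminus [\e,1-\e]^2$; then apply Moser's trick on the annular transition region to upgrade $\psi_\e$ to an area-preserving diffeomorphism with the same boundary values. For the first stage, I would identify the annulus $A = [\e,1-\e]^2 \setminus [2\e,1-2\e]^2$ with $S^1 \times [0,1]$ via a $C^\infty$ diffeomorphism chosen so that the inner boundary $\partial [2\e, 1-2\e]^2$ corresponds to $S^1 \times \{1\}$, the outer boundary $\partial [\e, 1-\e]^2$ corresponds to $S^1 \times \{0\}$, and the restriction of $R_{\pi/2}$ to the inner boundary becomes the rigid shift $\theta \mapsto \theta + 1/4$. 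On $A$, set $\psi_\e(\theta, s) = (\theta + \beta(s)/4, s)$, where $\beta: [0,1] \to [0,1]$ is smooth with $\beta \equiv 0$ in a neighborhood of $0$ and $\beta \equiv 1$ in a neighborhood of $1$. Because $\beta$ is flat at both endpoints, $\psi_\e$ glues $C^\infty$-smoothly with the identity across $\partial [\e, 1-\e]^2$ and with $R_{\pi/2}$ across $\partial [2\e, 1-2\e]^2$.

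For the second stage, set $\omega = dx \wedge dy$ and $\omega' = \psi_\e^* \omega$. Since $\psi_\e$ is already area-preserving on $[2\e,1-2\e]^2$ (where it equals $R_{\pi/2}$) and on the outer frame (where it equals the identity), $\omega' - \omega$ vanishes in a neighborhood of $\partial A$ and is therefore compactly supported in the open interior of $A$. Since $\psi_\e$ maps $A$ onto $A$, the change of variables formula gives $\int_A \omega' = \int_A \omega$, so $\int (\omega' - \omega) = 0$. Using that $H^2_c$ of an open annulus is one-dimensional and detected by integration, choose a smooth $1$-form $\eta$ with compact support in $\operatorname{int}(A)$ such that $d\eta = \omega' - \omega$. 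Now apply Moser's trick to the family $\omega_t = (1-t)\omega + t\omega'$: the time-dependent vector field $X_t$ defined by $\iota_{X_t}\omega_t = -\eta$ is smooth and compactly supported in $\operatorname{int}(A)$, and integrates to an isotopy $\phi_t$ of $[0,1]^2$ with $\phi_t \equiv \operatorname{id}$ outside $A$ and $\phi_1^*\omega' = \omega$. Then $\varphi_\e := \psi_\e \circ \phi_1$ is $C^\infty$, satisfies $\varphi_\e^*\omega = \phi_1^* \psi_\e^* \omega = \phi_1^*\omega' = \omega$ (area-preserving), and agrees with $\psi_\e$ outside $A$, hence equals $R_{\pi/2}$ on $[2\e,1-2\e]^2$ and the identity on $[0,1]^2 \setminus [\e,1-\e]^2$.

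The main delicate point is the first stage. A naive formula such as $(x,y) \mapsto (1/2,1/2) + R_{\alpha(x,y)}((x,y) - (1/2,1/2))$ with a scalar cutoff $\alpha$ generally fails to be a diffeomorphism, because the transition from rotation angle $\pi/2$ to $0$ must take place inside the narrow annulus $A$ of width $\e$, which forces $|\nabla \alpha|$ to be large and can destroy invertibility of the Jacobian. The shell-angle construction above avoids this issue, but pulling $R_{\pi/2}$ back to a rigid translation along the inner boundary requires the $S^1$-parameterization to be compatible with the cyclic permutation of the four sides of $\partial [2\e, 1-2\e]^2$ induced by $R_{\pi/2}$, and the $s$-coordinate must be chosen so that the passage from the inner square to $A$ and from $A$ to the outer frame is $C^\infty$-flat. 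Once such a parameterization is set up, the Moser step is a routine application of standard machinery.
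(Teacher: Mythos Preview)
The paper does not supply a proof of this lemma; it is quoted from the proof of Lemma~2 in \cite{FSW} and used as a black box, so there is no argument here to compare against.

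Your two-stage plan---smooth interpolation across the annular region, then Moser's trick to restore area preservation---is the standard route and is correct in outline. One point deserves more care than you give it: the region $A$ lies between two concentric \emph{squares}, and if the level sets of your $s$-coordinate are the concentric squares themselves (with $\theta$ the normalized arclength along each), then the shift $(\theta,s)\mapsto(\theta+\beta(s)/4,s)$ moves points along those squares by a fraction of their perimeter. A direct computation shows this map is only piecewise affine and fails to be $C^1$ along the diagonals through the corners whenever $\beta(s)/4\notin\{0,\tfrac14,\tfrac12,\tfrac34\}$; so the interpolation is not smooth as written. You flag that the parameterization is delicate, but the specific obstruction is the corners, not the size of $|\nabla\alpha|$.

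The clean fix is to carry out the interpolation first for concentric \emph{disks} in genuine polar coordinates (where $(\theta,r)\mapsto(\theta+\beta(r)\cdot\tfrac{\pi}{2},r)$ is manifestly smooth and a diffeomorphism), and then conjugate by a smooth $R_{\pi/2}$-equivariant diffeomorphism of $[0,1]^2$ taking the two disks to the two squares; such an equivariant diffeomorphism exists by a $\mathbb{Z}/4$-symmetrized construction, and equivariance guarantees the conjugated map still equals $R_{\pi/2}$ on the inner square and the identity on the outer frame. With that adjustment your Moser argument goes through verbatim, since $\psi_\e$ then literally equals $R_{\pi/2}$ (resp.\ $\mathrm{id}$) on open neighborhoods of the inner square (resp.\ outer frame), so $\omega'-\omega$ is compactly supported in the open annulus and has vanishing integral.
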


From now on, the coordinate functions corresponding to the quasi-rotation $\varphi_\e$ is denoted as $([\varphi_\e]_1, [\varphi_\e]_2)$. Moreover, for any $q\in\mathbb{Z}^+$ we define $\phi_{q,\e}: [0,\frac{1}{q}]\times [0,1]\to [0,\frac{1}{q}]\times [0,1]$ as
\begin{equation}
\phi_{q,\e}(x,y)=\left(\frac{1}{q}[\varphi_{\e}]_1(qx, y), [\varphi_{\e}]_2(qx,y)\right).
\end{equation}
We end this section by recalling a lemma about norm estimates for quasi-rotations.
\begin{lemma}[Lemma 3, \cite{FSW}]\label{lem:derivativeNormEstimateForQuasiRotations}
The quasi rotations defined above satisfy the following norm estimates:
$$\max\{\vertiii{\phi_{q,\e}}_k, \vertiii{\phi_{q,\e}^{-1}}_k\}\leq C q^{k},$$
where the constant $C$ depends on $\e$ and $k$ but not on $q$.
\end{lemma}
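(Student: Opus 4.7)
The statement is essentially a calibrated chain-rule computation exploiting the fact that $\phi_{q,\e}$ is just $\varphi_\e$ rescaled anisotropically: compressed by a factor of $q$ in the $x$-direction on both the source and the target. So my plan is to differentiate the explicit formula and track where the factors of $q$ land.

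First, I would fix a multiindex $\vec{a}=(a_1,a_2)$ with $1\leq |\vec{a}|\leq k$ and apply the chain rule directly to
\[
\phi_{q,\e}(x,y)=\Bigl(\tfrac{1}{q}[\varphi_\e]_1(qx,y),\ [\varphi_\e]_2(qx,y)\Bigr).
\]
Each $\partial_x$ acting on a coordinate $[\varphi_\e]_i(qx,y)$ contributes one factor of $q$, while $\partial_y$ contributes none. So for the two coordinates one gets
\[
D_{\vec a}[\phi_{q,\e}]_1(x,y)=q^{a_1-1}\,(D_{\vec a}[\varphi_\e]_1)(qx,y),\qquad D_{\vec a}[\phi_{q,\e}]_2(x,y)=q^{a_1}\,(D_{\vec a}[\varphi_\e]_2)(qx,y).
\]
Since $a_1\leq|\vec{a}|\leq k$, each of these is bounded in sup-norm by $q^k$ times a quantity $C_1(\e,k):=\max_{i,|\vec b|\leq k}\|D_{\vec b}[\varphi_\e]_i\|_0$ that is finite (because $\varphi_\e$ is a fixed $C^\infty$ diffeomorphism of the compact square $[0,1]^2$) and independent of $q$. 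The case $|\vec a|=0$ is trivial since $\phi_{q,\e}$ has image in $[0,1/q]\times[0,1]\subset[0,1]^2$. This gives the bound for $\vertiii{\phi_{q,\e}}_k$.

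For the inverse, the key observation is that $\phi_{q,\e}^{-1}$ has the same form with $\varphi_\e$ replaced by $\varphi_\e^{-1}$: a direct verification yields
\[
\phi_{q,\e}^{-1}(x,y)=\Bigl(\tfrac{1}{q}[\varphi_\e^{-1}]_1(qx,y),\ [\varphi_\e^{-1}]_2(qx,y)\Bigr),
\]
because $\varphi_\e^{-1}$ preserves the unit square (it is a diffeomorphism of $[0,1]^2$) and hence its first coordinate lies in $[0,1]$, so multiplying by $1/q$ lands in $[0,1/q]$. The same chain-rule computation as above, now with $C_2(\e,k):=\max_{i,|\vec b|\leq k}\|D_{\vec b}[\varphi_\e^{-1}]_i\|_0$, yields $\vertiii{\phi_{q,\e}^{-1}}_k\leq C_2(\e,k)\,q^k$. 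Taking $C=\max\{C_1(\e,k),C_2(\e,k)\}$ completes the proof.

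There is no genuine obstacle here; the whole point is a bookkeeping identity $D_{\vec a}(f(qx,y))=q^{a_1}(D_{\vec a}f)(qx,y)$. The only detail worth care is the correct assignment of the $1/q$ prefactor — it cancels exactly one of the $a_1$ factors of $q$ in the first coordinate, but the worst case is still the second coordinate where no such cancellation occurs, producing precisely the advertised $q^k$ growth and explaining why one cannot do better than $q^k$.
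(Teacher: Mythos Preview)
Your argument is correct and is exactly the standard proof: the paper does not supply its own argument for this lemma, it merely cites \cite{FSW}, so there is nothing further to compare. Your chain-rule bookkeeping $D_{\vec a}(f(qx,y))=q^{a_1}(D_{\vec a}f)(qx,y)$ together with the observation that $\phi_{q,\e}^{-1}$ has the same rescaled form with $\varphi_\e$ replaced by $\varphi_\e^{-1}$ is precisely what is needed.
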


\noindent\textbf{Additional requirements:} In all the constructions for the rest of the paper we will require that $\{\e_n\}_{n\in \N}$ is a summable sequence of positive numbers  and in addition
\begin{itemize}
    \item $\{\e_n\}_{n\in \N}$ is monotonically decreasing.
    \item For every $n \in \N$ we have
    \begin{equation}\label{eq:en4}
        \e_n\leq \frac{1}{n^4}.
    \end{equation}
    \item At stage $n$ of the construction $q_n$ is large enough to guarantee
    \begin{equation}\label{eq:qEpsilon}
        q_n>\frac{1}{\e_n}.
    \end{equation}
    %\item $\frac{1}{\e_n} \in \N$ for every $n \in \N$
\end{itemize}

\subsection{Scaling functions} \label{subsec:scaling}

We primarily use three types of scaling functions. Two of them are the polynomial scaling functions $a_m^{\pol}(t)=m^t$ and logarithmic scaling functions $a_m^{\log}(t)=(\ln m)^t$, respectively. The third one is a scaling function with speed of growth faster than any log type scaling function, but slower than any polynomial type scaling function. We will use a ``gamma-like'' function to explicitly construct this \emph{intermediate} speed scaling function.

Let $\Gamma$ denote the usual Gamma function,
$$\Gamma(x)=\int_0^\infty t^{x-1}e^{-t}dt.$$
Recall that the gamma function gives the usual factorial at positive integer values, $\Gamma(n+1)=n!$. For any integer $r\geq 4$, we define a function $\Gamma_r:\R^+\to\R^+$ using the gamma function as
$$\Gamma_r(x)=\big[\Gamma(x^\frac{1}{r}+1)\big]^r.$$
In particular, notice that $\Gamma_r$ is a well defined increasing function, which takes the following values on any integer of the form $n^r$:
$$\Gamma_r(n^r)=[\Gamma(n+1)]^r=\prod_{m=1}^n(m^r).$$
Since $\Gamma_r$ is an increasing function, thus it is invertible, which implies that we can define two families of functions as follows:
\begin{equation}\label{equation scaling function}
\begin{aligned}
    & a_m^{\operatorname{int1,r}}(t)=m^\frac{t}{\Gamma_r^{-1}({\frac{\ln m}{\ln q_1})}},\\
    & a_m^{\operatorname{int2,r}}(t)=m^\frac{t}{\big[{\Gamma_r^{-1}({\frac{\ln m}{\ln q_1})}}\big]^\frac{r-2}{r}}.
\end{aligned}
\end{equation}
It is clear that these functions are increasing with respect to $t$ and $m$ and hence qualify as scaling functions. The first family of functions gives interesting results in Section \ref{sec:untwistedAbC} and \ref{sec:uniquelyErgodicAbC} for upper topological slow entropy and also in Section \ref{sec:weakMixingAbC} for upper measure-theoretic entropy, while the second is an appropriate choice in Section \ref{sec:weakMixingAbC} for upper topological slow entropy.

In fact, some estimates for the values of these scaling functions at specific points are very helpful for our AbC estimates. It will turn out that the parameter sequence $\{q_n\}_{n\in \N}$ defined in any of the \textit{smooth} AbC methods described later will satisfy a sequence of inequalities for any given $n$:
\begin{align}\label{equation order inequality}
q_n<q_n^{n^2}<l_n'q_n<q_n^{n^r}= q_{n+1},
\end{align}
where the last equality inductively implies
\begin{align}
q_{n+1}=q_1^{\prod_{m=1}^n(m^r)}=q_1^{\Gamma_r(n^r)}.
\end{align}
Then with the help of above equation, we can compute the exact value of our intermediate scaling functions at $q_{n+1}$:
\begin{equation}\label{eq:intermediateScaleAtQ(n+1)}
\begin{aligned}
    a^{\operatorname{int1,r}}_{q_{n+1}}(t)   &= q_{n+1}^\frac{t}{\Gamma_r^{-1}({\frac{\ln q_{n+1}}{\ln q_1})}} = \big[q_n^{n^r}\big]^{\frac{t}{\Gamma_r^{-1}({\frac{\ln  q_1^{\prod_{m=1}^n(m^r)}}{\ln q_1})}}}  = \big[q_n^{n^r}\big]^{\frac{t}{\Gamma_r^{-1}( {\prod_{m=1}^n(m^r))}}} = \big[q_n^{n^r}\big]^{\frac{t}{n^r}} = q_n^t,\\
    a^{\operatorname{int2,r}}_{q_{n+1}}(t)  & = q_{n+1}^\frac{t}{\big[{\Gamma_r^{-1}({\frac{\ln q_{n+1}}{\ln q_1})}}\big]^\frac{r-2}{r}}   = \big[q_n^{n^r}\big]^{\frac{t}{\big[{\Gamma_r^{-1}( {\prod_{m=1}^n(m^r))}}\big]^\frac{r-2}{r}}} = \big[q_n^{n^r}\big]^{\frac{t}{n^{r-2}}} = q_n^{n^2t}.
\end{aligned}
\end{equation}

As we have already indicated, the speed of intermediate scaling function is between logarithmic scaling function and polynomial scaling function. The following proposition provides a rigorous proof of this phenomenon:
\begin{proposition}\label{prop:differentScales}
For any two positive real number $s$ and $t$ and any integer $r\geq 4$, we have
\begin{equation}\label{eq:pol > int2}
\lim_{m\to \infty}\frac{a^{\operatorname{int2,r}}_m(t)}{a^{\pol}_m(s)}=0,
\end{equation}
\begin{equation}\label{eq:int2 > int1}
\lim_{m\to \infty}\frac{a^{\operatorname{int1,r}}_m(t)}{a^{\operatorname{int2,r}}_m(s)}=0,
\end{equation}
\begin{equation}\label{eq:int1 > log}
\lim_{m\to \infty}\frac{a^{\log}_m(t)}{a^{\operatorname{int1,r}}_m(s)}=0.
\end{equation}
In the `little-o' notation, the above limits translate to $a^{\operatorname{int2,r}}_m(t)=o(a^{\pol}_m(s))$, $a^{\operatorname{int1,r}}_m(t)=o(a^{\operatorname{int2,r}}_m(s))$ and $a^{\log}_m(t)=o(a^{\operatorname{int1,r}}_m(s))$.
\end{proposition}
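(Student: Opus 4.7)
The main step is to reduce all three limits to a single comparison between polynomial-in-$u$ quantities and the super-polynomial function $\Gamma_r(u)$, where $u$ is the natural inverse variable for the scales. Set
\begin{equation*}
    u = u(m) := \Gamma_r^{-1}\!\left(\frac{\ln m}{\ln q_1}\right),\qquad\text{so that}\qquad \ln m = (\ln q_1)\,\Gamma_r(u),
\end{equation*}
and $u(m)\to\infty$ monotonically as $m\to\infty$. With this substitution the three ratios become, respectively,
\begin{equation*}
    \frac{a^{\operatorname{int2,r}}_m(t)}{a^{\pol}_m(s)} = m^{t/u^{(r-2)/r}-s},\quad
    \frac{a^{\operatorname{int1,r}}_m(t)}{a^{\operatorname{int2,r}}_m(s)} = m^{t/u-s/u^{(r-2)/r}},\quad
    \frac{a^{\log}_m(t)}{a^{\operatorname{int1,r}}_m(s)} = \frac{(\ln m)^t}{m^{s/u}}.
\end{equation*}

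Claim \eqref{eq:pol > int2} is then immediate: because $r\geq 4$ we have $(r-2)/r\geq 1/2$, so $u^{(r-2)/r}\to\infty$, the exponent tends to $-s<0$, and the ratio tends to $0$. For \eqref{eq:int2 > int1} the plan is to factor
\begin{equation*}
    \frac{t}{u}-\frac{s}{u^{(r-2)/r}} = \frac{1}{u^{(r-2)/r}}\!\left(\frac{t}{u^{2/r}}-s\right),
\end{equation*}
so the parenthesized factor converges to $-s<0$; multiplying by $\ln m = (\ln q_1)\Gamma_r(u)$ reduces the problem to showing $\Gamma_r(u)/u^{(r-2)/r}\to\infty$. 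This follows from Stirling applied to $\Gamma_r(u)=[\Gamma(u^{1/r}+1)]^r$, which yields $\ln\Gamma_r(u)\sim u^{1/r}\ln u$, so $\Gamma_r$ dominates every power of $u$.

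For \eqref{eq:int1 > log} I would take logarithms and rewrite
\begin{equation*}
    t\ln\ln m-\frac{s}{u}\ln m \;=\; t\bigl[\ln\ln q_1+\ln\Gamma_r(u)\bigr]-\frac{s\ln q_1}{u}\,\Gamma_r(u).
\end{equation*}
Again by Stirling $\ln\Gamma_r(u)\sim u^{1/r}\ln u$, whereas $\Gamma_r(u)/u=\exp\!\bigl(\ln\Gamma_r(u)-\ln u\bigr)$ grows super-exponentially in $u^{1/r}\ln u$; the negative second term therefore swamps the first and the expression tends to $-\infty$. The only technical input is Stirling's formula for $\Gamma$; the proof is purely an asymptotic comparison between $\Gamma_r$ and polynomial-in-$u$ quantities, so no serious obstacle is anticipated.
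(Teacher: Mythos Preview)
Your proof is correct. For \eqref{eq:pol > int2} and \eqref{eq:int2 > int1} your argument is essentially the paper's: both manipulate the exponent of $m$ directly, though your substitution $u=\Gamma_r^{-1}(\ln m/\ln q_1)$ and explicit appeal to Stirling make the required divergence $\Gamma_r(u)/u^{(r-2)/r}\to\infty$ (equivalently $m^{t/u}\to\infty$) transparent, whereas the paper leaves this implicit in the step $[m^{t/u}]^{-1}\searrow 0$.

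For \eqref{eq:int1 > log} the routes genuinely differ. The paper introduces the auxiliary reference points $q_n=q_1^{\Gamma_r((n-1)^r)}$ and $q_n^{n^2}$, bounds $a^{\operatorname{int1,r}}_{q_n^{n^2}}(s)$ from below via explicit estimates on $\Gamma_r^{-1}$, and then argues piecewise on the intervals $(q_n,q_n^{n^2}]$ and $(q_n^{n^2},q_{n+1}]$. You stay in the continuous variable $u$ and use Stirling to compare $\ln\Gamma_r(u)\sim u^{1/r}\ln u$ against $\Gamma_r(u)/u$, which grows like $\exp\bigl((1+o(1))u^{1/r}\ln u\bigr)$. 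Your argument is more self-contained---it does not rely on the particular sequence $q_n$ that the paper later uses in its constructions---and arguably cleaner; the paper's piecewise bound, on the other hand, foreshadows exactly the evaluations at $q_n$, $l_n'q_n$, and $q_{n+1}$ that drive the slow-entropy computations in the subsequent sections, so it serves double duty there.
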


\begin{proof}
First we compare the speed of $a^{\pol}$ and $a^{\operatorname{int2,r}}$. For any two positive real numbers $s$ and $t$, we note that for large enough values of $m$ we have
\begin{equation}
\begin{aligned}
\frac{a_m^{\operatorname{int2,r}}(t)}{a_m^{\pol}(s)} &=  \Big[m\Big]^{\Big[\frac{t}{\big[{\Gamma_r^{-1}({\frac{\ln m}{\ln q_1})}}\big]^\frac{r-2}{r}} - s\Big]} = \Big[m^{\frac{t}{\big[{\Gamma_r^{-1}({\frac{\ln m}{\ln q_1})}}\big]^\frac{r-2}{r}}}\Big]^{\Big[1 - \frac{s}{t}\big[{\Gamma_r^{-1}({\frac{\ln m}{\ln q_1})}}\big]^\frac{r-2}{r}\Big]}\\&\leq \Big[m^{\frac{t}{\big[{\Gamma_r^{-1}({\frac{\ln m}{\ln q_1})}}\big]^\frac{r-2}{r}}}\Big]^{[-1]}\searrow 0,
\end{aligned}
\end{equation}
which gives us \eqref{eq:pol > int2}.

Next to compare $a^{\operatorname{int1,r}}$ with $a^{\operatorname{int2,r}}$, we proceed in a similar fashion,
\begin{equation}
\begin{aligned}
\frac{a_m^{\operatorname{int1,r}}(t)}{a_m^{\operatorname{int2,r}}(s)}& =  \Big[m\Big]^{\frac{t}{{\Gamma_r^{-1}({\frac{\ln m}{\ln q_1})}}} - \frac{s}{\big[{\Gamma_r^{-1}({\frac{\ln m}{\ln q_1})}}\big]^\frac{r-2}{r}}} = \Big[m^{\frac{t}{{\Gamma_r^{-1}({\frac{\ln m}{\ln q_1})}}}}\Big]^{\Big[1 -  \frac{s}{t}\big[{\Gamma_r^{-1}({\frac{\ln m}{\ln q_1})}}\big]^\frac{2}{r}\Big]}\\&\leq \Big[m^{\frac{t}{{\Gamma_r^{-1}({\frac{\ln m}{\ln q_1})}}}}\Big]^{[-1]}\searrow 0.
\end{aligned}
\end{equation}
So we have \eqref{eq:int2 > int1}.

Finally, to compare $a^{\operatorname{int1,r}}$ with $a^{\operatorname{log}}$, we use the value of the respective functions at the intermediate point $q_n^{n^2}$. Observe that
$$ n^2\prod_{m=1}^{n-1}m^r \leq \prod_{m=1}^{n}m^r \Rightarrow  \Gamma_r^{-1}(\frac{\ln q_n^{n^2}}{\ln q_1})=\Gamma_r^{-1}(n^2\prod_{m=1}^{n-1}m^r)\leq n^r\Rightarrow a^{\text{int1,r}}_{q_n^{n^2}}(t)\geq \big[q_n\big]^{\frac{t}{n^{r-2}}},$$
which in turn produces the following bounds for the ratio of our scaling functions:
\begin{equation}
\begin{aligned}
\frac{a^{\log}_m(s)}{a^{\operatorname{int1,r}}_m(t)}\leq \begin{cases} \frac{(n^{2}(n-1)^r\ln(q_{n-1}))^s}{q_{n-1}^t}&\text{if } q_n<m\leq q_n^{n^2}\\ & \\ \frac{(n^r\ln(q_{n}))^s}{\big[q_n\big]^{\frac{t}{n^{r-2}}}}=\frac{(n!^r\cdot\ln(q_{1}))^s}{\big[q_1\big]^{\frac{t\cdot (n-1)!^r}{n^{r-2}}}} &\text{if } q_n^{n^2}<m\leq q_{n+1}\end{cases}
\end{aligned}
\end{equation}
This gives us \eqref{eq:int1 > log}.
\end{proof}

The last estimates in this section are for the value of our intermediate scaling functions at $l_n'q_n$, which is important for the computation of slow entropy. Recall that so far we have put some restrictions on $l_n'$ but have not pinned down a value. To proceed we choose:
\begin{equation}\label{eq:ln'}
    l_n'=q_n^{(n-1)^r-1}
\end{equation}
which makes $l_n'q_n=q_n^{(n-1)^r}$ and the conditions required by \eqref{eq:equation ln' are summable} and \eqref{eq:equation ln condition} are also satisfied. We will verify for each individual construction that \eqref{eq:equation ln' convergence} will also be satisfied with this choice. Recall the estimates for $\Gamma_r$,
$$  \Gamma_r\big((n-1)^r\big)\leq (1-\frac{1}{n})^r\Gamma_r(n^r)<\Gamma_r(n^r),$$
which results in estimates for our scaling function as follows
\begin{equation}\label{eq:intermediateScaleAtQ(n)}
\begin{aligned}
    \big[q_n\big]^{\frac{(n-1)^rt}{n^r}}&\leq  a^{\text{int1,r}}_{q_{n}^{(n-1)^r}}(t) =  \Big[q_{n}^{(n-1)^r}\Big]^\frac{t}{\Gamma_r^{-1}({\frac{\ln q_n^{(n-1)^r}}{\ln q_1})}} = \Big[q_{n}^{(n-1)^r}\Big]^\frac{t}{\Gamma_r^{-1}({(1-\frac{1}{n})^r\Gamma_r(n^r)})} \\&\leq \big[q_n\big]^{\frac{(n-1)^rt}{(n-1)^r}}=q_n^t,\\
    \big[q_n\big]^{\frac{(n-1)^rt}{n^{r-2}}}&\leq  a^{\text{int2,r}}_{q_{n}^{(n-1)^r}}(t) =  \Big[q_{n}^{(n-1)^r}\Big]^\frac{t}{\big[{\Gamma_r^{-1}({\frac{\ln q_n^{(n-1)^r}}{\ln q_1})}}\big]^\frac{r-2}{r}} = \Big[q_{n}^{(n-1)^r}\Big]^\frac{t}{\big[{\Gamma_r^{-1}({(1-\frac{1}{n})^r\Gamma_r(n^r)})}\big]^\frac{r-2}{r}}\\& \leq \big[q_n\big]^{\frac{(n-1)^rt}{(n-1)^{r-2}}}=q_n^{(n-1)^2t}.
\end{aligned}
\end{equation}

\section{Topological slow entropy for an untwisted AbC diffeomorphism} \label{sec:untwistedAbC}
This section is based on the quantitative AbC construction in \cite{FSW} producing a smooth diffeomorphism measure-theoretically conjugated to a rotation of the circle by a prescribed Liouville number. However, we do not compute the topological slow entropy for the ergodic version of the construction as presented in the original paper but have to make certain modifications to their construction. These allow us to obtain an untwisted example of an ergodic diffeomorphism on the torus isomorphic to a rotation and to compute its topological upper slow entropy. First we present our construction, then we obtain upper bounds for the cardinality of minimal covering sets, lower bounds for the cardinality of maximal separated sets and proceed with the calculation of the upper topological slow entropy.

%{\bb I changed this paragraph back to the original one. Let's discuss}

%{\color{red}In this section, we compute the topological upper slow entropy of some AbC diffeomorphisms derived from quantitative AbC construction in \cite{FSW}. More precisely, these diffeomorphisms are untwisted AbC diffeomorphisms measure-theoretically conjugated to a rotation of the circle by a prescribed Liouville number. However, in order to obtain the precise value of topological upper slow entropy, some modifications are needed for original AbC construction in \cite{FSW}. We start this section with a presentation of our AbC construction and then the estimates of cardinality of maximal separated set and minimal covering sets, which will give the precise value of upper topological slow entropy for our AbC diffeomorphisms.}

\subsection{The AbC construction}
%We begin by choosing an summable sequence of positive numbers $\{\e_n\}_{n\in \N}$ and we ensure that an the $m$-th stage of the construction, we choose $q_m$ satisfying the condition $q_m>\frac{1}{\e_m}$ in addition to the requirements specified by the Anosov-Katok method. At the $n$-th stage, the conjugating map $h_n$ can be described as follows: When restricted to $\Delta_{q_n}^0$,

At the $n$-th stage of the construction define the conjugation map $h_n$ at $n$-th stage as follows:
\begin{equation}\label{eq:hnUntwisted}
h_{n}(x,y)|_{\Delta_{q_n}^0}=\begin{cases}\big(\frac{1}{q_n}[\varphi_{\e_n}]_1(q_nx, y), [\varphi_{\e_n}]_2(q_nx,y)\big) & x\in [0,\frac{1}{q_n}-\frac{1}{q_n^2}],\\
\big(\tau_{n,1}(x), [\varphi_{\e_n}]_2(q_n^2(x-\frac{1}{q_n}+\frac{1}{q_n^2}),y)\big) & x\in [\frac{1}{q_n}-\frac{1}{q_n^2},\frac{1}{q_n}],\end{cases}
\end{equation}
where $\tau_{n,1}(x)=\frac{1}{q_n^2}[\varphi_{\e_n}]_1(q_n^2(x-\frac{1}{q_n}+\frac{1}{q_n^2}), y) + \frac{1}{q_n}-\frac{1}{q_n^2}$ and $\varphi_{\e_n}=([\varphi_{\e_n}]_1,[\varphi_{\e_n}]_2)$ is the quasi-rotation as described in Lemma \ref{lem:quasiRotation}. Recall that \eqref{eq:qEpsilon} guarantees that $q_n$ grow faster than $\frac{1}{\e_n}$ and thus the above construction defines a smooth diffeomorphism of $\Delta^0_{q_n}$. Finally notice that these diffeomorphisms can be extended to the whole torus in an equivariant way.

Combinatorially, when restricted to $\Delta^0_{q_n}$, $h_n$ acts as two consecutive rotations. The first component is a bigger rotation that rotates the bulk of the measure of the interior of the rectangle $[0,\frac{1}{q_n}-\frac{1}{q_n^2}]\times [0,1]$ by $90$ degrees and the second rotation also rotates the bulk of the measure of the narrower rectangle $[\frac{1}{q_n}-\frac{1}{q_n^2},\frac{1}{q_n}]\times [0,1]$ by $90$ degrees.
\begin{comment}
We comment a bit on how the parameters are being chosen at the $n$-th stage: We already have all data up to $p_n,q_n, k_{n-1}, l_{n-1}, \e_{n-1}$ available to us
\begin{enumerate}
\item We will not explicitly require the parameter $k_n$, so we just put it equal to $1$.
\item We choose $m_n=n-1$.
\item The parameter $l_n'$ will be chosen last to guarantee enough speed for convergence as per the requirement laid out in inequalities \ref{equation ln' convergence}, \ref{equation ln condition 1} and \ref{equation ln condition 2}. However, we will not let it grow too fast to help with the estimation of slow entropy. Exact estimates will be provided later.
\end{enumerate}
\end{comment}
With the above conjugating diffeomorphisms, we can define the Anosov-Katok conjugacies
\begin{equation}\label{eq:anosov-katokUntwisted}
    T_n=H_n\circ R_{\a_{n+1}}\circ H_n^{-1},\text{ where } H_n=h_1\circ\ldots\circ h_{n}.
\end{equation}
Similar to \cite{FSW} we obtain the following theorem using Lemma \ref{lem:convergenceOfAbC}.
\begin{theorem}\label{thm:UntwistedRegularity}
The sequence of diffeomorphisms $\{T_n\}_{n\in \N}$ described in \eqref{eq:anosov-katokUntwisted}, with parameter $m_n$ increasing to $k\leq \infty$, and with other parameters chosen according to the specification provided in \eqref{eq:abcparameters} and \eqref{eq:equation ln condition}, converges to an ergodic $C^k$ diffeomorphisms $T$ of the torus that is measure-theoretically isomorphic to an irrational rotation of the circle.
\end{theorem}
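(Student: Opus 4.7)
The plan is to combine the abstract convergence result of Lemma \ref{lem:convergenceOfAbC} with the approximation-by-conjugation argument in the spirit of \cite{FSW} to produce both the $C^k$ smoothness and the measure-theoretic conjugacy to a circle rotation.

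For convergence I would first check that each $h_n$ defined in \eqref{eq:hnUntwisted} commutes with $R_{\alpha_n}$. Because $h_n$ is built on the fundamental domain $\Delta^0_{q_n}$ of $R_{1/q_n}$ and then extended equivariantly, the commutation $h_n \circ R_{1/q_n} = R_{1/q_n} \circ h_n$ holds, and since $\alpha_n = p_n/q_n$ this gives $h_n \circ R_{\alpha_n} = R_{\alpha_n} \circ h_n$. Consequently $T_{n-1} = H_n \circ R_{\alpha_n} \circ H_n^{-1}$ (absorbing the extra $h_n$), and Lemma \ref{lem:proximity} applies to yield
\[
d_{m_n}(T_{n-1}, T_n) \leq C \max\{\vertiii{H_n}^{m_n+1}_{m_n+1}, \vertiii{H_n^{-1}}^{m_n+1}_{m_n+1}\} \cdot \beta_n,
\]
which is made summable by the choice of $l_n$ in \eqref{eq:equation ln condition}. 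Lemma \ref{lem:convergenceOfAbC} then furnishes the limit $T \in \mathrm{Diff}^k(\mathbb{T}^2,\mu)$.

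For ergodicity and the isomorphism with $R_\alpha$, where $\alpha = \lim_n \alpha_n$ is irrational by the genericity of the parameter choices, I would exploit the towers $\xi_n = H_n(\eta_{q_n})$. Each $T_n$ cyclically permutes $\xi_n$ by a generator of $\mathbb{Z}/q_n\mathbb{Z}$, and by Lemma \ref{lem:convergenceCriterion3} the orbits of $T$ and $T_n$ agree in $C^0$ up to time $l_n' q_n \gg q_n$, so $T$ also cyclically permutes $\xi_n$ up to vanishing error. This gives ergodicity by the standard Katok--Stepin cyclic-approximation argument. The isomorphism itself is then constructed by the projections $\pi_n : \mathbb{T}^2 \to \mathbb{S}^1$ sending $H_n(\Delta^i_{q_n})$ to the arc $[i/q_n, (i+1)/q_n)$, showing these converge in measure to some measurable $\pi$, and verifying $\pi \circ T = R_\alpha \circ \pi$ almost everywhere.

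The hard part will be to show that $\pi$ is a measure-theoretic bijection, equivalently that $\bigvee_n \xi_n$ generates the Borel $\sigma$-algebra of $\mathbb{T}^2$ and that the projection becomes asymptotically one-dimensional. This step uses the precise two-piece design of $h_n$ in \eqref{eq:hnUntwisted}: the large quasi-rotation on $[0,\tfrac{1}{q_n}-\tfrac{1}{q_n^2}] \times [0,1]$ interchanges the vertical and horizontal directions on most of the fundamental domain, while the auxiliary quasi-rotation on the thin strip of width $1/q_n^2$ ensures smoothness without destroying this exchange. A combinatorial bookkeeping of how $\xi_{n+1}$ subdivides $\xi_n$ in both coordinate directions, parallel to the argument in \cite{FSW} but simplified by our untwisted modification, then yields the desired generating property and completes the proof that $T$ is measure-theoretically isomorphic to $R_\alpha$.
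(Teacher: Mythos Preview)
Your proposal is correct and matches the paper's approach: the paper's own proof is nothing more than a reference to \cite{FSW} together with Lemma~\ref{lem:convergenceOfAbC}, and you have faithfully sketched the underlying FSW argument (convergence via the proximity lemma, ergodicity via cyclic approximation, and the isomorphism via the projections $\pi_n$ and the generating property of the towers $\xi_n$). One small indexing slip worth fixing: it is $T_{n-1}=H_nR_{\alpha_n}H_n^{-1}$, not $T_n$, that \emph{exactly} cyclically permutes $\xi_n=H_n\eta_{q_n}$, and Lemma~\ref{lem:convergenceCriterion3} compares $T^i$ with $T_{n-1}^i$ (not $T_n^i$) for $0\le i\le l_n'q_n$; once you shift the index your argument goes through unchanged.
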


Before proceeding further we introduce the notion of \emph{the central index}: the index $i_c\in[0,q_{n}]$ is the integer such that the rectangle $\Delta^{i_c}_{q_n}$ is closest to the center of $\Delta^0_{q_m}$ for any $m<n$, i.e.
\begin{equation}\label{eq:centralIndex}
i_c=\argmin_{0\leq i<q_n}\Big|\frac{i}{q_n}-\sum_{m=1}^{n-1}\frac{1}{2q_m}\Big|.
\end{equation}

\subsubsection{Norm estimates and parameter growth}
In this section, we obtain norm estimates for the conjugating diffeomorphisms, which allow us to control the parameters growth rates in the AbC constructions.
\begin{lemma}\label{lem:upperBoundOnDerivative}
The conjugating diffeomorphisms $h_n$ satisfy the following norm estimates:
\begin{align*}
\max\{\vertiii{h_n}_k, \vertiii{h_n^{-1}}_k\}\leq C q_n^{2k},
\end{align*}
where the constant $C$ is dependent on $k$ and $\e_n$ but not on $q_n$.
\end{lemma}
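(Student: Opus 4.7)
The plan is to decompose $h_n|_{\Delta^0_{q_n}}$ into the two pieces given in \eqref{eq:hnUntwisted}, recognize each as a translated copy of a quasi-rotation $\phi_{Q,\e_n}$ of Lemma \ref{lem:quasiRotation}, invoke the derivative estimate of Lemma \ref{lem:derivativeNormEstimateForQuasiRotations} on each piece, and then transport the bound to the whole torus by the equivariance $h_n\circ R_{\alpha_n}=R_{\alpha_n}\circ h_n$.

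More concretely, on the wider rectangle $[0,\tfrac{1}{q_n}-\tfrac{1}{q_n^2}]\times[0,1]$ the formula in \eqref{eq:hnUntwisted} is, by definition, the quasi-rotation $\phi_{q_n,\e_n}$, so Lemma \ref{lem:derivativeNormEstimateForQuasiRotations} gives $\vertiii{h_n}_k\leq C\,q_n^{k}$ on this piece. On the narrower rectangle $[\tfrac{1}{q_n}-\tfrac{1}{q_n^2},\tfrac{1}{q_n}]\times[0,1]$ one reads off that $h_n$ is, up to a rigid translation in the $x$-variable (which contributes a factor of $1$ in all $\vertiii{\cdot}_k$-norms), exactly $\phi_{q_n^2,\e_n}$; hence the same lemma yields $\vertiii{h_n}_k\leq C\,(q_n^2)^{k}=C\,q_n^{2k}$ there. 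Taking the maximum of the two bounds produces the claimed $C\,q_n^{2k}$, which is dominated by the contribution of the narrower piece.

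It remains to see that these piecewise bounds are in fact global bounds on $h_n$. The crucial point is property~(2) of Lemma \ref{lem:quasiRotation}: since $\varphi_{\e_n}=\mathrm{id}$ near the vertical sides of the unit square, both pieces of $h_n$ match smoothly (with all partial derivatives equal to those of the identity) along the interface $x=\tfrac{1}{q_n}-\tfrac{1}{q_n^2}$ and along the vertical sides of $\Delta^0_{q_n}$. Thus $h_n|_{\Delta^0_{q_n}}$ is globally $C^\infty$ with $\vertiii{\cdot}_k$-norm bounded by $C q_n^{2k}$. The equivariant extension $h_n|_{\Delta^i_{q_n}}=R_{i/q_n}\circ h_n|_{\Delta^0_{q_n}}\circ R_{-i/q_n}$ preserves this bound because $\vertiii{R_t}_k=1$ for all $t$, so the global estimate on $\T^2$ is the same. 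Finally, the inverse $h_n^{-1}$ is built from $\phi_{q_n,\e_n}^{-1}$ and $\phi_{q_n^2,\e_n}^{-1}$ in exactly the analogous way, and Lemma \ref{lem:derivativeNormEstimateForQuasiRotations} gives the same bound for the inverse quasi-rotations, yielding $\max\{\vertiii{h_n}_k,\vertiii{h_n^{-1}}_k\}\leq C\,q_n^{2k}$.

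There is no real obstacle here; the argument is essentially bookkeeping on top of Lemma \ref{lem:derivativeNormEstimateForQuasiRotations}. The only mildly subtle point is confirming that the two pieces of the piecewise definition in \eqref{eq:hnUntwisted} glue to a $C^\infty$ map, which is exactly what the ``identity near the boundary'' clause in Lemma \ref{lem:quasiRotation}\,(2) is designed to provide.
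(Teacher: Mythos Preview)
Your proof is correct and follows exactly the same approach as the paper, which simply states that the bound ``follows from equation \eqref{eq:hnUntwisted} and Lemma \ref{lem:derivativeNormEstimateForQuasiRotations}.'' You have spelled out the bookkeeping that the paper leaves implicit: the first piece is (a restriction of) $\phi_{q_n,\e_n}$ contributing $Cq_n^{k}$, the second is a translate of $\phi_{q_n^{2},\e_n}$ contributing $Cq_n^{2k}$, and the latter dominates.
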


\begin{proof}
Follows from equation \eqref{eq:hnUntwisted} and Lemma \ref{lem:derivativeNormEstimateForQuasiRotations}.
\end{proof}

So far we have not specified any specific value for the parameter $l_n$. However we will provide a series of estimates for the derivatives of the conjugating diffeomorphisms that will enable us to simultaneously come up with exact values for both $q_{n+1}$ and $l_n$.
\begin{lemma}\label{lem:estimateQ(n+1)vsQn}
The expression $\lceil\vertiii{H_{n}}_1\rceil\max\{\vertiii{H_{n}}^{m_n+1}_{m_n+1}, \vertiii{H_{n}^{-1}}^{m_n+1}_{m_n+1}\}$ is bounded above by $q_n^{(m_n+1)^4-2}$ and hence any choice of $l_n\geq q_{n}^{(m_n+1)^4-2}$ satisfies the requirement imposed by \eqref{eq:equation ln condition}. Additionally if $k_n=1$, we get $$q_{n+1}\geq q_n^{(m_n+1)^4}.$$ In particular we can choose
\begin{itemize}
    \item $m_n=n-1$ and $l_n=q_n^{n^4-2}$ yielding $q_{n+1}= q_n^{n^4}=q_1^{\prod_{m=1}^n(m^4)},$ or,
    \item $m_n=n-1$ and $l_n=q_n^{q_n-2}$ yielding $q_{n+1}= q_n^{q_n},$ or,
    \item $m_n=K-1$ for some integer $K>1$ and $l_n=q_n^{K^4-2}$ yielding $q_{n+1}= q_n^{K^4}=q_1^{K^{4n}}.$
\end{itemize}
Moreover, if $m_n\geq 1$, then we have the freedom for the choice of a sequence $l_n'$ to satisfy \eqref{eq:equation ln' convergence} and \eqref{eq:equation ln' are summable}.
\end{lemma}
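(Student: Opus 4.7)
The plan is to invoke the pointwise derivative bound for the single conjugation $h_n$ (Lemma \ref{lem:upperBoundOnDerivative}) together with the submultiplicative composition bound (Lemma \ref{lem:submultiplicative}) to inductively control $\vertiii{H_n}_k$ and $\vertiii{H_n^{-1}}_k$ for $k=m_n+1$ and $k=1$. Writing $H_n=H_{n-1}\circ h_n$ and applying Lemma \ref{lem:submultiplicative} gives
\[
\vertiii{H_n}_k\le C\,\vertiii{H_{n-1}}_k\,\vertiii{h_n}_k^{\,k},
\]
which iterates to $\vertiii{H_n}_k\le C^{n-1}\vertiii{h_1}_k\prod_{j=2}^{n}\vertiii{h_j}_k^{\,k}$. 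Combining with $\vertiii{h_j}_k\le C_k q_j^{2k}$ from Lemma \ref{lem:upperBoundOnDerivative} yields a master estimate of the form $\vertiii{H_n}_k\le C_{n,k}\,q_n^{2k^2}\prod_{j=1}^{n-1}q_j^{2k^2}$, and an identical expression controls $\vertiii{H_n^{-1}}_k$.

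The key observation is that the whole ``historical'' prefactor $C_{n,k}\prod_{j=1}^{n-1}q_j^{2k^2}$ is fixed by the data at stage $n{-}1$, while $q_n$ is still a free parameter; pre-committing to $q_n$ so large that this prefactor is swallowed by an arbitrarily small extra power of $q_n$ allows us to deduce, for $k=m_n+1$, that
\[
\vertiii{H_n}_1\cdot\max\bigl\{\vertiii{H_n}_{m_n+1}^{m_n+1},\,\vertiii{H_n^{-1}}_{m_n+1}^{m_n+1}\bigr\}\le q_n^{2(m_n+1)^3+2+o(1)}\le q_n^{(m_n+1)^4-2},
\]
where the last inequality uses $(m_n+1)^4-2(m_n+1)^3\ge 4$, valid once $m_n+1\ge 3$ (the finitely many smaller cases are absorbed into the initial choices of $q_1,q_2$). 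This is the claimed master bound.

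With the master bound in hand, the remaining assertions are arithmetic on the recursion $q_{n+1}=k_n l_n q_n^{2}$ from \eqref{eq:abcparameters}. The bound exactly matches the right-hand side of the requirement \eqref{eq:equation ln condition}, so any $l_n\ge q_n^{(m_n+1)^4-2}$ is admissible; taking $k_n=1$ with $l_n$ at the threshold produces $q_{n+1}=q_n^{(m_n+1)^4}$. The three displayed specialisations follow by substitution: $m_n=n-1$ and $l_n=q_n^{n^4-2}$ iterate to $q_{n+1}=q_1^{\prod_{m=1}^{n}m^4}$; enlarging to $l_n=q_n^{q_n-2}$ (still above the threshold) gives $q_{n+1}=q_n^{q_n}$; and freezing $m_n=K-1$ yields $q_{n+1}=q_n^{K^4}$, iterating to $q_1^{K^{4n}}$. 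For the final claim, the choice $l_n'=q_n^{(n-1)^4-1}$ from \eqref{eq:ln'} is admissible because the master estimate places $\max\{\vertiii{H_n}^{m_n+1}_{m_n+1},\vertiii{H_n^{-1}}^{m_n+1}_{m_n+1}\}$ below a fixed polynomial in $q_n$, and $\sum 1/l_n'$ converges thanks to the super-exponential growth of $q_n$.

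The one technical subtlety that drives the whole plan is the asymmetry in Lemma \ref{lem:submultiplicative}: only the \emph{inner} factor is raised to the $k$-th power, so one must peel $h_n$ off from the right at each inductive step. Expanding $H_n$ from the left would instead force each successive factor to be raised to a compounding power of $k$, producing useless bounds of the form $q_n^{k^{n}}$. Getting the inductive shape right is what allows the free parameter $q_n$ to act as a knob that pays off all accumulated historical debt at each stage.
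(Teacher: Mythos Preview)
Your proof is correct and follows essentially the same route as the paper: both peel off $h_n$ from $H_n=H_{n-1}\circ h_n$ via Lemma~\ref{lem:submultiplicative}, isolate the $h_n$-contribution $q_n^{2(m_n+1)^3+2}$ using Lemma~\ref{lem:upperBoundOnDerivative}, and absorb the $H_{n-1}$-prefactor into the free growth of $q_n$. Your write-up is a bit more explicit about the full iteration and the small-$m_n$ caveat (the inequality $2\gamma^3+2\le\gamma^4-2$ needs $\gamma\ge3$), but the underlying argument is identical to the paper's.
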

\begin{proof}
Denote $\gamma_n=m_n+1$, then the proof follows from the following estimates by Lemma \ref{lem:submultiplicative} and Lemma \ref{lem:upperBoundOnDerivative}:
\begin{equation}
\begin{aligned}
& \vertiii{H_{n}}_1 \max\{\vertiii{H_{n}}^{\gamma_n}_{\gamma_n}, \vertiii{H_{n}^{-1}}^{\gamma_n}_{\gamma_n}\} \\
\leq &\Big[\vertiii{H_{n-1}^{-1}}_1 \max\{\vertiii{H_{n-1}}^{\gamma_n}_{\gamma_n}, \vertiii{H_{n-1}^{-1}}^{\gamma_n^2}_{\gamma_n}\}\Big] \cdot \vertiii{h_{n}}_1\cdot \max\{\vertiii{h_{n}}^{\gamma_n^2}_{\gamma_n}, \vertiii{h_{n}^{-1}}^{\gamma_n}_{\gamma_n}\} \\
\leq &\Big[\vertiii{H_{n-1}^{-1}}_1 \max\{\vertiii{H_{n-1}}^{\gamma_n}_{\gamma_n}, \vertiii{H_{n-1}^{-1}}^{\gamma_n^2}_{\gamma_n}\}\Big]\cdot  q_n^{2\gamma_n^3+2}\\
< &q_n^{\gamma_n^4-2},
\end{aligned}
\end{equation}
where the last inequality follows from the fact that $q_n>>q_{n-1}$. As a result of this estimate, a choice of $l_n\geq q_n^{\gamma_n^4-2}$ will satisfy \eqref{eq:equation ln condition}. This completes the lemma.
\end{proof}

\subsection{Lower bounds for cardinality of maximal separated sets}

\begin{figure}[H]
\begin{center}
\includegraphics[trim=0cm 0cm 0cm 0cm, clip, width=4.4cm,height=7.5cm]{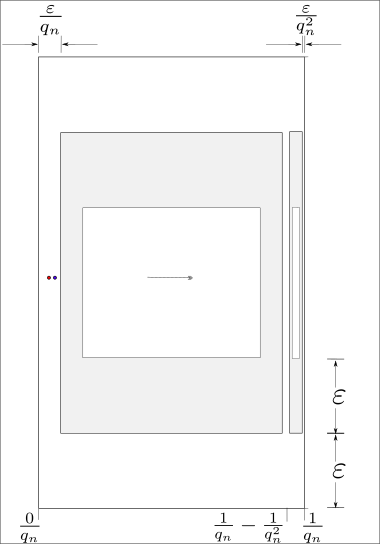}\hspace{.2cm}
\includegraphics[trim=0cm 0cm 0cm 0cm, clip, width=4.4cm,height=7.5cm]{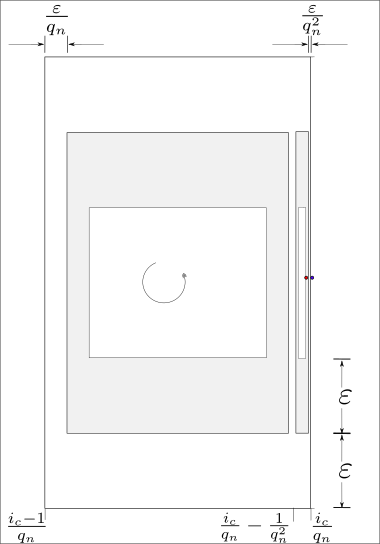}\hspace{.2cm}
\includegraphics[trim=0cm 0cm 0cm 0cm, clip, width=4.4cm,height=7.5cm]{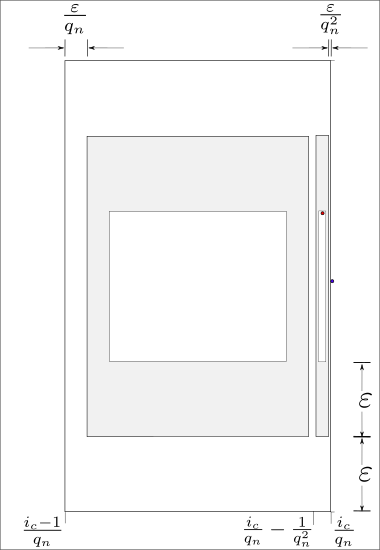}
\caption{Demonstration of the separation mechanism: Under the action of $R^t_{\alpha_{n+1}}$ the red and the blue point move right to the `separation region', then the subsequent action of the conjugation map $h_{n}$ rotates one point to the top (since that point lies in the rotation kernel), while the other remains at place (since that point lies in the identity region).}
\label{figure separation untwisted}
\end{center}
\end{figure}

\begin{lemma} \label{lem:FSWSeperationAtN-thStage}
For any given $\e>0$ and any $n\in\N$, we have
$$S_{d^{T_n}_{q_{n+1}}}(\e)\geq C\frac{q_n}{\e},$$
where $C>0$ is a constant that is independent of $n$ and $\e$.
\end{lemma}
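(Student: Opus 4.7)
The plan is to construct, in $H_n^{-1}$-coordinates, an explicit grid of $\asymp q_n/\e$ initial points whose $T_n$-orbits over one period get pulled apart by at least $\e$ in the original torus metric. The key mechanism, illustrated in Figure \ref{figure separation untwisted}, is that the quasi-rotation in $h_n$ acts on its rotation interior within $\Delta_{q_n}^0$ as $(x,y)\mapsto\bigl(\tfrac{1-y}{q_n},q_n x\bigr)$ and thus amplifies $x$-offsets of order $\e/q_n$ into $y$-offsets of order $\e$; by the $R_{1/q_n}$-equivariance of $h_n$ this amplification is replicated across the $q_n$ translates $\Delta_{q_n}^j$.

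First I would pick a reference point $(x_0,y_0)$ deep in the rotation interior of $\phi_{q_n,\e_n}$, whose existence is guaranteed by Lemma \ref{lem:quasiRotation} together with \eqref{eq:qEpsilon}. In $H_n^{-1}$-coordinates I would then place the grid
\[
W=\Bigl\{w_{j,k}=\Bigl(x_0+\frac{j}{q_n}+\frac{k\e}{C_0 q_n},\; y_0\Bigr): 0\leq j<q_n,\;0\leq k<\lfloor 1/\e\rfloor\Bigr\}
\]
for a suitable absolute constant $C_0$, and set $A=H_n(W)$. The set $W$ decomposes into $q_n$ clusters (indexed by $j$) of $\lfloor 1/\e\rfloor$ points each, with total cardinality $\asymp q_n/\e$, and by construction every cluster lies inside the rotation interior of $h_n$ restricted to the appropriate copy $\Delta_{q_n}^{?}$.

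To verify that $A$ is $(\e,q_{n+1})$-separated under $T_n=H_n\circ R_{\alpha_{n+1}}\circ H_n^{-1}$: since the orbit of $w$ under $R_{\alpha_{n+1}}$ is a horizontal translation, it suffices to find for each pair $w_{j,k}\neq w_{j',k'}$ a time $t\in[0,q_{n+1})$ with $d\bigl(H_n(w_{j,k}+t\alpha_{n+1}e_1),\,H_n(w_{j',k'}+t\alpha_{n+1}e_1)\bigr)\geq\e$. For a same-cluster pair ($j=j'$, $k\neq k'$) I take $t=0$: the quasi-rotation in $h_n$ converts the $x$-offset $|k-k'|\e/(C_0 q_n)$ into a $y$-offset $|k-k'|\e/C_0\geq\e/C_0$ after $h_n$. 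For a different-cluster pair I choose a time $t$ such that one orbit lands in the rotation kernel of some $\Delta_{q_n}^{i}$ while the other lands in the identity region of a suitable adjacent copy; then $h_n$ rotates only the first point and leaves the second essentially in place, producing an order-$1$ separation.

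\paragraph{Main obstacle.}
The principal difficulty is controlling how the outer maps $h_{n-1}\circ\cdots\circ h_1$ distort the post-$h_n$ configuration, since each of these has its own quasi-rotation component at a coarser scale. The resolution exploits the hierarchical scale structure: a single cluster has $H_n^{-1}$-diameter $\leq 1/q_n$, so after $h_n$ its image is a thin vertical segment that intersects the rotation regions of at most one $\Delta_{q_i}^{?}$ per stage $i<n$; Lemmas \ref{lem:submultiplicative}, \ref{lem:upperBoundOnDerivative}, and \ref{lem:derivativeNormEstimateForQuasiRotations} then bound the induced Euclidean distortion by an absolute constant depending only on the quasi-rotation parameters, independent of $n$. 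Combined with the $R_{1/q_n}$-equivariance, which reduces the different-cluster analysis to the same-cluster case up to a rigid translation, this yields $S_{d^{T_n}_{q_{n+1}}}(\e)\geq C q_n/\e$ with $C>0$ independent of $n$ and $\e$.
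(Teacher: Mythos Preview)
Your high-level picture is right (build a grid of size $\asymp q_n/\e$ in $H_n^{-1}$-coordinates and use the quasi-rotation to amplify), but the argument has a genuine gap at exactly the point you flag as the ``main obstacle''.

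The claim that Lemmas \ref{lem:submultiplicative}, \ref{lem:upperBoundOnDerivative}, \ref{lem:derivativeNormEstimateForQuasiRotations} bound the $H_{n-1}$-distortion by an absolute constant independent of $n$ is false. After $h_n$ turns your cluster into a vertical segment of $y$-extent $\asymp 1/C_0$, the map $h_{n-1}$ (on its rotation interior, which is where your segment typically lands) rotates again and compresses the $y$-extent by a factor $1/q_{n-1}$ into the $x$-direction; then $h_{n-2}$ re-expands by $q_{n-2}$, and so on. The net contraction factor is $\prod_{k}q_{n-2k}/q_{n-2k+1}\to 0$, so the $\e/C_0$ separation you produce at time $t=0$ does not survive $H_{n-1}$. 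No appeal to sub-multiplicativity helps here because you need a \emph{lower} bound on separation, not an upper bound on derivatives.

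The paper avoids this entirely by a mechanism you are missing: it exploits the \emph{second, narrow} quasi-rotation of $h_n$ on $[\frac{1}{q_n}-\frac{1}{q_n^2},\frac{1}{q_n}]\times[0,1]$ (see \eqref{eq:hnUntwisted}). One chooses $t$ so that $R^t_{\alpha_{n+1}}(P_1)$ falls in the rotation kernel of this narrow piece while $R^t_{\alpha_{n+1}}(P_2)$ sits in the adjacent identity zone. The narrow rotation sends $P_1$ to a point with $y$-coordinate so close to $0$ or $1$ that, by monotonicity of $(\e_m)$, it lies in the identity region of \emph{every} $h_m$, $m<n$; hence $H_{n-1}$ fixes it. The other point is handled either by placing it near the central index $i_c$ (where $H_{n-1}$ barely moves it) or in the global identity strip near $x=0$. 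This is what gives separation of order $1/4$ regardless of $n$. Your different-cluster argument also fails as stated: by the $\frac{1}{q_n}$-equivariance of $h_n$, two points differing by an integer multiple of $1/q_n$ occupy the same relative position in their respective cells under $h_n$, so one cannot land in the rotation kernel while the other lands in the identity region. Finally, note the paper obtains the $1/\e$ factor from stacking points at different heights $y$ (separated trivially by choosing $t$ so both are in the identity zone), not from the $k$-index in a single horizontal cluster.
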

\begin{proof}
Let $0\leq i_0<q_n$ be fixed. Define
\begin{equation}\label{eq:xi}
 \Xi_n(y)=\Big\{\big(\frac{i_0}{q_n}+\frac{2j\e_n}{q_n^2},y\big) : j= 0,1,2, \ldots, \frac{q_n}{2}-1\}\text{ and } \Xi_n= \bigcup_{k=0}^{\lfloor \frac{1}{4\e}\rfloor}\Xi_n\big(\frac{3}{8}+k\e\big).
\end{equation}
Notice that
$$\#(\Xi_n)=C\cdot\frac{1}{\e}\cdot q_n$$
for some positive constant $C$ independent of $n$ and $\e$.

We will give two different strategies to exhibit how points on the same horizontal level in $\Xi$ separate.

\smallskip

\noindent\textit{Strategy I:} Let $P_1$ and $P_2$ be two points in $\Xi_n(y)\subset \Xi_n$. Then assuming without loss of generality $\pi_1(P_1)<\pi_1(P_2)$, where $\pi_1:\mathbb{T}^2\to\mathbb{T}$ is defined as $\pi_1(x,y)=x$. Notice that there exists $0\leq t< q_{n+1}$ such that
$$0<\frac{i_c}{q_n}- \frac{2\e_n}{q_n^2}-\frac{\e_{n-1}}{q_n^2} < \pi_1\left(R^t_{\a_{n+1}}(P_1)\right)\leq \frac{i_c}{q_n}- \frac{2\e_n}{q_n^2},$$
and
$$0<\frac{i_c}{q_n}- \frac{\e_n}{q_n^2} < \pi_1\left(R^t_{\a_{n+1}}(P_2)\right)< \frac{i_c}{q_n}+ \frac{\e_n}{q_n}.$$

Note that the horizontal separation for two points belonging to $\Xi(y)$ is bounded below by $\frac{2\e_n}{q_n^2}$ and above by $\frac{\e_n}{q_n}$, and hence the above set of conditions can always be guaranteed.   So $R^t_{\a_{n+1}}(P_2)$ belongs to a zone where $h_n$ acts as the identity transformation. Hence $h_n(R^t_{\a_{n+1}}(P_2))=R^t_{\a_{n+1}}(P_2)$, while $h_n(R^t_{\a_{n+1}}(P_1))$ rotates by $90$ degrees to the top and into the identity zone of $h_m$ for any $m<n$ because of the monotonicity of the sequence $\{\e_m\}_{m\in \N}$ (see figure \ref{figure separation untwisted}). Hence $H_n(R^t_{\a_{n+1}}(P_1))=h_n(R^t_{\a_{n+1}}(P_1))$, and as a consequence $|H_n(R^t_{\a_{n+1}}(P_1))-h_n(R^t_{\a_{n+1}}(P_2))|\geq \frac{1}{4}$.

On the other hand, since $R^t_{\a_{n+1}}(P_2)$ is at most an horizontal distance $\frac{1}{q_n}$ away from the center $c$ of $\Delta^{j_m}_{q_m}$ for some $j_m$ for any $m<n$, we note that $P_2$ remains in $K_m$ for any $m<n$, where $K_m$ is the region that $h_m$ acts as rotations on. Further we show that since $R^t_{\a_{n+1}}(P_2)$ is close to the center for all the $\Delta^{j_m}_{q_m}$, it does not move much upon the application of $h_m$ for any $m<n$. Precisely, the horizontal separation of $H_{n-1}(R^t_{\a_{n+1}}(P_2))$ from $c$ is bounded from above by
$$\big(\frac{q_{n-1}}{q_n}\big)\big(\frac{q_{n-3}}{q_{n-2}}\big)\ldots\big(\frac{q_{1}}{q_2}\big)\quad\text{if }n\text{ is even} \qquad \text{or}\qquad\big(\frac{q_{n-1}}{q_n}\big)\big(\frac{q_{n-3}}{q_{n-2}}\big)\ldots\big(\frac{q_{2}}{q_3}\big)\big(\frac{1}{q_1}\big)\quad\text{if }n\text{ is odd.}$$
In either case we observe that the horizontal separation of $H_{n-1}(R^t_{\a_{n+1}}(P_2))$ from $c$ is at most $\frac{q_{n-1}}{q_n}<\frac{1}{q_{n-1}}$ away from $c$ which does not move upon application of $H_{n-1}$. A similar argument shows that the vertical separation of $H_{n-1}(R^t_{\a_{n+1}}(P_2))$ from $c$ is bounded above by $\frac{1}{8q_{n-1}}$. Hence, $H_{n}(R^t_{\a_{n+1}}(P_1))$ and $H_{n}(R^t_{\a_{n+1}}(P_2))$ are at least $\e$ apart.

\smallskip

\noindent\textit{Strategy II:} Let $P_1$ and $P_2$ be two points in $\Xi_n(y)\subset \Xi_n$. Then assuming without loss of generality $\pi_1(P_1)<\pi_1(P_2)$, where $\pi_1:\mathbb{T}^2\to\mathbb{T}$ is defined as $\pi_1(x,y)=x$. Notice that there exists $0\leq t< q_{n+1}$ such that both the conditions below are simultaneously satisfied
\begin{align*}
    (1) \qquad & 0<1- \frac{2\e_n}{q_n^2}-\frac{\e_{n-1}}{q_n^2} < \pi_1\left(R^t_{\a_{n+1}}(P_1)\right)\leq 1- \frac{2\e_n}{q_n^2},\\
    (2) \qquad & 0<1- \frac{\e_n}{q_n^2} < \pi_1\left(R^t_{\a_{n+1}}(P_2)\right)\quad\text{or}\quad\pi_1\left(R^m_{\a_{n+1}}(P_2)\right)< \frac{\e_n}{q_n}.
\end{align*}

Note that the horizontal separation for two points belonging to $\Xi(y)$ is bounded below by $\frac{2\e_n}{q_n^2}$ and above by $\frac{\e_n}{q_n}$, and hence the above set of conditions can always be guaranteed.   So $R^t_{\a_{n+1}}(P_2)$ belongs to a zone where $h_m$ acts as the identity transformation for any $m\leq n$. Hence $H_n(R^t_{\a_{n+1}}(P_2))=R^t_{\a_{n+1}}(P_2)$, while $h_n(R^t_{\a_{n+1}}(P_1))$ rotates by $90$ degrees to the top and into the identity zone of $h_m$ for any $m<n$ because of the monotonicity of the sequence $\{\e_m\}_{m\in \N}$ (see figure \ref{figure separation untwisted}). Hence $H_n(R^t_{\a_{n+1}}(P_1))=h_n(R^t_{\a_{n+1}}(P_1))$, and as a consequence $|H_n(R^t_{\a_{n+1}}(P_1))-H_n(R^t_{\a_{n+1}}(P_2))|\geq \frac{1}{4}$.

\smallskip

\noindent\textit{Strategy for horizontal separation:} In the situation, where $P_1\in\Xi(y_1)$ and $P_2\in\Xi(y_2)$ with $y_1\neq y_2$, separation comes from the difference in $y$'s coordinates in $\Xi$'s construction. Indeed, we can choose $0\leq t\leq q_{n+1}$ such that both $P_1$ and $P_2$ lie in an area where $h_m$ acts as the identity transformation for any $m\leq n$.

In conclusion, we note that $H_n(\Xi_n)$ forms an $(q_{n+1},\e)$-separated set for $T_n$ of cardinality $C\cdot \frac{1}{\e}\cdot q_n$.
\end{proof}

\begin{lemma}\label{lem:FSWSeperation}
For any given $\e>0$ and any $n\in\N$, we have
$$S_{d^T_{q_{n+1}}}(\e)\geq C\frac{q_n}{\e},$$
when $C>0$ is some constant independent of $n$ and $\e$.
\end{lemma}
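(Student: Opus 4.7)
The plan is to deduce the lemma as a direct corollary of Lemma \ref{lem:FSWSeperationAtN-thStage} combined with Proposition \ref{prop:upgradeTntoTForSeparatedSets}. The former already supplies an $(\e, q_{n+1})$-separated set for $T_n$ of the desired cardinality $Cq_n/\e$, and the latter is designed precisely to upgrade $T_n$-separation to $T$-separation at the cost of a factor of two in the separation radius, within time windows of length at most $l'_{n+1}q_{n+1}$. So the whole task reduces to chaining two results that are already in hand.

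First I would verify that the hypothesis $m \leq l'_{n+1}q_{n+1}$ of Proposition \ref{prop:upgradeTntoTForSeparatedSets} is satisfied with $m = q_{n+1}$; this is immediate since $l'_{n+1}$ is a positive integer. Applying the proposition and then Lemma \ref{lem:FSWSeperationAtN-thStage} (with $2\e$ in place of $\e$) in succession gives
$$S_{d^T_{q_{n+1}}}(\e) \;\geq\; S_{d^{T_n}_{q_{n+1}}}(2\e) \;\geq\; C\cdot\frac{q_n}{2\e},$$
which is the desired estimate with constant $C/2$.

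There is essentially no obstacle here: the technical work of controlling $d_0(T^i_n, T^i)$ for $i$ in the admissible range has already been done in Lemma \ref{lem:convergenceCriterion3} and packaged into Proposition \ref{prop:upgradeTntoTForSeparatedSets}. The only sanity check is that the separating set $H_n(\Xi_n)$ constructed in the proof of Lemma \ref{lem:FSWSeperationAtN-thStage} witnesses separation at some time $0 \leq t < q_{n+1}$, which comfortably lies within the admissible window of Proposition \ref{prop:upgradeTntoTForSeparatedSets}. Thus the proof should be only a few lines, with all the combinatorial effort contained in the preceding stage-$n$ separation lemma and all the analytic effort absorbed by the convergence estimates preparing the $T_n \to T$ upgrade.
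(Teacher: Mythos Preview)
Your proposal is correct and follows exactly the same approach as the paper, which simply states that the lemma follows immediately from Lemma~\ref{lem:FSWSeperationAtN-thStage} and Proposition~\ref{prop:upgradeTntoTForSeparatedSets}. Your verification that $q_{n+1} \leq l'_{n+1}q_{n+1}$ and the halving of the constant via the $2\e$ substitution are the only details to check, and you handle them correctly.
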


\begin{proof}
Follows immediately from Lemma \ref{lem:FSWSeperationAtN-thStage} and Proposition \ref{prop:upgradeTntoTForSeparatedSets}.
\end{proof}

\subsection{Upper bounds for cardinality of minimal covering sets}

\begin{lemma} \label{lem:FSWSpanningAtn-thStage}
For any given $\e>0$ and any $n\in\N$, we have for any integer $m\geq 0$ that
$$N_{d^{T_n}_m}(\e)\leq C_n\frac{q_n}{\e^2},$$
where $C_n>0$ is a constant that dependents on $\e_n$ and $H_{n-1}$ but independent of $m$ and $\e$.
\end{lemma}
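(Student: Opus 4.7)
The strategy is to exploit the circle-action structure $T_n = H_n\circ R_{\a_{n+1}}\circ H_n^{-1}$ in which $R_{\a_{n+1}}$ is an isometric horizontal translation of $\T^2$. Consequently $T_n$ preserves the foliation of $\T^2$ by the curves $\mathcal{L}_c := H_n(\T\times\{c\})$ and acts on each leaf as a rotation in the $H_n$-parameter. Bowen $(\e,m)$-balls can therefore be constructed as tubes adapted to this foliation, with along-leaf and transverse sizes controlled separately. The gain over the crude bound $N_{d^{T_n}_m}(\e)\leq C\vertiii{H_n}_1^4/\e^2$ from Lemma \ref{lem:estimationOfBowenBalls} will come from replacing a global Lipschitz bound by an integrated one.

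The key quantitative input is a refined estimate on $h_n$. From \eqref{eq:hnUntwisted} and Lemma \ref{lem:derivativeNormEstimateForQuasiRotations}, on the main region $[0,1/q_n-1/q_n^2]\times[0,1]$ of $\Delta^0_{q_n}$ one has $|\partial_x h_n|=O(q_n)$, on the narrow region of $x$-width $1/q_n^2$ one has $|\partial_x h_n|=O(q_n^2)$, and uniformly $|\partial_y h_n|=O(1)$ (with constants depending on $\e_n$ only). Integrating along a horizontal line inside a single fundamental domain of $\eta_{q_n}$ and summing over the $q_n$ domains yields
\begin{align*}
L_c \;:=\; \int_0^1 |\partial_x H_n(s,c)|\,ds \;\leq\; \vertiii{H_{n-1}}_1\cdot\bigl(Cq_n\cdot 1 + Cq_n^2\cdot q_n^{-1}\bigr) \;\leq\; C'_n\, q_n,
\end{align*}
where $C'_n$ depends only on $\vertiii{H_{n-1}}_1$ and $\e_n$; similarly $\|\partial_y H_n\|_0\leq C'_n$.

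I would then cover $\T^2$ by a product construction. Partition the source into $\lceil 3C'_n/\e\rceil$ horizontal strips of height $\e/(3C'_n)$, so that images of two strips are at transverse Euclidean distance $\leq \e/3$. For each leaf $\mathcal{L}_c$, cover its image by arcs of Euclidean length $\e/3$; by the bound on $L_c$, this requires $O(L_c/\e)=O(C'_n q_n/\e)$ arcs. The product strip$\times$arc yields at most $C_n q_n/\e^2$ candidate tubes in $\T^2$, with $C_n$ depending only on $\vertiii{H_{n-1}}_1$ and $\e_n$, as desired.

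The main obstacle I foresee is verifying that each strip-arc tube is a Bowen $(\e,m)$-ball for every $m$. Because $T_n$ acts on each leaf as a rigid rotation in the $H_n$-parameter but not in Euclidean arc-length, an arc of fixed Euclidean length $\e/3$ lying in the main region can expand beyond $\e$ once rotated into the narrow region, where $|\partial_x H_n|=O(q_n^2)$. To close this gap, I would replace uniform Euclidean arcs by adaptive parameter-intervals of infinitesimal length $\e/(3|\partial_x H_n(s,c)|)$, so the instantaneous Euclidean diameter is $\leq \e/3$ pointwise; summing gives $3L_c/\e=O(C'_n q_n/\e)$ intervals per leaf, preserving the total count. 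A sliding-window estimate, using that each orbit of $R_{\a_{n+1}}$ visits the narrow region on only a $1/q_n$ fraction of steps and that $\varphi_{\e_n}$ has a uniform structure on each narrow piece, should then confirm the Bowen $(\e,m)$-condition uniformly in $m$ and complete the bound $N_{d^{T_n}_m}(\e)\leq C_n q_n/\e^2$.
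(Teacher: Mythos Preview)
Your leaf-and-arclength strategy has a genuine gap that the proposed ``sliding-window'' patch does not close. The Bowen metric is a \emph{supremum} over $0\le t<m$, so it is not enough to control the Euclidean diameter of $H_n(I\times\{c\})$ at time $0$; you must control $\operatorname{diam}\bigl(H_n(R^t_{\a_{n+1}}I\times\{c\})\bigr)$ for \emph{every} $t$. Your adaptive choice $\delta s\approx \e/|\partial_x H_n(s,c)|$ only bounds the time-$0$ diameter. Concretely, take an interval $I$ of length $\e/(Cq_n)$ lying in the main region. There exist $t$ for which $R^t_{\a_{n+1}}I$ straddles one narrow piece (of $x$-width $1/q_n^2$), and then
\[
\operatorname{length}\bigl(h_n(R^t_{\a_{n+1}}I\times\{c\})\bigr)\ \ge\ \int_{\text{narrow piece}}|\partial_x h_{n,2}|\,ds\ \asymp\ q_n^2\cdot\frac{1}{q_n^2}\ =\ O(1),
\]
which is not $O(\e)$. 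That the orbit visits the narrow region only a $1/q_n$-fraction of the time is irrelevant for a supremum. If instead you take uniform parameter-intervals of the safe length $\e/(Cq_n^2)$, you get $O(q_n^2/\e)$ intervals per leaf and a total of $O(q_n^2/\e^2)$, off by a factor $q_n$.

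The ingredient you are missing is the $\tfrac{1}{q_n}$-equivariance of $h_n$. The paper does not work leaf-by-leaf; it covers $\T^2$ by sets
\[
B_{i_1,i_2,i_3}=\bigcup_{j=0}^{d_n-1}\Bigl[\tfrac{i_1d_n+j}{q_n}+i_2r^{(x)}_n,\ \tfrac{i_1d_n+j}{q_n}+(i_2+1)r^{(x)}_n\Bigr)\times\bigl[i_3r^{(y)}_n,(i_3+1)r^{(y)}_n\bigr),
\]
with $r^{(x)}_n\asymp \e/q_n^2$ (the \emph{worst-case} narrow-region scale), $r^{(y)}_n\asymp \e$, and $d_n\asymp \e q_n/\|DH_{n-1}\|_0$. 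Because $h_n R_{1/q_n}=R_{1/q_n}h_n$, one has for every $t$
\[
h_n\bigl(R^t_{\a_{n+1}}B_{i_1,i_2,i_3}\bigr)=\bigcup_{j=0}^{d_n-1}R_{j/q_n}\bigl(h_n(R^t_{\a_{n+1}}B^0)\bigr),
\]
where $B^0$ is a single $r^{(x)}_n\times r^{(y)}_n$ rectangle. Each piece has diameter $<\e/(2\|DH_{n-1}\|_0)$ \emph{uniformly in $t$} (this is where $r^{(x)}_n\asymp \e/q_n^2$ is used), and the $d_n$ translates span at most $d_n/q_n<\e/(4\|DH_{n-1}\|_0)$ horizontally; applying $H_{n-1}$ then keeps the whole image inside one $\e$-ball. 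Counting gives $N_{d^{T_n}_m}(\e)\le (q_n/d_n)\cdot(1/(q_nr^{(x)}_n))\cdot(1/r^{(y)}_n)\le C_n\,q_n/\e^3$. The equivariance grouping is precisely what recovers the missing factor $q_n$ that your leaf argument cannot.
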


\begin{proof}
Note that from Lemma \ref{lem:estimateQ(n+1)vsQn}, $\prod_{i=1}^{n-1}q_i^4<<q_n$. Let $d_n$ be the largest integer such that
	\begin{equation} \label{eq:dH0}
	\norm{DH_{n-1}}_0 \cdot \frac{d_n}{q_n} \leq \frac{\e}{4}.
	\end{equation}
	Since $d_n$ is the largest possible integer satisfying this condition, we also get
	\begin{equation}\label{eq:d0}
	d_n \geq \frac{\e}{4 \cdot \norm{DH_{n-1}}_0} \cdot q_n -1 \geq \frac{\e}{8 \cdot \norm{DH_{n-1}}_0} \cdot q_n
	\end{equation}
	for $n$ sufficiently large. Moreover, we define
	\begin{equation}\label{eq:rnxrny}
r^{(x)}_n \coloneqq \frac{\varepsilon}{8q^2_n \cdot \norm{D\varphi_{\e_n}}_0 \cdot \norm{DH_{n-1}}_0} \ \text{ and } \  r^{(y)}_n \coloneqq \frac{\varepsilon}{8\cdot \norm{D\varphi_{\e_n}}_0  \cdot \norm{DH_{n-1}}_0}.
\end{equation}
	Using these numbers we define the following sets
	$$B_{i_1,i_2,i_3} \coloneqq \bigcup^{d_n-1}_{j=0} \Big[ \frac{i_1d_n +j}{q_n}+ i_2 \cdot r^{(x)}_n, \frac{i_1d_n +j}{q_n}+ (i_2+1) \cdot r^{(x)}_n \Big) \times \Big[ i_3 \cdot r^{(y)}_n , (i_3 +1) \cdot r^{(y)}_n \Big)$$
	for $0 \leq i_1 < \lfloor \frac{q_n}{d_n} \rfloor$, $0\leq i_2 < \lfloor \frac{1}{q_n \cdot r^{(x)}_n} \rfloor$, and $0\leq i_3 < \lfloor \frac{1}{r^{(y)}_n} \rfloor$.
	Notice that \eqref{eq:hnUntwisted} and \eqref{eq:rnxrny} imply that
	$$\operatorname{diam}\left( h_n \left( [i \cdot r^{(x)}_n , (i+1) \cdot r^{(x)}_n) \times [j \cdot r^{(y)}_n, (j+1) \cdot r^{(y)}_n ) \right) \right)
	 <\frac{\e}{2 \cdot \norm{DH_{n-1}}_0}$$
	for any $i,j \in \Z$. Exploiting the $1/q_n$-equivariance of $h_n$, this yields for any $t\in \N$ that
	$$\operatorname{diam} \left( T^t_n \left(H_n(B_{i_1,i_2,i_3})\right) \right) = \text{diam} \left(H_{n-1} \circ h_n \circ R^t_{\a_{n+1}} (B_{i_1,i_2,i_3})\right) <\e$$
	by the definition of $d_n$ in (\ref{eq:dH0}). Hence, the points in a set $H_n(B_{i_1,i_2,i_3})$ lie in one $\e$-Bowen ball. Thus, for any $m\in \N$ we obtain
	$$N_{d^{T_n}_m} (\e)\leq \frac{q_n}{d_n} \cdot \frac{1}{q_nr^{(x)}_n} \cdot \frac{1}{r^{(y)}_n} \leq \frac{1}{\varepsilon^3}\cdot \left(8\cdot \norm{DH_{n-1}}_0\right)^3 \cdot \norm{D\varphi_{\e_n}}_0^2 \cdot q_n,$$
	where we used (\ref{eq:d0}) in the last step.
\end{proof}

\begin{lemma}\label{lem:FSWSpanning}
For any given $\e>0$ and any $n\in\N$, we have
\begin{align*}
N_{d^{T}_m}(\e)\leq C_n\frac{q_n}{\e^2}
\end{align*}
where $0\leq m \leq l_{n+1}'q_{n+1}$ and $C_n>0$ is a constant that dependents on $\e_n$ and $H_{n-1}$ but is independent of $m$ and $\e$.
\end{lemma}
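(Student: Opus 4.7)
The plan is to simply combine the two main tools already at our disposal: the $n$-th stage spanning estimate from Lemma \ref{lem:FSWSpanningAtn-thStage} and the general upgrading principle from Proposition \ref{prop:upgradeTntoTForSpanningSets}. Since we have arranged the AbC construction so that $T_n \to T$ with uniform control up to time $l'_{n+1} q_{n+1}$ (this is exactly what Lemmata \ref{lem:convergenceCriterion2}, \ref{lem:convergenceCriterion3} and the requirement \eqref{eq:equation ln condition} buy us), the transition from $T_n$ to the limit $T$ is essentially for free.

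First, I would invoke Proposition \ref{prop:upgradeTntoTForSpanningSets} in the regime $0 \le m \le l'_{n+1} q_{n+1}$ to write
\begin{equation*}
N_{d^T_m}(\e) \le N_{d^{T_n}_m}(\e/2).
\end{equation*}
Then I would apply Lemma \ref{lem:FSWSpanningAtn-thStage} at scale $\e/2$ (which is allowed for any $m \ge 0$) to obtain
\begin{equation*}
N_{d^{T_n}_m}(\e/2) \le C_n \cdot \frac{q_n}{(\e/2)^2} = \frac{4 C_n \, q_n}{\e^2}.
\end{equation*}
Absorbing the factor of $4$ into a redefined constant (still depending only on $\e_n$ and $H_{n-1}$, not on $m$ or $\e$) yields the desired bound.

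There is really no substantive obstacle here: the hard work was done in establishing Proposition \ref{prop:upgradeTntoTForSpanningSets}, whose proof used the triangle inequality together with the quantitative proximity estimate $d_0(T^i_{n-1}, T^i) \le 3/(k_n q_n)$ from Lemma \ref{lem:convergenceCriterion3}. The only point one should double-check is that the constant obtained is genuinely independent of $m$ (it is, since the constant in Lemma \ref{lem:FSWSpanningAtn-thStage} is), and that the restriction $0 \le m \le l'_{n+1} q_{n+1}$ is exactly the hypothesis required by the upgrading proposition.
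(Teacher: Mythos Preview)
Your proposal is correct and is exactly the paper's own argument: the paper's proof consists of the single sentence ``Follows immediately from Lemma \ref{lem:FSWSpanningAtn-thStage} and Proposition \ref{prop:upgradeTntoTForSpanningSets},'' which is precisely the combination you spell out. There is nothing to add.
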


\begin{proof}

Follows immediately from Lemma \ref{lem:FSWSpanningAtn-thStage} and Proposition \ref{prop:upgradeTntoTForSpanningSets}.
\end{proof}

\subsection{Upper topological slow entropy}

\begin{theorem}\label{thm:untwistedTopInter}
There exists an untwisted ergodic $C^\infty$ Anosov-Katok diffeomorphism $T$ isomorphic to an irrational translation of a circle constructed using parameters specified in \eqref{eq:abcparameters} with $m_n=n-1$, $l_n'$ as in \eqref{eq:ln'} with $r=4$, $l_n$ satisfying \eqref{eq:equation ln condition}, and conjugacies specified by \eqref{eq:hnUntwisted} and \eqref{eq:anosov-katokUntwisted}, such that the upper topological slow entropy is as follows,
\begin{equation}
\begin{aligned}
%& \uent^{\top}_{a_m^{\pol}(t)}(T)= 0 \\
& \uent^{\top}_{a_m^{\operatorname{int1,4}}(t)}(T)=1  \\
%& \uent^{\top}_{a_m^{\log}(t)}(T)=\infty
\end{aligned}
\end{equation}
\end{theorem}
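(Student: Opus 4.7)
The plan is to extract the value $1$ from two matching ingredients already established in this section: the separation lower bound (Lemma \ref{lem:FSWSeperation}) at the special times $m=q_{n+1}$, and the covering upper bound (Lemma \ref{lem:FSWSpanning}) at all intermediate times $m\in(l_n'q_n,\,l_{n+1}'q_{n+1}]$. These will be paired with the two evaluations of $a_m^{\operatorname{int1,4}}(t)$ computed in \eqref{eq:intermediateScaleAtQ(n+1)} and \eqref{eq:intermediateScaleAtQ(n)}.

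First, I verify that the chosen parameters actually produce a $C^\infty$ untwisted AbC diffeomorphism of the desired type. Since $m_n=n-1\to\infty$ and $\{l_n'\},\{l_n\}$ are arranged to satisfy \eqref{eq:equation ln' convergence}--\eqref{eq:equation ln condition} (using Lemma \ref{lem:estimateQ(n+1)vsQn} together with Lemma \ref{lem:upperBoundOnDerivative} to control $\vertiii{H_n}_{m_n+1}$), Lemma \ref{lem:convergenceOfAbC} together with Theorem \ref{thm:UntwistedRegularity} guarantees that $T_n\to T$ in $C^\infty$, with $T$ ergodic and measure-theoretically isomorphic to an irrational rotation of the circle. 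Note $q_{n+1}=q_n^{n^4}$ from Lemma \ref{lem:estimateQ(n+1)vsQn}.

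For the lower bound $\uent^{\top}_{a_m^{\operatorname{int1,4}}(t)}(T)\geq 1$, I take $K=M$ and restrict to the subsequence $m=q_{n+1}$. By Lemma \ref{lem:FSWSeperation} we have $S_{d^T_{q_{n+1}}}(\e)\geq C q_n/\e$, while \eqref{eq:intermediateScaleAtQ(n+1)} gives $a^{\operatorname{int1,4}}_{q_{n+1}}(t)=q_n^{t}$. Hence
\[
\frac{S_{d^T_{q_{n+1}}}(\e)}{a^{\operatorname{int1,4}}_{q_{n+1}}(t)}\geq \frac{C\,q_n^{\,1-t}}{\e},
\]
which has positive $\limsup$ for every $t\leq 1$ and every sufficiently small $\e>0$. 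Thus every $t\leq 1$ lies in $S_1(\e,M)$, so $\sup S_1(\e,M)\geq 1$ for small $\e$, giving the lower bound after passing to the limit $\e\to 0$.

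For the upper bound $\uent^{\top}_{a_m^{\operatorname{int1,4}}(t)}(T)\leq 1$, fix $t>1$ and, for each sufficiently large $m$, let $n=n(m)$ be the unique integer with $l_n'q_n<m\leq l_{n+1}'q_{n+1}$. Lemma \ref{lem:FSWSpanning} provides $N_{d^T_m}(\e)\leq C_n q_n/\e^2$, where $C_n$ depends only on $\e_n$ and $H_{n-1}$ and is therefore polynomially bounded in $q_{n-1}$ via Lemma \ref{lem:upperBoundOnDerivative} and Lemma \ref{lem:submultiplicative}. Monotonicity of the scaling function together with the lower estimate in \eqref{eq:intermediateScaleAtQ(n)} yields
\[
a^{\operatorname{int1,4}}_m(t)\geq a^{\operatorname{int1,4}}_{q_n^{(n-1)^4}}(t)\geq q_n^{\,(n-1)^4 t/n^4}.
\]
Combining these,
\[
\frac{N_{d^T_m}(\e)}{a^{\operatorname{int1,4}}_m(t)}\leq \frac{C_n\,q_n^{\,1-(n-1)^4 t/n^4}}{\e^2}.
\]
Since $(n-1)^4/n^4\to 1$ and $t>1$, the exponent tends to $1-t<0$, while the prefactor $C_n$, being polynomial in $q_{n-1}$, is dwarfed by any positive power of $q_n=q_{n-1}^{(n-1)^4}$. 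Hence the right-hand side tends to $0$ as $m\to\infty$, so $t\notin N_1(\e,M)$. Therefore $N(\e,M)\leq 1$, and letting $\e\to 0$ gives $\uent^{\top}_{a_m^{\operatorname{int1,4}}(t)}(T)\leq 1$.

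The main obstacle is the second step: pairing $m$ with the correct AbC stage $n(m)$ so that Lemma \ref{lem:FSWSpanning} still applies while the scaling function has already grown enough, and verifying that the polynomial-in-$q_{n-1}$ constant $C_n$ is absorbed by the super-exponential growth $q_{n+1}=q_n^{n^4}$. The delicate matching between $l_n'q_n=q_n^{(n-1)^4}$ and the evaluation $a^{\operatorname{int1,4}}_{q_n^{(n-1)^4}}(t)\geq q_n^{(n-1)^4 t/n^4}$ from \eqref{eq:intermediateScaleAtQ(n)} is precisely what forces the exponent to converge to $1-t$ rather than something weaker, and thereby pins the upper slow entropy exactly at $1$.
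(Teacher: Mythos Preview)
Your proof is correct and follows the same approach as the paper: pairing Lemma \ref{lem:FSWSeperation} at $m=q_{n+1}$ with \eqref{eq:intermediateScaleAtQ(n+1)} for the lower bound, and Lemma \ref{lem:FSWSpanning} with \eqref{eq:intermediateScaleAtQ(n)} for the upper bound. The only cosmetic difference is that the paper splits the upper-bound range into two subintervals $(q_n,\,l_n'q_n]$ and $(l_n'q_n,\,q_{n+1}]$ (applying Lemma \ref{lem:FSWSpanning} with indices $n-1$ and $n$ respectively), whereas you use the single interval $(l_n'q_n,\,l_{n+1}'q_{n+1}]$ throughout; your version is slightly more streamlined and your explicit remark that $C_n$ is polynomial in $q_{n-1}$ and hence negligible against powers of $q_n$ fills a point the paper leaves implicit.
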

\begin{remark}\label{rem:compareDifferentScales}
Since Proposition \ref{prop:differentScales} implies that intermediate scale $a_m^{\operatorname{int1,4}}$ is faster than logarithmic scale but slower than polynomial scale, the fact that the AbC diffeomorphism's upper topological slow entropy is a finite positive number in the scale $a_m^{\operatorname{int1,4}}$ guarantees that its polynomial upper topological slow entropy is zero and logarithmic upper topological slow entropy is infinity.
\end{remark}
\begin{proof}
The proof of the theorem essentially follows by using Lemmas \ref{lem:FSWSeperation} and \ref{lem:FSWSpanning} to get estimates for the cardinality of $(m,\e)$ minimal covering and maximal separated sets. %First we consider $a_m^{\pol}(t)=n^t$, then for any $t$ and any choice of $\e>0$, there exists an increasing sequence of integers $\{m_k\}_{k\in \N}$ such that following supremum is realized as a limit
%$$\limsup_{m\to\infty}\frac{N_{d_m^T}(\e)}{a_m^{\pol}(t)}= \lim_{k\to\infty}\frac{N_{d_{m_k}^T}(\e)}{a_{m_k}^{\pol}(t)}$$
%If for a given $k$, $q_n < m_k\leq l_n'q_{n}=q_{n}^{(n-1)^4}$, then we have the estimate
%\begin{align*}
%\frac{N_{d_{m_k}^T}(\e)}{a_{m_k}^{\text{pol}}(t)}=\frac{N_{d_{m_k}^T}(\e)}{m_k^t}\leq \frac{C_{n-1}q_{n-1}}{\e^2q_{n}^t}.
%\end{align*}
%While on the other hand if  $l_n'q_{n}=q_{n}^{(n-1)^4}<m_k\leq q_{n+1}$, we have the estimate
%\begin{align*}
%\frac{N_{d_{m_k}^T}(\e)}{a_{m_k}^{\text{pol}}(t)}=\frac{N_{d_{m_k}^T}(\e)}{m_k^t}\leq \frac{C_{n}q_{n}}{\e^2q_{n}^{(n-1)^4t}}.
%\end{align*}
%Hence $\overline{\text{ent}}^{top}_{a_m^{\text{pol}}(t)}(T)= 0$ in this case.

In fact for any $t\leq 1$, Lemma \ref{lem:FSWSeperation} and \eqref{eq:intermediateScaleAtQ(n+1)} give that
$$\limsup_{m\to\infty}\frac{S_{d_m^T}(\e)}{a_m^{\operatorname{int1,4}}(t)}\geq \limsup_{n\to\infty}\frac{S_{d_{q_{n+1}}^T}(\e)}{a_{q_{n+1}}^{\text{int1,4}}(t)} \geq  \limsup_{n\to\infty}\frac{Cq_{n}}{\e q_n^t}>0.$$
Thus it is clear that $\uent^{\top}_{a_m^{\operatorname{int1,4}}(t)}(T)\geq 1$ in this case.

On the other hand, for any $m$ and $q_n < m\leq l_n'q_{n}=q_{n}^{(n-1)^4}$, Lemma \ref{lem:FSWSpanning} and \eqref{eq:intermediateScaleAtQ(n+1)} guarantee that
$$\frac{N_{d_{m}^T}(\e)}{a_{m}^{\operatorname{int1,4}}(t)}\leq \frac{N_{d_{m}^T}(\e)}{a_{q_n}^{\operatorname{int1,4}}(t)}\leq \frac{C_{n-1}q_{n-1}}{\e^2q_{n-1}^t}.$$
While if $q_{n}^{(n-1)^4}=l_n'q_n<m\leq q_{n+1}$, Lemma \ref{lem:FSWSpanning} and \eqref{eq:intermediateScaleAtQ(n)} give that
$$\frac{N_{d_{m}^T}(\e)}{a_{m}^{\operatorname{int1,4}}(t)}\leq \frac{N_{d_{m}^T}(\e)}{a_{l_n'q_n}^{\operatorname{int1,4}}(t)}\leq \frac{C_{n}q_{n}}{\e^2\big[q_n\big]^{\frac{(n-1)^4t}{n^4}}}.$$
Hence it is clear that $\uent^{\top}_{a_m^{\operatorname{int1,4}}(t)}(T)\leq 1$ in this case.

Altogether, we conclude $\uent^{\top}_{a_m^{\operatorname{int1,4}}(t)}(T)= 1$.
%Finally let us consider $a_m^{\text{log}}(t)=(\ln m)^t$. Then for any choice of $t>0$ and $\e>0$
%\begin{align*}
%\limsup_{m\to\infty}\frac{S_{d_m^T}(\e)}{a_m^{\text{log}}(t)}\geq \limsup_{n\to\infty}\frac{Cq_{n}}{\e (\ln q_{n+1})^t}>0
%\end{align*}
%since $q_{n+1}\leq q_n^{n^4}$ with our choice of parameters. Hence $\overline{\text{ent}}^{top}_{a_m^{\text{log}}(t)}(T)= \infty$.
\end{proof}

Recall in Lemma \ref{lem:estimateQ(n+1)vsQn}, with $m_n=n-1$, we can make any choice for $q_{n+1}$ as long as it is greater than $q_n^{n^4}$, and all the estimates still remains valid. The proofs of the following two theorems are almost identical with above one and thus we omit them.

\begin{theorem}\label{thm:untwistedTopIn}
There exists an untwisted ergodic $C^\infty$ Anosov-Katok diffeomorphism $T$ isomorphic to an irrational translation of a circle constructed using parameters specified in \eqref{eq:abcparameters} with $q_{n+1}=q_n^{q_n}$, $m_n=n-1$, $l_n'=q_n^{q_n-n}$, $l_n=q_n^{q_n-2}$, and conjugacies specified by \eqref{eq:hnUntwisted} and \eqref{eq:anosov-katokUntwisted}, such that the upper topological slow entropy with respect to the log scale is as follows,
$$\uent^{\top}_{a^{\ln}_m(t)}(T)=1.$$
\end{theorem}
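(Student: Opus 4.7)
The plan is to follow the same blueprint as the proof of Theorem \ref{thm:untwistedTopInter}, but with the intermediate scale replaced by the logarithmic scale $a^{\ln}_m(t)=(\ln m)^t$. The whole argument is driven by one numerical observation: since $q_{n+1}=q_n^{q_n}$, we have $\ln q_{n+1}=q_n\ln q_n$, and similarly $\ln(l'_n q_n)=(q_n-n+1)\ln q_n$. Hence the factor $q_n$ appearing in the separation and covering bounds of Lemmas \ref{lem:FSWSeperation} and \ref{lem:FSWSpanning} corresponds on the log scale to exponent essentially $1$, which is the content of the theorem. Before computing the entropy I would first check that the parameters $q_{n+1}=q_n^{q_n}$, $l_n'=q_n^{q_n-n}$, $l_n=q_n^{q_n-2}$, $m_n=n-1$ are admissible: the super-exponential growth of $q_n$ dominates the bound $\vertiii{H_n}_1\cdot\max\{\vertiii{H_n}^{n}_n,\vertiii{H_n^{-1}}^{n}_n\}\le q_n^{n^4-2}$ from Lemma \ref{lem:estimateQ(n+1)vsQn} with enormous slack, so \eqref{eq:equation ln' convergence}, \eqref{eq:equation ln' are summable} and \eqref{eq:equation ln condition} all hold. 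Lemma \ref{lem:convergenceOfAbC} then yields convergence in $C^\infty$, and the standard AbC argument used in Theorem \ref{thm:UntwistedRegularity} (with the same untwisted conjugacies $h_n$) gives ergodicity and measure-theoretic isomorphism to an irrational rotation.

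For the lower bound $\uent^{\top}_{a^{\ln}_m(t)}(T)\ge 1$, I would fix $t\in(0,1)$ and evaluate the separation estimate along the subsequence $m=q_{n+1}$. By Lemma \ref{lem:FSWSeperation} and the identity $(\ln q_{n+1})^t=(q_n\ln q_n)^t$,
\[
\frac{S_{d^T_{q_{n+1}}}(\e)}{(\ln q_{n+1})^t}\ \ge\ \frac{C\,q_n^{1-t}}{\e\,(\ln q_n)^t}\ \longrightarrow\ \infty,
\]
so every $t<1$ belongs to $S_1(\e,\T^2)$, which forces $\uent^{\top}_{a^{\ln}_m(t)}(T)\ge 1$.

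For the upper bound $\uent^{\top}_{a^{\ln}_m(t)}(T)\le 1$, I would fix $t>1$ and $\e>0$ and, given $m$, locate the unique $n$ with $l'_n q_n<m\le l'_{n+1}q_{n+1}$. Lemma \ref{lem:FSWSpanning} yields $N_{d^T_m}(\e)\le C_n q_n/\e^2$, and the lower bound $\ln m>(q_n-n+1)\ln q_n$ gives
\[
\frac{N_{d^T_m}(\e)}{(\ln m)^t}\ \le\ \frac{C_n\,q_n}{\e^2\,(q_n-n+1)^t\,(\ln q_n)^t},
\]
which tends to $0$ provided $C_n$ grows strictly slower than $q_n^{t-1}$ for every $t>1$. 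The main obstacle lies exactly here: one must control $C_n$, which involves $\vertiii{H_{n-1}}_1$ and $\|D\varphi_{\e_n}\|_0$, against the tiny logarithmic gain of this scale. The saving grace is the super-exponential growth $q_n=q_{n-1}^{q_{n-1}}$: from Lemma \ref{lem:upperBoundOnDerivative} and submultiplicativity one has $\vertiii{H_{n-1}}_1\le q_{n-1}^{\mathrm{poly}(n)}$ and $\|D\varphi_{\e_n}\|_0$ is at most polynomial in $1/\e_n$, hence in $q_n$ via \eqref{eq:qEpsilon}, so $\log_{q_n} C_n\to 0$ and the ratio above vanishes. Combining the two bounds gives $\uent^{\top}_{a^{\ln}_m(t)}(T)=1$, and the proofs of the corresponding statements for $\mathbb{D}^2$ and $\mathbb{A}$ follow by the equivariance of the construction.
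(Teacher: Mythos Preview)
Your approach is exactly the one the paper intends (the paper omits the proof and says it is ``almost identical'' to that of Theorem~\ref{thm:untwistedTopInter}). The verification of admissibility of the parameters, the lower bound via Lemma~\ref{lem:FSWSeperation} along $m=q_{n+1}$, and the upper bound via Lemma~\ref{lem:FSWSpanning} on the single range $l'_nq_n<m\le l'_{n+1}q_{n+1}$ are all correct.

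There is, however, one step where your justification does not quite close. You write that $\|D\varphi_{\e_n}\|_0$ is polynomial in $1/\e_n$, ``hence in $q_n$ via \eqref{eq:qEpsilon}, so $\log_{q_n}C_n\to0$''. But \eqref{eq:qEpsilon} only gives $1/\e_n<q_n$, so this route yields at best $C_n\le q_n^{c}$ for some fixed $c>0$, and then $\log_{q_n}C_n\le c$ does \emph{not} tend to~$0$; the upper bound would then fail for $1<t<1+c$. The correct reason $C_n=q_n^{o(1)}$ is that the sequence $\{\e_n\}$ is fixed \emph{before} the AbC parameters (see the ``Additional requirements'' preceding \eqref{eq:en4}--\eqref{eq:qEpsilon}), so $\|D\varphi_{\e_n}\|_0$ is a constant depending only on $n$ through~$\e_n$; taking for instance $\e_n=1/n^4$ one gets $\|D\varphi_{\e_n}\|_0=\mathrm{poly}(n)$. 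Combined with your bound $\vertiii{H_{n-1}}_1\le q_{n-1}^{\mathrm{poly}(n)}$ and the relation $\ln q_n=q_{n-1}\ln q_{n-1}$, this gives
\[
\log_{q_n}C_n\ \le\ \frac{\mathrm{poly}(n)\ln q_{n-1}+O(\ln n)}{q_{n-1}\ln q_{n-1}}\ \longrightarrow\ 0,
\]
and your upper-bound argument then goes through for every $t>1$. With this fix the proof is complete.
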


On the other hand, it is possible to get nonzero finite upper topological slow entropy with respect to the polynomial scale by slowing down the speed of convergence. Unfortunately this also results in a lower regularity AbC diffeomorphism.
\begin{theorem}\label{thm:untwistedTopPol}
For any integer $K>1$, there exists an untwisted ergodic $C^{K-1}$ Anosov-Katok diffeomorphism $T$ isomorphic to an irrational translation of a circle constructed using parameters specified in \eqref{eq:abcparameters} with $q_{n+1}=q_n^{K^4}$, $m_n=K-1$, $l_n'=q_n^{K^4-K-3}$, $l_n=q_n^{K^4-2}$ satisfying \eqref{eq:equation ln condition}, and conjugacies specified by \eqref{eq:hnUntwisted} and \eqref{eq:anosov-katokUntwisted}, such that the upper topological slow entropy with respect to the polynomial scale is as follows,
$$\frac{1}{K^4}\leq \uent^{\top}_{a_m^{\pol}(t)}(T)<\frac{1}{K^4-K-2}.$$
\end{theorem}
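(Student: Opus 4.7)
The plan mirrors the proof of Theorem~\ref{thm:untwistedTopInter}, but with parameters tuned for the polynomial scale rather than the intermediate one. With the constant choice $m_n=K-1$, Lemma~\ref{lem:convergenceOfAbC} produces a $C^{K-1}$ limit diffeomorphism $T$, and Theorem~\ref{thm:UntwistedRegularity} supplies ergodicity together with measure-theoretic isomorphism to an irrational rotation. The choice $l_n=q_n^{K^4-2}$ is exactly the bound produced by Lemma~\ref{lem:estimateQ(n+1)vsQn} with $\gamma_n=K$ and hence meets \eqref{eq:equation ln condition}. The sequence $l_n'=q_n^{K^4-K-3}$ is summable since $K^4-K-3\geq 11$ for $K\geq 2$, and the derivative bound \eqref{eq:equation ln' convergence} is verified by unwinding the submultiplicative computation at the heart of Lemma~\ref{lem:estimateQ(n+1)vsQn}.

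For the lower bound, Lemma~\ref{lem:FSWSeperation} supplies $S_{d^T_{q_{n+1}}}(\e)\geq Cq_n/\e$ with $C$ independent of $n$ and $\e$. Combined with $q_{n+1}=q_n^{K^4}$, this yields
\[
\limsup_{m\to\infty}\frac{S_{d^T_m}(\e)}{a^{\pol}_m(t)}\;\geq\;\limsup_{n\to\infty}\frac{Cq_n/\e}{q_n^{K^4 t}}\;>\;0\quad\text{for every }t\leq 1/K^4,
\]
so $1/K^4\in N_1(\e,\T^2)$ and $\uent^{\top}_{a^{\pol}_m(t)}(T)\geq 1/K^4$.

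For the upper bound I apply Lemma~\ref{lem:FSWSpanning} to get $N_{d^T_m}(\e)\leq C_n q_n/\e^2$ whenever $m\leq l_{n+1}'q_{n+1}$, and split $(q_n,q_{n+1}]$ into the two subranges $(q_n,\,q_n^{K^4-K-2}]$ and $(q_n^{K^4-K-2},\,q_n^{K^4}]$. Using monotonicity of $a^{\pol}_m(t)$ in $m$, these produce respectively
\[
\frac{N_{d^T_m}(\e)}{a^{\pol}_m(t)}\;\leq\;\frac{C_{n-1}}{\e^2}\,q_{n-1}^{1-K^4 t}\qquad\text{and}\qquad\frac{N_{d^T_m}(\e)}{a^{\pol}_m(t)}\;\leq\;\frac{C_n}{\e^2}\,q_n^{1-(K^4-K-2)t}.
\]
For every $t>1/(K^4-K-2)$ both bounds tend to $0$ as $n\to\infty$: the factor $q_n^{1-(K^4-K-2)t}$ decays super-exponentially and eclipses the polynomial growth of $C_n$ in $q_{n-1}$ and $1/\e_n$. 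Hence $\uent^{\top}_{a^{\pol}_m(t)}(T)\leq 1/(K^4-K-2)$.

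The main obstacle is upgrading this to the \emph{strict} upper inequality claimed in the statement. At the critical exponent $t=1/(K^4-K-2)$ the second estimate reduces to $C_n/\e^2$, which diverges in $n$ because both $\norm{DH_{n-1}}_0$ and $\norm{D\varphi_{\e_n}}_0$ blow up, so a priori the value $t=1/(K^4-K-2)$ could still lie in $N_1(\e,\T^2)$. Closing this gap requires sharpening the covering estimate in the transitional range $(l_n'q_n,q_{n+1}]$ with a prefactor independent of $\e_n$, which can be pursued by exploiting the explicit block structure of $h_n$ in \eqref{eq:hnUntwisted} and covering the two rotation zones on $[0,\tfrac{1}{q_n}-\tfrac{1}{q_n^2}]$ and $[\tfrac{1}{q_n}-\tfrac{1}{q_n^2},\tfrac{1}{q_n}]$ by partitions tailored to them, rather than by the generic partition used in the proof of Lemma~\ref{lem:FSWSpanning}. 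Once the prefactor is tamed, the strict inequality follows after letting $\e\to 0$.
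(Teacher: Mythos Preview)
Your lower-bound argument is correct and matches what the paper intends. The gap is in the upper bound. You assert that for every $t>1/(K^4-K-2)$ the two displayed ratios tend to $0$ because ``$q_n^{1-(K^4-K-2)t}$ decays super-exponentially and eclipses the polynomial growth of $C_n$ in $q_{n-1}$.'' This reasoning is incorrect in the fixed-exponent regime $q_{n+1}=q_n^{K^4}$. Tracing the proof of Lemma~\ref{lem:FSWSpanningAtn-thStage} gives $C_n\asymp\norm{DH_{n-1}}_0^3\norm{D\varphi_{\e_n}}_0^2$, and since $\norm{DH_{n-1}}_0\lesssim\prod_{j\le n-1}q_j^{2}\asymp q_n^{2/(K^4-1)}$ (a geometric series with ratio $K^4$), one has $C_n\asymp q_n^{6/(K^4-1)}$ up to factors that are sub-polynomial in $q_n$. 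Thus both $C_n$ and $q_n^{1-(K^4-K-2)t}$ are \emph{fixed} powers of $q_n$; nothing is ``eclipsed,'' and the product $C_n\,q_n^{1-(K^4-K-2)t}$ tends to $0$ only when $t>\dfrac{K^4+5}{(K^4-1)(K^4-K-2)}$, which is strictly larger than $\dfrac{1}{K^4-K-2}$. This is exactly the place where the present theorem differs from Theorem~\ref{thm:untwistedTopInter}: there $q_{n+1}=q_n^{n^4}$, so $C_n=q_n^{o(1)}$ and can be absorbed, whereas with a constant exponent $K^4$ it cannot.

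Because of this, your subsequent discussion of the ``main obstacle'' is aimed at the wrong target. You propose to sharpen the covering lemma so as to remove the $\e_n$-dependent prefactor $\norm{D\varphi_{\e_n}}_0^2$, but that factor is only polynomial in $n$ and is harmless; the growth that spoils the bound is $\norm{DH_{n-1}}_0^3$, which your proposed refinement does not touch. To reach the upper bound stated in the theorem you would need either a covering argument that avoids losing three full factors of $\norm{DH_{n-1}}_0$, or a different accounting that exploits the block structure of $H_{n-1}$ (not just of $h_n$) to replace the global Lipschitz constant by something of order $q_n^{o(1)}$. The paper omits this proof, simply declaring it ``almost identical'' to Theorem~\ref{thm:untwistedTopInter}, so there is no explicit argument in the paper to compare against on this point.
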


\begin{remark}
We conclude this section by observing that the separation mechanism described in this section can be modified to obtain uniquely ergodic and weakly mixing examples. However, in the next section we describe a different separation mechanism for unique ergodicity and in Section \ref{sec:weakMixingAbC} we describe a third mechanism for separation. It is worth to notice that the separation mechanism described in the third section allows computation of measure-theoretic slow entropy in addition to upper topological slow entropy.
\end{remark}

\section{Topological slow entropy for some uniquely ergodic AbC diffeomorphisms}\label{sec:uniquelyErgodicAbC}

\subsection{The AbC construction}
In the uniquely ergodic version of the Fayad-Saprykina-Windsor \cite{FSW} AbC construction on $\T^2$, the conjugation map $h_n$ at the $n$-th stage is given by
$$h_{n}(x,y)=\left(\frac{1}{q_n}[\varphi_{\e_n}]_1(q_nx, y+q_nx), [\varphi_{\e_n}]_2(q_nx,y+q_nx)\right).$$
The general idea here is to apply a `shearing' in the vertical direction to enable the control over all orbits for ``most'' of the time. In our explicit construction, some adjustments have been made to the `shearing' component of the conjugating diffeomorphism, which will destroy some equivariance, that may arise in the original construction, in order to simplify the computations of slow entropy.

More precisely, our conjugation diffeomorphism is defined as
\begin{equation}\label{eq:h_n-ue}
h_{n}(x,y)=\phi_{q_n,\e_n} \circ D_{\psi_n} (x,y),
\end{equation}
where $\phi_{q_n,\e_n}$ is the quasi-rotation from Section \ref{subsec:Anosov-Katok} and the diffeomorphism $D_{\psi_n}:\T^2 \to \T^2$ is defined by $D_{\psi_n}(x,y)=(x,y+\psi_n(x))$ with a $C^{\infty}$ function $\psi_n:\T^1\to [0,1]$. Altogether this gives
$$h_{n}(x,y)= \left(\frac{1}{q_n}[\varphi_{\e_n}]_1(q_nx, y+\psi_{n}(x)), [\varphi_{\e_n}]_2(q_nx,y+\psi_n(x))\right).$$

While $\psi_n(x)=q_nx$ in \cite{FSW}, in our case the function $\psi_{n}(x)$ is a smooth approximation of a suitably chosen step function. We start the construction of $\psi_{n}(x)$ with a step function $\tilde{\psi}_{s,\e}(x)$ for any given $\e>0$ and $s\in \N$ of the form:
\begin{equation}
\begin{aligned}
\tilde{\psi}_{s,\e}(x) & = -3\e \cdot \left(\sum_{i=0}^{\lfloor\frac{1}{3\e}\rfloor-1}i \cdot\chi_{[is,(i+1)s)} + \sum_{i=\lfloor\frac{1}{3\e}\rfloor}^{2\lfloor\frac{1}{3\e}\rfloor-2} \left(2\lfloor\frac{1}{3\e}\rfloor-2-i\right) \cdot \chi_{[is,(i+1)s)} \right) \\
& = -3\e \cdot \left(\sum_{i=1}^{\lfloor\frac{1}{3\e}\rfloor-1}\chi_{[is,\infty)} - \sum_{i=\lfloor\frac{1}{3\e}\rfloor}^{2\lfloor\frac{1}{3\e}\rfloor-2} \chi_{[is,\infty)} \right),
\end{aligned}
\end{equation}
i.e. $s$ is the ``step length'' with $\tilde{\psi}_{s,\e}$ attaining a constant value. It is worth to point out that $\tilde{\psi}_{s,\e}$ can be considered to be a map from $[0,as]$ to $[0,1]$,
where
$$a=2\lfloor \frac{1}{3\e}\rfloor -1.$$

In order to approximate $\tilde{\psi}_{s,\e}$, we define $\rho: \mathbb{R}\to\mathbb{R}$ be an increasing smooth function that equals $0$ for $x\leq -1$ and $1$ for $x\geq 1$. Then we define the map  $\overline{\psi}_{s,\varepsilon}: \left[0,as\right]\rightarrow [0,1]$ by
\begin{equation}
\overline{\psi}_{s,\varepsilon}\left(x\right)= -3\e \cdot \left(\sum^{\lfloor\frac{1}{3\e}\rfloor-1}_{i=1} \rho \left(\frac{x}{\varepsilon}-\frac{is}{\varepsilon}\right) - \sum_{i=\lfloor\frac{1}{3\e}\rfloor}^{2\lfloor\frac{1}{3\e}\rfloor-2}\rho \left(\frac{x}{\varepsilon}-\frac{is}{\varepsilon}\right) \right).
\end{equation}
We note that $$\overline{\psi}_{s,\varepsilon}|_{\left[ks+\e,(k+1)s-\e \right]} = \tilde{\psi}_{s,\varepsilon}|_{\left[ks+\e,(k+1)s-\e \right]} $$ for every $0 \leq k \leq a-1$. Furthermore, we have the estimates that
\begin{equation}\label{eq:stepDeriEst}
\left\|D^l\overline{\psi}_{s,\varepsilon}\right\|_0 \leq \frac{3}{\varepsilon^{l-1}} \cdot \left\|D^l\rho\right\|_0,
\end{equation}
for any $l\in \N$. Here $\|\cdot\|_0$ refers to the standard supremum norm.
\begin{figure}[H]
\begin{center}
\includegraphics[trim=0cm 0cm 0cm 0cm, clip, width=14cm,height=7cm]{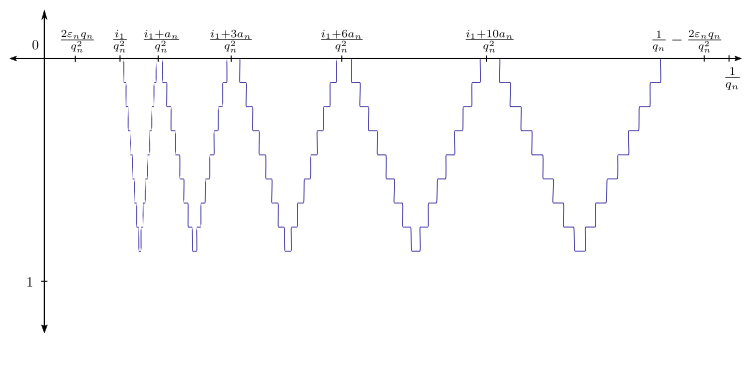}
\caption{A schematics diagram showing the function $\psi_n$.}
\label{figure psi}
\end{center}
\end{figure}

In our specific situation we consider $\overline{\psi}_{s,\varepsilon_n}$ defined on $[0,a_ns]$, where $a_n =2\lfloor \frac{1}{3\e_n}\rfloor -1$. Then we define $\psi_n:[0,\frac{1}{q_n}] \to [0,1]$ by
\[
\psi_n(x) = \begin{cases}
\tau_{n,s}(x), & \text{ if } x\in \left[ \frac{a_n \cdot s \cdot (s-1)+2i_1}{2q^2_n}, \frac{a_n \cdot s \cdot (s+1)+2i_1}{2q^2_n} \right] \text{ for some } s\in \{1,2,\dots ,s_1\}, \\
0 & \text{ otherwise},
\end{cases}
\]
where $\tau_{n,s}(x)=\overline{\psi}_{s,\e_n}\left(q^2_nx-i_1 - a_n \frac{s\cdot (s-1)}{2}\right)$ and the numbers $i_1,s_1\in \N$ are chosen such that $i_1\geq \lceil 2\varepsilon_nq_n \rceil$ and $i_1 + a_n \frac{s_1 (s_1+1)}{2} \leq q_n-\lceil 2\varepsilon_nq_n \rceil$ (see figure \ref{figure psi}). In particular, we have that $\psi_n=\text{id}$ on $\left[0,\frac{2\varepsilon_n}{q_n}\right]\cup \left[\frac{1-2\varepsilon_n}{q_n},\frac{1}{q_n}\right]$. Since $\psi_n$ coincides with the identity in a neighborhood of the boundary, we can extend it to a $C^{\infty}$ map $\psi_n : \T \to [0,1]$ with period $\frac{1}{q_n}$.

With the above conjugating diffeomorphisms, we define the AbC conjugacies as
\begin{equation}\label{eq:anosov-katok-uniquelyErgodic}
    T_n=H_n\circ R_{\a_{n+1}}\circ H_n^{-1},\qquad\text{where}\qquad H_n=h_1\circ\ldots\circ h_{n}.
\end{equation}

Similar to \cite{FSW} we obtain the following theorem using lemma \ref{lem:convergenceOfAbC}.
\begin{theorem}\label{the:ue}
The sequence of diffeomorphisms $\{T_n\}_{n\in \N}$ described in \eqref{eq:anosov-katok-uniquelyErgodic} converges to a uniquely ergodic diffeomorphisms $T$ of the torus that is measure-theoretical isomorphic to a circle rotation.
\end{theorem}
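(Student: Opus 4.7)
The plan is to follow the strategy developed in \cite{FSW}, adapting each step to the modified $\psi_n$ introduced in \eqref{eq:h_n-ue}. The statement has three components: $C^\infty$-convergence of the sequence $\{T_n\}_{n\in\N}$, measure-theoretic isomorphism of the limit to an irrational rotation, and unique ergodicity.

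For convergence, the main ingredient is a polynomial bound of the form $\max\{\vertiii{h_n}_k,\vertiii{h_n^{-1}}_k\}\leq C_{k,\e_n}\,q_n^{Ak}$, with constants depending on $k$ and $\e_n$ but not on $q_n$. This follows by applying Lemma \ref{lem:submultiplicative} to the decomposition $h_n=\phi_{q_n,\e_n}\circ D_{\psi_n}$: the quasi-rotation factor is handled by Lemma \ref{lem:derivativeNormEstimateForQuasiRotations}, while the shear factor is controlled by the derivative estimates \eqref{eq:stepDeriEst} together with the $q_n^2$-rescaling present in the definition of $\psi_n$. After choosing $l_n$ in accordance with \eqref{eq:equation ln condition}, Lemma \ref{lem:convergenceOfAbC} delivers the smooth limit $T$. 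The measure-theoretic isomorphism to the rotation $R_\alpha$ with $\alpha=\lim_n\alpha_n$ is then obtained by the standard approximation-by-conjugation argument: the cyclic partitions $\xi_n=H_n\eta_{q_n}$ are asymptotically permuted by $T$ compatibly with $R_\alpha$ on the circle factor, and since $\operatorname{diam}(\xi_n)\to 0$ in measure, the sub-$\sigma$-algebra they generate is the full Borel $\sigma$-algebra. This portion is insensitive to the specific form of $\psi_n$ and parallels \cite{FSW} essentially verbatim.

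The main obstacle is unique ergodicity. I would prove that for every $f\in C(\T^2,\R)$ and every $\delta>0$ there exists $N_0$ such that
\begin{equation*}
\sup_{x\in\T^2}\left|\frac{1}{N}\sum_{k=0}^{N-1}f(T^kx)-\int_{\T^2}f\,d\mu\right|<\delta
\end{equation*}
for every $N\geq N_0$. The strategy is to first reduce the analysis from $T$ to $T_n$ using the $C^0$-closeness afforded by \eqref{eq:abcparameters}, and then analyze $T_n$-orbits of length comparable to $l_n q_n$. Since $T_n$ cyclically permutes the vertical strips $H_n(\Delta^i_{q_n})$, the $x$-coordinate equidistributes along the circle factor after $q_n$ steps. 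Within a single strip, successive returns are shifted vertically by values of $\psi_n$ sampled along the orbit; the crucial feature is that the range of $\psi_n$ forms, up to an $\e_n$-error, a $3\e_n$-net of $[0,1]$ attained on intervals of comparable relative measure, owing to the block structure indexed by $s\in\{1,\dots,s_1\}$ and the piecewise-constant design of $\overline\psi_{s,\e_n}$. A combinatorial bookkeeping over these blocks then shows that after sufficiently many returns to a fixed strip the vertical coordinates of the orbit form a $3\e_n$-dense subset of the strip, uniformly in the starting point. Consequently the orbit averages of $f$ approximate $\int f\,d\mu$ up to an error governed by the modulus of continuity of $f$ at scale $\e_n$, uniformly in $x$. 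Letting $n\to\infty$ and exploiting the summability of $\{\e_n\}_{n\in\N}$ concludes the proof.
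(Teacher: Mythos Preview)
Your proposal is correct and follows the same approach as the paper, which itself gives essentially no proof beyond the sentence ``Similar to \cite{FSW} we obtain the following theorem using Lemma \ref{lem:convergenceOfAbC}.'' Your sketch is considerably more detailed than what the paper provides: you correctly identify that convergence comes from the polynomial norm bounds on $h_n$ (established in the paper as Lemma \ref{lem:boundOnDerivativeUniquelyErgodicCase}) fed into Lemma \ref{lem:convergenceOfAbC}, that the isomorphism argument is insensitive to the form of $\psi_n$, and that unique ergodicity hinges on the fact that the range of the modified $\psi_n$ still forms a $3\e_n$-net of $[0,1]$ so that the FSW uniform-equidistribution criterion goes through.
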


\subsubsection{Norm estimates and parameter growth}

\begin{comment}
{\bb Moreover, we have the following explicit norm estimate for any $l\in \N$:
\begin{equation}\label{eq:normPsi}
 q^{2l}_n \cdot \norm{D^l \overline{\psi}_{\e_n}}_0 \leq q^{2l}_n \cdot \frac{3}{\e^{l-1}_n} \cdot \norm{D^l \rho}_0.
\end{equation}

For the conjugation map $h_n=\phi_{q_n,\e_n} \circ D_{\psi_n}$ this allows the subsequent norm estimate.
\begin{lemma}
	Norm estimate $h_n$
\end{lemma}

\begin{proof}
	content...
\end{proof}}
\end{comment}

\begin{lemma}\label{lem:boundOnDerivativeUniquelyErgodicCase}
The conjugating diffeomorphisms satisfy the following norm estimates:
\begin{align*}
\max\{\vertiii{h_n}_k, \vertiii{h_n^{-1}}_k\}\leq C q_n^{4k^2+4k},
\end{align*}
where the constant $C$ is dependent on $k$ and $\e_n$  but not on $q_n$.
\end{lemma}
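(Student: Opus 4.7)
The plan is to decompose $h_n = \phi_{q_n,\e_n}\circ D_{\psi_n}$ and control each factor separately, then combine them via the submultiplicative estimate from Lemma \ref{lem:submultiplicative}. The first factor is already handled by Lemma \ref{lem:derivativeNormEstimateForQuasiRotations}, which gives $\max\{\vertiii{\phi_{q_n,\e_n}}_k, \vertiii{\phi_{q_n,\e_n}^{-1}}_k\} \leq C q_n^k$. So the real content is in estimating $\vertiii{D_{\psi_n}^{\pm1}}_k$, which reduces to estimating $\|\psi_n^{(l)}\|_0$ for $1\leq l\leq k$.

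First I would observe that on each subinterval where $\psi_n$ is nonzero we have $\psi_n(x)=\overline{\psi}_{s,\e_n}(q_n^2 x - c_{n,s})$ for appropriate constants. Hence by the chain rule and the estimate \eqref{eq:stepDeriEst},
\begin{equation*}
\|\psi_n^{(l)}\|_0 \;\leq\; q_n^{2l}\cdot\|D^l\overline{\psi}_{s,\e_n}\|_0 \;\leq\; q_n^{2l}\cdot \frac{3\,\|D^l\rho\|_0}{\e_n^{l-1}}.
\end{equation*}
Using the standing assumption $q_n > 1/\e_n$ from \eqref{eq:qEpsilon}, I absorb $\e_n^{-(l-1)} \leq q_n^{l-1}$ to obtain $\|\psi_n^{(l)}\|_0 \leq C q_n^{3l-1}$ with $C$ depending only on $k$ and $\rho$ (and in particular not on $q_n$).

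Next I treat $D_{\psi_n}(x,y)=(x,y+\psi_n(x))$. Its Jacobian has a trivial $(1,0)$ pattern plus $\psi_n'$ in the lower-left, so all partial derivatives of order $\leq k$ of the coordinate functions of $D_{\psi_n}$ and $D_{\psi_n}^{-1}(x,y)=(x,y-\psi_n(x))$ are controlled by $\max_{1\leq l\leq k}\|\psi_n^{(l)}\|_0 \leq C q_n^{3k-1}$. Thus $\max\{\vertiii{D_{\psi_n}}_k,\vertiii{D_{\psi_n}^{-1}}_k\}\leq Cq_n^{3k-1}$ (for $k\geq 1$; the case $k=0$ is trivial since $D_{\psi_n}$ stays in a bounded set).

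Finally I apply Lemma \ref{lem:submultiplicative} to $h_n=\phi_{q_n,\e_n}\circ D_{\psi_n}$ and to $h_n^{-1}=D_{\psi_n}^{-1}\circ \phi_{q_n,\e_n}^{-1}$ to obtain
\begin{equation*}
\vertiii{h_n}_k \;\leq\; C\,\vertiii{\phi_{q_n,\e_n}}_k\cdot \vertiii{D_{\psi_n}}_k^{k}\;\leq\; C q_n^{k}\cdot(Cq_n^{3k-1})^{k}\;\leq\; Cq_n^{3k^2+k},
\end{equation*}
and similarly $\vertiii{h_n^{-1}}_k\leq C q_n^{3k-1}\cdot(Cq_n^{k})^{k}\leq Cq_n^{k^2+3k}$. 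Both quantities are bounded by $Cq_n^{4k^2+4k}$, which yields the claim. The only mildly delicate step is converting the $\e_n$-dependence of the derivatives of $\overline{\psi}_{s,\e_n}$ into powers of $q_n$ using \eqref{eq:qEpsilon}; everything else is bookkeeping with Fa\`a di Bruno via Lemma \ref{lem:submultiplicative}.
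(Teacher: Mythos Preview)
Your proof is correct and follows the same approach as the paper: decompose $h_n=\phi_{q_n,\e_n}\circ D_{\psi_n}$, bound $\|D^l\psi_n\|_0$ via the chain rule and \eqref{eq:stepDeriEst}, and combine with Lemma~\ref{lem:derivativeNormEstimateForQuasiRotations} through Lemma~\ref{lem:submultiplicative}. One remark: your conversion of $\e_n^{-(l-1)}$ into $q_n^{l-1}$ via \eqref{eq:qEpsilon} is not needed here, since the statement explicitly allows $C$ to depend on $\e_n$; the paper simply absorbs the $\e_n$-factors into $C$ and works with $\|D^l\psi_n\|_0\leq C_{\e_n,k}\,q_n^{2l}$, which already suffices (and your minor arithmetic slips in the final exponents are harmless since everything sits comfortably below $4k^2+4k$).
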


\begin{proof}
Recall that the conjugation map $h_n=\phi_{q_n,\e_n} \circ D_{\psi_n}$. Then the proof follows from Lemma \ref{lem:submultiplicative} and Lemma \ref{lem:derivativeNormEstimateForQuasiRotations} and following estimate, which is a direct consequence of \eqref{eq:stepDeriEst} for any $k\in \N$:
\begin{equation}\label{eq:normPsi}
\norm{D^k\psi_n}_0 \leq q^{2k}_n \cdot \norm{D^k \overline{\psi}_{\e_n}}_0 \leq q^{2k}_n \cdot \frac{3}{\e^{k-1}_n} \cdot \norm{D^k \rho}_0.
\end{equation}
\end{proof}

%{\bb I'm afraid the current estimate in the previous lemma is too generous and doesn't allow to get the next lemma. When applying the Faa di Bruno lemma the best possible estimate seems to be $\max\{\vertiii{h_n}_k, \vertiii{h_n^{-1}}_k\}\leq C q_n^{2k^2+2k}$ but then I don't get the next lemma. By direct computation one might get a better estimate. Otherwise, one hast to adjust the choice of $m_n$...}
Following methods similar to those described in Section \ref{sec:untwistedAbC}, we obtain the following lemma.
\begin{lemma}\label{lem:estimateQ(n+1)vsQnUniquelyErgodic}
The expression $\lceil\vertiii{H_{n}}_1\rceil\max\{\vertiii{H_{n}}^{m_n+1}_{m_n+1}, \vertiii{H_{n}^{-1}}^{m_n+1}_{m_n+1}\}$ is bounded above by $q_n^{(m_n+1)^5-2}$ and hence any choice of $l_n\geq q_{n}^{(m_n+1)^5-2}$ satisfies the requirement imposed by \ref{eq:equation ln condition}. Additionally if $k_n=1$, we get $$q_{n+1}\geq q_n^{(m_n+1)^5}.$$ In particular we can choose
\begin{itemize}
    \item $m_n=n-1$ and $l_n=q_n^{n^5-2}$ yielding $q_{n+1}= q_n^{n^5}=q_1^{\prod_{m=1}^n(m^5)},$ or,
    \item $m_n=n-1$ and $l_n=q_n^{q_n-2}$ yielding $q_{n+1}= q_n^{q_n},$ or,
    \item $m_n=K-1$ for some integer $K>1$ and $l_n=q_n^{K^5-2}$ yielding $q_{n+1}= q_n^{K^5}=q_1^{K^{5n}}.$
\end{itemize}
Moreover, if $m_n\geq 1$, then we have the freedom for the choice of a sequence $l_n'$ to satisfy \eqref{eq:equation ln' convergence} and \eqref{eq:equation ln' are summable}.
\end{lemma}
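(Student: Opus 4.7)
My plan is to mirror the proof of Lemma \ref{lem:estimateQ(n+1)vsQn} from the untwisted setting, replacing the polynomial growth rate $q_n^{2k}$ of the conjugation maps by the new quadratic-in-$k$ rate $q_n^{4k^2+4k}$ supplied by Lemma \ref{lem:boundOnDerivativeUniquelyErgodicCase}. Writing $\gamma_n = m_n+1$, the game is to control the product $\vertiii{H_n}_1 \cdot \max\{\vertiii{H_n}_{\gamma_n}^{\gamma_n}, \vertiii{H_n^{-1}}_{\gamma_n}^{\gamma_n}\}$ inductively, using $H_n = H_{n-1}\circ h_n$ together with the Faà di Bruno type estimate of Lemma \ref{lem:submultiplicative}.

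First, I would apply submultiplicativity to get
\begin{equation*}
\vertiii{H_n}_{\gamma_n} \leq C\, \vertiii{H_{n-1}}_{\gamma_n}\, \vertiii{h_n}_{\gamma_n}^{\gamma_n},
\end{equation*}
and similarly for $\vertiii{H_n^{-1}}_{\gamma_n}$, so that raising to the $\gamma_n$-th power yields
\begin{equation*}
\max\{\vertiii{H_n}_{\gamma_n}^{\gamma_n}, \vertiii{H_n^{-1}}_{\gamma_n}^{\gamma_n}\} \leq C^{\gamma_n}\, \max\{\vertiii{H_{n-1}}_{\gamma_n}^{\gamma_n}, \vertiii{H_{n-1}^{-1}}_{\gamma_n}^{\gamma_n^2}\} \cdot \max\{\vertiii{h_n}_{\gamma_n}^{\gamma_n^2}, \vertiii{h_n^{-1}}_{\gamma_n}^{\gamma_n}\}.
\end{equation*}
Lemma \ref{lem:boundOnDerivativeUniquelyErgodicCase} then gives $\max\{\vertiii{h_n}_{\gamma_n}^{\gamma_n^2}, \vertiii{h_n^{-1}}_{\gamma_n}^{\gamma_n}\} \leq C' q_n^{(4\gamma_n^2+4\gamma_n)\gamma_n^2} = C' q_n^{4\gamma_n^4+4\gamma_n^3}$, and the corresponding bound $\vertiii{h_n}_1 \leq C q_n^8$ controls the first order factor. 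Multiplying everything together produces
\begin{equation*}
\vertiii{H_n}_1 \cdot \max\{\vertiii{H_n}_{\gamma_n}^{\gamma_n}, \vertiii{H_n^{-1}}_{\gamma_n}^{\gamma_n}\} \leq \bigl[\vertiii{H_{n-1}}_1 \max\{\vertiii{H_{n-1}}_{\gamma_n}^{\gamma_n}, \vertiii{H_{n-1}^{-1}}_{\gamma_n}^{\gamma_n^2}\}\bigr] \cdot C''\, q_n^{4\gamma_n^4 + 4\gamma_n^3 + 8}.
\end{equation*}

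The remaining step is to absorb the inductive factor in brackets, which depends only on data fixed at stage $n-1$. Since we are free to take $q_n$ as large as we wish at stage $n$ (the parameter $k_n$ and the choice of $q_n$ are made before $l_n$), we can impose $q_n$ large enough that this bracket is at most $q_n^{\gamma_n^5 - 4\gamma_n^4 - 4\gamma_n^3 - 10}$, which is possible once $\gamma_n$ is large enough for the exponent to be positive. This gives the desired bound $q_n^{\gamma_n^5 - 2} = q_n^{(m_n+1)^5 - 2}$, and the inequality \eqref{eq:equation ln condition} is satisfied by any $l_n \geq q_n^{(m_n+1)^5-2}$. With $k_n = 1$ we then have $q_{n+1} = k_n l_n q_n^2 \geq q_n^{(m_n+1)^5}$, from which the three explicit choices in the statement follow by direct substitution (checking in each case that $l_n'$ as in \eqref{eq:ln'} or the indicated alternatives satisfy \eqref{eq:equation ln' convergence} and \eqref{eq:equation ln' are summable}). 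The only real obstacle I anticipate is bookkeeping the constants and the small-$\gamma_n$ (equivalently, small $n$) regime, which is handled uniformly by simply starting the estimates from a sufficiently large index and absorbing the finitely many earlier terms into the constant prefactor.
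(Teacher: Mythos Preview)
Your proposal is correct and follows essentially the same route as the paper's proof: set $\gamma_n = m_n+1$, split $H_n = H_{n-1}\circ h_n$ via Lemma~\ref{lem:submultiplicative}, feed in the bound $\vertiii{h_n}_k \leq C q_n^{4k^2+4k}$ from Lemma~\ref{lem:boundOnDerivativeUniquelyErgodicCase}, and absorb the $H_{n-1}$-dependent prefactor into a power of $q_n$ using $q_n \gg q_{n-1}$. The only cosmetic difference is the exact degree-four exponent you obtain ($4\gamma_n^4+4\gamma_n^3+8$ versus the paper's $4\gamma_n^4+5\gamma_n^3+\gamma_n^2+5$), which in either case is dominated by $\gamma_n^5-2$.
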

\begin{proof}
Proceeding as in proof of Lemma \ref{lem:estimateQ(n+1)vsQn}, denote $\gamma_n=m_n+1$, then the proof follows from the following estimates with the aid of Lemma \ref{lem:submultiplicative} and Lemma \ref{lem:boundOnDerivativeUniquelyErgodicCase}:
\begin{equation}
\begin{aligned}
& \vertiii{H_{n}}_1 \max\{\vertiii{H_{n}}^{\gamma_n}_{\gamma_n}, \vertiii{H_{n}^{-1}}^{\gamma_n}_{\gamma_n}\} \\
\leq &\Big[\vertiii{H_{n-1}^{-1}}_1 \max\{\vertiii{H_{n-1}}^{\gamma_n}_{\gamma_n}, \vertiii{H_{n-1}^{-1}}^{\gamma_n^2}_{\gamma_n}\}\Big] \cdot \vertiii{h_{n}}_1\cdot \max\{\vertiii{h_{n}}^{\gamma_n^2}_{\gamma_n}, \vertiii{h_{n}^{-1}}^{\gamma_n}_{\gamma_n}\} \\
\leq &\Big[\vertiii{H_{n-1}^{-1}}_1 \max\{\vertiii{H_{n-1}}^{\gamma_n}_{\gamma_n}, \vertiii{H_{n-1}^{-1}}^{\gamma_n^2}_{\gamma_n}\}\Big]\cdot  q_n^{4\gamma_n^4+5\gamma_n^3+\gamma_n^2+5}\\
< &q^{\gamma_n^5-2}_n,
\end{aligned}
\end{equation}
where the last inequality follows from $q_n>>q_{n-1}$. As a result of this estimate, a choice of $l_n\geq q_n^{\gamma_n^5-2}$ will satisfy \eqref{eq:equation ln condition}. This completes the proof.
\end{proof}

\subsection{Lower bounds for cardinality of maximal separated sets}

To exclude the regions coming from smoothing the approximative step function $\rho$, we introduce
$$B_n \coloneqq \Big(\bigcup_{i\in \Z} \left[\frac{i-\varepsilon_n}{q^2_n} , \frac{i+\varepsilon_n}{q^2_n} \right]\Big)\bigcap\T.$$
Hereby, we define the \emph{good domain} of the conjugation map $h_n =\phi_{q_n,\e_n} \circ D_{\psi_n}$ as
$$L_n \coloneqq (\T \setminus B_n ) \times \T \cap D^{-1}_{\psi_n} \left(K_n \right),$$
where
$$K_n=\bigcup_{i=0}^{q_n-1}\Big[\frac{i+2\e_n}{q_n}, \frac{i+1-2\e_n}{q_n}\Big]\times \Big[2\e_n, 1-2\e_n\Big]$$
is the $n$-th rotation zone, i.e. in this portion of the torus, $\phi_{q_n,\e_n}$ and $\phi_{q_n,\e_n}^{-1}$ act as a pure rotation.
Accordingly, $h_n(L_n)$ is the \emph{good domain} of $h^{-1}_n$.

\begin{lemma}\label{lem:uniquelyErgodicApproLower}
Given any $\e>0$ there is $N\in\N$ such that we have for all $n> N$ that
	$$S_{d^{T_{n}}_{q_{n+1}}}(\e)\geq  0.5 q_{n}.$$
\end{lemma}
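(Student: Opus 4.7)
The plan is to adapt the separation mechanism of Lemma~\ref{lem:FSWSeperationAtN-thStage} to the uniquely ergodic setting, where the conjugation map factors as $h_n=\phi_{q_n,\e_n}\circ D_{\psi_n}$, combining a shearing with the quasi-rotation rather than two consecutive rotations. Since $T_n=H_n\circ R_{\a_{n+1}}\circ H_n^{-1}$, producing a Bowen $(\e,q_{n+1})$-separated set of cardinality at least $\lfloor q_n/2\rfloor$ reduces to exhibiting that many preimages $\tilde P_j \in L_n$ such that for every pair $(\tilde P_i,\tilde P_j)$ there is a time $0\leq t_{ij}<q_{n+1}$ with
\[
d\bigl(H_n(R^{t_{ij}}_{\a_{n+1}}\tilde P_i),\,H_n(R^{t_{ij}}_{\a_{n+1}}\tilde P_j)\bigr)\geq\e.
\]

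First I would place $\lfloor q_n/2\rfloor$ candidate preimages on a horizontal segment $\{y=y_0\}$ with $y_0\in[3/8,5/8]$ (safely inside the rotation zone), choosing $x$-coordinates $\xi_j$ clustered inside a single cell $\Delta^0_{q_n}$ at positions near step-plateaus of $\psi_n$. Using the $1/q_{n+1}$-density of the orbit $\{t\a_{n+1}\bmod 1 : 0\leq t<q_{n+1}\}$ in $\T$, for each pair $(i,j)$ I would select $t_{ij}$ steering the pair into positions where: (a) both orbit points lie in the good domain $L_n$; (b) the shifts $\psi_n(\xi_i+t_{ij}\a_{n+1})$ and $\psi_n(\xi_j+t_{ij}\a_{n+1})$ differ by at least a multiple of $3\e_n$, achieved by hitting distinct plateaus (or by one point sitting at a steep transition of $\psi_n$ while the other lies in a flat zone); and (c) after applying $\phi_{q_n,\e_n}$, the vertical separation produced in step~(b) is converted, via the $90^\circ$-rotation in the $1/q_n$-wide strip, into a displacement of order at least some $c_n>0$.

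The outer factor $H_{n-1}$ contracts this displacement by at most $\vertiii{H_{n-1}^{-1}}_1$, so the hypothesis $n>N$ in the statement boils down to $c_n/\vertiii{H_{n-1}^{-1}}_1\geq \e$. This is ensured by the rate of growth imposed in Lemmas~\ref{lem:boundOnDerivativeUniquelyErgodicCase} and~\ref{lem:estimateQ(n+1)vsQnUniquelyErgodic}, since $q_n$ eventually outgrows any fixed polynomial in $q_{n-1}$ and $\vertiii{H_{n-1}^{-1}}_1$ depends only on data frozen at stage $n-1$.

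The main obstacle I expect is the combinatorial step: for all $\binom{\lfloor q_n/2\rfloor}{2}$ pairs simultaneously, valid times $t_{ij}\in [0,q_{n+1})$ satisfying (a)--(c) must exist. Since $\lambda(L_n^c)=O(\e_n)$ from the explicit definitions of $B_n$ and $K_n$, and since the rotation orbit is $1/q_{n+1}$-dense with $q_{n+1}\gg q_n^2/\e_n$ by Lemma~\ref{lem:estimateQ(n+1)vsQnUniquelyErgodic}, a union-bound argument produces abundantly many good candidate times $t_{ij}$ for each pair. The conclusion $S_{d^{T_n}_{q_{n+1}}}(\e)\geq 0.5\,q_n$ then follows directly from the definition of a maximal $(\e,q_{n+1})$-separated set.
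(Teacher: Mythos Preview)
Your outline misses the separation mechanism that actually works in this setting, and step~(c) as written is false. In the rotation core of $\phi_{q_n,\e_n}$ one has (up to integer shifts) $\phi_{q_n,\e_n}(x,y)=\bigl((1-y)/q_n,\,q_nx\bigr)$, so a vertical gap $\Delta y$ coming out of $D_{\psi_n}$ is \emph{contracted} to a horizontal gap $\Delta y/q_n$, not converted into something of fixed positive order. With your points all at the same height $y_0\in[3/8,5/8]$ and spacing $\sim 1/q_n^2$ inside a single $\Delta^0_{q_n}$, the best displacement you can get after $h_n$ is $O(1/q_n)$; dividing by $\vertiii{H_{n-1}^{-1}}_1$ sends this to $0$, so the inequality $c_n/\vertiii{H_{n-1}^{-1}}_1\geq\e$ you need in the last paragraph never holds for large $n$.

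The paper's argument is different in two essential respects. First, the separation does not come from a small metric gap surviving $H_{n-1}$ via Lipschitz bounds; it comes from the \emph{identity zone versus rotation zone} dichotomy of $\phi_{q_n,\e_n}$. One chooses times $t$ (with $x$-coordinate steered to a central index $i_c$) at which the net vertical translation $g_{w,t}\cdot 3\e_n$ caused by $D_{\psi_n}\circ R^t_{\a_{n+1}}\circ D_{\psi_n}^{-1}$ pushes one candidate's $y$-coordinate into $[-\e_n,\e_n]$, so $\phi_{q_n,\e_n}$ acts as the identity and the image stays near $y\approx 0$; the other candidates remain in the rotation core and land with $y\in[4\e_{n-1},1-2\e_{n-1}]$. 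Then $H_{n-1}$ (by the central index property, not by a crude Lipschitz estimate) maps the latter to $y\in[\tfrac14,\tfrac34]$, giving genuine $\e$-separation. Second, separating points with different $w$-index (this is where the factor $q_n$ in the count comes from) uses that the construction of $\psi_n$ has \emph{varying} step lengths $s=1,2,\dots,s_1$: the sequences $\{g_{w_1,t}\}_t$ and $\{g_{w_2,t}\}_t$ desynchronize over time, so the identity/rotation dichotomy eventually distinguishes them. Your sketch uses neither of these ideas, and without them there is no route to an $\e$ independent of $n$.
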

\begin{proof}
	For a given $\varepsilon>0$ we let $b_0=\varepsilon=c_0$. Recursively, we define $c_{i+1}=q_{i+1}b_i$ and $b_{i+1}=\frac{c_i}{q_{i+1}}$. Let $N=N(\varepsilon)\coloneqq \max \{i:c_i<1\}$.   We consider
	$$\Theta_N \coloneqq \bigcap^{N}_{n=1}L_n,$$
	i.e. the ``good domain'' of $H_N$. Then we take an interval $\hat{I}_u=\left[ \frac{u}{q_{N+1}}, \frac{u+1}{q_{N+1}} \right]$ lying in $\pi_1(\Theta_N)$  such that $\psi_N=0$ on $\hat{I}_u$.

In the next step we introduce subsets of $S^{(u)}\coloneqq \hat{I}_u \times [2\varepsilon_N, 1-2\varepsilon_N]$ as follows:
\[
S^{(u)}_{v,w} = \Big[ \frac{u+(3v+2)\varepsilon_{N+1}}{q_{N+1}},\frac{u+(3v+4)\varepsilon_{N+1}}{q_{N+1}}\Big) \times \Big[4\e_N+ \frac{w+2\varepsilon_{N+1}}{q_{N+1}}, 4\e_N+ \frac{w+1-2\varepsilon_{N+1}}{q_{N+1}} \Big),
\]
where $0\leq v \leq \lfloor \frac{1}{3\e_{N+1}}\rfloor -2$ and $0\leq w< q_{N+1}-6\e_{N}q_{N+1}$ (see figure \ref{figure Suvw}).

We claim that points from different sets $S^{(u)}_{v,w}$ are $\varepsilon$-separated under $\{H_{N+1} \circ R^t_{\alpha_{N+2}} \circ h^{-1}_{N+1} \}_{0\leq t \leq q_{N+2}}$. The sets $S^{(u)}_{v,w}$ are positioned in such a way that
\begin{equation}
\begin{aligned}
\phi^{-1}_{q_{N+1}} \left( S^{(u)}_{v,w} \right) = & \Big[\frac{u+1-4\e_N}{q_{N+1}}- \frac{w+1-2\varepsilon_{N+1}}{q^2_{N+1}}, \frac{u+1-4\e_N}{q_{N+1}}- \frac{w+2\varepsilon_{N+1}}{q^2_{N+1}} \Big) \\ &\times \Big[ (3v+2)\e_{N+1}, (3v+4)\e_{N+1} \Big),
\end{aligned}
\end{equation}
which lies in the ``good domain'' $(\mathbb{T}\setminus B_{N+1}) \times \mathbb{T}$ of the map $D^{-1}_{\psi_{N+1}}$ . Hence, $-\Psi_{N+1}$ attains a constant value on $\pi_1(\phi^{-1}_{q_{N+1}}(S^{(u)}_{v,w}))$ and we denote this value by $f_w \cdot 3\e_{N+1}$ for some $f_w \in \{0,1,\dots , \lfloor \frac{1}{3\e_{N+1}} \rfloor -1\}$.

In the next step, we choose $t_1=t_1(u)$, $0\leq t_1<q_{N+2}$, such that
\begin{equation}
\begin{aligned}
 R^{t_1}_{\alpha_{N+2}} \circ \phi^{-1}_{q_{N+1}} \left( S^{(u)}_{v,w} \right) = & \Big[\frac{i_c+1-4\e_N}{q_{N+1}}- \frac{w+1-2\varepsilon_{N+1}}{q^2_{N+1}}, \frac{i_c +1 - 4\e_N}{q_{N+1}}- \frac{w+2\varepsilon_{N+1}}{q^2_{N+1}} \Big) \\& \times \Big[ (3v+2)\e_{N+1}, (3v+4)\e_{N+1} \Big),
\end{aligned}
\end{equation}
where $i_c$ is some central index for $H_N$ (i.e. $\pi_2(H_N(\Delta^{i_c}_{q_{N+1}} \cap K_{N}))\subset \left[\frac{1}{4},\frac{3}{4}\right]$ similar to the definition in equation \eqref{eq:centralIndex}). Let $t$ be of the form $t=t_1+\lambda \frac{q_{N+2}}{q_{N+1}}$ with some $0\leq \lambda <2\e_Nq_{N+1}$. Suppose that $D_{\psi_{N+1}} \circ R^t_{\alpha_{N+2}} \circ D^{-1}_{\psi_{N+1}}$ causes a net vertical translation by $g_{w,t}\cdot 3\e_{N+1}$ for some $g_{w,t} \in \{f_w, f_w -1, \dots, f_w -\lfloor \frac{1}{3\e_{N+1}} \rfloor +1 \}$. Then
\begin{align*}
 &D_{\psi_{N+1}} \circ R^t_{\alpha_{N+2}} \circ h^{-1}_{N+1} \left( S^{(u)}_{v,w} \right) \\
= & \Big[\frac{i_c+1-4\e_N}{q_{N+1}}- \frac{w+1-2\varepsilon_{N+1}}{q^2_{N+1}}, \frac{i_c+1-4\e_N}{q_{N+1}}- \frac{w+2\varepsilon_{N+1}}{q^2_{N+1}} \Big) \\
& \times \Big[ 2\e_{N+1}+(g_{w,t}+v)\cdot 3\e_{N+1}, 4\e_{N+1}+(g_{w,t}+v)\cdot 3\e_{N+1} \Big).
\end{align*}
For $g_{w,t}+v=-1$ or $g_{w,t}+v=\lfloor \frac{1}{3\e_{N+1}} \rfloor -1$, i.e. the $y$-coordinate is in $[-\e_{N+1},\e_{N+1}]$, this image lies in the identity zone of $\phi_{q_{N+1}}$ (notice that for each $g_{w,t}$ at most one of these situations can occur). Otherwise, $\phi_{q_{N+1}}$ maps it to
\begin{align*}
& \Big[ \frac{i_c+2\e_{N+1}+(g_{w,t}+v)\cdot 3\e_{N+1}}{q_{N+1}},\frac{i_c+4\e_{N+1}+(g_{w,t}+v)\cdot 3\e_{N+1}}{q_{N+1}}\Big) \\
\times & \Big[4\e_N+ \frac{w+2\varepsilon_{N+1}}{q_{N+1}}, 4\e_N+ \frac{w+1-2\varepsilon_{N+1}}{q_{N+1}} \Big).
\end{align*}
Hence, we see by our choice of $i_c$ that $H_{N+1} \circ R^t_{\alpha_{N+2}} \circ h^{-1}_{N+1}\left( S^{(u)}_{v,w} \right)$ with $g_{w,t}+v=-1$ or $g_{w,t}+v=\lfloor \frac{1}{3\e_{N+1}} \rfloor -1$ is $\e$-separated from the other images $H_{N+1} \circ R^t_{\alpha_{N+2}} \circ h^{-1}_{N+1}\left( S^{(u)}_{v,w} \right)$. Since $D_{\psi_{N+1}} \circ R^t_{\alpha_{N+2}} \circ D^{-1}_{\psi_{N+1}}$ attains net vertical translations $g_{w,t}\cdot 3\e_{N+1}$ with possible values $g_{w,t} \in \{f_w, f_w -1, \dots, f_w -\lfloor \frac{1}{3\e_{N+1}} \rfloor +1 \}$, all $S^{(u)}_{v,w}$, $0\leq v \leq \lfloor \frac{1}{3\e_{N+1}}\rfloor -2$, get $\e$-separated from each other.

In the same way, we explore that for any different $0 \leq w_1,w_2<q_{N+1}-6\e_{N}q_{N+1}$ all $S^{(u)}_{v_1,w_1}$, $0\leq v_1 \leq \lfloor \frac{1}{3\e_{N+1}}\rfloor -2$, get $\e$-separated from all $S^{(u)}_{v_2,w_2}$, $0\leq v_2 \leq \lfloor \frac{1}{3\e_{N+1}}\rfloor -2$: As above, we see that $H_{N+1} \circ R^t_{\alpha_{N+2}} \circ h^{-1}_{N+1}\left( S^{(u)}_{v_1,w_1} \right)$ with $g_{w_1}+v_1=-1$ or $g_{w_1}+v_1=\lfloor \frac{1}{3\e_{N+1}} \rfloor -1$ is $\e$-separated from all the images $H_{N+1} \circ R^t_{\alpha_{N+2}} \circ h^{-1}_{N+1}\left( S^{(u)}_{v_2,w_2} \right)$ apart from those with $g_{w_2}+v_2=-1$ or $g_{w_2}+v_2=\lfloor \frac{1}{3\e_{N+1}} \rfloor -1$. By the differences of sequences $\{g_{w_1,t}\}$ and $\{g_{w_2,t}\}$ of vertical translations caused by $D_{\psi_{N+1}} \circ R^t_{\alpha_{N+2}} \circ D^{-1}_{\psi_{N+1}}$ (due to the varying step lengths in the construction of the map $\Psi_{N+1}$, the sequences $\{g_{w_1,t}\}$ and $\{g_{w_2,t}\}$ get shifted with respect to each other after some time), we conclude the claim.

Counting the number of different sets $S^{(u)}_{v,w}$ we obtain
\[
S_{d^{T_{N+1}}_{q_{N+2}}}(\e)\geq \left(\lfloor \frac{1}{3\e_{N+1}} \rfloor -1\right) \cdot \left(1-6\e_N\right)\cdot q_{N+1} >0.5q_{N+1}.
\]
By the same methods we continue for any $n> N+1$.
\end{proof}

\begin{figure}[H]%\label{figure Suvw}
\begin{center}
\includegraphics[trim=0cm 0cm 0cm 0cm, clip, width=5cm,height=8.5cm]{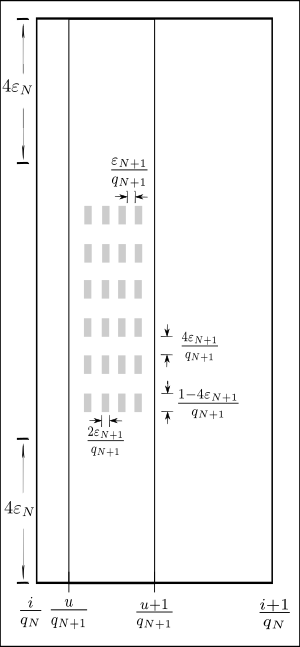}
\caption{A schematic diagram showing the sets $S_{v,w}^{(u)}$.}
\label{figure Suvw}
\end{center}
\end{figure}

Combining Proposition \ref{prop:upgradeTntoTForSeparatedSets} and Lemma \ref{lem:uniquelyErgodicApproLower}, we have:
\begin{lemma}\label{lem:LowerSmooth2}
Given any $\e>0$ there is $N\in\N$ such that we have for all $n> N$ that
$$S_{d^T_{q_{n+1}}}(\e)\geq  0.5 q_n.$$
\end{lemma}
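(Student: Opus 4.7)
The statement is essentially an immediate chaining of the two cited results, so the plan is short. First I would apply Proposition \ref{prop:upgradeTntoTForSeparatedSets} with $m = q_{n+1}$ to the limit diffeomorphism $T$, obtaining
\[
S_{d^T_{q_{n+1}}}(\e) \;\geq\; S_{d^{T_n}_{q_{n+1}}}(2\e).
\]
To invoke the proposition legitimately I need $q_{n+1} \leq l_{n+1}' q_{n+1}$, which holds trivially because $l_{n+1}' \geq 1$ by construction.

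Next I would apply Lemma \ref{lem:uniquelyErgodicApproLower} with parameter $2\e$ in place of $\e$: there exists $N = N(2\e) \in \N$ such that for every $n > N$,
\[
S_{d^{T_n}_{q_{n+1}}}(2\e) \;\geq\; 0.5\, q_n.
\]
Concatenating the two inequalities yields $S_{d^T_{q_{n+1}}}(\e) \geq 0.5\, q_n$ for all $n > N$, which is exactly the claim.

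Since both ingredients are already available and the chain is mechanical, there is no serious obstacle here. The only minor point worth flagging is that the resolution parameter in the lemma must be taken to be $2\e$ (not $\e$) to absorb the factor of $2$ introduced by Proposition \ref{prop:upgradeTntoTForSeparatedSets}; the threshold $N$ in the conclusion then depends on $2\e$ rather than on $\e$ itself, which is harmless because $\e > 0$ is fixed throughout.
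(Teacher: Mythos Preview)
Your proof is correct and follows exactly the approach the paper takes: the paper's own proof is the single sentence ``Combining Proposition \ref{prop:upgradeTntoTForSeparatedSets} and Lemma \ref{lem:uniquelyErgodicApproLower}, we have...,'' and you have merely spelled out this chaining, including the correct observation that Lemma \ref{lem:uniquelyErgodicApproLower} must be invoked at resolution $2\e$ to compensate for the factor of $2$ in the proposition.
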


\subsection{Upper bounds for cardinality of minimal covering sets}

\begin{lemma}\label{lem:upperSmooth2}
	For any given $\e>0$ and any $n\in \N$ sufficiently large we have for $m\in\N$ that
	\[
	N_{d^{T_n}_m} (\e)\leq C_n \cdot\frac{q_n}{\e^3},
	\]
	where the constant $C_n$ depends on $\e_n$ and $H_{n-1}$ but is independent of $q_n$, $m$, and $\e$.
\end{lemma}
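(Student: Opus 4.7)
The plan is to adapt the proof of Lemma \ref{lem:FSWSpanningAtn-thStage} from the untwisted setting. The key structural fact is that the new conjugation $h_n = \phi_{q_n,\e_n} \circ D_{\psi_n}$ remains $1/q_n$-equivariant in the $x$-direction: the periodicity $\psi_n(x+1/q_n) = \psi_n(x)$ forces $D_{\psi_n}(x+1/q_n, y) = D_{\psi_n}(x, y) + (1/q_n, 0)$, and $\phi_{q_n,\e_n}$ enjoys the analogous property by construction. Consequently, the covering-set construction from the untwisted case transplants, provided the constants in the definitions of $r_n^{(x)}$ and $r_n^{(y)}$ are recalibrated to absorb the additional $y$-extent introduced by the shear.

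Concretely, I take $d_n$ to be the largest integer satisfying $\norm{DH_{n-1}}_0 \cdot d_n/q_n \leq \e/4$, so that $d_n \geq \frac{\e q_n}{8\norm{DH_{n-1}}_0}$ for $n$ sufficiently large, and set
\[
r_n^{(x)} \coloneqq \frac{\e}{C \cdot q_n^2 \norm{D\rho}_0 \norm{D\varphi_{\e_n}}_0 \norm{DH_{n-1}}_0}, \qquad r_n^{(y)} \coloneqq \frac{\e}{C \cdot \norm{D\varphi_{\e_n}}_0 \norm{DH_{n-1}}_0}
\]
for a sufficiently large constant $C$, with the sets $B_{i_1, i_2, i_3}$ defined exactly as in Lemma \ref{lem:FSWSpanningAtn-thStage}. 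A rectangle $R$ of size $r_n^{(x)} \times r_n^{(y)}$ is mapped by $D_{\psi_n}$ to a parallelogram of $x$-extent $r_n^{(x)}$ and $y$-extent at most $r_n^{(y)} + r_n^{(x)} \norm{D\psi_n}_0$. Using the bound $\norm{D\psi_n}_0 \leq 3 q_n^2 \norm{D\rho}_0$ from \eqref{eq:normPsi} together with the partial-derivative estimates $\abs{\partial_y \pi_1 \circ \phi_{q_n,\e_n}} \leq \norm{D\varphi_{\e_n}}_0/q_n$ and $\abs{\partial_x \pi_2 \circ \phi_{q_n,\e_n}} \leq q_n \norm{D\varphi_{\e_n}}_0$, a direct computation verifies $\operatorname{diam}(h_n(R)) < \e/(2\norm{DH_{n-1}}_0)$.

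The $1/q_n$-equivariance then furnishes, exactly as in the untwisted case, the bound $\operatorname{diam}(h_n \circ R^t_{\a_{n+1}}(B_{i_1,i_2,i_3})) \leq d_n/q_n + \e/(2\norm{DH_{n-1}}_0) \leq 3\e/(4\norm{DH_{n-1}}_0)$, so that $\operatorname{diam}(T_n^t(H_n(B_{i_1,i_2,i_3}))) < \e$ for every $t \in \N$. Counting the sets gives
\[
N_{d^{T_n}_m}(\e) \leq \frac{q_n}{d_n} \cdot \frac{1}{q_n r_n^{(x)}} \cdot \frac{1}{r_n^{(y)}} \leq C_n \cdot \frac{q_n}{\e^3},
\]
where $C_n$ depends on $\e_n$ and $\norm{DH_{n-1}}_0$ but not on $q_n$, $m$, or $\e$. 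The main subtlety is ensuring that the shear contribution $r_n^{(x)} \norm{D\psi_n}_0$ does not worsen the power of $q_n$ in the final bound: the extra $\norm{D\rho}_0$ built into $r_n^{(x)}$ precisely cancels the one arising from $\norm{D\psi_n}_0$, while the $q_n^2$ appearing in $r_n^{(x)}$ is already present in the untwisted formula \eqref{eq:rnxrny}, so the final count remains linear in $q_n$ as required.
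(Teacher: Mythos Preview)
Your proof is correct and follows essentially the same approach as the paper: the paper's proof simply states that it is identical to Lemma \ref{lem:FSWSpanningAtn-thStage} with the modified choices
\[
r^{(x)}_n = \frac{\varepsilon}{8q^2_n \cdot \norm{D\varphi_{\e_n}}_0 \cdot \norm{D\overline{\psi}_{\e_n}}_0 \cdot \norm{DH_{n-1}}_0}, \qquad r^{(y)}_n = \frac{\varepsilon}{8\cdot \norm{D\varphi_{\e_n}}_0 \cdot \norm{D\overline{\psi}_{\e_n}}_0 \cdot \norm{DH_{n-1}}_0}.
\]
The only cosmetic difference is that the paper inserts the shear factor $\norm{D\overline{\psi}_{\e_n}}_0$ into both $r_n^{(x)}$ and $r_n^{(y)}$, whereas you place $\norm{D\rho}_0$ only in $r_n^{(x)}$ and argue directly that the shear contribution $r_n^{(x)}\norm{D\psi_n}_0$ is already controlled; since $\norm{D\overline{\psi}_{\e_n}}_0$ is independent of $q_n$, both choices are absorbed into $C_n$ and yield the same bound.
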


\begin{proof}
The proof is identical to the proof of Lemma \ref{lem:FSWSeperationAtN-thStage} with equation \ref{eq:rnxrny} replaced by,
\begin{equation}
\begin{aligned}
	&r^{(x)}_n \coloneqq \frac{\varepsilon}{8q^2_n \cdot \norm{D\varphi_{\e_n}}_0 \cdot \norm{D\overline{\psi}_{\e_n}}_0 \cdot \norm{DH_{n-1}}_0}, \\  &r^{(y)}_n \coloneqq \frac{\varepsilon}{8\cdot \norm{D\varphi_{\e_n}}_0 \cdot \norm{D\overline{\psi}_{\e_n}}_0 \cdot \norm{DH_{n-1}}_0}.
\end{aligned}
\end{equation}
\end{proof}

\begin{lemma}\label{lem:FSWSpanning2}
For any given $\e>0$ and any $n\in\N$, we have
$$N_{d^{T}_m}(\e)\leq C_n\frac{q_n}{\e^3}$$
where $0\leq m \leq l_{n+1}'q_{n+1}$ and $C_n>0$ is some constant dependent on $\e_n$ and $H_{n-1}$ but independent of $m$ and $\e$.
\end{lemma}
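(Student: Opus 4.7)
The plan is to reduce the statement for the limit diffeomorphism $T$ to the statement for the periodic approximation $T_n$, exactly as was done in the ergodic (untwisted) setting in Lemma \ref{lem:FSWSpanning}. The two ingredients needed are already in place: Lemma \ref{lem:upperSmooth2}, which controls the minimal covering number for $T_n$ at any time $m$, and Proposition \ref{prop:upgradeTntoTForSpanningSets}, which upgrades a covering bound for $T_n$ to one for $T$ on the time window $0 \leq m \leq l'_{n+1}q_{n+1}$ at the cost of halving $\varepsilon$.

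First I would invoke Proposition \ref{prop:upgradeTntoTForSpanningSets} with the given $\varepsilon$ and $m$, which provides the inequality
\[
N_{d^{T}_m}(\varepsilon) \leq N_{d^{T_n}_m}\!\left(\tfrac{\varepsilon}{2}\right).
\]
This step relies precisely on the assumption $m\leq l'_{n+1}q_{n+1}$, since that is the range in which Lemmas \ref{lem:convergenceCriterion2} and \ref{lem:convergenceCriterion3} give the uniform closeness of the orbits of $T$ and $T_n$ required by Proposition \ref{prop:upgradeTntoTForSpanningSets}.

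Next I would apply Lemma \ref{lem:upperSmooth2} to the right-hand side with $\varepsilon$ replaced by $\varepsilon/2$. Since the constant produced there depends only on $\varepsilon_n$ and $H_{n-1}$ and not on $m$ or $\varepsilon$, this yields
\[
N_{d^{T_n}_m}\!\left(\tfrac{\varepsilon}{2}\right) \leq C'_n \cdot \frac{q_n}{(\varepsilon/2)^3} = 8 C'_n \cdot \frac{q_n}{\varepsilon^3}.
\]
Absorbing the factor $8$ into the constant, i.e.\ setting $C_n := 8 C'_n$, produces the required estimate. There is no essential obstacle here; the only thing one must be careful about is that the proposition used for the upgrade is valid only in the stated time window, so the bound on $m$ in the statement of the lemma is sharp from the point of view of this argument.
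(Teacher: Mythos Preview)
Your proposal is correct and is exactly the argument the paper has in mind: the lemma is stated without proof in the paper, relying implicitly on combining Lemma~\ref{lem:upperSmooth2} with Proposition~\ref{prop:upgradeTntoTForSpanningSets} just as Lemma~\ref{lem:FSWSpanning} was deduced from Lemma~\ref{lem:FSWSpanningAtn-thStage} and the same proposition. Your write-up in fact makes the halving of $\varepsilon$ and the absorption of the factor $8$ into $C_n$ more explicit than the paper does.
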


\subsection{Upper topological slow entropy}

Proceeding similar as in the proof of Theorem \ref{thm:untwistedTopInter}, we obtain the following results by replacing Lemmas \ref{lem:FSWSeperation} and \ref{lem:FSWSpanning} by Lemmas \ref{lem:LowerSmooth2} and \ref{lem:FSWSpanning2}, respectively.
\begin{theorem}\label{thm:uniquelyErgodicTopInter}
There exists an untwisted uniquely ergodic $C^\infty$ Anosov-Katok diffeomorphism $T$ isomorphic to an irrational translation of a circle constructed using parameters specified in \eqref{eq:abcparameters} with $m_n=n-1$, $l_n'$ as in \eqref{eq:ln'} with $r=5$, $l_n$ satisfying \eqref{eq:equation ln condition}, and conjugacies specified by \eqref{eq:h_n-ue} and \eqref{eq:anosov-katok-uniquelyErgodic}, such that the upper topological slow entropy is
\begin{equation}
%\begin{aligned}
%& \uent^{\top}_{a_m^{\pol}(t)}(T)= 0 \\
 \uent^{\top}_{a_m^{\operatorname{int1,5}}(t)}(T)=1.
%& \uent^{\top}_{a_m^{\log}(t)}(T)=\infty
%\end{aligned}
\end{equation}
\end{theorem}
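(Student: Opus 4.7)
The plan is to mirror the proof of Theorem \ref{thm:untwistedTopInter} almost verbatim, with three substitutions: replace the lower bound from Lemma \ref{lem:FSWSeperation} by Lemma \ref{lem:LowerSmooth2}; replace the upper bound from Lemma \ref{lem:FSWSpanning} by Lemma \ref{lem:FSWSpanning2}; and replace the scaling function $a^{\operatorname{int1,4}}$ by $a^{\operatorname{int1,5}}$. The shift from $r=4$ to $r=5$ is forced by Lemma \ref{lem:estimateQ(n+1)vsQnUniquelyErgodic}: because of the vertical shearing $D_{\psi_n}$, the derivative bound from Lemma \ref{lem:boundOnDerivativeUniquelyErgodicCase} grows like $q_n^{4k^2+4k}$ instead of $q_n^{2k}$, which yields the parameter growth $q_{n+1} \geq q_n^{(m_n+1)^5}$ rather than $q_n^{(m_n+1)^4}$. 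With the choice $m_n = n-1$ and $l'_n = q_n^{(n-1)^5-1}$ from \eqref{eq:ln'} with $r=5$, the conditions \eqref{eq:equation ln' convergence}, \eqref{eq:equation ln' are summable} and \eqref{eq:equation ln condition} are all satisfied, and $l'_n q_n = q_n^{(n-1)^5}$.

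For the lower bound $\uent^{\top}_{a_m^{\operatorname{int1,5}}(t)}(T) \geq 1$, I would fix any $t \leq 1$ and $\e>0$, restrict the $\limsup$ to the subsequence $m = q_{n+1}$, apply Lemma \ref{lem:LowerSmooth2}, and invoke the first identity in \eqref{eq:intermediateScaleAtQ(n+1)} (with $r=5$) to obtain
$$
\limsup_{m \to \infty} \frac{S_{d_m^T}(\e)}{a_m^{\operatorname{int1,5}}(t)} \geq \limsup_{n\to\infty} \frac{0.5\, q_n}{q_n^t} > 0.
$$
This immediately gives $t \in S_1(\e, \T^2)$ for every $t \leq 1$.

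For the upper bound $\uent^{\top}_{a_m^{\operatorname{int1,5}}(t)}(T) \leq 1$, I would split the range of $m$ into the two zones $q_n < m \leq l'_n q_n$ and $l'_n q_n < m \leq q_{n+1}$. In the first zone, monotonicity of the scale in $m$ together with $a^{\operatorname{int1,5}}_{q_n}(t) = q_{n-1}^t$ and Lemma \ref{lem:FSWSpanning2} bound the ratio by $C_{n-1} q_{n-1}/(\e^3 q_{n-1}^t)$. In the second zone, the lower estimate $a^{\operatorname{int1,5}}_{l'_n q_n}(t) \geq q_n^{(n-1)^5 t / n^5}$ from \eqref{eq:intermediateScaleAtQ(n)} together with Lemma \ref{lem:FSWSpanning2} bound the ratio by $C_n q_n / (\e^3 q_n^{(n-1)^5 t/n^5})$. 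For any $t > 1$, both bounds tend to zero as $n \to \infty$, so $t \notin N_1(\e, \T^2)$.

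The only genuinely new point compared with the untwisted case is the verification that Proposition \ref{prop:upgradeTntoTForSpanningSets} applies in the relevant range, which is ensured because $l'_n q_n = q_n^{(n-1)^5}$ coincides with the domain of validity in Lemma \ref{lem:FSWSpanning2}. The rest is bookkeeping: the two-zone estimate works because the intermediate scale is essentially constant on $[q_n, l'_n q_n]$ and grows only to $q_n^t$ at $q_{n+1}$, which is precisely the rate Lemma \ref{lem:LowerSmooth2} produces. I do not foresee any substantial obstacle beyond matching the exponent $r=5$ to the parameter growth, which is already built into the construction; the argument is essentially a formal rerun of Theorem \ref{thm:untwistedTopInter}'s proof.
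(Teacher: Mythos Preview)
Your proposal is correct and matches the paper's approach exactly: the paper's proof consists of the single sentence ``Proceeding similar as in the proof of Theorem \ref{thm:untwistedTopInter}, we obtain the following results by replacing Lemmas \ref{lem:FSWSeperation} and \ref{lem:FSWSpanning} by Lemmas \ref{lem:LowerSmooth2} and \ref{lem:FSWSpanning2}, respectively,'' which is precisely the substitution you carry out in detail with the shift from $r=4$ to $r=5$.
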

As observed in Remark \ref{rem:compareDifferentScales}, the above theorem also implies that the given AbC diffeomorphism's logarithmic upper topological slow entropy is infinity and polynomial upper topological slow entropy is zero.

\begin{theorem}\label{thm:uniquelyErgodicTopIn}
There exists an untwisted uniquely ergodic $C^\infty$ Anosov-Katok diffeomorphism $T$ isomorphic to an irrational translation of a circle constructed using parameters specified in \eqref{eq:abcparameters} with $q_{n+1}=q_n^{q_n}$, $m_n=n-1$, $l_n'=q_n^{q_n-n}$, $l_n$ satisfying \eqref{eq:equation ln condition}, and conjugacies specified by \eqref{eq:h_n-ue} and \eqref{eq:anosov-katok-uniquelyErgodic}, such that the upper topological slow entropy with respect to the log scale is
\begin{align*}
        \uent^{\top}_{a^{\ln}_m(t)}(T)=1
\end{align*}
\end{theorem}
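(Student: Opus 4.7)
The plan is to follow the template used for Theorem \ref{thm:uniquelyErgodicTopInter}, substituting the logarithmic scale $a^{\ln}_m(t)=(\ln m)^t$ for the intermediate scale and using the parameter choices $q_{n+1}=q_n^{q_n}$, $m_n=n-1$, $l_n'=q_n^{q_n-n}$. The separation and covering lemmas already proved are scale-independent and will be applied directly; the only work is to match the growth rate of the scaling function to these parameters. First I would verify that these parameters are admissible. Lemma \ref{lem:estimateQ(n+1)vsQnUniquelyErgodic} bounds $\max\{\vertiii{H_n}_n^n,\vertiii{H_n^{-1}}_n^n\}$ by $q_n^{n^5-3}$, which is dominated by $l_n'=q_n^{q_n-n}$ for $n$ large, so \eqref{eq:equation ln' convergence} holds; $\sum 1/l_n'\leq\sum q_n^{-(q_n-n)}<\infty$ gives \eqref{eq:equation ln' are summable}; taking any $l_n\geq \lceil\vertiii{H_n}_1\rceil\cdot l_n'$ (e.g.\ $l_n=q_n^{q_n-2}$) satisfies \eqref{eq:equation ln condition}. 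Then Lemma \ref{lem:convergenceOfAbC} and Theorem \ref{the:ue} produce the desired uniquely ergodic $C^\infty$ diffeomorphism isomorphic to a circle rotation.

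For the lower bound, I would evaluate the separation lemma along the subsequence $m=q_{n+1}=q_n^{q_n}$. By Lemma \ref{lem:LowerSmooth2}, $S_{d^T_{q_{n+1}}}(\e)\geq 0.5\, q_n$ for all large $n$, while $\ln q_{n+1}=q_n\ln q_n$. For any $t\in(0,1)$,
\[
\frac{S_{d^T_{q_{n+1}}}(\e)}{a^{\ln}_{q_{n+1}}(t)}\;\geq\;\frac{0.5\, q_n}{(q_n\ln q_n)^t}\;=\;\frac{0.5\, q_n^{1-t}}{(\ln q_n)^t}\;\longrightarrow\;\infty,
\]
so $(0,1)\subseteq N_1(\e,\T^2)$, which gives $\uent^{\top}_{a^{\ln}_m(t)}(T)\geq 1$.

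For the upper bound, I set $M_n\coloneqq l_n'q_n=q_n^{q_n-n+1}$. By Lemma \ref{lem:FSWSpanning2}, $N_{d^T_m}(\e)\leq C_n q_n/\e^3$ for every $m\leq M_{n+1}$. Given $m\in(M_n,M_{n+1}]$, we have $\ln m\geq(q_n-n+1)\ln q_n$, so for any $t>1$,
\[
\frac{N_{d^T_m}(\e)}{(\ln m)^t}\;\leq\;\frac{C_n q_n}{\e^3\bigl((q_n-n+1)\ln q_n\bigr)^t}.
\]
As $m\to\infty$ we have $n\to\infty$, and the ratio tends to zero provided $C_n q_n=o\bigl(((q_n-n+1)\ln q_n)^t\bigr)$. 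This is the one point requiring a short check: the constant $C_n$ from Lemma \ref{lem:upperSmooth2} depends on $\vertiii{H_{n-1}}_1$ and on $\e_n$, and by Lemma \ref{lem:boundOnDerivativeUniquelyErgodicCase} it is at worst a polynomial in $q_{n-1}$ of degree $O(n)$; since $q_n=q_{n-1}^{q_{n-1}}$, we get $C_n\leq q_n^{o(1)}$, so $C_n q_n\leq q_n^{1+o(1)}$, which is indeed dominated by $(q_n-n+1)^t(\ln q_n)^t$ for every $t>1$. Hence $\uent^{\top}_{a^{\ln}_m(t)}(T)\leq 1$.

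Combining the two bounds yields $\uent^{\top}_{a^{\ln}_m(t)}(T)=1$. The only mildly technical step is controlling the constant $C_n$ against the super-exponential growth of $q_n$; all other ingredients are direct applications of Lemmas \ref{lem:LowerSmooth2} and \ref{lem:FSWSpanning2}, exactly as in Theorem \ref{thm:uniquelyErgodicTopInter}.
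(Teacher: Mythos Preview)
Your proposal is correct and follows essentially the same approach as the paper, which simply states that the proof proceeds as in Theorem~\ref{thm:untwistedTopInter} with Lemmas~\ref{lem:LowerSmooth2} and~\ref{lem:FSWSpanning2} in place of Lemmas~\ref{lem:FSWSeperation} and~\ref{lem:FSWSpanning}. Your explicit verification that $C_n = q_n^{o(1)}$ under the super-exponential growth $q_{n+1}=q_n^{q_n}$ is a detail the paper leaves implicit but which is indeed needed for the upper bound to go through.
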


\begin{theorem}\label{thm:uniquelyErgodicTopPol}
For any integer $K>1$, there exists an untwisted uniquely ergodic $C^{K-1}$ Anosov-Katok diffeomorphism $T$ isomorphic to an irrational translation of a circle constructed using parameters specified in \eqref{eq:abcparameters} with $q_{n+1}=q_n^{K^5}$, $m_n=K-1$, $l_n'=q_n^{K^5-K-9}$, $l_n=q_n^{K^5-2}$ satisfying \eqref{eq:equation ln condition}, and conjugacies specified by \eqref{eq:h_n-ue} and \eqref{eq:anosov-katok-uniquelyErgodic}, such that the upper topological slow entropy with respect to the polynomial scale is as follows,
$$\frac{1}{K^5}\leq \uent^{\top}_{a_m^{\pol}(t)}(T)<\frac{1}{K^5-K-8}.$$
\end{theorem}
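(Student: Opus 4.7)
The plan is to mirror the proofs of Theorems \ref{thm:uniquelyErgodicTopInter} and \ref{thm:uniquelyErgodicTopIn}, this time pairing the spanning and separation estimates with the polynomial scaling function $a_m^{\pol}(t)=m^t$. First I verify that the prescribed choices fit the AbC framework: with $m_n=K-1$, Lemma \ref{lem:estimateQ(n+1)vsQnUniquelyErgodic} shows that $l_n=q_n^{K^5-2}$ meets \eqref{eq:equation ln condition}, while $l_n'=q_n^{K^5-K-9}$ is large enough to satisfy \eqref{eq:equation ln' convergence} and $\sum_n 1/l_n'<\infty$ yields \eqref{eq:equation ln' are summable}. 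Lemma \ref{lem:convergenceOfAbC} then produces the limit $T\in\mathrm{Diff}^{K-1}(\mathbb{T}^2,\mu)$, and Theorem \ref{the:ue} identifies $T$ as a uniquely ergodic diffeomorphism measure-theoretically isomorphic to an irrational circle rotation.

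For the lower bound I would evaluate the separation estimate of Lemma \ref{lem:LowerSmooth2} along the subsequence $m=q_{n+1}=q_n^{K^5}$. For every $\e>0$ and all sufficiently large $n$ the lemma gives $S_{d_{q_{n+1}}^T}(\e)\geq 0.5\, q_n$, and since $a_{q_{n+1}}^{\pol}(t)=q_{n+1}^t=q_n^{K^5 t}$, for every $t\leq 1/K^5$ one has
\[
\frac{S_{d_{q_{n+1}}^T}(\e)}{a_{q_{n+1}}^{\pol}(t)}\geq\frac{0.5\, q_n}{q_n^{K^5t}}\geq 0.5.
\]
Thus the $\limsup$ over all $m$ is strictly positive, forcing $\uent^{\top}_{a_m^{\pol}(t)}(T)\geq 1/K^5$.

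For the upper bound I would feed the spanning estimate of Lemma \ref{lem:FSWSpanning2} into the scaling function via a range-splitting argument within each block $(q_n,q_{n+1}]$. If $q_n<m\leq l_n'q_n=q_n^{K^5-K-8}$, I invoke the lemma at index $n-1$, which yields $N_{d_m^T}(\e)\leq C_{n-1}q_{n-1}/\e^3$; bounding $m^t\geq q_n^t=q_{n-1}^{K^5 t}$ gives a ratio of order $q_{n-1}^{1-K^5 t}$, which tends to $0$ as soon as $t>1/K^5$. If $l_n'q_n<m\leq q_{n+1}$, the lemma applied at index $n$ gives $N_{d_m^T}(\e)\leq C_n q_n/\e^3$ while $m^t\geq(l_n'q_n)^t=q_n^{(K^5-K-8)t}$, producing a ratio of order $q_n^{1-(K^5-K-8)t}$ that tends to $0$ provided $t>1/(K^5-K-8)$. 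Since $1/(K^5-K-8)>1/K^5$, the second case is the binding one, and one concludes $\uent^{\top}_{a_m^{\pol}(t)}(T)<1/(K^5-K-8)$.

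The main subtlety is the careful bookkeeping of the index in Lemma \ref{lem:FSWSpanning2} as $m$ traverses $(q_n,q_{n+1}]$: the lemma is valid only up to $m\leq l_{n+1}'q_{n+1}$, so within the sub-range $m\leq l_n'q_n$ one is forced to drop to index $n-1$ in order to exploit the smaller prefactor $q_{n-1}$. Separating these two regimes is precisely what produces the two candidate thresholds $1/K^5$ and $1/(K^5-K-8)$. A minor point worth checking when quoting the strict upper inequality is that the constants $C_n$ (depending on $H_{n-1}$ and $\e_n$) do not overpower the polynomial decay in $q_n$; this is standard in AbC estimates because $q_n$ grows superexponentially while $\e_n$ is summable, but it must be recorded for the limit $\e\to 0$ in the definition of the slow entropy.
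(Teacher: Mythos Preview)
Your proposal is correct and follows exactly the approach the paper indicates: the paper omits the proof, stating only that it proceeds as in Theorem~\ref{thm:untwistedTopInter} with Lemmas~\ref{lem:LowerSmooth2} and~\ref{lem:FSWSpanning2} replacing Lemmas~\ref{lem:FSWSeperation} and~\ref{lem:FSWSpanning}. Your range-splitting of $(q_n,q_{n+1}]$ at $l_n'q_n$ and the resulting thresholds $1/K^5$ and $1/(K^5-K-8)$ are precisely what that template yields at the polynomial scale.
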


\section{Slow entropy for some weakly mixing AbC diffeomorphisms}\label{sec:weakMixingAbC}

\subsection{The AbC construction}
Using the quantitative version of the AbC method Fayad and Saprykina \cite{FS} constructed weakly mixing diffeomorphisms on $\T^2$, $\mathbb{S}^1\times [0,1]$ and $\mathbb{D}$ with arbitrary Liouville rotation number. Using the shear $g_b(x,y)=(x+by,y)$ into the horizontal direction, their conjugation map is
\begin{align}\label{eq:hn-wm}
h_n(x,y)= g_{[nq^{\sigma}_n]} \circ \phi_{n} (x,y) = \left([\phi_{n}]_1(x,y) + [nq^{\sigma}_n] \cdot [\phi_{n}]_2(x,y), [\phi_{n}]_2(x,y) \right)
\end{align}
for some $0<\sigma<1$. Here, the $1/q_n$-equivariant map $\phi_n$ is built as $\phi_n=\phi_{2q_n,\e_n}$ on $[0,1/(2q_n)]\times [0,1]$ and $\phi_n=\text{id}$ on $[1/(2q_n), 1/q_n]\times [0,1]$.

As in the previous section we modify their construction in order to simplify the slow entropy estimates. For the computation of the topological slow entropy along the lines of the previous section we could have worked with the conjugation map
\begin{equation}
\phi_n= \begin{cases}
\phi_{2q^2_n,\e_n} & \text{ on } \left[\frac{i}{2q^2_n}, \frac{i+1}{2q^2_n}\right] \times [0,1] \text{ for } i=0,1,\dots,q_n-1,\\
\text{id} & \text{ on } \left[\frac{1}{2q_n}, \frac{1}{q_n} \right] \times [0,1].
\end{cases}
\end{equation}
To carry out an exact computation of the upper measure-theoretical slow entropy it proved convenient to modify $\phi_n$ even further. We assign different mapping behavior of $\phi_n$ on distinct domains by some probabilistic procedure. This allows us to show in Lemma \ref{lem:lowerSmooth4} that orbits starting in different domains are Hamming apart from each other which will give us a lower bound on the upper measure-theoretical slow entropy. To get an upper bound we provide in Lemma \ref{lem:upperSmooth4} some $(1-\e)$-cover with $(\e, q_{n+1})$-Hamming balls with respect to a given partition.

\subsubsection{A probabilistic Lemma}

A key ingredient to control the measure-theoretical upper slow entropy in our construction is a probabilistic method similar to the so-called ``Substitution Lemma'' in \cite{FRW}. More precisely, we present a method to select good choices of coding words so that these constructed sequences satisfy strong uniformity and that all pairs of building blocks occur with about the same frequency when comparing two sequences with each other, even after some sliding along the sequence. To state this precisely, we introduce some notation.

\begin{definition}
	Let $\Sigma$ be an alphabet. For a word $w\in\Sigma^{k}$ and $x\in\Sigma$
	we write $r(x,w)$ for the number of times that $x$ occurs in $w$
	and $\freq(x,w)=\frac{r(x,w)}{k}$ for the frequency of occurrences
	of $x$ in $w$. Similarly, for $(w,w')\in\Sigma^{k}\times\Sigma^{k}$
	and $(x,y)\in\Sigma\times\Sigma$ we write $r(x,y,w,w')$ for the
	number of $i<k$ such that $x$ is the $i$-th member of $w$ and
	$y$ is the $i$-th member of $w'$. We also introduce $\freq(x,y,w,w')=\frac{r(x,y,w,w')}{k}$.
\end{definition}
We also state the Law of Large Numbers with its large deviations using Chernoff bounds:
\begin{lemma}[Law of Large Numbers]
	Let $\left(X_{i}\right)_{i\in\mathbb{N}}$ be a sequence of independent
	identically distributed random variables taking value $1$ with probability
	$p$ and taking value $0$ with probability $1-p$. Then for any $\delta>0$
	we have
	\[
	\mathbb{P}\left(\left|\frac{1}{n}\sum_{i=1}^{n}X_{i}-p\right|\geq\delta\right)<\exp\left(-\frac{n\delta^{2}}{4}\right).
	\]
	%and for the upper tail we have the following estimate
	%\[
	%\mathbb{P}\left(\sum_{i=1}^{n}X_{i}\geq(1+\delta)np\right)\leq\exp\left(-\frac{\delta^{2}}{2+\delta}np\right).
	%\]
\end{lemma}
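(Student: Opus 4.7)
The plan is to carry out the standard Chernoff bound via exponential moments. Writing $S_n = \sum_{i=1}^n X_i$ and splitting the two-sided event by a union bound,
\[
\mathbb{P}\!\left(\left|\tfrac{S_n}{n} - p\right| \geq \delta\right) \leq \mathbb{P}(S_n - np \geq n\delta) + \mathbb{P}(np - S_n \geq n\delta),
\]
it suffices to control each one-sided tail separately and then sum.

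For the upper tail I would apply Markov's inequality to $\exp(t(S_n - np))$ with a parameter $t > 0$ to be optimized:
\[
\mathbb{P}(S_n - np \geq n\delta) \leq e^{-tn\delta}\,\mathbb{E}\!\left[e^{t(S_n - np)}\right] = e^{-tn\delta} \prod_{i=1}^n \mathbb{E}\!\left[e^{t(X_i - p)}\right],
\]
using independence in the last step. For the Bernoulli variable $X_i$, a direct computation gives $\mathbb{E}[e^{t(X_i - p)}] = (1-p)e^{-tp} + pe^{t(1-p)}$, which Hoeffding's lemma bounds by $e^{t^2/8}$ since $X_i \in [0,1]$. Inserting this and optimizing the resulting expression $\exp(-tn\delta + nt^2/8)$ in $t$ (take $t = 4\delta$) yields the one-sided bound $e^{-2n\delta^2}$; the lower tail is symmetric upon applying the same argument to $1 - X_i$. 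Adding the two tails produces the sharper estimate $2e^{-2n\delta^2}$, which is stronger than the stated $e^{-n\delta^2/4}$ in the relevant regime.

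The main and essentially cosmetic obstacle is to land on the precise constants in the stated form. I would either absorb the prefactor $2$ into the weaker exponent, valid as soon as $n\delta^2$ exceeds a small absolute constant (and in the applications within this paper only such values are relevant), or, alternatively, use the suboptimal choice $t = \delta$ together with the cruder estimate $\mathbb{E}[e^{t(X_i-p)}] \leq e^{t^2/2}$, which lands directly on an exponent $\leq -n\delta^2/4$ in each of the two tails, so that combining them one needs only a trivial additional absorption of the factor $2$. Either route is entirely routine, and the resulting large-deviation estimate will serve as the probabilistic engine feeding the "Substitution Lemma"-type construction of coding words later in the section.
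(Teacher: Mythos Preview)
The paper does not supply a proof of this lemma; it is quoted as a standard large-deviation (Chernoff--Hoeffding) estimate and used as a black box in the subsequent ``Substitution Lemma''-style argument. Your approach via exponential Markov plus Hoeffding's lemma is exactly the standard derivation, and your observation about the constants is apt: the clean Hoeffding argument yields $2e^{-2n\delta^2}$, and as literally stated the inequality $\mathbb{P}(|\tfrac{1}{n}S_n-p|\ge\delta)<e^{-n\delta^2/4}$ fails for small $n\delta^2$ (take $n=1$, $p=\tfrac12$, $\delta$ small). The paper's later use only needs the bound for $k\ge K_0$ large, where the factor $2$ is absorbed without difficulty, so your caveat is the correct reading of how the lemma is actually applied.
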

Inspired by the proof of the Substitution Lemma in \cite[Proposition 44]{FRW} we apply the Law of Large Numbers to guarantee the existence of selections with the desired properties mentioned above. An even stronger probabilistic lemma was proven in \cite{BKW1} which allowed us to also control the lower measure-theoretical slow entropy of some combinatorial constructions. For the sake of completeness and the reader's convenience we include statement and proof of a probabilistic lemma sufficient for our purposes.

\begin{lemma}\label{lem:prob3}
Let $\epsilon>0$ and $\Sigma$ be a finite alphabet. For any sequence $\{b_n\}_{n\in \N}$ with $\lim_{n\to \infty} \frac{\log b_n}{n} = 0$ there exists $K_0 \in\mathbb{N}$ such that for all $k\geq K_0$, that are multiples of $|\Sigma|$, and all $N\leq b_k$ there is a collection of sequences $\Theta\subset\Sigma^k$ with cardinality $|\Theta|=N$ satisfying the following properties:
\begin{enumerate}[(1)]
  \item(Exact uniformity) For every $x\in\Sigma$ and every $w\in\Theta$, we have
      $$\operatorname{freq}(x,w)=\frac{1}{|\Sigma|};$$
  \item(Hamming separation) Let $0\leq t<(1-\epsilon)k$, $w,w'\in\Theta$ and $I\subset[0,k-1]\cap\mathbb{Z}$ be the indices in the overlap of $w$ and $\sh^t(w')$, where $\sh^t(w')$ moves $w'$'s digits to the left by $t$ units. If $w,w'$ are different from each other, then we have
      \begin{equation}
      d^H_{k-t}(w\upharpoonright I,\sh^t(w')\upharpoonright I)\geq 1-\frac{1}{|\Sigma|}-\epsilon|\Sigma|;
      \end{equation}
      if $1\leq t\leq(1-\epsilon)k$, then we have
      \begin{equation}\label{eq:selfSliding}
      d^H_{k-t}(w\upharpoonright I,\sh^t(w)\upharpoonright I)\geq 1-\frac{1}{|\Sigma|}-\epsilon|\Sigma|,
      \end{equation}
  where $w\upharpoonright I$ denotes the restriction of $w$ on the index set $I$, i.e. if $I=\{i_1,i_2,\ldots\}$, then $(w\upharpoonright I)_p=w_{i_p}$.
\end{enumerate}

\end{lemma}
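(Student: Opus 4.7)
The plan is a classical probabilistic argument. Let $E_k \subset \Sigma^k$ denote the set of exactly uniform words (those in which each symbol in $\Sigma$ appears exactly $k/|\Sigma|$ times), and let $\mu_k$ be the uniform probability measure on $E_k$. I would sample $N$ words $w_1, \ldots, w_N$ i.i.d.\ according to $\mu_k$; by construction, property (1) (exact uniformity) is automatically satisfied for every realization. The collection $\Theta = \{w_1, \ldots, w_N\}$ will satisfy property (2) provided that none of the finitely many ``bad'' Hamming-proximity events occur. Estimating the probability of each bad event and then union-bounding will yield a positive probability of success for all $k \geq K_0$, hence the required $\Theta$ exists.

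For the cross-word bound, fix distinct indices $i \neq j$ and a shift $0 \leq t < (1-\epsilon)k$. Conditioning on $w_i$, the marginal of $w_j$ at any position is uniform on $\Sigma$, so each match indicator $X_\ell = \mathbf{1}((w_i)_\ell = (w_j)_{\ell+t})$ has expectation $1/|\Sigma|$ and the expected number of matches on the overlap of length $k-t$ is $(k-t)/|\Sigma|$. Although the $X_\ell$ are not independent (because $w_j$ is drawn from $E_k$ rather than i.i.d.\ uniform), the uniform measure on $E_k$ is negatively associated, so Chernoff-type tail bounds still apply and give
\[
\mathbb{P}\!\left(\#\{\text{matches}\} \geq \left(\tfrac{1}{|\Sigma|} + \epsilon|\Sigma|\right)(k-t)\right) \leq \exp\!\left(-c\,\epsilon^2 (k-t)\right) \leq \exp\!\left(-c\,\epsilon^3 k\right),
\]
where I used $k - t \geq \epsilon k$. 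An equivalent route is to compare $\mu_k$ to the i.i.d.\ uniform product measure, losing only a polynomial factor of order $k^{(|\Sigma|-1)/2}$ via Stirling and then applying the classical Chernoff bound for i.i.d.\ Bernoulli variables. For the self-shift case $w_i = w_j$ with $1 \leq t < (1-\epsilon)k$, the marginal probability of a match between position $\ell$ and position $\ell+t$ of the same exactly uniform word is $(k/|\Sigma|-1)/(k-1)$, which differs from $1/|\Sigma|$ by $O(1/k)$, and the same negative-association or polynomial-comparison argument yields an analogous tail bound.

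Finally, I would conclude by a union bound over all ordered pairs $(i,j)$ with $i \neq j$ (at most $N^2$ choices) and all shifts $t \in [0, (1-\epsilon)k)$ (at most $k$ choices), together with all self-shifts (at most $Nk$ choices). This controls the total failure probability by $3N^2 k \exp(-c\,\epsilon^3 k)$. Because $\log N \leq \log b_k = o(k)$, this quantity tends to $0$ as $k \to \infty$, so for all $k$ exceeding some $K_0 = K_0(\epsilon, \Sigma, \{b_n\})$ it is strictly less than $1$, which guarantees the existence of a realization $\Theta$ meeting both requirements. The main subtlety is the dependence structure forced by the exact-uniformity constraint, which rules out a direct Chernoff bound on independent indicators; the cleanest resolutions are either to invoke negative association for $\mu_k$ or to transfer the i.i.d.\ Chernoff bound through the polynomial-factor comparison with $\mu_k$. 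The subexponential growth hypothesis on $b_n$ is precisely what allows the $N^2 k$ factor to be absorbed by the exponential gain from concentration.
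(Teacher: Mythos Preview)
Your approach is correct and takes a genuinely different route from the paper's. The paper does \emph{not} sample from $E_k$; instead it samples $N$ words i.i.d.\ uniformly from all of $\Sigma^k$, applies Chernoff to get a collection $\Theta'$ satisfying \emph{approximate} uniformity together with the Hamming-separation bounds, and then repairs each word in $\Theta'$ by altering at most $2\delta k$ symbols to force exact uniformity, checking afterwards that this perturbation (affecting at most $4\delta k$ positions in any alignment) does not spoil the separation inequalities. Your route---sampling directly from $E_k$ and handling the resulting dependence via negative association or the polynomial Stirling comparison with the product measure---makes property~(1) automatic and avoids the fix-up step entirely; the price is invoking a slightly less elementary concentration tool. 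One point worth tightening in your write-up (and in fact in the paper's as well) is the self-shift case: the indicators $\mathbf{1}(w_\ell=w_{\ell+t})$ are \emph{not} independent even under the i.i.d.\ product measure, since each coordinate of $w$ enters two of them, so neither plain Chernoff nor negative association of single coordinates applies directly. The cleanest fix is McDiarmid's bounded-differences inequality (changing one symbol of $w$ alters the match count by at most~$2$), which gives the same $\exp(-c\epsilon^3 k)$ tail and fits seamlessly into your union bound.
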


\begin{proof}

We will use the Law of Large Numbers to show that for sufficiently large $k\in \N$ most choices in $\Sigma^{k}$	satisfy the aimed properties.
	
Let $\delta<\frac{\epsilon^2}{5}$. We consider $\varOmega_{k}\coloneqq\left(\Sigma^{k} \right)^{N}=\Sigma^{k}\times\dots\times\Sigma^{k}$ equipped with the counting measure as our probability space. For each $x\in\Sigma$ and every $i\in\left\{ 0,1,\dots,k-1\right\} $ let $X_{i}$ be the random variable that takes the value $1$ if $x$ occurs in the $i$-th place of an element $w\in\Sigma^{k}$ and $0$ otherwise. Then the $X_{i}$ are independent and identically distributed. Hence, the Law of Large Numbers gives $k_{x}=k_{x}(\delta)$ such that for all $k\geq k_{x}$ a proportion $\left(1-(\exp(-\delta^2/4))^k\right)$ of sequences in $\Sigma^{k}$ satisfy
\begin{equation}\label{eq:prop13}
\abs{\frac{1}{k}\sum_{i=0}^{k-1}X_{i}-\frac{1}{|\Sigma|}}<\delta.
\end{equation}
Moreover, we define for each $0\leq t <(1-\epsilon)k$ and pair $(x,y)\in\Sigma\times\Sigma$ the random variable $Y^t_{i}$ that takes the value $1$ if $x$ occurs in the $i$-th place of $w$ for an element $w\in\Sigma^{k}$ and $y$ is the $i$-th entry of $\sh^t(w')$ for some $w'\in\Sigma^{k}$, $w'\neq w$. Otherwise, $Y^t_{i}$ takes the value $0$. Since the $Y^t_{i}$ are independent and identically distributed, the Law of Large Numbers gives $k_{x,y}=k_{x,y}(\delta)$ such that for all $k\geq k_{x,y}$, all $0\leq t <(1-\epsilon)k$ and all but a $(\exp(-\delta^2/4))^{k-t}$ proportion of sequences $(w,w')\in\Sigma^{k}\times\Sigma^{k}$ satisfy
\begin{equation}\label{eq:prop23}
\abs{\frac{1}{k-t}\sum_{i=0}^{k-t-1}Y^t_{i}-\frac{1}{|\Sigma|^{2}}}<\delta.
\end{equation}
Finally, we introduce for each $1\leq t <(1-\epsilon)k$ the random variable $Z^t_i$ that takes the value $1$ if the $i$-th symbol of $w$ agrees with the $i$-th symbol of $\sh^t(w)$ for some $w\in \Sigma^k$. Otherwise, $Z^t_i$ takes the value $0$. Since the $Z^t_{i}$ are independent and identically distributed, the Law of Large Numbers gives $k(\delta)$ such that for all $k\geq k(\delta)$ and all $1\leq t <(1-\epsilon)k$  a $\left(1-(\exp(-\delta^2/4))^{k-t}\right)$ proportion of sequences $w\in\Sigma^{k}$ satisfy
\begin{equation}\label{eq:prop33}
\abs{\frac{1}{k-t}\sum_{i=0}^{k-t-1}Z^t_{i}-\frac{1}{|\Sigma|}}<\delta.
\end{equation}
	
We point out that the number of requirements is less than $N |\Sigma|+2kN^2|\Sigma|^{2} \leq 3kN^2|\Sigma|^{2}$. Since
\[
3kN^2|\Sigma|^{2} \cdot (\exp(-\delta^2/4))^{\epsilon k} \leq 3kb_k^2|\Sigma|^{2} \cdot (\exp(-\delta^2/4))^{\epsilon k} \to 0 \text{ as } k\to \infty,
\]
we conclude by Bernoulli inequality that for sufficiently large $k$ the vast majority of elements in $\varOmega_{k}$ satisfies all the conditions \eqref{eq:prop13}, \eqref{eq:prop23}, and \eqref{eq:prop33}. We pick $k$ large enough such that there is such a collection of sequences $\Theta'\subset\Sigma^{k}$ with cardinality $|\Theta'|=N$. Then by equation \eqref{eq:prop13} in any $w_{in}\in\Theta^{\prime}$, we can remove symbols at at most $2\delta k$ places to obtain a word $w_{red}$ in which each element of $\Sigma$ occurs the same number of times. Afterwards, each element of $\Sigma$ can be filled into the empty slots exactly the same number of times. Clearly, the constructed word $w$ satisfies uniformity. The sequences built this way constitute our collection $\Theta\subset\Sigma^{k}$.

To check the second property we denote for $w,w^{\prime}\in\Theta$ their original strings in $\Theta^{\prime}$ by $w_{in}$ and $w_{in}^{\prime}$, respectively. From equation \eqref{eq:prop23} we obtain for every $x,y\in\Sigma$ that
	\[
	\abs{\frac{r(x,y,w_{in}\upharpoonright I,\sh^t(w'_{in})\upharpoonright I)}{k-t}-\frac{1}{|\Sigma|^{2}}}<\delta.
	\]
Since $w_{in}$ and $\sh^t(w'_{in})$ were changed at most $2\delta k$ places, at most $4\delta k$ positions in the alignment of $w$ and $\sh^t(w')$ can be affected. Hereby, we conclude that
\begin{equation*}
\begin{aligned}
	& \abs{\freq\left(x,y,w\upharpoonright I,\sh^t(w')\upharpoonright I\right)-\frac{1}{|\Sigma|^{2}}}\\
	\leq & \abs{ \frac{r(x,y,w\upharpoonright I,\sh^t(w')\upharpoonright I)-r(x,y,w_{in}\upharpoonright I,\sh^t(w'_{in})\upharpoonright I)}{k-t}} \\&+ \abs{\frac{r(x,y,w_{in}\upharpoonright I,\sh^t(w'_{in})\upharpoonright I)}{ k-t} -\frac{1}{|\Sigma|^{2}}}\\
	\leq & \frac{4\delta  k}{ \epsilon k}+\delta	< \epsilon.
\end{aligned}
\end{equation*}

In particular, this implies
\[
d^H_{k-t-1}\left(w\upharpoonright I,\sh^t(w')\upharpoonright I\right)\geq 1-\frac{1}{|\Sigma|}-\epsilon |\Sigma|,
\]
which yields the first part of property (2). Similarly, we check its second part with the aid of (\ref{eq:prop33}).

\end{proof}

To fix some notation we make the following immediate observation from Lemma \ref{lem:prob3}.

\begin{remark} \label{rem:kForCLT2}
	Let $s\in \Z$, $s\geq 2$, and $t\in \R_{\geq 0}$. Then there is $K_1=K_1(t,s,\varepsilon)$ such that for all $k\geq K_1$ and any finite alphabet $\Sigma$ of cardinality $|\Sigma|=s$ there is a collection $\Theta \subset \Sigma^{k}$ of cardinality $|\Theta|=\lfloor k^t\rfloor $ such that the words in $\Theta$ satisfy the properties from Lemma \ref{lem:prob3}.
\end{remark}

\subsubsection{Construction of the conjugation maps}
In our case, let $q_n$ be large enough such that there is a collection $\Theta$ of $q_n$ many words of length $q_n$ in the alphabet $\{0,\dots ,n^2-1\}$ satisfying the properties in Lemma \ref{lem:prob3} with $\epsilon=\frac{1}{\abs{\Sigma}^2}=\frac{1}{n^4}$. This corresponds to $q_n \geq K_1(1,n^2,\frac{1}{n^4})$ in the notation from Remark \ref{rem:kForCLT2}. Then we concatenate these words from $\Theta$ to form a word $\overline{w}$ of length $q^2_n$. With it we introduce the word $\tilde{w}$ of length $q^2_n$ by $\tilde{w}_i = \overline{w}_i + 1 \mod n^2$. Then we concatenate these two words and obtain a word $W=\overline{w}\tilde{w}=W_0\dots W_{2q^2_n-1}$. We use this word to define the conjugation map $\phi_n$ on $\left[\frac{i}{2q^3_n}, \frac{i+1}{2q^3_n}\right] \times [0,1]$ for $i=0,1,\dots,2q^2_n-1$  via the following method:
\begin{enumerate}[(i)]
\item If $W_i=0$, then $\phi_n=id$;
\item If $W_i=j\in\{1,\ldots, n^2-1\}$, then $\phi_n=\phi_{2q^{j+2}_n,\e_n}$ on each $\left[\frac{i}{2q^3_n}+\frac{s}{2q^{j+2}_n}, \frac{i}{2q^3_n}+\frac{s+1}{2q^{j+2}_n}\right] \times [0,1]$ for $s=0,1,\ldots, q_n^{j-1}$.
\end{enumerate}
Thereby, $\phi_n$ is defined on the fundamental domain $\left[0,\frac{1}{q_n}\right] \times [0,1]$. Finally, we extend it $1/q_n$-equivariantly. By Lemma \ref{lem:derivativeNormEstimateForQuasiRotations} we have
\begin{equation}\label{eq:normPhi3}
|||\phi_n|||_k \leq C_{n,k} \cdot q^{(n^2+1)\cdot k}_n \text{ for every } k\in \N,
\end{equation}
where $C_{n,k}$ is a constant depending on $k$ and $\e_n$ but is independent of $q_n$.

If $\phi_n =\phi_{2q^{j+2}_n,\e_n} $ on $\left[\frac{i}{2q^3_n}, \frac{i+1}{2q^3_n}\right] \times [0,1]$, then we set
\begin{align*}
\tilde{K}^{(i)}_n = \bigcup^{q^{j-1}_n -1}_{s=0} \left[\frac{i}{2q^3_n} + \frac{s+2\e_n}{2q^{j+2}_n}, \frac{i}{2q_n^3} + \frac{s+1-2\e_n}{2q^{j+2}_n} \right] \times \left[2\e_n,1-2\e_n\right],
\end{align*}
while if $\phi_n=\text{id}$, then we set
\begin{align*}
\tilde{K}^{(i)}_n = \left[\frac{i}{2q^3_n}, \frac{i+1}{2q^3_n}\right] \times \left[2\e_n,1-2\e_n\right].
\end{align*}
Then we use this to introduce
\begin{align*}
K^{(0)}_n=\bigcup_{i:W_i=0} \tilde{K}^{(i)}_n \ \ \ \text{ and } \ \ \ K^{(j)}_n = \bigcup_{i:W_i=j} \tilde{K}^{(i)}_n
\end{align*}
for $j\in \{1,\dots,n^2-1 \}$. Altogether, we define
\begin{equation} \label{eq:Ksmooth3}
K_n = \bigcup^{2q^3_n-1}_{i=0} \tilde{K}^{(i)}_n = \bigcup^{n^2-1}_{j=0} K^{(j)}_n
\end{equation}
as the \emph{good domain} of $\phi_n$ and $\phi^{-1}_n$.

Instead of the shear map $g_b$ we use as the function $g_n$ a smooth approximation of a suitably chosen step function. Moreover, we will choose a variable $\sigma_n \searrow 0$ instead of some fixed $0<\sigma<1$. %{\bb I'm not sure if we still need this additional flexibility of letting $\sigma_n \searrow 0$. With one of the original constructions I got separation number $q^2_n$ and covering number $q^{2+2\sigma_n}_n$. To get the exact value of the upper slow entropy it was then necessary to let $\sigma_n \to 0$. After all the modifications the $\sigma$ term might become irrelevant anyway. Let's keep $\sigma_n$ for the moment and let's see how the computation of slow entropy works out.}{\rr We need this flexibility in computation of slow entropy wrt polynomial scale with limited regularity.}
\begin{lemma} \label{lem:G}
	Let $\sigma_n>0$ and $a_n = [nq^{\sigma_n}_n]\cdot 2q^3_n $. There is a smooth measure-preserving diffeomorphism $g_n: \mathbb{T}^1 \times \left[0,1\right]\rightarrow \mathbb{T}^1 \times \left[0,1\right]$ such that
	\begin{itemize}
		\item $g_n$ acts as the translation by $\left[nq^{\sigma_n}_n \right] \cdot \frac{i}{a_n}$ in the $x$-direction on $\mathbb{T}^1 \times \left[ \frac{i+ \varepsilon_n}{a_n}, \frac{i+1- \varepsilon_n}{a_n} \right] \subset \mathbb{T}^1 \times \left[2\e_n, 1- 2\e_n\right]$,
		\item $g_n$ coincides with the identity on $\mathbb{T}^1 \times \left[0, \e_n \right] \cup \mathbb{T}^1 \times \left[1- \e_n,1 \right]$,
		\item $g_n \circ R_{\frac{1}{q_n}} = R_{\frac{1}{q_n}} \circ g_n$,
		\item $\vertiii{g_n}_k\leq C_{n,k} \cdot \left[nq^{\sigma_n}_n\right]^k\cdot q^{3\cdot (k-1)}_n$, where the constant $C_{n,k}$ depends on $k$, $n$, and $\e_n$ but is independent of $q_n$.
	\end{itemize}
\end{lemma}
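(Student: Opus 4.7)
The plan is to take $g_n$ of the form $g_n(x,y) = (x + s_n(y), y)$, where $s_n:[0,1] \to \mathbb{R}$ is a smooth approximation of a suitable step function that depends only on $y$. Because the $x$-shift is independent of $x$, such a map is automatically a diffeomorphism with Jacobian $1$ (hence area-preserving) and automatically commutes with every rotation $R_t(x,y) = (x+t,y)$; in particular the equivariance $g_n \circ R_{\frac{1}{q_n}} = R_{\frac{1}{q_n}} \circ g_n$ is immediate.

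First I would introduce the step function $\tilde s_n:[0,1]\to\mathbb{R}$ taking the value $[nq_n^{\sigma_n}] \cdot i/a_n$ on each plateau interval $\bigl[\tfrac{i}{a_n}, \tfrac{i+1}{a_n}\bigr]$ lying in $[2\e_n, 1-2\e_n]$, and the value $0$ on $[0,\e_n] \cup [1-\e_n, 1]$, with arbitrary step-function interpolation in the two thin boundary layers. Then, following exactly the mollification scheme used in Section \ref{sec:uniquelyErgodicAbC} for $\overline\psi_{s,\e}$, I would replace $\tilde s_n$ by a smooth $s_n$ obtained by smoothing each jump across a transition window of width proportional to $\e_n/a_n$ using a fixed smooth monotone cutoff $\rho:\mathbb{R}\to\mathbb{R}$ with $\rho\equiv 0$ on $(-\infty,-1]$ and $\rho\equiv 1$ on $[1,\infty)$. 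By construction $s_n$ equals $\tilde s_n$ on each plateau strip $\bigl[\tfrac{i+\e_n}{a_n}, \tfrac{i+1-\e_n}{a_n}\bigr]$ and vanishes on $[0,\e_n] \cup [1-\e_n, 1]$, so that the first three bulleted properties are built into the definition.

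The main work is the $C^k$ bound. Since $g_n^{-1}(x,y) = (x - s_n(y), y)$, every mixed partial of $g_n^{\pm 1}$ of order $|\vec a|\geq 1$ is either trivial or equals $\pm s_n^{(j)}(y)$ for some $1\leq j\leq |\vec a|$, so it suffices to bound the derivatives of $s_n$. Each jump has size
\[
\Delta = \frac{[nq_n^{\sigma_n}]}{a_n} = \frac{1}{2q_n^3},
\]
smoothed across a window of width $\sim \e_n/a_n$. The standard scaling estimate for mollifiers (completely analogous to \eqref{eq:stepDeriEst}) therefore yields
\[
\|s_n^{(k)}\|_0 \leq C_k \cdot \Delta \cdot \bigl(a_n/\e_n\bigr)^k \cdot \|\rho^{(k)}\|_0 = C_k \cdot \e_n^{-k} \cdot [nq_n^{\sigma_n}]^k \cdot q_n^{3(k-1)} \cdot \|\rho^{(k)}\|_0.
\]
Absorbing $\e_n^{-k}$ and $\|\rho^{(k)}\|_0$ into a constant $C_{n,k}$ that depends on $k$, $n$ and $\e_n$ but not on $q_n$, this is exactly the claimed bound $\vertiii{g_n}_k \leq C_{n,k}\cdot [nq_n^{\sigma_n}]^k \cdot q_n^{3(k-1)}$.

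I do not anticipate a genuine obstacle: the construction and its verification are a direct adaptation of the $\overline\psi_{s,\e}$ construction from Section \ref{sec:uniquelyErgodicAbC}. The only subtlety is bookkeeping: one must make sure that the constants arising from the mollifier $\rho$ and from the factors $\e_n^{-k}$ are grouped into a single $C_{n,k}$ with no hidden $q_n$-dependence, so that when this lemma is later used in the norm estimates driving the choice of the parameters $l_n$ and $k_n$, it produces bounds of the same polynomial-in-$q_n$ form as in Sections \ref{sec:untwistedAbC} and \ref{sec:uniquelyErgodicAbC}.
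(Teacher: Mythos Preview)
Your proposal is correct and follows essentially the same approach as the paper: the paper also defines $g_n(x,y)=(x+\tilde\psi_{a_n,[nq_n^{\sigma_n}],\e_n}(y),y)$ with $\tilde\psi$ a smoothed step function built from a fixed monotone cutoff $\rho$, and obtains the identical derivative bound $\|D^k\tilde\psi\|_0\le \frac{b\,a^{k-1}}{\e^k}\|D^k\rho\|_0$, which with $b=[nq_n^{\sigma_n}]$ and $a=a_n=2q_n^3[nq_n^{\sigma_n}]$ is exactly your $\Delta\cdot(a_n/\e_n)^k$ estimate. The one place where the paper is slightly more explicit than your sketch is the boundary handling: rather than an ``arbitrary step-function interpolation'' near $y=1-\e_n$, it chooses a starting index $j_0$ with $b\,j_0/a\in\mathbb{Z}$ and sums only over $j_0+1\le i\le a-j_0$, so that $\tilde\psi\equiv 0\bmod 1$ automatically on $[0,\e_n]\cup[1-\e_n,1]$ without any separate interpolation step.
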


\begin{proof}
Let $a,b \in \mathbb{N}$, $\varepsilon>0$ satisfying $\frac{1}{\varepsilon} \in \mathbb{N}$ and $\rho: \mathbb{R} \rightarrow \mathbb{R}$ be a smooth increasing function that equals $0$ for $x\leq -1$ and $1$ for $x\geq 1$. Moreover, we let $j_0\in \N$ be the minimum $j\in \N$, $j\geq a\varepsilon$, such that $b\cdot \frac{j}{a}\equiv 0 \mod 1$. Then we define the map $\tilde{\psi}_{a,b,\varepsilon}: \left[0,1\right]\rightarrow\mathbb{R}$ by
\begin{equation*}
\tilde{\psi}_{a,b,\varepsilon}\left(x\right)=\frac{b}{a} \cdot \sum^{a-j_0}_{i=j_0+1} \rho \left(\frac{a \cdot x}{\varepsilon}-\frac{i}{\varepsilon} \right).
\end{equation*}
Note that $\tilde{\psi}_{a,b,\varepsilon}|_{\left[0,\e \right] \cup \left[1-\e, 1\right]} \equiv 0 \mod 1$ and for every $j_0 \leq i \leq a-j_0$ we have $$\tilde{\psi}_{a,b,\varepsilon}|_{\left[\frac{i+\e}{a},\frac{i+1-\varepsilon}{a}\right]} \equiv b \cdot \frac{i}{a} \mod 1.$$ Furthermore, we can estimate
\begin{equation} \label{eq:g1}
\left\|D^k\tilde{\psi}_{a,b,\varepsilon}\right\|_0 \leq \frac{b\cdot a^{k-1}}{\varepsilon^k} \cdot \left\|D^k\rho\right\|_0.
\end{equation}
Then we define the measure-preserving diffeomorphism $g_{a,b,\varepsilon}: \mathbb{S}^1 \times \left[0,1\right]\rightarrow \mathbb{S}^1 \times \left[0,1\right]$ by
\begin{equation*}
g_{a,b,\varepsilon}\left(x,y\right)= \left(x+\tilde{\psi}_{a,b,\varepsilon}\left(y\right) ,y\right).
\end{equation*}
In our concrete constructions we will use
\begin{equation*}
g_n = g_{\left[nq^{\sigma_n}_n\right]\cdot 2q^3_n, \left[nq^{\sigma_n}_n\right],\e_n}.
\end{equation*}
We observe $g_n \circ R_{\frac{1}{q_n}}=R_{\frac{1}{q_n}} \circ g_n$ and
\begin{equation} \label{eq:normG}
\left\|D^k\psi_{n}\right\|_0 \leq \frac{\left[nq^{\sigma_n}_n\right]^k\cdot q^{3\cdot (k-1)}_n}{\varepsilon^k_n} \cdot \left\|D^k\rho\right\|_0 =: C_{\psi_n,k} \cdot  \left[nq^{\sigma_n}_n\right]^k\cdot q^{3\cdot (k-1)}_n
\end{equation}
from (\ref{eq:g1}) with $\psi_n= \tilde{\psi}_{\left[nq^{\sigma_n}_n\right]\cdot 2q^3_n, \left[nq^{\sigma_n}_n\right],\e_n}$. This immediately yields $\vertiii{g_n}_k\leq C_{n,k} \cdot \left[nq^{\sigma_n}_n\right]^k\cdot q^{3\cdot (k-1)}_n$, where the constant $C_{n,k}$ is independent of $q_n$.
\end{proof}

To exclude the regions related to smoothing coming from $\rho$ we introduce
\[
C_n \coloneqq \bigcup_{i\in \Z} \left[\frac{i-\varepsilon_n}{\left[nq^{\sigma_n}_n\right]\cdot 2q^3_n} , \frac{i+\varepsilon_n}{\left[nq^{\sigma_n}_n\right]\cdot 2q^3_n} \right]\bigcap\T.
\]
Using the different regions $\tilde{K}^{(i)}_n$ for $\phi_n$ we set
\begin{align*}
\tilde{L}^{(i)}_n \coloneqq \tilde{K}^{(i)}_n \cap \phi^{-1}_n\left(\T \times (\T \setminus C_n ) \right),
\end{align*}
which gives (with the notation $a_n=\left[nq^{\sigma_n}_n\right]\cdot 2q^3_n$)
\begin{align*}
\tilde{L}^{(i)}_n = &\bigcup^{q^{j-1}_n -1}_{s_1=0} \bigcup^{(1-2\e_n)a_n-1}_{s_2=2\e_n a_n} \left[\frac{i}{2q^3_n} + \frac{s_1}{2q^{j+2}_n}+ \frac{s_2+\e_n}{a_n \cdot 2q^{j+2}_n}, \frac{i}{2q_n^3} + \frac{s_1}{2q^{j+2}_n} + \frac{s_2+1-\e_n}{a_n \cdot 2q^{j+2}_n} \right] \\ &\times \left[2\e_n,1-2\e_n\right]
\end{align*}
in case of $\phi_n =\phi_{2q^{j+2}_n,\e_n} $, while if $\phi_n=\text{id}$, then we have
\begin{align*}
\tilde{L}^{(i)}_n = \bigcup^{(1-4\e_n)\cdot a_n-1}_{s=2\lceil \e_n \cdot a_n\rceil } \left[\frac{i}{2q^3_n}, \frac{i+1}{2q^3_n}\right] \times \left[\frac{s+\varepsilon_n}{a_n} , \frac{s+1-\varepsilon_n}{a_n} \right].
\end{align*}
Hereby, we define the \emph{good domain} of the conjugation map $h_n =g_n \circ \phi_{n}$ as
\begin{equation} \label{eq:Lsmooth3}
L_n = \bigcup^{2q^3_n-1}_{i=0} \tilde{L}^{(i)}_n.
\end{equation}

\subsubsection{Norm estimates and parameter growth}
Proceeding in a way similar to Section \ref{sec:uniquelyErgodicAbC} we obtain the following set of estimates.

\begin{lemma}\label{lemma bound on derivative weak-mixing case}
The conjugating diffeomorphisms satisfy the following norm estimates:
\begin{align*}
\max\{\vertiii{h_n}_k, \vertiii{h_n^{-1}}_k\}\leq C q_n^{3k^2-3k+\sigma_nk^2+(n^2+1)k^2},
\end{align*}
where the constant $C$ is dependent on $k$ and $\e_n$  but not on $q_n$.
\end{lemma}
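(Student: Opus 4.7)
The plan is to decompose $h_n=g_n\circ\phi_n$ and $h_n^{-1}=\phi_n^{-1}\circ g_n^{-1}$ and apply the submultiplicative estimate of Lemma \ref{lem:submultiplicative} (Fa\`a di Bruno), using the already-established norm bounds on the quasi-rotation component $\phi_n$ coming from \eqref{eq:normPhi3} and on the shear component $g_n$ coming from Lemma \ref{lem:G}.

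First I would write, by Lemma \ref{lem:submultiplicative},
\begin{equation*}
\vertiii{h_n}_k \leq C\,\vertiii{g_n}_k\cdot\vertiii{\phi_n}_k^{\,k},\qquad
\vertiii{h_n^{-1}}_k \leq C\,\vertiii{\phi_n^{-1}}_k\cdot\vertiii{g_n^{-1}}_k^{\,k},
\end{equation*}
and then insert the individual bounds $\vertiii{\phi_n}_k,\vertiii{\phi_n^{-1}}_k\leq C_{n,k}\,q_n^{(n^2+1)k}$ from \eqref{eq:normPhi3} together with $\vertiii{g_n}_k,\vertiii{g_n^{-1}}_k\leq C_{n,k}\,[nq_n^{\sigma_n}]^k\,q_n^{3(k-1)}$ from Lemma \ref{lem:G}.

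For $\vertiii{h_n}_k$ this yields an exponent of $q_n$ equal to $\sigma_n k+3(k-1)+(n^2+1)k^2$ (the factor $n^k$ is absorbed into the constant $C$), whereas for $\vertiii{h_n^{-1}}_k$ the roles of the outer/inner maps are swapped, giving exponent $(n^2+1)k+\sigma_n k^2+3(k-1)k=(n^2+1)k+\sigma_n k^2+3k^2-3k$. Since $k\le k^2$ for $k\ge 1$, both exponents are dominated by
\begin{equation*}
3k^2-3k+\sigma_n k^2+(n^2+1)k^2,
\end{equation*}
so taking the maximum and absorbing all $n$- and $\e_n$-dependent prefactors into $C$ yields the claim.

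There is no real obstacle here beyond careful bookkeeping of exponents: the only conceptual subtlety is recognising that the worse bound comes from $h_n^{-1}$, where the shear $g_n^{-1}$ (which carries the $\sigma_n$ exponent and the factor $q_n^{3(k-1)}$) sits on the inside and is therefore raised to the $k$-th power by the Fa\`a di Bruno estimate, producing the quadratic terms $\sigma_n k^2$ and $3k^2-3k$ rather than the linear ones seen in $\vertiii{h_n}_k$. Once that is tracked correctly, the statement follows immediately from the two input lemmas.
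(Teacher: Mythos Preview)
Your proposal is correct and follows exactly the approach the paper intends: the paper gives no explicit proof here but refers back to the analogous Lemma~\ref{lem:boundOnDerivativeUniquelyErgodicCase}, whose proof is precisely the decomposition $h_n=g_n\circ\phi_n$ combined with Lemma~\ref{lem:submultiplicative} and the individual norm bounds \eqref{eq:normPhi3} and Lemma~\ref{lem:G}. Your bookkeeping of exponents, including the observation that the worst case comes from $h_n^{-1}$ where $g_n^{-1}$ sits inside and gets raised to the $k$-th power, is accurate.
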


%\begin{proof}
%{\bb I get the same estimate.}
%\end{proof}

%{\bb As in the previous section I'm not sure if the estimate allows to get the next lemma.} Following methods identical to those described in section \ref{section untwisted}, we obtain the following two lemmas.
\begin{lemma}\label{lemma estimate q_(n+1) vs q_n weak mixing}
The expression $\lceil\vertiii{H_{n}}_1\rceil\max\{\vertiii{H_{n}}^{m_n+1}_{m_n+1}, \vertiii{H_{n}^{-1}}^{m_n+1}_{m_n+1}\}$ is bounded above by $q_{n}^{n^2(m_n+1)^5-2}$ and hence any choice of $l_n\geq q_{n}^{n^2(m_n+1)^5-2}$ satisfies the requirement imposed by \eqref{eq:equation ln condition}. Additionally if $k_n=1$, we get $$q_{n+1}\geq q_n^{n^2(m_n+1)^5}.$$ In particular we can choose
\begin{itemize}
    \item $m_n=n-1$ and $l_n=q_n^{n^8-2}$ yielding $q_{n+1}= q_n^{n^8}=q_1^{\prod_{m=1}^n(m^8)},$ or,
    \item $m_n=n-1$ and $l_n=q_n^{q_n-2}$ yielding $q_{n+1}= q_n^{q_n},$ or,
    \item $m_n=K-1$ for some integer $K>1$ and $l_n=q_n^{n^2K^5-2}$ yielding $q_{n+1}= q_n^{n^2K^5}.$
\end{itemize}
Moreover, if $m_n\geq 1$, then we have the freedom for the choice of a sequence $l_n'$ to satisfy \eqref{eq:equation ln' convergence} and \eqref{eq:equation ln' are summable}.
\end{lemma}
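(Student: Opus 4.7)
The proof will follow the same template as Lemmas \ref{lem:estimateQ(n+1)vsQn} and \ref{lem:estimateQ(n+1)vsQnUniquelyErgodic}, with the only essential change being that the norm estimate on $h_n$ now comes from Lemma \ref{lemma bound on derivative weak-mixing case} rather than from Lemma \ref{lem:upperBoundOnDerivative} or Lemma \ref{lem:boundOnDerivativeUniquelyErgodicCase}. Setting $\gamma_n = m_n + 1$ and writing $H_n = H_{n-1} \circ h_n$, two applications of Lemma \ref{lem:submultiplicative} give
\[
\vertiii{H_n}_1 \max\{\vertiii{H_n}^{\gamma_n}_{\gamma_n}, \vertiii{H_n^{-1}}^{\gamma_n}_{\gamma_n}\} \leq \bigl[\vertiii{H_{n-1}^{-1}}_1 \max\{\vertiii{H_{n-1}}^{\gamma_n}_{\gamma_n}, \vertiii{H_{n-1}^{-1}}^{\gamma_n^2}_{\gamma_n}\}\bigr] \cdot \vertiii{h_n}_1 \cdot \max\{\vertiii{h_n}^{\gamma_n^2}_{\gamma_n}, \vertiii{h_n^{-1}}^{\gamma_n}_{\gamma_n}\}.
\]

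Next I would plug in the weak-mixing norm estimate. Because $\sigma_n \searrow 0$, we have $\vertiii{h_n}_{\gamma_n} \leq C\, q_n^{(n^2+5)\gamma_n^2}$ for all sufficiently large $n$, so
\[
\max\{\vertiii{h_n}^{\gamma_n^2}_{\gamma_n}, \vertiii{h_n^{-1}}^{\gamma_n}_{\gamma_n}\} \leq C\, q_n^{(n^2+5)\gamma_n^4}, \qquad \vertiii{h_n}_1 \leq C\, q_n^{n^2+5}.
\]
The factor involving $H_{n-1}$ depends only on data from stages $1, \dots, n-1$, and since the AbC construction guarantees $q_n \gg q_{n-1}$ (in particular $q_n$ can be taken at each stage so that this bracketed factor is at most $q_n$), it can be absorbed into a single additional factor of $q_n$. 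The remaining arithmetic is the comparison $n^2 \gamma_n^5 - 2 \geq (n^2+5)\gamma_n^4 + n^2 + 6$, which reduces to $n^2 \gamma_n^4 (\gamma_n - 1) \geq 5\gamma_n^4 + n^2 + 8$ and holds for all $\gamma_n \geq 2$ and $n$ sufficiently large (and trivially when $\gamma_n = 1$, upon also choosing $n$ large). This gives the claimed estimate $q_n^{n^2(m_n+1)^5-2}$.

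With this bound in hand, the three consequences are immediate: taking $l_n \geq q_n^{n^2 \gamma_n^5 - 2}$ verifies \eqref{eq:equation ln condition}, and with $k_n = 1$ we obtain $q_{n+1} = l_n q_n^2 \geq q_n^{n^2 \gamma_n^5}$. The three displayed parameter choices are obtained by substituting $m_n=n-1$, $m_n=n-1$ with $l_n=q_n^{q_n-2}$, and $m_n=K-1$, respectively, and reading off $q_{n+1}$. For the freedom to pick a sequence $\{l_n'\}$ satisfying \eqref{eq:equation ln' convergence} and \eqref{eq:equation ln' are summable} when $m_n \geq 1$: the exponent $n^2 \gamma_n^5 - 2$ is substantially larger than what is forced by $\vertiii{H_n}_{\gamma_n}^{\gamma_n+1}$, so there is room to interpolate a summable sequence $l_n'$ (for instance $l_n' = q_n^{(n-1)^r - 1}$ as in \eqref{eq:ln'}) between the convergence requirement and the final choice of $l_n$.

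The main obstacle — and the only place where the weak-mixing case genuinely differs from the earlier lemmas — is the $n^2$ appearing in the exponent of the $h_n$-norm, which propagates through the compositions and forces the $n^2$ prefactor in the final exponent $n^2(m_n+1)^5$. As long as one is careful to track this factor through the $\gamma_n$ and $\gamma_n^2$ powers arising from Lemma \ref{lem:submultiplicative}, all the estimates go through by the same mechanism as before.
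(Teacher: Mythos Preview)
Your proof is essentially the same as the paper's: set $\gamma_n=m_n+1$, use Lemma~\ref{lem:submultiplicative} to split off the $H_{n-1}$ factor, insert the norm bound from Lemma~\ref{lemma bound on derivative weak-mixing case}, and absorb the $H_{n-1}$ bracket using $q_n\gg q_{n-1}$. The paper simply writes the chain of inequalities and asserts the final bound $q_n^{n^2\gamma_n^5-2}$ without spelling out the arithmetic, whereas you supply that verification explicitly; otherwise the arguments coincide.

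One small correction: your parenthetical remark that the inequality holds ``trivially when $\gamma_n=1$'' is wrong. With $\gamma_n=1$ your own comparison reads $n^2-2\geq (n^2+5)+n^2+6$, which fails for every $n$. This does not affect the lemma as actually used, since all three listed parameter choices have $m_n\geq 1$ (hence $\gamma_n\geq 2$), and the paper's own proof never addresses $\gamma_n=1$ either; but you should drop that clause rather than claim it.
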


\begin{proof}
Proceeding as in proof of Lemma \ref{lem:estimateQ(n+1)vsQnUniquelyErgodic}, and using the estimates from Lemma \ref{lemma bound on derivative weak-mixing case} we get using the notation $\gamma_n=m_n+1$,
\begin{align*}
& \vertiii{H_{n}}_1 \max\{\vertiii{H_{n}}^{\gamma_n}_{\gamma_n}, \vertiii{H_{n}^{-1}}^{\gamma_n}_{\gamma_n}\} \\
 \leq& \Big[\vertiii{H_{n-1}^{-1}}_1 \max\{\vertiii{H_{n-1}}^{\gamma_n}_{\gamma_n}, \vertiii{H_{n-1}^{-1}}^{\gamma_n^2}_{\gamma_n}\}\Big] \cdot \vertiii{h_{n}}_1\cdot \max\{\vertiii{h_{n}}^{\gamma_n^2}_{\gamma_n}, \vertiii{h_{n}^{-1}}^{\gamma_n}_{\gamma_n}\} \\
 < &q^{n^2\gamma_n^5-2}_n.
\end{align*}
So, in conclusion  $l_n=q_n^{n^2\gamma_n^5-2}$ will satisfy all criteria required.
\end{proof}

\subsubsection{Weak mixing and safe domains}
\begin{theorem}
The AbC construction defined above converges to a weakly mixing AbC diffeomorphism
\end{theorem}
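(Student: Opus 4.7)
The convergence of $T_n$ to a smooth diffeomorphism $T$ is immediate from Lemma \ref{lem:convergenceOfAbC} in view of the parameter choices in Lemma \ref{lemma estimate q_(n+1) vs q_n weak mixing} and the norm bounds in Lemma \ref{lemma bound on derivative weak-mixing case}, so the substance of the statement is weak mixing. My plan is to adapt the weak mixing criterion from \cite{FS} to the modified conjugation $h_n = g_n \circ \phi_n$ that we use here. Recall the standard criterion: it suffices to exhibit a sequence of integers $t_n \to \infty$ and a sequence of partitions $\xi_n$ whose join generates the Borel $\sigma$-algebra such that for all $A,B \in \bigcup_n \xi_n$
\begin{equation*}
\mu\bigl(T^{t_n} A \cap B\bigr) \;\longrightarrow\; \mu(A)\,\mu(B).
\end{equation*}
I would take $\xi_n = H_n\,\eta_{q_n}$, which generates by the standard AbC genericity argument, and choose $t_n$ such that $R_{\alpha_{n+1}}^{t_n}$ is close to a rotation by $1/q_n$ modulo $1/q_n$; concretely $t_n = l_n q_n$ yields $R_{\alpha_{n+1}}^{t_n} = R_{p_{n+1}/(k_n q_n)}$, which permutes the $\eta_{q_n}$-partition while producing only a tiny extra displacement of order $1/(k_n q_n)$.

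The decisive step is then to analyze the effect of the shear $g_n$ on this iterate. First I would restrict attention to the good domain $L_n$ of \eqref{eq:Lsmooth3}, whose complement has measure tending to $0$ thanks to our choice $q_n > 1/\e_n$ from \eqref{eq:qEpsilon} and the smoothing estimates in Lemma \ref{lem:G}. On $L_n$ the map $\phi_n$ acts as a pure rotation on each subrectangle and so $T_n^{t_n} \upharpoonright H_n(L_n)$ is conjugate by $\phi_n$ to an iterate involving $g_n \circ R \circ g_n^{-1}$; a short computation shows that this iterate shears horizontal fibers by a total amount of order $[nq_n^{\sigma_n}] \cdot (1/(k_n q_n))$, which, for $\sigma_n$ bounded below and $k_n = 1$, is much larger than $1/q_n$ (the horizontal scale of $\xi_n$). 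Hence the image $T_n^{t_n}\bigl(H_n(\Delta_{q_n}^i \cap L_n)\bigr)$ of any tower element winds around the torus many times and its intersection with any element of $\xi_n$ has measure equal to $\mu(A)\mu(B)$ up to error $O\!\bigl(1/(n q_n^{\sigma_n})\bigr)$.

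Next I would transfer this asymptotic independence from $T_n$ to the limit $T$. For this I use the same $C^0$-control arguments as in the proofs of Proposition \ref{prop:upgradeTntoTForSeparatedSets} and Proposition \ref{prop:upgradeTntoTForSpanningSets}: by our choice of $l_n$ in \eqref{eq:equation ln condition} together with Lemma \ref{lem:convergenceCriterion3}, one has $d_0(T_n^{t_n}, T^{t_n}) \leq 3/(k_{n+1} q_{n+1})$ for $t_n \leq l_{n+1}'\, q_{n+1}$, which is satisfied by $t_n = l_n q_n$. Consequently $\mu\bigl(T^{t_n} A \triangle T_n^{t_n} A\bigr)\to 0$ for $A \in \xi_n$, and the approximate independence established at the $T_n$ level passes to $T$.

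The main technical obstacle is the quantitative shearing estimate on $L_n$: one must verify uniformly in $n$ that the image of a thin strip under $T_n^{t_n}$ is $\varepsilon$-equidistributed in the horizontal direction, while carefully tracking the regions near the boundary of $L_n$ where the smoothing by $\rho$ destroys the clean rotation/shear structure. These estimates are essentially the ones worked out in \cite[Sections 3--4]{FS}; the novelty here is only the block structure of $\phi_n$ determined by the random word $W$, which affects which subrectangles get how much shearing but does not alter the overall equidistribution argument, so I would invoke the relevant estimates of \cite{FS} rather than re-derive them.
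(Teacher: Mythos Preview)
Your argument has a genuine gap in the mechanism for weak mixing. You claim that on the good domain $T_n^{t_n}$ is conjugate by $\phi_n$ to an iterate involving $g_n\circ R\circ g_n^{-1}$, and that this composition shears horizontal fibers by an amount of order $[nq_n^{\sigma_n}]\cdot(1/(k_nq_n))$. But $g_n$ has the form $(x,y)\mapsto (x+\tilde\psi(y),y)$, so it \emph{commutes} with every horizontal rotation: $g_n\circ R_\beta\circ g_n^{-1}=R_\beta$. Thus the composition you analyze produces no shearing at all, and with your choice $t_n=l_nq_n$ (so $R_{\alpha_{n+1}}^{t_n}$ is a rotation by a multiple of $1/(k_nq_n)$, and $\phi_n$ is $1/q_n$-equivariant) the map $\Phi_n=\phi_n\circ R^{t_n}_{\alpha_{n+1}}\circ\phi_n^{-1}$ is essentially a horizontal rotation as well. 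Your iterate then just permutes $\xi_n$ elements without any equidistribution.

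The paper's proof uses a different route that exploits the specific combinatorics of $\phi_n$. The sequence is chosen as $m_n$ with $m_n\cdot(\alpha_{n+1}-\alpha_n)=\tfrac{1}{2q_n}$, so $R^{m_n}_{\alpha_{n+1}}$ shifts the first half of the fundamental domain onto the second half. The word $W=\overline{w}\tilde{w}$ was built precisely so that $\tilde w_i=\overline w_i+1\bmod n^2$; hence a point leaving a domain where $\phi_n=\phi_{2q_n^{j+2},\e_n}$ lands in one where $\phi_n=\phi_{2q_n^{j+3},\e_n}$ (with the two boundary cases id and $\phi_{2q_n^{3},\e_n}$). One then checks directly, as in \cite[Lemma 4.3]{Kdisjointness}, that each of the resulting compositions $\phi_{2q_n^{j+3},\e_n}\circ R^{m_n}_{\alpha_{n+1}}\circ\phi^{-1}_{2q_n^{j+2},\e_n}$ is \emph{uniformly distributing} in the sense of \cite[Definition 3.6]{FS}; this vertical distribution is what, after the outer conjugation by $g_n$ and $H_{n-1}$, yields weak mixing via \cite[Proposition 3.9]{FS}. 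Your write-up treats the structure of $W$ as incidental (``random''), but it is exactly this $+1$ shift between $\overline{w}$ and $\tilde{w}$ that makes $\Phi_n$ uniformly distributing; without it one cannot simply invoke the estimates of \cite{FS}.
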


\begin{proof}
We sketch the proof which follows along the strategy in \cite{FS}. To apply the criterion for weak mixing in \cite[Proposition 3.9]{FS} we have to show that the diffeomorphism $\Phi_n \coloneqq \phi_n \circ R^{m_n}_{\alpha_{n+1}} \circ \phi^{-1}_n$ is uniformly distributing (see \cite[Definition 3.6]{FS} for the definition of this notion), where we take $m_n$ such that $m_n \cdot (\a_{n+1}-\a_n) =\frac{1}{2q_n}$ as in \cite[section 5.4.1]{FS}. For this purpose, one checks by direct computation that the maps $\phi_{2q^{3}_n,\varepsilon_n} \circ R^{m_n}_{\alpha_{n+1}} \circ \text{id}$, $\text{id} \circ R^{m_n}_{\alpha_{n+1}} \circ \phi^{-1}_{2q^{n^2+1}_n,\varepsilon_n}$, and  $\phi_{2q^{j+1}_n,\varepsilon_n} \circ R^{m_n}_{\alpha_{n+1}} \circ \phi^{-1}_{2q^{j}_n,\varepsilon_n}$ are uniformly distributing (see e.g. \cite[Lemma 4.3]{Kdisjointness}). Moreover, we note that the second word $\tilde{w}$ describes the combinatorics of the conjugation map $\phi_n$ on the second half $\left[\frac{1}{2q_n}, \frac{1}{q_n}\right] \times \T$ of the fundamental domain and that it was defined in such a way that $\phi_n \circ R^{m_n}_{\a_{n+1}} \circ \phi^{-1}_n$ gives one of these three cases. Hence, $\Phi_n$ is uniformly distributing and $T$ is weakly mixing. Convergence follows from Lemma \ref{lem:convergenceOfAbC}.
\end{proof}

\begin{remark} \label{rem:safeSmooth}
	To compare the orbits of $T^t_n$ and $T^t_{n-1}$ for small numbers of iterates $t\leq l_n' q_n$ in Lemmas \ref{lem:upperSmooth4} and \ref{lem:lowerSmooth4} we introduce the sets $\overline{L}^{(n)}_{i_1,i_2,\dots, i_{n^2+1},j}$ as
	{\footnotesize\begin{align*}
	&\left[ \frac{i_1}{q_n}+ \frac{i_2}{2q^3_n} + \dots + \frac{i_{n^2}}{2q^{n^2+1}_n} + \frac{i_{n^2+1} + \e_n}{a_n \cdot 2q^{n^2+1}_n}, \frac{i_1}{q_n}+ \frac{i_2}{2q^3_n} + \dots + \frac{i_{n^2}}{2q^{n^2+1}_n} + \frac{i_{n^2+1} + 1- \e_n}{a_n \cdot 2q^{n^2+1}_n}-\frac{2l_n'}{k_nl_nq_n}\right] \\
	& \times \left[\frac{j+\varepsilon_n}{a_n} , \frac{j+1-\varepsilon_n}{a_n} \right]
	\end{align*}}
	%{\rr This forces $k_nl_n > 2a_nq_n^{n^2+1}l_n'$, adding just for clarity...}{\bb Yes, we should be careful here.}
	(with the notation $a_n=\left[nq^{\sigma_n}_n\right]\cdot 2q^3_n$) and use them to define the \emph{safe domain} of $T_n$ as
	\begin{align*}
	\overline{L}_n = \bigcup^{q_n -1}_{i_1=0} \bigcup^{2q^2_n-1}_{i_2=0} \bigcup^{(1-2\e_n)q_n}_{i_3=2\e_nq_n} \dots \bigcup^{(1-2\e_n)q_n}_{i_{n^2}=2\e_nq_n} \bigcup^{(1-2\e_n)\cdot a_n-1}_{i_{n^2+1}=2\e_n \cdot a_n }\bigcup^{(1-2\e_n)\cdot a_n-1}_{j=2\e_n \cdot a_n }  h_n\left(\overline{L}^{(n)}_{i_1,i_2,\dots, i_{n^2+1},j}\right).
	\end{align*}
	To even compare iterates $T^t$ and $T^t_{n-1}$ for $t\leq l_n' q_n$ we define for $m\geq n$ the sets $\tilde{L}^{(m)}_{i_1,i_2,\dots, i_{m^2+1},j}$ as
	{\footnotesize\begin{align*}
	& \left[ \frac{i_1}{q_m}+ \frac{i_2}{2q^3_m} + \dots + \frac{i_{m^2}}{2q^{m^2+1}_m} + \frac{i_{m^2+1} + \e_m}{a_m \cdot 2q^{m^2+1}_m}, \frac{i_1}{q_m}+ \frac{i_2}{2q^3_m} + \dots + \frac{i_{m^2}}{2q^{m^2+1}_m} + \frac{i_{m^2+1} + 1- \e_m}{a_m \cdot 2q^{m^2+1}_m}-\frac{2 l_n' q_n}{k_ml_mq^2_m}\right] \\
	& \times \left[\frac{j+\varepsilon_m}{a_m} , \frac{j+1-\varepsilon_m}{a_m} \right]
	\end{align*}}
	(note that it coincides with the previous definition if $n=m$) and its union
	{\footnotesize\begin{align*}
	\Xi_{n,m} = \bigcup^{q_m -1}_{i_1=0} \bigcup^{2q^2_m-1}_{i_2=0} \bigcup^{(1-2\e_m)q_m}_{i_3=2\e_mq_m} \dots \bigcup^{(1-2\e_m)q_m}_{i_{m^2}=2\e_mq_m} \bigcup^{(1-2\e_m)\cdot a_m-1}_{i_{m^2+1}=2\e_m \cdot a_m }\bigcup^{(1-2\e_m)\cdot a_m-1}_{j=2\e_m \cdot a_m }  h_n \circ \dots h_m \left(\tilde{L}^{(m)}_{i_1,i_2,\dots, i_{m^2+1},j}\right).
	\end{align*}}
Hereby, we set
	\[
	\Xi_{n} \coloneqq \overline{L}_n \cap \bigcap_{m> n} \Xi_{n,m}.
	\]
		
	We note that
	\[
	\mu(\Xi_{n,m}) \geq  1-4(m^2+1)\e_m -\frac{a_m \cdot 2q^{m^2+1}_m \cdot 2l_n' q_n}{k_ml_mq^2_m}.
	\]
	Under our condition $l_m \geq q^{2m^2-2}$ from Lemma \ref{lemma estimate q_(n+1) vs q_n weak mixing} and our assumption $\e_m \leq \frac{1}{m^4}$ from \eqref{eq:en4}, this yields for any given $\delta>0$ that $\mu(\Xi_n)>1-\delta$ for $n$ sufficiently large.
\end{remark}

\subsection{Lower bounds for cardinality of maximal separated sets}

\begin{lemma}
	Given any $\e>0$ there is $N\in\N$ such that we have for all $n\geq N$ that
	\begin{align*}
	S_{d^{T_n}_{q_{n+1}}}(\e)\geq  q^{n^2}_n.
	\end{align*}
\end{lemma}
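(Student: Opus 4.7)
The plan is to exhibit a $(q_{n+1},\varepsilon)$-separated set for $T_n$ of cardinality at least $q_n^{n^2}$ by placing one representative point in each of the finest-scale rotation pieces of $\phi_n$ and then invoking the Hamming-separation property of the word $W$ (Lemma \ref{lem:prob3}) to separate any two representatives in time at most $q_{n+1}$. By the exact uniformity of $W$, the symbol $n^2-1$ occurs exactly $2q_n^2/n^2$ times in the word $W$ of length $2q_n^2$, and at each such position $i$ the map $\phi_n$ coincides with the quasi-rotation $\phi_{2q_n^{n^2+1},\varepsilon_n}$ on each of $q_n^{n^2-2}$ fine sub-pieces of $x$-width $1/(2q_n^{n^2+1})$. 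Together with the $1/q_n$-equivariance of $h_n$ across the $q_n$ fundamental domains, this yields at least $2q_n^{n^2+1}/n^2 \geq q_n^{n^2}$ such fine sub-pieces for all sufficiently large $n$. In each of $q_n^{n^2}$ of them, fix a representative point $Q_\alpha$ with a common small $y$-coordinate $y_0\in(2\varepsilon_n,1/8)$ and an $x$-coordinate sitting deep inside the fine rotation zone, and set $P_\alpha=H_n(Q_\alpha)$.

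For any two representatives $Q_\alpha,Q_\beta$ sitting in fine sub-pieces associated with distinct indices $i_\alpha\neq i_\beta$ in $W$, the self-Hamming-separation property \eqref{eq:selfSliding}, applied with $|\Sigma|=n^2$ and $\varepsilon=1/n^4$, guarantees that for a fraction at least $1-1/n^2-1/n^2\geq 1/2$ of shifts $s$ one has $W_{i_\alpha+s}\neq W_{i_\beta+s}$. Since the orbit $\{t\alpha_{n+1}\bmod 1\}_{0\leq t<q_{n+1}}$ is $1/q_{n+1}$-dense in the circle and $1/q_{n+1}=1/(k_nl_nq_n^2)$ is vastly smaller than $\varepsilon_n/q_n^{n^2+1}$ by Lemma \ref{lemma estimate q_(n+1) vs q_n weak mixing}, I can choose $0\leq t<q_{n+1}$ such that $R^t_{\alpha_{n+1}}(Q_\alpha)$ lands in the central portion of a fine rotation zone at a position $i_c$ with $W_{i_c}=n^2-1$ (so $\phi_n$ rotates it by $\pi/2$ at the finest scale, producing a new $y$-coordinate close to $1/2$), while $R^t_{\alpha_{n+1}}(Q_\beta)$ lands at a position with $W_{i_c-(i_\alpha-i_\beta)}$ strictly smaller than $n^2-1$, so that $\phi_n$ either acts as identity or as a coarser-scale quasi-rotation whose image of $Q_\beta$ can be positioned to keep the $y$-coordinate close to $y_0\leq 1/8$. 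The subsequent shear $g_n$ preserves $y$-coordinates, so $|h_n(R^t_{\alpha_{n+1}}(Q_\alpha))_y-h_n(R^t_{\alpha_{n+1}}(Q_\beta))_y|\geq 1/4$.

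This $y$-separation is preserved under the final application of $H_{n-1}$ by the same mechanism as in the proof of Lemma \ref{lem:FSWSeperationAtN-thStage}: choosing the target index $i_c$ to be central in the sense of \eqref{eq:centralIndex} keeps both images near the centers of the rotation cells of the coarser conjugacies $h_m$, $m<n$, on which they act essentially as rigid rotations, so the $y$-separation survives up to a fraction depending only on $\varepsilon$. Hence $d(T_n^t P_\alpha, T_n^t P_\beta)\geq\varepsilon$ for $n$ sufficiently large, as required. The main obstacle is the simultaneous realization, for a single $t$, of the two constraints that (i) $W_{i_\alpha+s}\neq W_{i_\beta+s}$ and (ii) $R^t_{\alpha_{n+1}}(Q_\alpha)$ sits inside the central portion of a fine rotation zone; both are resolved by the huge gap between $1/q_{n+1}$ and the fine scale $\varepsilon_n/q_n^{n^2+1}$, together with the fact that, by Hamming separation, a positive-density fraction of shifts in $[0,q_n)$ satisfies (i), so enough of them combine with the fine positioning in (ii).
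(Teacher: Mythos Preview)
Your counting and your separation mechanism do not match. You place one representative in each of the $q_n^{n^2-2}$ fine sub-pieces of width $1/(2q_n^{n^2+1})$ inside each $W$-block of width $1/(2q_n^3)$, and this factor of $q_n^{n^2-2}$ is what makes the count reach $q_n^{n^2}$. But your separation argument only treats pairs $Q_\alpha,Q_\beta$ with \emph{distinct} $W$-indices $i_\alpha\neq i_\beta$. For two representatives in the \emph{same} $W$-block (same $i$, different fine sub-piece $s$), their $x$-coordinates differ by a multiple of $1/(2q_n^{n^2+1})$, which is far smaller than the $W$-block width $1/(2q_n^3)$; hence for every $t$ the rotated points $R^t_{\alpha_{n+1}}Q_\alpha$ and $R^t_{\alpha_{n+1}}Q_\beta$ lie in the same $W$-domain and see the same symbol $W_j$. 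Your ``one point rotates at the finest scale, the other does not'' mechanism therefore never fires for such pairs, and nothing in your argument separates them. Without the fine sub-pieces you are left with at most $O(q_n^3)$ points, not $q_n^{n^2}$.

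The paper's proof separates points at the finest scale by a completely different mechanism that you have dismissed as harmless: the shear $g_n$. The sets $S^{(u)}_{i_1,i_2,i_3,j_1,j_2}$ are designed so that after applying $\phi_n^{-1}$ (which swaps fine-scale $x$ with $y$), the shear $g_n^{-1}$ converts the fine-scale $y$-information into a macroscopic horizontal shift of size $(i_3-j_2)(e_n+1)/q_n$. Then, at a suitable iterate $t$, one compares the image landing in a $\phi_n=\phi_{2q_n^{n^2+1},\varepsilon_n}$ domain (case~1, where this shift is undone) with the image landing in a $\phi_n=\mathrm{id}$ domain (case~2, where it persists), yielding $\varepsilon$-separation in the $x$-direction after $H_{n-1}$. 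The existence of adjacent domains with these two behaviors is exactly what property~(2) of Lemma~\ref{lem:prob3} provides. In short, the shear $g_n$ is not incidental but is the engine that lifts separation from the coarse $W$-scale $1/(2q_n^3)$ down to the fine scale $1/(2q_n^{n^2+1})$; your sketch needs an analogous idea to close the gap.
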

\begin{comment}
\begin{lemma} \label{lem:LowerSmooth3}
	Given any $\e>0$ there is $N\in\N$ such that we have for all $n\geq N$ that
	\begin{align*}
	S_{d^T_{q_n}}(\e)\geq  q^{n^2}_n.
	\end{align*}
\end{lemma}
\end{comment}
\begin{proof}
	As in the proof of Lemma \ref{lem:LowerSmooth2} we start by defining for any given $\varepsilon>0$ the numbers $b_0=\varepsilon=c_0$ and $c_{i}=2q^{i^2+1}_{i}b_{i-1}$ as well as $b_{i}=\frac{c_{i-1}}{2q^{i^2+1}_{i}}$ by recursion. Let $N=N(\varepsilon)\coloneqq \max \{i:c_{i}<1\}$. We consider the ``good domain''
	\[
	\Theta_N \coloneqq \bigcap^{N}_{m=1}L_m
	\]
	of $H_N$ as well as its subset
	\[
	\Theta_{N,1} \coloneqq \Theta_N \cap K^{(N^2-1)}_N \cap \bigcap^{N-1}_{m=1} h^{-1}_N \circ \dots \circ h^{-1}_{m+1} \left(K^{(m^2-1)}_m\right),
	\]
	i.e. $h_{m+1}\circ \dots \circ h_N (\Theta_{N,1})$ lies in the domain $K^{(m^2-1)}_m$ and $\phi_m$ acts as $\phi_{2q^{m^2+1}_m,\e_m}$ on it.
	
	Then we take an interval $\hat{I}_u=\left[ \frac{u}{q_{N+1}}, \frac{u+1}{q_{N+1}} \right]$ lying in $\pi_1(\Theta_{N,1})$. In the next step $n=N+1$ we introduce subsets $S^{(u)}_{i_1,i_2,i_3,j_1,j_2}$ of each set $S^{(u)}\coloneqq \hat{I}_u \times [2\varepsilon_N, 1-2\varepsilon_N]$ as follows: Let $\Sigma_{n^2-1} \subset \{0,\dots,2q^2_n-1\}$ be the set of indices $i$ with $W_i=n^2-1$. By uniformity in Lemma \ref{lem:prob3} we have $\abs{\Sigma_{n^2-1}}= \frac{2q^2_n}{n^2}$. Then we define subsets $S^{(u)}_{i_1,i_2,i_3,j_1,j_2}$ as
	{\small\begin{align*}
	 & \Big[ \frac{u}{q_{n}} + \frac{i_1}{2q^3_n}+ \frac{i_2}{2q^{n^2+1}_n}+ \frac{d_n}{2q^{n^2+1}_n \cdot \left[nq^{\sigma_n}_n\right]}+\frac{i_3\cdot (e_n+1)}{2q^{n^2+2}_n \cdot \left[nq^{\sigma_n}_n\right]}+\frac{\e_n}{2q^{n^2+1}_n \cdot 2q^3_n \cdot \left[nq^{\sigma_n}_n\right]}, \\
	 & \ \frac{u}{q_{n}} + \frac{i_1}{2q^3_n}+ \frac{i_2}{2q^{n^2+1}_n}+ \frac{d_n}{2q^{n^2+1}_n \cdot\left[nq^{\sigma_n}_n\right]}+\frac{i_3\cdot (e_n+1)}{2q^{n^2+2}_n \cdot \left[nq^{\sigma_n}_n\right]}+\frac{1-\e_n}{2q^{n^2+1}_n \cdot 2q^3_n \cdot \left[nq^{\sigma_n}_n\right]}\Big) \\
	\times & \Big[\frac{j_1 \cdot (d_n+1)}{\left[nq^{\sigma_n}_n\right]}+\frac{j_2 \cdot (e_n+1)}{q_n \cdot \left[nq^{\sigma_n}_n\right]}+\frac{\e_n}{2q^3_n \cdot \left[nq^{\sigma_n}_n\right]}, \frac{j_1 \cdot (d_n+1)}{\left[nq^{\sigma_n}_n\right]}+\frac{j_2 \cdot (e_n+1)}{q_n \cdot \left[nq^{\sigma_n}_n\right]}+\frac{1-\e_n}{2q^3_n \cdot \left[nq^{\sigma_n}_n\right]} \Big),
	\end{align*}}
	where $d_n \coloneqq \lceil 2\e_n nq^{\sigma_n}_n \rceil$, $e_n \coloneqq \lceil b_n q_n \rceil$, $i_1\in \Sigma_{n^2-1}$, $0\leq i_2<2q^{n^2-2}_n$, $0\leq i_3 < \frac{q_n}{e_n+1}$, $0 \leq j_1 < \frac{\left[nq^{\sigma_n}_n\right]}{d_n+1}$, and $0\leq j_2 < \frac{q_n}{e_n +1}$.
	
	We claim that points from different $S^{(u)}_{i_1,i_2,i_3,j_1,j_2}$ with $i_3 -j_2 \neq 0 \mod \frac{q_n}{e_n+1}$ are $\e$-separated under $\{H_{n} \circ R^t_{\a_{n+1}} \circ h^{-1}_n\}_{0\leq t \leq q_{n+1}}$. For this purpose, we start by calculating the image of $S^{(u)}_{i_1,i_2,i_3,j_1,j_2}$ under $g^{-1}_n$ to
	{\footnotesize\begin{align*}
	 & \Big[ \frac{u-j_2 \cdot (e_n+1)}{q_{n}} + \frac{i_1}{2q^3_n}+ \frac{i_2}{2q^{n^2+1}_n}+ \frac{d_n}{2q^{n^2+1}_n \cdot \left[nq^{\sigma_n}_n\right]}+\frac{i_3\cdot (e_n+1)}{2q^{n^2+2}_n \cdot \left[nq^{\sigma_n}_n\right]}+\frac{\e_n}{2q^{n^2+1}_n \cdot 2q^3_n \cdot \left[nq^{\sigma_n}_n\right]}, \\
	& \ \frac{u-j_2 \cdot (e_n+1)}{q_{n}} + \frac{i_1}{2q^3_n}+ \frac{i_2}{2q^{n^2+1}_n}+ \frac{d_n}{2q^{n^2+1}_n \cdot\left[nq^{\sigma_n}_n\right]}+\frac{i_3\cdot (e_n+1)}{2q^{n^2+2}_n \cdot \left[nq^{\sigma_n}_n\right]}+\frac{1-\e_n}{2q^{n^2+1}_n \cdot 2q^3_n \cdot \left[nq^{\sigma_n}_n\right]}\Big) \\
	\times & \Big[\frac{j_1 \cdot (d_n+1)}{\left[nq^{\sigma_n}_n\right]}+\frac{j_2 \cdot (e_n+1)}{q_n \cdot \left[nq^{\sigma_n}_n\right]}+\frac{\e_n}{2q^3_n \cdot \left[nq^{\sigma_n}_n\right]}, \frac{j_1 \cdot (d_n+1)}{\left[nq^{\sigma_n}_n\right]}+\frac{j_2 \cdot (e_n+1)}{q_n \cdot \left[nq^{\sigma_n}_n\right]}+\frac{1-\e_n}{2q^3_n \cdot \left[nq^{\sigma_n}_n\right]} \Big).
	\end{align*}}
	This image is positioned in the domain where $\phi^{-1}_n=\phi^{-1}_{2q^{n^2+1}_n,\e_n}$ since $i_1\in \Sigma_{n^2-1}$. Hence, we get that $\phi^{-1}_n \circ g^{-1}_n (S^{(u)}_{i_1,i_2,i_3,j_1,j_2})$ is equal to
	{\footnotesize\begin{align*}
	& \Big[ \frac{u-j_2 \cdot (e_n+1)}{q_{n}} + \frac{i_1}{2q^3_n} + \frac{i_2+1}{2q^{n^2+1}_n}- \frac{j_1 \cdot (d_n+1)}{2q^{n^2+1}_n \cdot \left[nq^{\sigma_n}_n\right]}-\frac{j_2 \cdot (e_n+1)}{2q^{n^2+2}_n \cdot \left[nq^{\sigma_n}_n\right]}-\frac{1-\e_n}{2q^{n^2+1}_n \cdot 2q^3_n \cdot \left[nq^{\sigma_n}_n\right]}, \\
	& \ \frac{u-j_2 \cdot (e_n+1)}{q_{n}} + \frac{i_1}{2q^3_n} + \frac{i_2+1}{2q^{n^2+1}_n}- \frac{j_1 \cdot (d_n+1)}{2q^{n^2+1}_n \cdot \left[nq^{\sigma_n}_n\right]}-\frac{j_2 \cdot (e_n+1)}{2q^{n^2+2}_n \cdot \left[nq^{\sigma_n}_n\right]}-\frac{\e_n}{2q^{n^2+1}_n \cdot 2q^3_n \cdot \left[nq^{\sigma_n}_n\right]}\Big) \\
	\times & \Big[\frac{d_n}{\left[nq^{\sigma_n}_n\right]}+\frac{i_3\cdot (e_n+1)}{q_n \cdot \left[nq^{\sigma_n}_n\right]}+\frac{\e_n}{2q^3_n \cdot \left[nq^{\sigma_n}_n\right]}, \frac{d_n}{\left[nq^{\sigma_n}_n\right]}+\frac{i_3\cdot (e_n+1)}{q_n \cdot \left[nq^{\sigma_n}_n\right]}+\frac{1-\e_n}{2q^3_n \cdot \left[nq^{\sigma_n}_n\right]} \Big).
	\end{align*}}
		
	Suppose that $R^t_{\a_{n+1}}$ causes a translation as follows.
	{\scriptsize\begin{align*}
	& \Big[ \frac{u-j_2 \cdot (e_n+1)+t}{q_{n}} + \frac{i_1+t_1}{2q^3_n} + \frac{i_2+1+t_2}{2q^{n^2+1}_n}- \frac{j_1 \cdot (d_n+1)-t_3}{2q^{n^2+1}_n \cdot \left[nq^{\sigma_n}_n\right]}-\frac{j_2 \cdot (e_n+1)}{2q^{n^2+2}_n \cdot \left[nq^{\sigma_n}_n\right]}-\frac{1-\e_n}{2q^{n^2+1}_n \cdot 2q^3_n \cdot \left[nq^{\sigma_n}_n\right]}, \\
	& \ \frac{u-j_2 \cdot (e_n+1)+t}{q_{n}} + \frac{i_1+t_1}{2q^3_n} + \frac{i_2+1+t_2}{2q^{n^2+1}_n}- \frac{j_1 \cdot (d_n+1)-t_3}{2q^{n^2+1}_n \cdot \left[nq^{\sigma_n}_n\right]}-\frac{j_2 \cdot (e_n+1)}{2q^{n^2+2}_n \cdot \left[nq^{\sigma_n}_n\right]}-\frac{\e_n}{2q^{n^2+1}_n \cdot 2q^3_n \cdot \left[nq^{\sigma_n}_n\right]}\Big) \\
	\times & \Big[\frac{d_n}{\left[nq^{\sigma_n}_n\right]}+\frac{i_3\cdot (e_n+1)}{q_n \cdot \left[nq^{\sigma_n}_n\right]}+\frac{\e_n}{2q^3_n \cdot \left[nq^{\sigma_n}_n\right]}, \frac{d_n}{\left[nq^{\sigma_n}_n\right]}+\frac{i_3\cdot (e_n+1)}{q_n \cdot \left[nq^{\sigma_n}_n\right]}+\frac{1-\e_n}{2q^3_n \cdot \left[nq^{\sigma_n}_n\right]} \Big),
	\end{align*}}
	
	Notice that in dependence of $i_1$ and $i_2$, the iterates $R^{t}_{\a_{n+1}} \circ h^{-1}_n(S^{(u)}_{i_1,i_2,i_3,j_1,j_2})$ lie in the distinct domains of $\phi_n$ at different times. In case that $R^{t}_{\a_{n+1}} \circ h^{-1}_n(S^{(u)}_{i_1,i_2,i_3,j_1,j_2})$ lies in the domain where $\phi_n=\phi_{2q^{n^2+1}_n,\e_n}$ (case 1), then we obtain that $h_n \circ R^t_{\a_{n+1}} \circ h^{-1}_n(S^{(u)}_{i_1,i_2,i_3,j_1,j_2})$ is equal to
	{\footnotesize\begin{align*}
	& \Big[ \frac{u+t}{q_{n}} + \frac{i_1+t_1}{2q^3_n}+ \frac{i_2+t_2}{2q^{n^2+1}_n}+ \frac{d_n}{2q^{n^2+1}_n \cdot \left[nq^{\sigma_n}_n\right]}+\frac{i_3\cdot (e_n+1)}{2q^{n^2+2}_n \cdot \left[nq^{\sigma_n}_n\right]}+\frac{\e_n}{2q^{n^2+1}_n \cdot 2q^3_n \cdot \left[nq^{\sigma_n}_n\right]}, \\
	& \ \frac{u+t}{q_{n}} + \frac{i_1+t_1}{2q^3_n}+ \frac{i_2+t_2}{2q^{n^2+1}_n}+ \frac{d_n}{2q^{n^2+1}_n \cdot\left[nq^{\sigma_n}_n\right]}+\frac{i_3\cdot (e_n+1)}{2q^{n^2+2}_n \cdot \left[nq^{\sigma_n}_n\right]}+\frac{1-\e_n}{2q^{n^2+1}_n \cdot 2q^3_n \cdot \left[nq^{\sigma_n}_n\right]}\Big) \\
	\times & \Big[\frac{j_1 \cdot (d_n+1)-t_3}{\left[nq^{\sigma_n}_n\right]}+\frac{j_2 \cdot (e_n+1)}{q_n \cdot \left[nq^{\sigma_n}_n\right]}+\frac{\e_n}{2q^3_n \cdot \left[nq^{\sigma_n}_n\right]}, \frac{j_1 \cdot (d_n+1)-t_3}{\left[nq^{\sigma_n}_n\right]}+\frac{j_2 \cdot (e_n+1)}{q_n \cdot \left[nq^{\sigma_n}_n\right]}+\frac{1-\e_n}{2q^3_n \cdot \left[nq^{\sigma_n}_n\right]} \Big),
	\end{align*}}
	
	On the other hand, if $R^{t}_{\a_{n+1}} \circ h^{-1}_n(S^{(u)}_{i_1,i_2,i_3,j_1,j_2})$ lies in the domain with $\phi_n=\text{id}$ (case 2), then we calculate that $h_n \circ R^t_{\a_{n+1}} \circ h^{-1}_n(S^{(u)}_{i_1,i_2,i_3,j_1,j_2})$ is equal to
	{\tiny\begin{align*}
	& \Big[ \frac{u+(i_3-j_2) \cdot (e_n+1)+t}{q_{n}} + \frac{i_1+t_1}{2q^3_n} + \frac{i_2+1+t_2}{2q^{n^2+1}_n}- \frac{j_1 \cdot (d_n+1)-t_3}{2q^{n^2+1}_n \cdot \left[nq^{\sigma_n}_n\right]}-\frac{j_2 \cdot (e_n+1)}{2q^{n^2+2}_n \cdot \left[nq^{\sigma_n}_n\right]}-\frac{1-\e_n}{2q^{n^2+1}_n \cdot 2q^3_n \cdot \left[nq^{\sigma_n}_n\right]}, \\
	& \ \frac{u+(i_3-j_2) \cdot (e_n+1)+t}{q_{n}} + \frac{i_1+t_1}{2q^3_n} + \frac{i_2+1+t_2}{2q^{n^2+1}_n}- \frac{j_1 \cdot (d_n+1)-t_3}{2q^{n^2+1}_n \cdot \left[nq^{\sigma_n}_n\right]}-\frac{j_2 \cdot (e_n+1)}{2q^{n^2+2}_n \cdot \left[nq^{\sigma_n}_n\right]}-\frac{\e_n}{2q^{n^2+1}_n \cdot 2q^3_n \cdot \left[nq^{\sigma_n}_n\right]}\Big) \\
	\times & \Big[\frac{d_n}{\left[nq^{\sigma_n}_n\right]}+\frac{i_3\cdot (e_n+1)}{q_n \cdot \left[nq^{\sigma_n}_n\right]}+\frac{\e_n}{2q^3_n \cdot \left[nq^{\sigma_n}_n\right]}, \frac{d_n}{\left[nq^{\sigma_n}_n\right]}+\frac{i_3\cdot (e_n+1)}{q_n \cdot \left[nq^{\sigma_n}_n\right]}+\frac{1-\e_n}{2q^3_n \cdot \left[nq^{\sigma_n}_n\right]} \Big),
	\end{align*}}
		
	By definition of $b_n$ and $e_n$ we get separation between blocks in case 1 and those in case 2 from the horizontal distance if $i_3 -j_2 \neq 0 \mod \frac{q_n}{e_n+1}$. Since there are adjacent domains with the mapping behaviors $\phi_n=\phi_{2q^{n^2+1}_n,\e_n}$ and $\phi_n=\text{id}$ by property (2) of Lemma \ref{lem:prob3}, we get separation for those $S^{(u)}_{i_1,i_2,i_3,j_1,j_2}$.
	
	Counting the number of different sets $S^{(u)}_{i_1,i_2,i_3,j_1,j_2}$ we obtain
	\[
	S_{d^{T_{n}}_{q_{n+1}}}(\e)\geq q^{n^2}_n.
	\]
	By the same methods we continue for any $n> N+1$.
\end{proof}

Using methods from Section \ref{sec:untwistedAbC} we obtain

\begin{lemma} \label{lem:LowerSmooth3}
Given any $\e>0$ there is $N\in\N$ such that we have for all $n\geq N$,
$$	S_{d^T_{q_{n+1}}}(\e)\geq  q^{n^2}_n.$$
\end{lemma}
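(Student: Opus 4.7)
The plan is to deduce this lemma from the immediately preceding one by a direct application of the upgrade mechanism in Proposition \ref{prop:upgradeTntoTForSeparatedSets}, exactly as was done in the short proofs of Lemma \ref{lem:FSWSeperation} and Lemma \ref{lem:LowerSmooth2}. All of the combinatorial heavy lifting---exhibiting $q_n^{n^2}$ many $\varepsilon$-separated seeds $S^{(u)}_{i_1,i_2,i_3,j_1,j_2}$ whose $H_n \circ R^t_{\alpha_{n+1}} \circ h_n^{-1}$-images are pairwise separated---has already been carried out at level $n$, so the remaining task is simply to transfer the lower bound from $T_n$ to $T$.

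First, I would invoke the preceding lemma at resolution $2\varepsilon$ to obtain $N = N(2\varepsilon) \in \N$ such that
$$S_{d^{T_n}_{q_{n+1}}}(2\varepsilon) \geq q_n^{n^2} \qquad \text{for all } n \geq N.$$
Next, I note that the AbC parameters in the present weakly mixing construction satisfy the standing hypotheses required by Proposition \ref{prop:upgradeTntoTForSeparatedSets}: indeed, Lemma \ref{lemma estimate q_(n+1) vs q_n weak mixing} explicitly confirms that the choice of $l_n$ fulfills \eqref{eq:equation ln condition}, and $l_n'$ is chosen to meet \eqref{eq:equation ln' convergence} and \eqref{eq:equation ln' are summable}. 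Since $l_{n+1}' \geq 1$, the horizon $m = q_{n+1}$ lies in the admissible range $0 \leq m \leq l_{n+1}' q_{n+1}$ of the proposition, which therefore yields
$$S_{d^T_{q_{n+1}}}(\varepsilon) \geq S_{d^{T_n}_{q_{n+1}}}(2\varepsilon) \geq q_n^{n^2}$$
for all $n \geq N$, as desired.

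I do not anticipate any real obstacle here. The separation mechanism of the previous lemma is intrinsic to the conjugation step $n$ and is unaffected by the subsequent perturbations, and the soft $C^0$-closeness bound $d_0(T_n^i, T^i) \leq 3/(k_{n+1} q_{n+1})$ from Lemma \ref{lem:convergenceCriterion3}, which is precisely what powers Proposition \ref{prop:upgradeTntoTForSeparatedSets}, was built into Section \ref{sec:untwistedAbC} with exactly this reusability in mind. Consequently the proof collapses to the two-line derivation above.
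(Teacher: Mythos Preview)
Your proposal is correct and follows exactly the approach indicated by the paper, which simply writes ``Using methods from Section \ref{sec:untwistedAbC} we obtain'' and thereby points to the combination of the preceding $T_n$-level lemma with Proposition \ref{prop:upgradeTntoTForSeparatedSets}. Your explicit halving of $\varepsilon$ to $2\varepsilon$ and the check that $q_{n+1}\le l_{n+1}'q_{n+1}$ are precisely the details that the paper's one-line reference leaves implicit.
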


\subsection{Upper bounds for cardinality of minimal covering sets}

\begin{lemma} %\label{lem:upperSmooth3}
	For any given $\e>0$ and any $n\in \N$ sufficiently large we have
	\[
	N_{d^{T_n}_m} (\e)\leq C_n \cdot\frac{1}{\e^3} \cdot q^{n^2+2\sigma_n}_n,
	\]
	for any integer $m\geq 0$, where the constant $C_n$ depends on $\e_n$ and $H_{n-1}$ but is independent of $q_n$, $m$, and $\e$.
\end{lemma}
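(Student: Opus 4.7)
The plan is to adapt the covering construction of Lemma~\ref{lem:upperSmooth2} (itself following Lemma~\ref{lem:FSWSpanningAtn-thStage}) to the weakly mixing conjugation $h_n = g_n \circ \phi_n$, relying on the $1/q_n$-equivariance of $h_n$ to distribute a single small rectangle across $d_n$ fundamental domains. Let $d_n$ be the largest integer with $\|DH_{n-1}\|_0 \cdot d_n/q_n \leq \e/4$, so that $d_n \geq \e q_n/(8\|DH_{n-1}\|_0)$ for $n$ large, and set
\[
r^{(x)}_n \coloneqq \frac{\e}{C \cdot n \cdot q_n^{n^2+1+\sigma_n} \cdot \|DH_{n-1}\|_0}, \qquad r^{(y)}_n \coloneqq \frac{\e}{C \cdot n \cdot q_n^{\sigma_n} \cdot \|DH_{n-1}\|_0},
\]
where $C$ depends only on $\|D\varphi_{\e_n}\|_0$ and $\|D\rho\|_0$ and is independent of $q_n$, $m$, and $\e$.

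The coordinatewise estimates read off from the explicit form $\phi_{q,\e}(x,y) = (\tfrac{1}{q}[\varphi_\e]_1(qx,y),\, [\varphi_\e]_2(qx,y))$ in Lemma~\ref{lem:quasiRotation}, namely $\|D_1(\phi_n)_1\|_0, \|D_2(\phi_n)_2\|_0 = O(1)$ and $\|D_1(\phi_n)_2\|_0 = O(q_n^{n^2+1})$, combined with the shear bound $\|D\psi_n\|_0 = O(nq_n^{\sigma_n})$ from (\ref{eq:normG}), imply that any rectangle of dimensions at most $r^{(x)}_n \times r^{(y)}_n$ is mapped by $h_n$ to a set of diameter at most $\e/(2\|DH_{n-1}\|_0)$. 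Define
\[
B_{i_1,i_2,i_3} \coloneqq \bigcup_{j=0}^{d_n-1} \Big[ \tfrac{i_1 d_n + j}{q_n} + i_2 r^{(x)}_n,\, \tfrac{i_1 d_n + j}{q_n} + (i_2+1)r^{(x)}_n \Big) \times \Big[ i_3 r^{(y)}_n,\, (i_3+1)r^{(y)}_n \Big)
\]
for $0 \leq i_1 < \lfloor q_n/d_n \rfloor$, $0 \leq i_2 < \lfloor 1/(q_n r^{(x)}_n) \rfloor$, $0 \leq i_3 < \lfloor 1/r^{(y)}_n \rfloor$. By $1/q_n$-equivariance, $T_n^t(H_n(B_{i_1,i_2,i_3})) = H_{n-1} \circ h_n \circ R^t_{\a_{n+1}}(B_{i_1,i_2,i_3})$ for every $t\in\mathbb{N}$ consists of $d_n$ sub-rectangles, each of diameter $\leq \e/(2\|DH_{n-1}\|_0)$ after $h_n$, all lying in a horizontal strip of width $d_n/q_n$; the choice of $d_n$ then ensures $H_{n-1}$ expands this into a set of diameter $\leq \e$, so $H_n(B_{i_1,i_2,i_3})$ fits in a single $(\e,m)$-Bowen ball for every $m\geq 0$. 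Counting the number of triples,
\[
N_{d^{T_n}_m}(\e) \leq \tfrac{q_n}{d_n} \cdot \tfrac{1}{q_n r^{(x)}_n} \cdot \tfrac{1}{r^{(y)}_n} \leq C_n \cdot \tfrac{1}{\e^3} \cdot q_n^{n^2+2\sigma_n}.
\]

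The main technical point is balancing the two stretching effects in the coordinatewise estimate for $h_n$: $\phi_n$ turns a thin vertical slab of width $r^{(x)}_n$ into a tall thin strip of height $\sim q_n^{n^2+1} r^{(x)}_n$, and the subsequent horizontal shear $g_n$ adds $nq_n^{\sigma_n}$ times this height back into the horizontal extent. This is what forces the asymmetric choices of $r^{(x)}_n$ and $r^{(y)}_n$, differing by the factor $q_n^{n^2+1}$; after the counting, $1/(q_n r^{(x)}_n)$ contributes $q_n^{n^2+\sigma_n}$ and $1/r^{(y)}_n$ contributes $q_n^{\sigma_n}$, producing exactly the exponent $n^2+2\sigma_n$. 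Had one used only the uniform bound $\vertiii{\phi_n}_1 = O(q_n^{n^2+1})$ from~(\ref{eq:normPhi3}) for both coordinates, the exponent would double and the desired bound would fail; exploiting the rotation structure of $\phi_{q,\e}$ coordinate by coordinate is therefore essential.
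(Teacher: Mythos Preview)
Your proof is correct and follows essentially the same approach as the paper: the paper's proof simply refers back to Lemma~\ref{lem:FSWSpanningAtn-thStage} with the replacement
\[
r^{(x)}_n = \frac{\varepsilon}{16q^{n^2+1}_n \cdot [nq^{\sigma_n}_n] \cdot \norm{D\varphi_{\e_n}}_0 \cdot C_{\psi_n,1} \cdot \norm{DH_{n-1}}_0}, \qquad
r^{(y)}_n = \frac{\varepsilon}{8\cdot [nq^{\sigma_n}_n]\cdot \norm{D\varphi_{\e_n}}_0 \cdot C_{\psi_n,1} \cdot \norm{DH_{n-1}}_0},
\]
which are (up to absorbing the $\e_n$-dependent constants into $C$) exactly your choices. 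Your closing paragraph making explicit the coordinatewise Jacobian analysis for $h_n=g_n\circ\phi_n$ and why the naive uniform bound $\vertiii{h_n}_1$ would overshoot is a useful elaboration that the paper leaves implicit.
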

\begin{comment}
\begin{lemma} \label{lem:upperSmooth3}
	For any given $\e>0$ and any $n\in \N$ sufficiently large we have for $m\leq q_{n+1}$ that
	\[
	N_{d^T_m} (\e)\leq C_n \cdot\frac{1}{\e^3} \cdot q^{n^2+2\sigma_n}_n,
	\]
	where the constant $C_n$ depends on $\e_n$ and $H_{n-1}$ but is independent of $q_n$, $m$, and $\e$.
\end{lemma}
\end{comment}
%{\color{green} the proof of this one seems to me is very similar to Lemma \ref{lem:FSWSpanningAtn-thStage}, I am wondering do we need to include it or not..maybe just specified $r^{(x)}_n$ and $r^{(y)}_n$ will be enough.} {\bb Yes, let's just specify $r^{(x)}_n$ and $r^{(y)}_n$.}
\begin{proof}
The proof is identical to the proof of Lemma \ref{lem:FSWSpanningAtn-thStage} with equation \eqref{eq:rnxrny} replaced by,
\begin{equation}
\begin{aligned}
	r^{(x)}_n & \coloneqq \frac{\varepsilon}{16q^{n^2+1}_n \cdot [nq^{\sigma_n}_n] \cdot  \norm{D\varphi_{\e_n}}_0 \cdot C_{\psi_n,1} \cdot \norm{DH_{n-1}}_0},  \\
	r^{(y)}_n & \coloneqq \frac{\varepsilon}{8\cdot [nq^{\sigma_n}_n]\cdot \norm{D\varphi_{\e_n}}_0 \cdot C_{\psi_n,1} \cdot \norm{DH_{n-1}}_0},
\end{aligned}
\end{equation}
where $C_{\psi_n,1}$ is the constant from (\ref{eq:normG}).
\end{proof}
Using methods from Section \ref{sec:untwistedAbC}, we obtain
\begin{lemma}\label{lem:upperSmooth3}
For any given $\e>0$ and any $n\in\N$, we have
$$N_{d^T_m} (\e)\leq C_n \cdot\frac{1}{\e^3} \cdot q^{n^2+2\sigma_n}_n,$$
where $0\leq m \leq l_{n+1}'q_{n+1}$ and $C_n>0$ is some constant dependent on $\e_n$ and $H_{n-1}$ but independent of $m$ and $\e$.
\end{lemma}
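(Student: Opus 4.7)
The plan is to deduce this lemma by combining the estimate for the approximants $T_n$ established in the immediately preceding lemma with the transfer result of Proposition \ref{prop:upgradeTntoTForSpanningSets}. This mirrors exactly the way Lemma \ref{lem:FSWSpanning} was deduced from Lemma \ref{lem:FSWSpanningAtn-thStage} in Section \ref{sec:untwistedAbC}, and the way Lemma \ref{lem:FSWSpanning2} was obtained in Section \ref{sec:uniquelyErgodicAbC}, so there is no new dynamical content to uncover here.

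First I would check that the hypotheses of Proposition \ref{prop:upgradeTntoTForSpanningSets} are in force: we need the AbC diffeomorphisms $T_n$ constructed in this section to satisfy the growth requirements \eqref{eq:equation ln' convergence}, \eqref{eq:equation ln' are summable}, and \eqref{eq:equation ln condition}, which is guaranteed by the parameter choices from Lemma \ref{lemma estimate q_(n+1) vs q_n weak mixing}. Once that is verified, for any $0\leq m\leq l_{n+1}'q_{n+1}$ and any $\varepsilon>0$ the proposition yields
\begin{equation*}
N_{d^T_m}(\varepsilon)\;\leq\; N_{d^{T_n}_m}\!\left(\tfrac{\varepsilon}{2}\right).
\end{equation*}

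Applying the preceding lemma, which bounds $N_{d^{T_n}_m}(\delta)$ by $C_n\cdot \delta^{-3}\cdot q_n^{n^2+2\sigma_n}$ uniformly in $m$, and specializing to $\delta=\varepsilon/2$ gives
\begin{equation*}
N_{d^T_m}(\varepsilon)\;\leq\; C_n\cdot \frac{8}{\varepsilon^{3}}\cdot q_n^{n^2+2\sigma_n}.
\end{equation*}
Absorbing the factor $8$ into the constant $C_n$ (which is already allowed to depend on $\varepsilon_n$ and $H_{n-1}$) yields the claimed bound. There is really no main obstacle: the work has been done upstream, both in the combinatorial estimate of Bowen ball sizes for $T_n$ and in the general transfer lemma. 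The proof itself should occupy only a sentence or two, essentially saying \emph{``Follows immediately from the preceding lemma and Proposition~\ref{prop:upgradeTntoTForSpanningSets}.''}
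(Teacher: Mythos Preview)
Your proposal is correct and matches the paper's approach exactly: the paper simply prefaces this lemma with ``Using methods from Section \ref{sec:untwistedAbC}, we obtain'' and gives no further argument, which is precisely the route you describe---combine the preceding $T_n$-estimate with Proposition \ref{prop:upgradeTntoTForSpanningSets}, just as Lemma \ref{lem:FSWSpanning} was deduced from Lemma \ref{lem:FSWSpanningAtn-thStage}.
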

\begin{comment}
\begin{lemma} \label{lem:upperSmooth3}
	For any given $\e>0$ and any $n\in \N$ sufficiently large we have for $m\leq q_{n+1}$ that
	\[
	N_{d^T_m} (\e)\leq C_n \cdot\frac{1}{\e^3} \cdot q^{n^2+2\sigma_n}_n,
	\]
	where the constant $C_n$ depends on $\e_n$ and $H_{n-1}$ but is independent of $q_n$, $m$, and $\e$.
\end{lemma}
\end{comment}

\subsection{Upper topological slow entropy}
Similar to the proof of Theorem \ref{thm:untwistedTopInter}, we obtain the following results by replacing Lemmas \ref{lem:FSWSeperation} and \ref{lem:FSWSpanning} by Lemmas \ref{lem:LowerSmooth3} and \ref{lem:upperSmooth3}, respectively.
\begin{theorem}\label{thm:weakMixingTopInter}
There exists a weakly mixing $C^\infty$ Anosov-Katok diffeomorphism $T$ constructed using parameters specified in \eqref{eq:abcparameters} with $m_n=n-1$, $l_n'$ as in \eqref{eq:ln'} with $r=8$, $l_n$ satisfying \eqref{eq:equation ln condition}, and conjugacies specified by \eqref{eq:hn-wm}, such that the upper topological slow entropy is
\begin{equation}
\uent^{\top}_{a_m^{\operatorname{int2,8}}(t)}(T)=1.
\end{equation}
\end{theorem}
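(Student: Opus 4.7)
The proof follows the same template as Theorem \ref{thm:untwistedTopInter}, with the sharper separation estimate from Lemma \ref{lem:LowerSmooth3} replacing Lemma \ref{lem:FSWSeperation} and the weakly-mixing covering estimate from Lemma \ref{lem:upperSmooth3} replacing Lemma \ref{lem:FSWSpanning}. First I would verify that under the parameter choice $m_n=n-1$, $l_n'=q_n^{(n-1)^8-1}$, $l_n$ satisfying \eqref{eq:equation ln condition}, and the induced $q_{n+1}=q_n^{n^8}$, the norm control of Lemma \ref{lemma estimate q_(n+1) vs q_n weak mixing} ensures that \eqref{eq:equation ln' convergence} holds; hence Lemma \ref{lem:convergenceOfAbC} yields $C^\infty$-convergence of $T_n$ to a diffeomorphism $T$. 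Weak mixing of $T$ is then a direct consequence of the probabilistic construction of $\phi_n$ via the concatenated word $W=\overline{w}\tilde w$ (as already established earlier in the section).

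For the lower bound $\uent^{\top}_{a_m^{\operatorname{int2,8}}(t)}(T)\geq 1$, I would evaluate Lemma \ref{lem:LowerSmooth3} at the times $m=q_{n+1}$: for every $\e>0$ and all sufficiently large $n$, $S_{d_{q_{n+1}}^T}(\e)\geq q_n^{n^2}$. Combining this with the closed form $a_{q_{n+1}}^{\operatorname{int2,8}}(t)=q_n^{n^2 t}$ from \eqref{eq:intermediateScaleAtQ(n+1)}, for any $t<1$ one obtains
$$\limsup_{m\to\infty}\frac{S_{d_m^T}(\e)}{a_m^{\operatorname{int2,8}}(t)}\geq\limsup_{n\to\infty}\frac{q_n^{n^2}}{q_n^{n^2 t}}=+\infty,$$
so every such $t$ lies in $S_1(\e,\T^2)$, yielding the claimed inequality.

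For the upper bound $\uent^{\top}_{a_m^{\operatorname{int2,8}}(t)}(T)\leq 1$, I would fix $t>1$ and partition the possible values of $m$ into the two regimes separated by $l_n'q_n=q_n^{(n-1)^8}$. When $q_n<m\leq l_n'q_n$, Lemma \ref{lem:upperSmooth3} and monotonicity of the scale give
$$\frac{N_{d_m^T}(\e)}{a_m^{\operatorname{int2,8}}(t)}\leq\frac{C_{n-1}\,q_{n-1}^{(n-1)^2+2\sigma_{n-1}}}{\e^3\cdot q_{n-1}^{(n-1)^2 t}},$$
which tends to zero as $n\to\infty$ since $t>1$ and $\sigma_{n-1}\to 0$. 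When $l_n'q_n<m\leq q_{n+1}$, the inequality $a_{l_n'q_n}^{\operatorname{int2,8}}(t)\geq q_n^{(n-1)^8 t/n^6}$ from \eqref{eq:intermediateScaleAtQ(n)} yields
$$\frac{N_{d_m^T}(\e)}{a_m^{\operatorname{int2,8}}(t)}\leq\frac{C_n\,q_n^{n^2+2\sigma_n}}{\e^3\cdot q_n^{(n-1)^8 t/n^6}},$$
and since $(n-1)^8/n^6=n^2(1-1/n)^8$ the denominator exponent asymptotically dominates $n^2+2\sigma_n$ whenever $t>1$. Hence no $t>1$ belongs to $N_1(\e,\T^2)$, proving $\uent^{\top}_{a_m^{\operatorname{int2,8}}(t)}(T)\leq 1$.

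The main obstacle is the simultaneous matching of parameters: the value $r=8$ is forced on one side by the derivative bound $l_n'\geq q_n^{(n-1)^8-1}$ coming from Lemma \ref{lemma estimate q_(n+1) vs q_n weak mixing} (which in turn stems from the $q_n^{n^2+1}$ stretch in $\phi_n$), and on the other side by the need for the exponent $(n-1)^r/n^{r-2}\cdot t$ in the intermediate scale evaluated at $l_n'q_n$ to overtake the covering growth exponent $n^2+2\sigma_n$ of Lemma \ref{lem:upperSmooth3} for every $t>1$. Once these exponents are aligned and $\sigma_n\to 0$ is chosen, the rest of the argument is a direct adaptation of the untwisted case.
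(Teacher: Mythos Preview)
Your proposal is correct and follows essentially the same approach as the paper: both use Lemma \ref{lem:LowerSmooth3} at the times $q_{n+1}$ for the lower bound and Lemma \ref{lem:upperSmooth3} together with the two-regime splitting $q_n<m\leq l_n'q_n$ versus $l_n'q_n<m\leq q_{n+1}$ for the upper bound, invoking the same scale evaluations \eqref{eq:intermediateScaleAtQ(n+1)} and \eqref{eq:intermediateScaleAtQ(n)}. Your write-up is in fact slightly more explicit than the paper's in justifying why the second-regime exponent $(n-1)^8 t/n^6$ dominates $n^2+2\sigma_n$ for $t>1$.
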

%Similar comments as in Remark \ref{rem:compareDifferentScales} can also be made here for polynomial scale and logarithmic scale.
%{\color{green}The proof of this theorem is almost identical with Theorem \ref{thm:untwistedTopInter}, I am wondering maybe we can omit it.} {\bb Since we're using the other scaling function, we should keep that part. But we should shorten logarithmic and polynomial calculation as before.}
\begin{proof}
\begin{comment}
Below, we repeatedly use Lemma \ref{lem:LowerSmooth3} and \ref{lem:upperSmooth3} to get estimates for the $(m,\e)$ {\color{red}minimal covering and maximal} separated sets. First we consider $a_m^{\pol}(t)=n^t$, then for any $t$ and any choice of $\e>0$, there exists an increasing sequence of integers $\{m_k\}_{k\in \N}$ such that following supremum is realized as a limit
\begin{align*}
\limsup_{m\to\infty}\frac{N_{d_m^T}(\e)}{a_m^{\text{pol}}(t)}= \lim_{k\to\infty}\frac{N_{d_{m_k}^T}(\e)}{a_{m_k}^{\text{pol}}(t)}.
\end{align*}
If for a given $k$, $q_n < m_k\leq l_n'q_{n}=q_{n}^{(n-1)^8}$, then we have the estimate
\begin{align*}
\frac{N_{d_{m_k}^T}(\e)}{a_{m_k}^{\text{pol}}(t)}=\frac{N_{d_{m_k}^T}(\e)}{m_k^t}\leq \frac{C_{n-1}q_{n-1}^{(n-1)^2+2\sigma_{n-1}}}{\e^3q_{n}^t}.
\end{align*}
While on the other hand if  $l_n'q_{n}=q_{n}^{(n-1)^8}<m_k\leq q_{n+1}$, we have the estimate
\begin{align*}
\frac{N_{d_{m_k}^T}(\e)}{a_{m_k}^{\text{pol}}(t)}=\frac{N_{d_{m_k}^T}(\e)}{m_k^t}\leq \frac{C_{n}q_{n}^{n^2+2\sigma_{n}}}{\e^2q_{n}^{(n-1)^8t}}.
\end{align*}
Hence $\overline{\text{ent}}^{top}_{a_m^{\text{pol}}(t)}(T)= 0$ in this case.
\end{comment}
We consider $a_m^{\text{int2,8}}(t)$. If for a given $k$, $q_n < m_k\leq l_n'q_{n}=q_{n}^{(n-1)^8}$, then we have the estimate
\begin{align*}
\frac{N_{d_{m_k}^T}(\e)}{a_{m_k}^{\text{int2,8}}(t)}\leq \frac{N_{d_{m_k}^T}(\e)}{a_{q_n}^{\text{int2,8}}(t)}\leq \frac{C_{n-1}q_{n-1}^{(n-1)^2+2\sigma_{n-1}}}{\e^3q_{n-1}^{(n-1)^2t}}.
\end{align*}
If $q_{n}^{(n-1)^8}=l_n'q_n<m_k\leq q_{n+1}$, then we have the estimate
\begin{align*}
\frac{N_{d_{m_k}^T}(\e)}{a_{m_k}^{\text{int2,8}}(t)}\leq \frac{N_{d_{m_k}^T}(\e)}{a_{l_n'q_n}^{\text{int2,8}}(t)}\leq \frac{C_{n}q_{n}^{n^2+2\sigma_{n}}}{\e^3\big[q_n\big]^{\frac{(n-1)^8t}{n^6}}}.
\end{align*}
Hence, it is clear that $\overline{\text{ent}}^{top}_{a_m^{\text{int2,8}}(t)}(T)\leq 1$. On the other hand, for any $t\leq 1$
\begin{align*}
\limsup_{m\to\infty}\frac{S_{d_m^T}(\e)}{a_m^{\text{int2,8}}(t)}\geq \limsup_{n\to\infty}\frac{S_{d_{q_{n+1}}^T}(\e)}{a_{q_{n+1}}^{\text{int2,8}}(t)} \geq  \limsup_{n\to\infty}\frac{q_{n}^{n^2}}{q_n^{n^2t}}>0.
\end{align*}
Hence, $\overline{\text{ent}}^{top}_{a_m^{\text{int2}}(t)}(T)= 1$ in this case.
\begin{comment}
Finally let us consider $a_m^{\text{log}}(t)=(\ln m)^t$, then for any choice of $t>0$ and $\e>0$
\begin{align*}
\limsup_{m\to\infty}\frac{S_{d_m^T}(\e)}{a_m^{\text{log}}(t)}\geq \limsup_{n\to\infty}\frac{q_{n}^{n^2}}{ (\ln q_{n+1})^t}>0
\end{align*}
Since $q_{n+1}\leq q_n^{n^8}$ with our choice of parameters. Hence $\overline{\text{ent}}^{top}_{a_m(t)}(T)= \infty$ in this case.
\end{comment}
\end{proof}
We also get results for the polynomial and log scale.

\begin{theorem}\label{thm:weakMixingTopIn}
There exists a weakly mixing $C^\infty$ Anosov-Katok diffeomorphism $T$ constructed using parameters specified in \eqref{eq:abcparameters} with $q_{n+1}=q_n^{q_n^{n^2}}$, $m_n=n-1$, $l_n'=q_n^{q_n^{n^2}-2n^2}$, $l_n$ satisfying \eqref{eq:equation ln condition}, and conjugacies specified by \eqref{eq:hn-wm}, such that the upper topological slow entropy with respect to the log scale is
\begin{align*}
        \uent^{\top}_{a^{\ln}_m(t)}(T)=1.
\end{align*}
\end{theorem}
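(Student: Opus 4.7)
The plan is to mimic the proof of Theorem \ref{thm:weakMixingTopInter}, but replacing the intermediate scaling function by $a^{\ln}_m(t)=(\ln m)^t$ and exploiting the specific parameter choice $q_{n+1}=q_n^{q_n^{n^2}}$. The first step is to verify that the given parameters $m_n=n-1$, $l_n'=q_n^{q_n^{n^2}-2n^2}$, $l_n$ satisfying \eqref{eq:equation ln condition}, and $q_{n+1}=q_n^{q_n^{n^2}}$ fit into the framework of Lemma \ref{lemma estimate q_(n+1) vs q_n weak mixing} so that the construction converges in $C^\infty$ and Lemmas \ref{lem:LowerSmooth3} and \ref{lem:upperSmooth3} apply. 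Since $l_n'=q_n^{q_n^{n^2}-2n^2}$ dominates $\vertiii{H_n}_{m_n+1}^{m_n+1}$, \eqref{eq:equation ln' convergence} holds; summability in \eqref{eq:equation ln' are summable} is immediate; and the product $k_n l_n q_n^2 = q_{n+1}$ can be matched by an appropriate choice of $k_n$.

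The key computation uses the identity $\ln q_{n+1} = q_n^{n^2}\ln q_n$ and $\ln(l_n' q_n) = (q_n^{n^2}-2n^2+1)\ln q_n$. For the lower bound, Lemma \ref{lem:LowerSmooth3} yields, for any $t<1$ and any fixed $\varepsilon>0$,
\begin{equation*}
\limsup_{m\to\infty}\frac{S_{d^T_m}(\varepsilon)}{a^{\ln}_m(t)}\;\geq\;\limsup_{n\to\infty}\frac{S_{d^T_{q_{n+1}}}(\varepsilon)}{(\ln q_{n+1})^t}\;\geq\;\limsup_{n\to\infty}\frac{q_n^{n^2}}{\left(q_n^{n^2}\ln q_n\right)^t}=\limsup_{n\to\infty}\frac{q_n^{n^2(1-t)}}{(\ln q_n)^t}=+\infty,
\end{equation*}
so every $t<1$ belongs to $S_1(\varepsilon,\mathbb{T}^2)$, forcing $\uent^{\top}_{a^{\ln}_m(t)}(T)\geq 1$. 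For the matching upper bound we cover the range $l_n' q_n<m\leq l_{n+1}'q_{n+1}$ using Lemma \ref{lem:upperSmooth3}, which gives $N_{d^T_m}(\varepsilon)\leq C_n q_n^{n^2+2\sigma_n}/\varepsilon^3$. For $t>1$,
\begin{equation*}
\frac{N_{d^T_m}(\varepsilon)}{a^{\ln}_m(t)}\;\leq\;\frac{C_n q_n^{n^2+2\sigma_n}/\varepsilon^3}{\bigl(\ln(l_n' q_n)\bigr)^t}\;\leq\;\frac{C_n q_n^{n^2(1-t)+2\sigma_n}}{\varepsilon^3(\ln q_n)^t(1-2n^2/q_n^{n^2})^t}\;\xrightarrow[n\to\infty]{}0,
\end{equation*}
since $\sigma_n\to 0$ and $t>1$. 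Hence no $t>1$ lies in $N_1(\varepsilon,\mathbb{T}^2)$, giving $\uent^{\top}_{a^{\ln}_m(t)}(T)\leq 1$. Combining the two bounds yields the stated equality.

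The routine obstacle is checking that the exceptionally fast growth $q_{n+1}=q_n^{q_n^{n^2}}$ is still compatible with smooth convergence: one needs $l_n'$ large enough to control $\max\{\vertiii{H_n}_{n}^{n},\vertiii{H_n^{-1}}_{n}^{n}\}$ while keeping $\sum 1/l_n'<1$. The exponent $q_n^{n^2}-2n^2$ comfortably dominates the polynomial-in-$n$ exponent appearing in Lemma \ref{lemma bound on derivative weak-mixing case}, so all the hypotheses of Lemma \ref{lem:convergenceOfAbC}, Proposition \ref{prop:upgradeTntoTForSeparatedSets} and Proposition \ref{prop:upgradeTntoTForSpanningSets} hold. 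The truly substantive step is the upper estimate, where one must make sure that the smoothing loss $2\sigma_n$ in the exponent of $q_n$ in Lemma \ref{lem:upperSmooth3} is negligible compared to the gap $n^2(t-1)\ln q_n$ produced by the logarithm of $l_n'q_n$; this is automatic as soon as $\sigma_n=o(n^2)$, which is guaranteed by the construction.
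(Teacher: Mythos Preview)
Your proposal is correct and follows essentially the same approach the paper intends: adapt the proof of Theorem~\ref{thm:weakMixingTopInter} by invoking Lemmas~\ref{lem:LowerSmooth3} and~\ref{lem:upperSmooth3} and then compare the resulting bounds $q_n^{n^2}$ and $C_n q_n^{n^2+2\sigma_n}$ against $(\ln q_{n+1})^t$ and $(\ln(l_n'q_n))^t$, using the identity $\ln q_{n+1}=q_n^{n^2}\ln q_n$ coming from the choice $q_{n+1}=q_n^{q_n^{n^2}}$. The paper itself omits the proof as ``similar'' to the intermediate-scale case, and your write-up fills in exactly the calculation one would expect; your single-range treatment $l_n'q_n<m\le l_{n+1}'q_{n+1}$ for the upper bound is a harmless simplification of the two-range split used in the proof of Theorem~\ref{thm:weakMixingTopInter}.
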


\begin{theorem}\label{thm:weakMixingTopPol}
For any integer $K>1$, there exists a weakly mixing $C^K$ Anosov-Katok diffeomorphism $T$ constructed using parameters specified in \eqref{eq:abcparameters} with $q_{n+1}=q_n^{n^2K^5}$, $m_n=K-1$, $l_n'=q_n^{n^2(K^5-2)-2}$, $l_n=q_n^{n^2K^5-2}$ satisfying \eqref{eq:equation ln condition}, and conjugacies specified by \eqref{eq:hn-wm}, such that the upper topological slow entropy with respect to the polynomial scale is
\begin{align*}
        \frac{1}{K^5}\leq \uent^{\top}_{a_m^{\pol}(t)}(T)<\frac{1}{K^5-2}.
\end{align*}
\end{theorem}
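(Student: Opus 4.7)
The plan is to run the same separation-and-covering scheme driving Theorem \ref{thm:weakMixingTopInter}, only swapping the intermediate scale for the polynomial scale $a_m^{\pol}(t)=m^t$ and verifying that the slower parameter growth $q_{n+1}=q_n^{n^2K^5}$ is compatible with the finite regularity demanded by $m_n=K-1$. Concretely, I would verify first that all hypotheses of Section \ref{sec:weakMixingAbC} are met: Lemma \ref{lemma estimate q_(n+1) vs q_n weak mixing} requires $l_n\geq q_n^{n^2K^5-2}$, which is saturated by the assignment $l_n=q_n^{n^2K^5-2}$; the auxiliary $l_n'=q_n^{n^2(K^5-2)-2}$ dominates $\max\{\vertiii{H_n}_K^K,\vertiii{H_n^{-1}}_K^K\}$, since Lemma \ref{lemma bound on derivative weak-mixing case} combined with the submultiplicativity of Lemma \ref{lem:submultiplicative} bounds the latter by $q_n^{n^2K^3+O(K^2)}$; summability $\sum 1/l_n'<1$ and $l_n\geq\lceil\vertiii{H_n}_1\rceil\, l_n'$ are automatic. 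Lemma \ref{lem:convergenceOfAbC} then delivers the $C^K$ limit $T$, and weak mixing is inherited unchanged from the probabilistic construction of $\phi_n$ via the word $W$ as in Theorem \ref{thm:weakMixingTopInter}.

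For the lower bound I would feed the estimate of Lemma \ref{lem:LowerSmooth3} into the definition along the subsequence $m=q_{n+1}=q_n^{n^2K^5}$:
\begin{equation*}
\frac{S_{d^{T}_{q_{n+1}}}(\varepsilon)}{a^{\pol}_{q_{n+1}}(t)}\;\geq\;\frac{q_n^{n^2}}{q_n^{n^2 K^5 t}}\;=\;q_n^{n^2(1-K^5 t)},
\end{equation*}
which stays bounded away from zero for every $t\leq 1/K^5$. Hence $\uent^{\top}_{a_m^{\pol}(t)}(T)\geq 1/K^5$.

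For the upper bound I would partition the $m$-axis by the thresholds $q_n$, $l_n'q_n$ and $q_{n+1}$, applying Lemma \ref{lem:upperSmooth3} on each piece. When $q_n<m\leq l_n'q_n$ the stage $n-1$ bound combined with $m^t\geq q_n^t=q_{n-1}^{(n-1)^2K^5 t}$ controls the ratio by $C_{n-1}\varepsilon^{-3}q_{n-1}^{(n-1)^2+2\sigma_{n-1}-(n-1)^2K^5 t}$, which vanishes for every $t>1/K^5$. When $l_n'q_n<m\leq q_{n+1}$ the stage $n$ bound together with $m^t\geq(l_n'q_n)^t=q_n^{(n^2(K^5-2)-1)t}$ gives
\begin{equation*}
\frac{N_{d^{T}_m}(\varepsilon)}{a^{\pol}_m(t)}\;\leq\;\frac{C_n}{\varepsilon^3}\,q_n^{\,n^2+2\sigma_n-(n^2(K^5-2)-1)t},
\end{equation*}
whose leading $n^2$-coefficient equals $1-(K^5-2)t$, strictly negative as soon as $t>1/(K^5-2)$; for such $t$ the exponent tends to $-\infty$ and the ratio vanishes. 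Combining the two sub-ranges yields $\uent^{\top}_{a_m^{\pol}(t)}(T)<1/(K^5-2)$.

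The main obstacle is the intrinsic mismatch between the lower-bound exponent $n^2$ of Lemma \ref{lem:LowerSmooth3} and the upper-bound exponent $n^2+2\sigma_n$ of Lemma \ref{lem:upperSmooth3}, which is what prevents the two thresholds $1/K^5$ and $1/(K^5-2)$ from coinciding (unlike in Theorem \ref{thm:weakMixingTopInter}, where the intermediate scale swallows the mismatch). The calibration is to let $\sigma_n\searrow 0$ slowly enough to preserve the uniform-distribution mechanism responsible for weak mixing, yet rapidly enough to keep the upper-bound exponent compatible with the polynomial scale; tracking this balance at the critical value $t=1/(K^5-2)$ is the delicate computation that produces the strict upper bound.
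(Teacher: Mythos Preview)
Your approach is correct and coincides with the paper's implicit argument: the paper states Theorem \ref{thm:weakMixingTopPol} without a separate proof, relying on the template of Theorem \ref{thm:weakMixingTopInter} with Lemmas \ref{lem:LowerSmooth3} and \ref{lem:upperSmooth3} fed into the polynomial scale, and your partition of the $m$-axis into $(q_n,l_n'q_n]$ and $(l_n'q_n,q_{n+1}]$ with the stage~$n{-}1$ and stage~$n$ covering bounds is exactly the intended route.

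One correction to your closing paragraph: you misidentify the source of the gap between $1/K^5$ and $1/(K^5-2)$. The term $2\sigma_n$ is not responsible---it vanishes in the limit and the gap would persist even with $\sigma_n\equiv 0$. The gap arises because the separation estimate of Lemma \ref{lem:LowerSmooth3} is taken at $m=q_{n+1}=q_n^{n^2K^5}$ while the stage~$n$ covering bound from Lemma \ref{lem:upperSmooth3} is only available from $m=l_n'q_n=q_n^{n^2(K^5-2)-1}$ onward; the ratio of these exponents is what produces the two thresholds. Structurally the shortfall of $l_n'$ relative to $l_n$ is forced by condition \eqref{eq:equation ln condition}, since $l_n\geq\lceil\vertiii{H_n}_1\rceil\, l_n'$ and $\vertiii{H_n}_1\sim q_n^{n^2}$ by Lemma \ref{lemma bound on derivative weak-mixing case}. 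Relatedly, your computation as written only yields $\uent^{\top}_{a_m^{\pol}(t)}(T)\leq 1/(K^5-2)$: at $t=1/(K^5-2)$ the second-range exponent equals $2\sigma_n+t>0$, so the \emph{upper bound} on the ratio diverges and gives no information; obtaining the strict inequality in the statement would require sharper control (the paper does not elaborate on this point either).
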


\subsection{Upper measure-theoretic slow entropy} \label{subsec:UpperMeasure}

We now turn to the measure-theoretic slow entropy. On the one hand, we provide some $(1-\e)$-cover with $(\e, q_{n+1})$-Hamming balls with respect to a given partition $\xi$. The cardinality of this cover gives an upper bound for $S^H_{\xi}(T,q_{n+1},\e)$. On the other hand, we give examples of regions that are $(\e, q_{n+1})$-Hamming apart from each other. Then we use this to deduce a lower bound on the cardinality of a $(1-\e)$-cover. Note that we still need good norm estimates for the conjugation maps to get precise growth rates of $q_{n+1}$ expressed in $q_n$ which allows us to find an appropriate scaling function in the slow entropy invariant.

We define the partial partition
\[
\xi_n = \Meng{R^{(n)}_{i,j}}{0\leq i <q_n , \ 2\e_nq_n\leq j < q_n-2\e_nq_n},
\]
where (recall the definition of the ``good domain'' $K_n$ of $\phi^{-1}_n$ from (\ref{eq:Ksmooth3}))
\begin{align*}
R^{(n)}_{i,j} & \coloneqq \Big[\frac{i}{q_n}, \frac{i+1}{q_n}\Big) \times \Big[\frac{j}{q_n}, \frac{j+1}{q_n}\Big) \cap K_n .
\end{align*}
Then we notice that the image partitions $\eta_n \coloneqq H_{n-1} \circ g_n (\xi_n)$ converge to the decomposition into points since
\[
\lim_{n\to \infty} \text{diam} \left(H_{n-1} \circ g_n \left(R^{(n)}_{i,j}\right) \right) \leq \lim_{n\to \infty} \norm{DH_{n-1}}_0 \cdot 2 \cdot C_{\psi_n,1} \cdot \left[nq^{\sigma_n}_n\right] \cdot \frac{\sqrt{2}}{q_n} =0,
\]
where $C_{\psi_n,1}$ is the constant from (\ref{eq:normG}). Hence, we can calculate the upper measure-theoretical slow entropy along the sequence of partitions $\{\eta_n\}_{n\in \N}$ using the generator theorem from Proposition \ref{prop:generator}.

We start with an upper bound on the number of covering sets.

\begin{lemma}\label{lem:upperSmooth4}
	Let $0<\e<\frac{1}{2}$ and $m\in \N$ sufficiently large. For $n>m$ we have for all $0\leq L \leq l_{n+1}'q_{n+1}$ that
	{\[
	S^H_{\eta_m}(T,L,\e) \leq C_n \cdot q^2_n,
	\]}
where the constant $C_n$ is independent of $q_n$.
\end{lemma}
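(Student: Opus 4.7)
The plan is to exploit the approximate periodicity of $T_n$ at the macro-scale $1/q_n$ to enumerate at most $q_n^2$ essentially distinct $\eta_m$-coding sequences, and then to transfer this bound from $T_n$ to $T$ via the safe domain $\Xi_n$ from Remark \ref{rem:safeSmooth}.

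First, I would decompose $\T^2$ into the $q_n^2$ macro-cells $H_n(C_{i,j})$ with $C_{i,j}=\left[\tfrac{i}{q_n},\tfrac{i+1}{q_n}\right)\times\left[\tfrac{j}{q_n},\tfrac{j+1}{q_n}\right)$ for $0\leq i,j<q_n$. Since $\alpha_{n+1}=\tfrac{p_n}{q_n}+\tfrac{1}{q_{n+1}}$, the rotation $R_{\alpha_{n+1}}$ acts on these macro-cells as the permutation $(i,j)\mapsto(i+p_n\bmod q_n,\,j)$ up to an accumulated horizontal drift of at most $\tfrac{L}{q_{n+1}}\leq l_{n+1}'$ over $L$ iterations. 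Because $\eta_m$ is at scale $\tfrac{1}{q_m}\gg\tfrac{1}{q_n}$ (with only $O(q_m^2)$ atoms whose boundary is transported by $H_{m-1}\circ g_m$ with bounded $C^1$-norm), for $n$ sufficiently large each macro-cell lies in a single $\eta_m$-atom except in a $\tfrac{C}{q_n^2}$-neighbourhood of the atom boundaries. Consequently, the $T_n$-orbit of any point in macro-cell $H_n(C_{i_0,j_0})$ yields an $\eta_m$-coding sequence which is determined, outside a set of times of density less than $\e/3$, by the index $(i_0,j_0)$, giving at most $q_n^2$ representative coding sequences.

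To pass from $T_n$ to $T$, I would invoke Remark \ref{rem:safeSmooth}: for any prescribed $\delta>0$ one has $\mu(\Xi_n)>1-\delta$ once $n$ is large, and on $\Xi_n$ the iterates $T^t$ and $T_n^t$ stay within a distance controlled by $\e_n$ and $l_{n+1}'/q_{n+1}$ for all $0\leq t\leq l_{n+1}'q_{n+1}$. Choosing $\delta<\e/3$ and using $\e_n\leq\tfrac{1}{n^4}$ together with the parameter growth \eqref{eq:equation ln' convergence}--\eqref{eq:equation ln condition}, this distance is smaller than the safe interior thickness of every $\eta_m$-atom. Hence the $T$-coding of each $x\in\Xi_n$ disagrees with the $T_n$-coding of its macro-cell representative on at most an $\e$-proportion of the times $0\leq t<L$, so the $q_n^2$ macro-cell codings serve as centres of $(\e,L)$-Hamming balls whose union covers $\Xi_n$, yielding $S^H_{\eta_m}(T,L,\e)\leq C_n q_n^2$ with $C_n$ independent of $q_n$.

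The main obstacle is to make rigorous the claim that the exceptional set of pairs $(x,t)$ where $T^t(x)$ and $T_n^t(x)$ land in different $\eta_m$-atoms has time-averaged density less than $\e$ for $x\in\Xi_n$. This requires combining a Fubini-type boundary-measure estimate for $\eta_m$ (the union of $\kappa$-neighbourhoods of atom boundaries has measure $O(\kappa\cdot q_m\cdot\vertiii{H_{m-1}\circ g_m}_1)$) with the summability condition \eqref{eq:equation ln' are summable} to control the total drift contribution up to time $l_{n+1}'q_{n+1}$, and then absorbing all $n$-dependent constants arising from $\norm{DH_{m-1}}_0$ and the quasi-rotation distortion bound of Lemma \ref{lem:derivativeNormEstimateForQuasiRotations} into $C_n$.
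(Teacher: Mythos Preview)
Your proposal has a genuine gap in the first step. The claim that each macro-cell $H_n(C_{i,j})$ lies in a single $\eta_m$-atom (up to a negligible boundary zone) is false, and with it the assertion that the $\eta_m$-coding under $T_n$ is determined by the macro-index $(i_0,j_0)$. Recall that $h_n=g_n\circ\phi_n$ and that on a positive proportion of $[0,1/q_n)\times[0,1]$ the map $\phi_n$ equals a quasi-rotation $\phi_{2q_n^{s+2},\e_n}$ for some $1\leq s\leq n^2-1$. Such a quasi-rotation sends the horizontal strip $C_{0,j_0}=[0,1/q_n)\times[j_0/q_n,(j_0+1)/q_n)$ to a union of vertical slivers of full height $\approx 1$. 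Since $g_n$ and $H_{n-1}$ have derivative bounds independent of $q_n$, the image $H_n(C_{i,j})$ still has vertical extent of order one, whereas every $\eta_m$-atom has diameter at most $\norm{DH_{m-1}}_0\cdot C_{\psi_m,1}\cdot[mq_m^{\sigma_m}]\cdot\sqrt{2}/q_m\ll 1$. Hence $H_n(C_{i,j})$ meets many $\eta_m$-atoms, and two points in the same macro-cell typically produce genuinely different $\eta_m$-codings. Equivalently, in pulled-back coordinates the partition $H_n^{-1}(\eta_m)$ has atoms compressed by $\norm{Dh_n^{-1}}_0\sim q_n^{n^2+1}$ in some direction, so the $1/q_n$-cells $C_{i,j}$ are far too coarse to lie inside them.

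The paper circumvents this by constructing, inductively in $n\geq m$, much finer ``holey'' sets $B^{(n)}_{\vec{k}}$: each is a union over indices $i_1,\dots,i_{n^2+1}$ of tiny rectangles $B^{(n)}_{\vec{k},\vec{i}}$ of width $\sim \e_n s_m/(b_n\cdot 2q_n^{n^2+1})$ and height $\sim\e_n s_m/b_n$, small enough that $h_n$ maps each into a single element of the previous target partition $\tilde{\eta}_m$. The point is that the union over $\vec{i}$ exploits the fine periodicities of $\phi_n$ (at scales $1/(2q_n^{s+2})$) and of $g_n$ (at scale $1/a_n$), so that the number of distinct $B^{(n)}_{\vec{k}}$ is only $\frac{1}{\e_n s_m}\cdot 2q_n^2\cdot\big(\frac{q_m}{\e_n}\cdot\frac{1}{\e_n s_m}\big)^{n^2-1}\cdot q_m\cdot\frac{1}{\e_n s_m}=C_n q_n^2$ with $C_n$ independent of $q_n$. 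Your transfer step via the safe domain $\Xi_n$ is essentially correct and matches the paper, but it only becomes useful once the $T_n$-bound is in hand; the missing ingredient is precisely this fine-scale holey decomposition adapted to the multiscale structure of $h_n$, not a coarse $1/q_n\times 1/q_n$ grid.
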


\begin{proof}
	By Remark \ref{rem:safeSmooth} we can choose $m$ sufficiently large such that $\mu(\Xi_m)> 1-\frac{\e}{2}$. In addition to that, by condition \eqref{eq:en4} we can choose $m$ sufficiently large such that
	\begin{equation} \label{eq:deltaSmooth2}
	\sum^{\infty}_{n=m} 10n^2\e_n < \frac{\e}{2}.
	\end{equation}
	Then we define the sets $B^{(m)}_{\vec{j}}$ as
	{\footnotesize\begin{align*}
	 & \Big[ \frac{j_1}{q_m}+\frac{j_2}{2q^{3}_m}+\frac{j_3}{2q^{4}_m}+ \dots +\frac{j_{m^2}}{2q^{m^2+1}_m}+\frac{j_{m^2+1}}{2q^{m^2+2}_m}+\frac{j_{m^2+2} \e}{4q^{m^2+2}_m} , \frac{j_1}{q_m}+\frac{j_2}{2q^{3}_m}+ \dots +\frac{j_{m^2+1}}{2q^{m^2+2}_m}+\frac{(j_{m^2+2}+1) \e}{4q^{m^2+2}_m}\Big) \\
	 &\times \Big[\frac{j_{m^2+3}}{q_m}, \frac{j_{m^2+3}+1}{q_m} \Big)
	\end{align*}}
	for $\vec{j}=(j_1,\dots , j_{m^2+3})$ with $0\leq j_1<q_m$, $0\leq j_2<2q^{2}_m$, $4\e_mq_m\leq j_i < q_m-4\e_mq_m$ for $i=3,\dots , m^2+1$, $0\leq j_{m^2+2} < \lfloor \frac{2}{\e} \rfloor$, and $2\e_mq_m\leq j_{m^2+3} < q_m-2\e_mq_m$. We note that for any point in such a set $B^{(m)}_{\vec{j}}$ at most $4m^2\e_mq_{m+1}$ iterates under $\{R^t_{\a_{m+1}}\}_{0\leq t \leq q_{m+1}}$ do not lie in $\bigcup_{R^{(m)}_{i,j} \in \xi_m}\phi^{-1}_m \left(R^{(m)}_{i,j}\right)$. Altogether we see that each $H_m(B^{(m)}_{\vec{j}})$ is contained in one $\left(\frac{\e}{2}+8m^2\e_m,q_{m+1}\right)$-Hamming ball for $T_m$ with respect to the partition $\eta_m$. Since for any point $P\in H_m(B^{(m)}_{\vec{j}}\cap \Xi_{m+1})$ the images $T^t(P)$ and $T^t_m(P)$ for any $t\leq l_{m+1}' q_{m+1}$ lie in the same element of $\eta_m$ by the definition of the safe domain in Remark \ref{rem:safeSmooth}, we also obtain that each $H_m(B^{(m)}_{\vec{j}}\cap \Xi_{m+1})$ is contained in one $(\frac{\e}{2}+8m^2\e_m,q_{m+1})$-Hamming ball for $T$ with respect to the partition $\eta_m$.
	
	In the next step, we let $n=m+1$ and treat the collection $\tilde{\eta}_m \coloneqq \{B^{(m)}_{\vec{j}}\}$ as our ``target partition''. We denote the width of such sets $B^{(m)}_{\vec{j}}$ by $s_m \coloneqq \frac{\e}{4q^{m^2+2}_m}$. Within $h^{-1}_{n}\left(\bigcup B^{(m)}_{\vec{j}}\right)$ we consider sets of the following form (recalling $a_{n} \coloneqq 2q^3_{n}\cdot \left[nq^{\sigma_{n}}_{n} \right]$ and $b_{n} \coloneqq \left[nq^{\sigma_{n}}_{n} \right]$ from the construction of the conjugation map $g_{n}$):
	\begin{align*}
	& B^{(n)}_{\vec{k}}\coloneqq L_{n}  \cap \bigcup^{c_n -1}_{i_1=0} \bigcup^{u_{n}-1}_{i_2=0} \bigcup^{t_{n}-1}_{i_3=0} \dots \bigcup^{t_{n}-1}_{i_{n^2+1}=0} B^{(n)}_{\vec{k},\vec{i}} ,
	\end{align*}
	where $B^{(n)}_{\vec{k},\vec{i}}$ is the set
	\begin{align*}
	& \Big[ \frac{k_1 +i_1}{q_{n}}+\frac{k_2}{2q^3_{n}}+\frac{k_3  d_{n}+i_3 e_n +k_4 f_n} {2q^4_n}+\dots +\frac{k_{2n^2+1}  d_{n}+i_{n^2+1} e_n +k_{2n^2+2} f_n} {2q^{n^2+2}_n}, \\
	& \ \frac{k_1 +i_1}{q_{n}}+\frac{k_2}{2q^3_{n}}+\frac{k_3  d_{n}+i_3  e_n +k_4 f_n} {2q^4_n}+\dots +\frac{k_{2n^2+1}  d_{n}+i_{n^2+1} e_n +(k_{2n^2+2}+1) \cdot f_n} {2q^{n^2+2}_n}\Big) & \\
	& \times \Big[\frac{k_{2n^2+3}}{q_m}+\frac{i_2+k_{2n^2+4} \cdot \e_n s_m}{b_{n}}, \frac{k_{2n^2+3}}{q_m}+\frac{i_2+(k_{2n^2+4}+1) \cdot \e_n s_m}{b_{n}} \Big),
	\end{align*}
	and $c_n\coloneqq \lfloor \e_{n}s_mq_{n}\rfloor$, $d_{n}\coloneqq \lfloor \frac{\e_{n}q_{n}}{q_m} \rfloor$, $e_n \coloneqq \lfloor \frac{q_n}{b_n} \rfloor$, $f_n \coloneqq \lfloor \frac{\e_n s_m q_n}{b_n} \rfloor$, $t_{n} \coloneqq b_nd_n $, and $u_n=\lfloor \frac{b_n}{q_m}\rfloor$. Descriptively speaking, such a set $B^{(n)}_{\vec{k},\vec{i}}$ is a union of sets of width $\frac{ \e_n s_m }{b_n \lambda_{\max}}$ (where $\lambda_{\max}=2q^{n^2+1}_n$ is the largest expansion factor appearing in the definition of $\phi_n$) and height $\frac{ \e_n s_m}{b_n}$, whose images have small diameter under $h_n$ compared to the size of elements in $\tilde{\eta}_m$. When building $B^{(n)}_{\vec{k}}$ we take unions to reflect equivariance of $\phi_n$ and periodicity of $g_n$. Overall, such a $B^{(n)}_{\vec{k}}$ is a  holey subset of a set of width $\e_n s_m$ (by the union over $i_1$) and height $1/q_m$ (by the union over $i_2$). Moreover, we note that there are less than
	\[
	\frac{1}{\e_n s_m} \cdot 2q^2_n \cdot \left(\frac{q_m}{\e_n} \cdot \frac{1}{\e_ns_m}\right)^{n^2-1} \cdot q_m \cdot \frac{1}{\e_n s_m}
	\]
	many sets $B^{(n)}_{\vec{k}}$. We note that for any point in such a set $B^{(n)}_{\vec{k}}$ at most $(4m^2\e_m+4n^2\e_n)q_{n+1}$ iterates under $\{R^t_{\a_{n+1}}\}_{0\leq t \leq q_{n+1}}$ do not lie in $h^{-1}_{n}\left(\bigcup B^{(m)}_{\vec{j}}\right)$. On the remaining iterates these sets are chosen in this way such that each $h_n\left(B^{(n)}_{\vec{k}}\right)$ is contained in one $2\e_n$-Hamming ball for $h_n \circ R_{\a_{n+1}}\circ h^{-1}_n$ with respect to the partition $\tilde{\eta}_m$. Since each partition element of $H_m(\tilde{\eta}_m)$ lies within one $\left(\frac{\e}{2}+8m^2\e_m,q_{m+1}\right)$-Hamming ball for $T_m$ with respect to $\eta_m$ as seen above, we obtain that each $H_n\left(B^{(n)}_{\vec{k}}\right)$ is contained in one $\left(\frac{\e}{2}+9m^2 \e_m + 9n^2 \e_n ,q_{n+1}\right)$-Hamming ball for $T_n$ with respect to $\eta_m$. By definition of the safe domain $\Xi_n$, for any point $P\in H_n(B^{(n)}_{\vec{k}}\cap \Xi_n)$ the images $T^t(P)$ and $T^t_n(P)$ for any $t\leq l_{n+1}' q_{n+1}$ lie in the same element of $\eta_m$. Altogether we conclude
	{\[
	S^H_{\eta_m}\left(T,q_{n+1},\frac{\e}{2}+9m^2 \e_m + 9n^2 \e_n\right) \leq C_n \cdot q^2_n.
	\]}
	
	By induction we continue for $n>m+1$. To conclude the statement we make use of $\frac{\e}{2}+\sum^{\infty}_{n=m}10n^2\e_n <\e$ by condition (\ref{eq:deltaSmooth2}).
\end{proof}

In the converse direction, we also find a lower bound of the same order $q^2_n$ on the number of separated points.

\begin{lemma}\label{lem:lowerSmooth4}
	Let $0<\e<\frac{1}{2}$ and $m\in \N$. For $n>m$ we have
	\[
	S^H_{\eta_m}(T,q_{n+1},\e) \geq q^2_n.
	\]
\end{lemma}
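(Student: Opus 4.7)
The plan is dual to Lemma \ref{lem:upperSmooth4}: rather than constructing an efficient Hamming cover, I will exhibit $q_n^2$ well-separated regions whose $T$-orbits are pairwise $(\e,q_{n+1})$-Hamming apart with respect to $\eta_m$. The key resource is the Hamming separation property (property (2) of Lemma \ref{lem:prob3}) of the concatenated word $W=\overline{w}\tilde{w}$ used to define $\phi_n$: cyclically shifted versions of $W$ disagree on a $\bigl(1-\tfrac{1}{n^2}-\tfrac{1}{n^2}\bigr)$-fraction of positions. This combinatorial separation will be translated into dynamical Hamming separation via the structure of the AbC step.

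First I would restrict attention to the safe domain $\Xi_n$ from Remark \ref{rem:safeSmooth}, so that orbits of $T$ and of $T_n$ visit the same $\eta_m$-atoms for all $0\leq t\leq l'_{n+1}q_{n+1}$, reducing the problem to $T_n=H_n\circ R_{\a_{n+1}}\circ H_n^{-1}$. Next I would fix a central index $i_c$ for $H_{n-1}$ (as in the proof of Lemma \ref{lem:FSWSeperationAtN-thStage}, so that $H_{n-1}\circ g_n$ sends the central vertical strip of $\Delta^{i_c}_{q_n}$ into one atom of $\eta_m$) and select $q_n^2$ small ``test rectangles'' $Q_{u,v}$ of the form
\[
Q_{u,v}\subset H_n\Bigl(L_n\cap \Bigl[\tfrac{u}{q_n^{n^2+2}}+\tfrac{v\cdot\varepsilon_n}{q_n^{n^2+2}[nq_n^{\sigma_n}]},\ \tfrac{u}{q_n^{n^2+2}}+\tfrac{(v+1)\varepsilon_n}{q_n^{n^2+2}[nq_n^{\sigma_n}]}\Bigr)\times I_v\Bigr),
\]
where $0\le u<q_n$ and $0\le v<q_n$, and the vertical intervals $I_v$ are chosen inside the ``highly rotated'' regions corresponding to indices $i$ with $W_i=n^2-1$ (by exact uniformity of $W$, at least $2q_n^2/n^2$ such indices are available, which is more than enough). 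The key property to be verified is that under the rotation $R^t_{\a_{n+1}}$ with $t$ ranging over $\{0,1,\ldots,q_{n+1}-1\}$, pre-images $H_n^{-1}(Q_{u,v})$ meet distinct $W$-sub-blocks at distinct times, and the pattern of sub-block labels over $t$ is exactly a cyclic shift of $W$ depending on $(u,v)$.

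The main technical lemma to prove is: if $(u,v)\ne(u',v')$, then the resulting $W$-label sequences differ at a $(1-\tfrac{2}{n^2})$-fraction of times, and at each such time the images $T_n^t(Q_{u,v})$ and $T_n^t(Q_{u',v'})$ lie in different atoms of $\eta_m$. This follows from two ingredients: (i) different $W$-labels cause $\phi_n$ to apply quasi-rotations with incommensurable expansion factors $2q_n^{j+2}$, so the horizontal coordinates are spread across different sub-cells of the fundamental domain $[0,1/q_n]\times [0,1]$; (ii) composing with $g_n$ and then $H_{n-1}$, which is Lipschitz on the central region where all the relevant orbits are concentrated (again by the safe-domain choice), preserves separation at the scale of $\eta_m$-atoms, because the diameter of $H_{n-1}\circ g_n$-images of cells in the partition $\xi_m$ is only of order $1/q_m\gg 1/q_n$. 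Once Hamming separation at the $(1-\tfrac{3}{n^2})$-level is established for all pairs $(u,v)\ne(u',v')$, each $(q_{n+1},\e)$-Hamming ball of $\eta_m$ can contain at most one $Q_{u,v}$ (for $n$ large enough that $\tfrac{3}{n^2}<1-\e$), so $S^H_{\eta_m}(T,q_{n+1},\e)\geq q_n^2$.

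The hard part will be the bookkeeping in ingredient (i): one must verify that the Hamming separation guaranteed by Lemma \ref{lem:prob3} for the combinatorial word $W$ actually translates into separation in the $\eta_m$-coding even after the reshuffling by $R^t_{\a_{n+1}}$ and the contraction by the shear $g_n$. In particular the argument must handle the two halves $\overline{w}$ and $\tilde{w}$ of $W$ simultaneously, and must deal with the exceptional times when an orbit falls outside $L_n$ (these comprise a fraction $O(\varepsilon_n n^2)$ of $\{0,\ldots,q_{n+1}-1\}$ by the same computation as in Lemma \ref{lem:upperSmooth4}, hence can be absorbed into the allowed error $\e$ for $n$ large). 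Apart from this, the mechanism is essentially a soft consequence of the probabilistic construction of $W$ combined with the quantitative norm control of the conjugacies.
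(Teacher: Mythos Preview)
Your ingredient (ii) is where the argument breaks down. Membership in an $\eta_m$-atom is not a metric statement that a Lipschitz bound on $H_{n-1}$ can settle: since $\eta_m=H_{m-1}\circ g_m(\xi_m)$, the $\eta_m$-code of $T_n^t(x)$ is the $R^{(m)}_{i,j}$-cell containing
\[
\phi_m\circ h_{m+1}\circ\cdots\circ h_n\circ R^t_{\a_{n+1}}\circ H_n^{-1}(x).
\]
When two orbits see different $W$-labels, $\phi_n$ places them at different vertical heights; but then $g_n$ shears them horizontally by amounts differing by order $[nq_n^{\sigma_n}]$, which modulo $1$ is uncontrolled, and each subsequent $h_k$ ($m<k<n$) and finally $\phi_m$ act piecewise through their own quasi-rotation decompositions. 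A Lipschitz estimate cannot tell you which $\xi_m$-cell the output lands in, and the central-index device you import from the topological lower bound separates points at \emph{one} specially chosen iterate, not at a density of times. Your test rectangles are also under-specified: with $0\le u<q_n$ and horizontal position $u/q_n^{n^2+2}$, all $Q_{u,v}$ lie in a window of width $1/q_n^{n^2+1}\ll 1/(2q_n^3)$, hence share a single $W$-block index and produce \emph{the same} label sequence---no cyclic shift at all.

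The paper closes this gap with a genuine inductive descent. At the base (Claim~\ref{cl:starting}) it computes $\phi_m$ explicitly on boxes $B^{(m)}_{j_1,\ldots,j_{m^2+2},a}$ and shows that different quasi-rotation exponents send them to different $1/q_m$-vertical layers of $\xi_m$. For $n>m$ (Claim~\ref{cl:induction}) it introduces a coarser \emph{target partition} $\tilde\eta_m=\{\overline{B}^{(m)}_{k_1,k_2,k_3}\}$ and shows that boxes $B^{(n)}_{j_1,j_2,\vec i,a}$ with $j_2\neq j'_2\bmod q_n^2$ and \emph{fixed} fine-position index $\vec i$ are sent by $h_n$ into different $\tilde\eta_m$-atoms; the previous level then converts $\tilde\eta_m$-separation into $\eta_m$-separation. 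Fixing $\vec i$ is precisely what pins down the $h_n$-image well enough to name the target atom, and is the bookkeeping your proposal suppresses. Finally, the paper does not exhibit $q_n^2$ mutually separated points; it concludes by showing every $(\e,q_{n+1})$-Hamming ball centred in $H_n(\Xi_{n+1})$ has measure at most $1/q_n^2$.
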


\begin{proof}
	As in the proof of Lemma \ref{lem:upperSmooth4} we choose $m$ sufficiently large such that
	\begin{equation} \label{eq:condSafe4}
	    \mu(\Xi_m)> 1-\e.
	\end{equation}
	This time we additionally require on $m$ that
	\begin{equation} \label{eq:deltaSmooth4}
	\sum^{\infty}_{k=m}\left(\frac{3}{k^2}+4k^2\e_k\right) <\e
	\end{equation}
	Within the images $\bigcup_{R^{(m)}_{i,j} \in \xi_m}\phi^{-1}_m(R^{(m)}_{i,j})$ we define for $0\leq j_1 <q_m$, $0\leq j_2 < 2q^2_m$, $2\e_mq_m \leq j_i < (1-2\e_m)q_m$, where $3\leq i \leq m^2+2$, and $0\leq a < \lfloor \frac{q_{m+1}}{2q^{m^2+2}_m} \rfloor $ the subsets $B^{(m)}_{j_1,\dots , j_{m^2+2},a}$ as
	\begin{align*}
	& \Big[ \frac{j_1}{q_m}+\frac{j_2}{2q^{3}_m}+\frac{j_3}{2q^{4}_m}+ \dots +\frac{j_{m^2+1}}{2q^{m^2+2}_m} + \frac{a}{q_{m+1}} , \frac{j_1}{q_m}+\frac{j_2}{2q^{3}_m}+ \dots +\frac{j_{m^2+1}}{2q^{m^2+2}_m}+ \frac{a+1}{q_{m+1}}\Big) \\
	\times & \Big[\frac{j_{m^2+2}}{q_m}, \frac{j_{m^2+2}+1}{q_m} \Big).
	\end{align*}
	
	\begin{claim}\label{cl:starting} Let $j_2 \neq j^{\prime}_2 \mod q^2_m$, then we have that any two points $$P_1 \in H_m\left(B^{(m)}_{j_1,j_2,j_3,\dots ,j_{m^2+1}, j_{m^2+2},a} \cap \Xi_{m+1}\right),$$ $$P_2 \in H_m\left(B^{(m)}_{j^{\prime}_1,j^{\prime}_2,j_3,\dots ,j_{m^2+1}, j^{\prime}_{m^2+2},a} \cap \Xi_{m+1}\right)$$ are $(1-\frac{3}{m^2}-\frac{1}{q_m}-4m^2\e_m-\e,q_{m+1})$-Hamming apart from each other under the map $T$ with respect to the partition $\eta_m$.
	\end{claim}
	\begin{proof} We compute that under the map $\phi_{2q^s_m, \e_m}$, $3\leq s \leq m^2+1$, a set of the form $B^{(m)}_{j_1,\dots , j_{m^2+2},a}$ is mapped to
	\begin{align*}
	& \Big[ \frac{j_1}{q_m}+\frac{j_2}{2q^{3}_m}+ \dots +\frac{j_{s-1}}{2q^{s}_m}+\frac{j_{m^2+2}}{2q^{s+1}_m}, \frac{j_1}{q_m}+\frac{j_2}{2q^{3}_m}+ \dots +\frac{j_{s-1}}{2q^{s}_m}+\frac{j_{m^2+2}+1}{2q^{s+1}_m}\Big) \\
	&\times \Big[1-\frac{j_{s}}{q_m}- \dots -\frac{j_{m^2+1}}{q^{m^2+2-s}_m}-\frac{(a+1) \cdot q^s_m}{q_{m+1}}, 1-\frac{j_{s}}{q_m}- \dots -\frac{j_{m^2+1}}{q^{m^2+2-s}_m}-\frac{a \cdot q^s_m}{q_{m+1}} \Big).
	\end{align*}
	Hence, we see that if the action of $\phi_m$ on $\left[\frac{j_1}{q_m}+\frac{j_2}{2q^{3}_m},\frac{j_1}{q_m}+\frac{j_2+1}{2q^{3}_m}\right) \times [0,1]$ is different from $\phi_m$ on $\left[\frac{j^{\prime}_1}{q_m}+\frac{j^{\prime}_2}{2q^{3}_m},\frac{j^{\prime}_1}{q_m}+\frac{j^{\prime}_2+1}{2q^{3}_m}\right) \times [0,1]$, then a proportion of at most $\frac{1}{q_m}$ of the sets $B^{(m)}_{j_1,j_2,j_3,\dots ,j_{m^2+1}, j_{m^2+2},a}$ and $B^{(m)}_{j^{\prime}_1,j^{\prime}_2,j_3,\dots ,j_{m^2+1}, j^{\prime}_{m^2+2},a}$ are mapped into the same partition element of $\xi_m$ under $\phi_m$. By the combinatorics of $\phi_m$ coming from part (2) of Lemma \ref{lem:prob3}, on a proportion of at most $\frac{3}{m^2}$ of domains the actions of $\phi_m$ on $R^t_{\a_{m+1}}\circ H^{-1}_m(P_1)$ and $R^t_{\a_{m+1}}\circ H^{-1}_m(P_2)$, $0\leq t < q_{m+1}$, coincide. Thus, we obtain that under $T_m$ the points $P_1$ and $P_2$ are $(1-\frac{3}{m^2}-\frac{1}{q_m}-4m^2\e_m,q_{m+1})$-Hamming apart from each other with respect to the partition $\eta_m$. Since for any point $P\in H_m(B^{(m)}_{j_1,j_2,j_3,\dots ,j_{m^2+1}, j_{m^2+2},a}\cap \Xi_{m+1})$ the images $T^t(P)$ and $T^t_m(P)$ for any $t\leq q_{m+1}$ lie in the same element of $\eta_m$ by the definition of the safe domain in Remark \ref{rem:safeSmooth}, we conclude the claim for the map $T$ with the aid of condition (\ref{eq:condSafe4}).
\end{proof}
	
	This also motivates to consider the unions
	\begin{align*}
	\overline{B}^{(m)}_{k_1,k_2,k_3} = \bigcup^{(1-3\e)q_m}_{j_3=2\e_mq_m} \bigcup^{(1-2\e_m)q_m}_{j_4=2\e_mq_m} \dots \bigcup^{(1-2\e_m)q_m}_{j_{m^2+1}=2\e_mq_m} \bigcup^{\lfloor \frac{q_{m+1}}{2q^{m^2+2}_m} \rfloor -1}_{a=0} B^{(m)}_{k_1,k_2, j_3,j_4, \dots,j_{m^2+1},k_3,a}
	\end{align*}
	for $0\leq k_1<q_m$, $0\leq k_2 <2q^2_m$, and $2\e_mq_m \leq k_3 < (1-2\e_m)q_m$.
	
	In the next step, we let $n=m+1$ and $\tilde{\eta}_m \coloneqq \{\overline{B}^{(m)}_{k_1,k_2,k_3}\}$. As in the proof of Lemma \ref{lem:upperSmooth4} $\tilde{\eta}_m$ will serve as our ``target partition''. This time the width of such sets $\overline{B}^{(m)}_{k_1,k_2,k_3}$ is approximately $s_m \coloneqq \frac{1-3\e}{2q^3_m}$. Within $h^{-1}_{n}\left(\bigcup \overline{B}^{(m)}_{k_1,k_2,k_3}\right)$ we consider sets $B^{(n)}_{j_1, j_2,\vec{i},a}$ of the following form
	\begin{align*}
	L_n \cap & \Big[ \frac{j_1}{q_{n}}+\frac{j_2}{2q^3_{n}}+\frac{i_1  d_{n}+i_2 e_n +i_3 f_n} {2q^4_n}+\dots +\frac{i_{3n^2-5}  d_{n}+i_{3n^2-4} e_n +i_{3n^2-3} f_n} {2q^{n^2+2}_n}+\frac{a}{q_{n+1}}, \\
	& \ \frac{j_1}{q_{n}}+\frac{j_2}{2q^3_{n}}+\frac{i_1  d_{n}+i_2  e_n +i_3 f_n} {2q^4_n}+\dots +\frac{i_{3n^2-5}  d_{n}+i_{3n^2-4} e_n +i_{3n^2-3} f_n} {2q^{n^2+2}_n}+\frac{a+1}{q_{n+1}}\Big) & \\
	& \times \Big[\frac{i_{3n^2-2}}{q_m}+\frac{i_{3n^2-1}+i_{3n^2} \cdot  s_m}{b_{n}}, \frac{i_{3n^2-2}}{q_m}+\frac{i_{3n^2-1}+(i_{3n^2}+1) \cdot  s_m}{b_{n}} \Big),
	\end{align*}
	where $d_{n}\coloneqq \lfloor \frac{q_{n}}{q_m} \rfloor$, $e_n \coloneqq \lfloor \frac{q_n}{b_n} \rfloor$, and $f_n \coloneqq \lfloor \frac{s_m q_n}{b_n} \rfloor$.
	
\begin{claim}\label{cl:induction}
Suppose $P_1 \in H_n \left(B^{(n)}_{j_1, j_2,\vec{i},a}\cap \Xi_{n+1} \right)$ and $P_2 \in H_n \left(B^{(n)}_{j_1, j^{\prime}_2,\vec{i},a} \cap \Xi_{n+1} \right)$ with $j_2 \neq j^{\prime}_2 \mod q^2_n$, then we have that the Hamming distance with length $q_{n+1}$ between $P_1$ and $P_2$ under map $T$ with respect to $\eta_m$ is larger than $1-3\e$
\end{claim}
\begin{proof}
As in the proof of the Claim \ref{cl:starting} we make the following observation by direct computation: If $\phi_n$ on $\left[\frac{j_1}{q_n}+\frac{j_2}{2q^{3}_n},\frac{j_1}{q_n}+\frac{j_2+1}{2q^{3}_n}\right) \times [0,1]$ is different from $\phi_n$ on $\left[\frac{j_1}{q_n}+\frac{j^{\prime}_2}{2q^{3}_n},\frac{j_1}{q_n}+\frac{j^{\prime}_2+1}{2q^{3}_n}\right) \times [0,1]$, then a proportion of $\frac{1}{s_m}$ of the sets $B^{(n)}_{j_1, j_2,\vec{i},a}$ and $B^{(n)}_{j_1, j^{\prime}_2,\vec{i},a}$ are mapped into the same partition element of $\tilde{\eta}_m$ under $h_n=g_n \circ \phi_n$. For points $P_1 \in H_n \left(B^{(n)}_{j_1, j_2,\vec{i},a} \right)$ and $P_2 \in H_n \left(B^{(n)}_{j_1, j^{\prime}_2,\vec{i},a} \right)$ with $j_2 \neq j^{\prime}_2 \mod q^2_n$ we use part (2) of Lemma \ref{lem:prob3} again to see that on a proportion of at most $\frac{3}{n^2}$ of domains the actions of $\phi_n$ on $R^t_{\a_{n+1}}\circ H^{-1}_n(P_1)$ and $R^t_{\a_{m+1}}\circ H^{-1}_n(P_2)$, $0\leq t < q_{n+1}$, coincide. Combining both observations yields that under $h_n \circ R_{\alpha_{n+1}}\circ H^{-1}_n$ the points $P_1$ and $P_2$ are $(1-\frac{3}{n^2}-\frac{1}{s_m}-4n^2\e_n,q_{n+1})$-Hamming apart from each other with respect to the partition $\tilde{\eta}_m$. Since for any point $P\in H_n(B^{(n)}_{j_1, j_2,\vec{i},a}\cap \Xi_{n+1})$ the images $T^t(P)$ and $T^t_n(P)$ for any $t\leq q_{n+1}$ lie in the same element of $\eta_m$ by the definition of the safe domain in Remark \ref{rem:safeSmooth}, we conclude with the aid of the Claim \ref{cl:starting} and condition (\ref{eq:condSafe4}) that $P_1$ and $P_2$ are $\left(1-\e-\frac{1}{q_m}-\frac{1}{s_m}-\sum^{m+1}_{k=m}(\frac{3}{k^2}+4k^2\e_k),q_{n+1}\right)$-Hamming apart from each other under the map $T$ with respect to the partition $\eta_m$.
	
By induction we continue for $n>m+1$. Hereby, we complete the proof of the claim since $\sum^{\infty}_{k=m}(\frac{3}{k^2}+4k^2\e_k) <\e$ by condition (\ref{eq:deltaSmooth4}).
\end{proof}

In fact, Claim \ref{cl:induction} implies for every fixed $j_3$, $\vec{i}$ and $a$ that if $P_1 \in H_n \left(B^{(n)}_{j_1, j_2,\vec{i},a}\cap \Xi_{n+1} \right)$, $P_2 \in H_n \left(B^{(n)}_{j_3, j_4,\vec{i},a} \cap \Xi_{n+1} \right)$, then there exists at most one $j_4$ such that $P_1$ and $P_2$ are $\epsilon$-Hamming close with respect to $\eta_m$ with length $q_{n+1}$ under map $T$: Suppose that there are $j'_4\neq j_4$ such that for $P'_2 \in H_n \left(B^{(n)}_{j_3, j'_4,\vec{i},a} \cap \Xi_{n+1} \right)$ we also have $P_1$ and $P'_2$ are $\epsilon$-Hamming close with respect to $\eta_m$ with length $q_{n+1}$ under map $T$. This would imply that $P_2$ and $P'_2$ are $2\epsilon$-Hamming close with respect to $\eta_m$ with length $q_{n+1}$ under map $T$, which contradicts Claim \ref{cl:induction}.

As a result, we obtain the estimate of the measure of the $\epsilon$-Hamming ball which contains $P \in H_n \left(\Xi_{n+1} \right)$:
\begin{equation}
\mu(B_{\eta_m,q_{n+1}}(P,\epsilon))\leq\frac{1}{q_n^2}.
\end{equation}
Combining this with \eqref{eq:condSafe4}, we complete the proof of the lemma.

\end{proof}

\begin{theorem}\label{thm:weakMixingMeaInter}
There exists a weakly mixing $C^{\infty}$ AbC diffeomorphism such that
\begin{equation}
%& \uent^{\mu}_{a_m^{\pol}(t)}(T)= 0 \\
\uent^{\mu}_{a_m^{\operatorname{int1,8}}(t)}(T)=2.
%& \uent^{\mu}_{a_m^{\log}(t)}(T)=\infty
\end{equation}
\end{theorem}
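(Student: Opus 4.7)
The plan is to invoke the generator theorem (Proposition \ref{prop:generator}) along the sequence of partitions $\{\eta_m\}_{m\in\N}$ (working with the increasing refinement $\xi_m=\bigvee_{k\leq m}\eta_k$ if the monotonicity hypothesis is required), since the diameters $\text{diam}(H_{n-1}\circ g_n(R^{(n)}_{i,j}))$ tend to $0$ and hence $\bigvee_m\eta_m$ generates $\mathcal B$. It then suffices to show $\uent^{\mu}_{a^{\operatorname{int1,8}}_L(t)}(T,\eta_m)=2$ for each sufficiently large $m$. The two-sided control on the number of Hamming balls is provided by Lemma \ref{lem:upperSmooth4}, which gives $S^H_{\eta_m}(T,L,\e)\leq C_n q_n^2$ for all $0\leq L\leq l'_{n+1}q_{n+1}$ with $n>m$, and by Lemma \ref{lem:lowerSmooth4}, which supplies the matching $S^H_{\eta_m}(T,q_{n+1},\e)\geq q_n^2$. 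The value $2$ in the theorem arises because $a^{\operatorname{int1,8}}_{q_{n+1}}(t)=q_n^t$ by \eqref{eq:intermediateScaleAtQ(n+1)} (with $r=8$), so the critical exponent is precisely matched against the power $q_n^2$ realized at $L=q_{n+1}$.

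For the lower bound, I would fix any $t<2$ and evaluate along the subsequence $L=q_{n+1}$. Combining Lemma \ref{lem:lowerSmooth4} with $a^{\operatorname{int1,8}}_{q_{n+1}}(t)=q_n^t$ gives
\begin{equation*}
\limsup_{L\to\infty}\frac{S^H_{\eta_m}(T,L,\e)}{a^{\operatorname{int1,8}}_L(t)} \;\geq\; \limsup_{n\to\infty}\frac{q_n^2}{q_n^t} \;=\; +\infty,
\end{equation*}
so every $t<2$ lies in $B(T,\e,\eta_m)$ and $\uent^{\mu}_{a^{\operatorname{int1,8}}_L(t)}(T,\eta_m)\geq 2$. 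For the matching upper bound, fix $t>2$ and split an arbitrary $L$ into two ranges. If $q_n<L\leq l'_nq_n=q_n^{(n-1)^8}$, monotonicity of the scale together with \eqref{eq:intermediateScaleAtQ(n+1)} yields $a^{\operatorname{int1,8}}_L(t)\geq a^{\operatorname{int1,8}}_{q_n}(t)=q_{n-1}^t$, so Lemma \ref{lem:upperSmooth4} applied at stage $n-1$ gives a ratio dominated by $C_{n-1}q_{n-1}^{2-t}\to 0$. If instead $l'_nq_n<L\leq q_{n+1}$, the estimate \eqref{eq:intermediateScaleAtQ(n)} produces $a^{\operatorname{int1,8}}_L(t)\geq q_n^{(n-1)^8 t/n^8}$; since $(n-1)^8/n^8\to 1$, for every fixed $t>2$ we have $(n-1)^8 t/n^8>2$ once $n$ is large, so the ratio is bounded by $C_n q_n^{2-(n-1)^8 t/n^8}\to 0$. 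Passing to $\e\to 0$ and applying Proposition \ref{prop:generator} concludes $\uent^{\mu}_{a^{\operatorname{int1,8}}_L(t)}(T)=2$.

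The principal obstacle will be the bookkeeping needed to reconcile the partition-dependent constants $C_n$ from Lemma \ref{lem:upperSmooth4} with the super-exponential growth $q_{n+1}\geq q_n^{n^8}$: one must verify that $C_n q_n^{-\delta}\to 0$ for the relevant small gap $\delta=t-2-o(1)$. This is not a genuine difficulty because $C_n$ is built from $\|DH_{n-1}\|_0$ and $\e_n^{-1}$, which are fixed once the inductive step $n-1$ is chosen and hence are outgrown by $q_n$, but the estimates must be lined up carefully. A secondary technicality is the monotonicity requirement in the generator theorem, which is handled by replacing $\eta_m$ with $\xi_m=\bigvee_{k\leq m}\eta_k$ and observing that coarser and finer partitions give the same slow entropy since the Hamming counts on either side differ by at most a multiplicative factor depending only on $m$.
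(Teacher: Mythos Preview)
Your proposal is correct and follows essentially the same argument as the paper: invoke Proposition \ref{prop:generator} along the generating sequence $\{\eta_m\}$, use Lemma \ref{lem:lowerSmooth4} at $L=q_{n+1}$ together with $a^{\operatorname{int1,8}}_{q_{n+1}}(t)=q_n^t$ for the lower bound $\uent\geq 2$, and use Lemma \ref{lem:upperSmooth4} with the same two-range splitting (governed by \eqref{eq:intermediateScaleAtQ(n+1)} and \eqref{eq:intermediateScaleAtQ(n)}) for the upper bound $\uent\leq 2$. The two technical caveats you flag---that $C_n$ is independent of $q_n$ and hence is killed by any positive power of $q_n$, and that the monotonicity hypothesis of Proposition \ref{prop:generator} is met by passing to $\xi_m=\bigvee_{k\leq m}\eta_k$---are both handled correctly and are in fact more explicitly addressed in your write-up than in the paper itself.
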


\begin{proof}
Since $\eta_l$ is a generating sequence of partitions, Proposition \ref{prop:generator} allows us to obtain the measure-theoretic slow entropy of $T$ by computing its slow entropy along $\eta_l$.
%Recall that for a fixed $\e>0$ and $l\in\N$,
%\begin{align*}
%    B(\e,\mathcal{\eta}_l)=\sup\{t>0:\limsup_{m\to\infty}\frac{S(\eta_l,m,\e)}{a_m(t)}>0\}
%\end{align*}

%First we consider $a_m^{\text{pol}}(t)=n^t$, then for any $t$ and any choice of $\e>0$, there exists an increasing sequence of integers $\{m_k\}_{k\in \N}$ such that following supremum is realized as a limit
%{\color{red}\begin{align*}
%\limsup_{m\to\infty}\frac{S(\eta_l,m,\e)}{a_m^{\text{pol}}(t)}= \lim_{k\to\infty}\frac{S(\eta_l,m_k,\e)}{a_{m_k}^{\text{pol}}(t)}.
%\end{align*}}
%If for a given $k$, $q_n < m_k\leq l_n'q_{n}=q_{n}^{(n-1)^8}$, then we have the estimate
%{\color{red}\begin{align*}
%\frac{S(\eta_l,m_k,\e)}{a_{m_k}^{\text{pol}}(t)}=\frac{S(\eta_l,m_k,\e)}{m_k^t}\leq \frac{C_{n-1}q_{n-1}^{2}}{q_{n}^t}.
%\end{align*}}
%While on the other hand if  $l_n'q_{n}=q_{n}^{(n-1)^8}<m_k\leq q_{n+1}$, we have the estimate
%{\color{red}\begin{align*}
%\frac{S(\eta_l,m_k,\e)}{a_{m_k}^{\text{pol}}(t)}=\frac{S(\eta_l,m_k,\e)}{m_k^t}\leq %\frac{C_{n}q_{n}^{2}}{q_{n}^{(n-1)^8t}}.
%\end{align*}}
%So $B(\e,\eta_l)=0\;\forall l$ and hence %$\overline{\text{ent}}^\mu_{a_m^{\text{pol}}(t)}(T)= 0$ in this case.

For any $m$ satisfying $q_n < m\leq l_n'q_{n}=q_{n}^{(n-1)^8}$, we get from Lemma \ref{lem:upperSmooth4} and \eqref{eq:intermediateScaleAtQ(n+1)} that
$$\frac{S_{\eta_l}^H(T,m,\e)}{a_{m}^{\text{int1,8}}(t)}\leq \frac{S_{\eta_l}^H(T,m,\e)}{a_{q_n}^{\operatorname{int1,8}}(t)}\leq \frac{C_{n-1}q_{n-1}^{2}}{q_{n-1}^{t}}.$$
For any $m$ with $q_{n}^{(n-1)^8}=l_n'q_n<m\leq q_{n+1}$, we have by Lemma \ref{lem:upperSmooth4} and \eqref{eq:intermediateScaleAtQ(n)} that
$$\frac{S_{\eta_l}^H(T,m,\e)}{a_{m}^{\text{int1,8}}(t)}\leq \frac{S_{\eta_l}^H(T,m,\e)}{a_{l_n'q_n}^{\operatorname{int1,8}}(t)}\leq \frac{C_{n}q_{n}^{2}}{\big[q_n\big]^{\frac{(n-1)^8t}{n^8}}}.$$
Hence, it is clear that $\uent^{\mu}_{a_m^{\operatorname{int1,8}}(t)}(T)\leq 2$.

On the other hand, for any $t\leq 2$, Lemma \ref{lem:lowerSmooth4} and \eqref{eq:intermediateScaleAtQ(n+1)} yield
$$\limsup_{m\to\infty}\frac{S_{\eta_l}^H(T,m,\e)}{a_m^{\text{int1,8}}(t)}\geq \limsup_{n\to\infty}\frac{S_{\eta_l}^H(T,q_{n+1},\e)}{a_{q_{n+1}}^{\text{int1,8}}(t)} \geq  \limsup_{n\to\infty}\frac{C_nq_{n}^{2}}{q_n^{t}}>0.$$
As a result, we have $\uent^{\mu}_{a_m^{\operatorname{int1,8}}(t)}(T)\geq 2$. Combining all these steps, we complete the proof.

%Finally let us consider $a_m^{\text{log}}(t)=(\ln m)^t$, then for any choice of $t>0$ and $\e>0$
%\begin{align*}
%\limsup_{m\to\infty}\frac{S(\eta_l,m_k,\e)}{a_m^{\text{log}}(t)}\geq %\limsup_{n\to\infty}\frac{q_{n}^{2}}{ (\ln q_{n+1})^t}>0
%\end{align*}
%Since $q_{n+1}\leq q_n^{n^8}$ with our choice of parameters. Hence $\overline{\text{ent}}^{\mu}_{a_m^{\text{log}}(t)}(T)= \infty$ in this case.
\end{proof}

\section{Regularity of AbC constructions and slow entropy}

It appears from the above examples that there is a connection between the speed of convergence of the AbC method and slow entropy of the limit diffeomorphism. Speed of convergence of the AbC method is in its turn related to the regularity of the AbC diffeomorphism, with higher the regularity, higher is the requirement of speed of convergence. Here we formulate some results and questions in an attempt to further understand this connection.

\subsection{Measure-theoretical slow entropy and regularity of AbC constructions}\label{sec:vanishSuperSmooth}

We begin with a proof of Theorem \ref{thm:vanishSuperSmooth} to show that for $C^\infty$ AbC diffeomorphisms the upper measure-theoretical slow entropy is always zero at polynomial scale.

By \cite[Lemma 4.5]{BKW1} we have for $\frac{\varepsilon}{2} l_nq_n\leq L<\frac{\varepsilon}{2} l_{n+1}q_{n+1}$ that
$$N\left(\eta_m, L,\varepsilon\right)\leq \frac{2}{\varepsilon^2}k_n q_n s_n.$$
The conjugation map $h_n$ approximating the permutation of $\frac{1}{k_nq_n} \times \frac{1}{s_n}$ rectangles smoothly has to satisfy $\norm{h_n}_u \geq (\max(k_nq_n,s_n))^u$. Along a sequence $u_n \in \N$ growing to infinity the numbers $l_n$ have to satisfy $l_n \geq n^2 \norm{h_n}_{u_n}$. Hence:
\begin{align*}
\uent^{\mu}_{n^t}(T) \leq \frac{\frac{2}{\varepsilon^2}k_n q_n s_n}{(\frac{\varepsilon}{2} l_nq_n)^t} \leq \frac{\frac{2}{\varepsilon^2}  (\max\left(k_nq_n,s_n \right))^2 }{(\frac{\varepsilon}{2} q_n \cdot n^2 \cdot (\max(k_nq_n,s_n))^{u_n} )^t},
\end{align*}
whose limit is zero for all $t>0$.

\subsection{Topological slow entropy and regularity of AbC constructions}

The computations we made in this article prompt the following questions:

\noindent\textbf{Question 1:} Is it possible for a $C^\infty$ AbC diffeomorphism to have finite non-zero upper topological slow entropy in the polynomial scale?

\noindent\textbf{Question 2:} What is an appropriate family of scaling functions for the slow entropy of $C^\omega$ (real-analytic) AbC diffeomorphisms? In particular, is it possible for a $C^\omega$ AbC diffeomorphism to have non-zero upper slow entropy in the log scale?

We point out that estimates from \cite{Ku-rigidity} indicate that the upper topological slow entropy for analytic AbC diffeomorphisms can be non-zero at $\left(\ln(\ln(n))\right)^t$ scale, while the ones in \cite{Ba-Ku} can be non-zero at the far slower $\left(\ln(\ln(\ln(n)))\right)^t$ scale.

\end{document}